\theoremstyle{plain}
\newtheorem{thm}{Theorem}
\newtheorem{prop}{Proposition}[section]
\newtheorem{nota}[prop]{Notation}
\newtheorem{lem}[prop]{Lemma}
\newtheorem{cor}[prop]{Corollary}
\newtheorem{defi}[prop]{Definition}
\newtheorem{rmk}[prop]{Remark}
\newtheorem{alg}[prop]{Algorithm}
\newtheorem{claim}{Claim}
\newtheorem{assume}{Assumption}
\newtheorem{obs}{Observation}
\newcommand {\R} {\mathbb{R}} \newcommand {\Z} {\mathbb{Z}}
 \newcommand {\N} {\mathbb{N}}
\newcommand {\p} {\partial}
\newcommand {\va} {\varphi}
\newcommand {\supp} {\text{supp}}
\DeclareMathOperator{\tr}{tr}
\DeclareMathOperator {\dist} {dist}
\DeclareMathOperator {\intconv} {intconv}
\DeclareMathOperator {\conv}{conv}
\DeclareMathOperator {\argmin} {argmin}
\DeclareMathOperator {\Skew} {Skew}
\DeclareMathOperator {\rank} {rank}
\DeclareMathOperator {\Per} {Per}
\DeclareMathOperator {\sgn} {sgn}
\begin{document}

\title[Higher Sobolev Regularity of Convex Integration Solutions]{Higher Sobolev Regularity of Convex Integration Solutions in Elasticity}

\author{Angkana R\"uland }
\author{Christian Zillinger}
\author{Barbara Zwicknagl}

\address{
Mathematical Institute of the University of Oxford, Andrew Wiles Building, Radcliffe Observatory Quarter, Woodstock Road, OX2 6GG Oxford, United Kingdom }
\email{ruland@maths.ox.ac.uk}

\address{
Department of Mathematics,
University of Southern California,
Los Angeles, CA 90089-2532, US}
\email{zillinge@usc.edu}

\address{
Institute for Applied Mathematics, Universit\"at Bonn, Endenicher Allee 60, 53115 Bonn, Germany  }
\email{zwicknagl@iam.uni-bonn.de}

\begin{abstract}
In this article we discuss quantitative properties of convex integration solutions arising in problems modeling shape-memory materials. For a two-dimensional, geometrically linearized model case, the hexagonal-to-rhombic phase transformation, we prove the existence of convex integration solutions $u$ with higher Sobolev regularity, i.e. there exists $\theta_0>0$ such that $\nabla u \in W^{s,p}_{loc}(\R^2)\cap L^{\infty}(\R^2)$ for $s\in(0,1)$, $p\in(1,\infty)$ with $0<sp < \theta_0$. We also recall a construction, which shows that in situations with additional symmetry much better regularity properties hold.
\end{abstract}

\subjclass[2010]{Primary 35B36, 35B65, 32F32}

\keywords{Convex integration solutions, elasticity, solid-solid phase transformations, differential inclusion, higher Sobolev regularity}

\thanks{
A.R. acknowledges a Junior Research Fellowship at Christ Church.
C.Z. and B.Z. acknowledge support from the DFG through CRC 1060 ``The mathematics of emergent effects''.}

\maketitle
\tableofcontents

\section{Introduction}
In this article we are concerned with the detailed analysis of certain convex integration solutions, which arise in the modeling of solid-solid, diffusionless phase transformations in shape-memory materials. We seek to precisely analyze the regularity properties of these constructions in a simple, two-dimensional, geometrically linear model case.\\
Shape-memory materials undergo a solid-solid, diffusionless phase transition
upon temperature change (see e.g. \cite{B} and the references given there): In the high temperature phase, the \emph{austenite phase}, the materials form very symmetric lattices. Upon cooling down the material, the symmetry of the lattice is reduced, the material transforms into the \emph{martensitic phase}. Due to the loss of symmetry, there are different \emph{variants of martensite}, which make these materials very flexible at low temperature and give rise to a variety of different microstructures. Mathematically, it has proven very successful to model this behavior variationally in a continuum framework as the following minimization problem \cite{B3}:
\begin{align}
\label{eq:var}
\min \int\limits_{\Omega} W(\nabla y, \theta) dx.
\end{align}
Here $\Omega \subset \R^{n}$ is the reference configuration of the undeformed material. 
The mapping $y:\Omega \rightarrow \R^n$ describes the \emph{deformation} of the material with respect to the reference configuration. It is assumed to be of a suitable Sobolev regularity.
The function $W:\R^{n\times n} \times \R \rightarrow [0,\infty)$ denotes the \emph{energy density} of a given deformation gradient $M\in \R^{n \times n}$ at a certain temperature $\theta \in \R$. Due to frame indifference, $W$ is required to be invariant with respect to rotations, i.e. 
\begin{align*}
W(QM,\theta) = W(M,\theta) \mbox{ for all } Q\in SO(n), \theta \in \R, M\in\R^{n\times n}. 
\end{align*}
Modeling the behavior of shape-memory materials, the energy density further reflects the physical properties of these materials. In particular, it is assumed that at high temperatures $\theta > \theta_c$ the energy density
$W$ has a single minimum (modulo $SO(n)$ symmetry), which (upon normalization)
we may assume to be given by the $SO(n)$ orbit of $\alpha(\theta) Id$, where
$\alpha: \R \rightarrow (0,\infty)$ with $\alpha(\theta_c)=1$ (c.f. \cite{Ball:ESOMAT}). This is the \emph{(austenite) energy well} at temperature $\theta$. Upon lowering the temperature below a critical temperature $\theta_c$, the function $W$ displays a (discrete) multi-well behavior (modulo $SO(n)$): There exist finitely many matrices $U_1(\theta),\dots,U_m(\theta) \in \R^{n\times n}_{+}$, $m\in \N$, such that
\begin{align*}
W(M,\theta) = 0 \Leftrightarrow M \in \bigcup\limits_{j=1}^{m}SO(n) U_j(\theta).
\end{align*} 
The matrices $U_j(\theta)$ represent the variants of martensite at temperature $\theta <\theta_c$ and are referred to as the \emph{(martensite) energy wells}. At the critical temperature $\theta= \theta_c$ both the austenite and the martensite wells are energy minimizers. \\
In the sequel, we assume that $\theta<\theta_c$ is fixed, so that only the variants of martensite are energy minimizers. We seek to study the quantitative behavior of minimizers for energies of the type (\ref{eq:var}). Here we make the following simplifications:
\begin{itemize}
\item[(i)] \emph{Reduction to the $m$-well problem.} Instead of studying the full variational problem (\ref{eq:var}), we only focus on exact minimizers. Restricting to the low temperature regime, this implies that we seek solutions to the differential inclusion
\begin{align}
\label{eq:mwell}
\nabla y \in \bigcup\limits_{j=1}^{m}SO(n) U_j(\theta),
\end{align}
for some $\theta < \theta_c$.
\item[(ii)] \emph{Small deformation gradient case, geometric linearization.} We further modify (\ref{eq:mwell}) and assume that $\nabla y$ is close to the identity. This allows us to linearize the problem around this constant value (c.f. Chapter 11 in \cite{B}). Instead of considering (\ref{eq:mwell}), we are thus lead to the inclusion problem
\begin{align}
\label{eq:mwell_lin}
e(\nabla u):=\frac{\nabla u + (\nabla u)^T}{2} \in \{e_{1},\dots,e_m\}.
\end{align}
The symmetrized gradient $e(\nabla u)$ represents the infinitesimal deformation
\emph{strain} associated with the \emph{displacement} $u$, which is defined as $u(x):=y(x)-x$ (with slight abuse of physical convention in the sequel we do not distinguish between the deformation and the displacement in our use of language and will simply refer to both as a ``deformation''). The symmetric matrices $e_1,\dots,e_m \in \R^{n\times n}$ are the
\emph{exactly stress-free strains} representing the variants of martensite.
While this linearizes the \emph{geometry} of the problem (by replacing the symmetry group $SO(n)$ by an invariance with respect to the linear space $\Skew(n)$), the differential inclusion (\ref{eq:mwell_lin}) preserves the inherent \emph{physical nonlinearity}, which arises from the multi-well structure of the problem.
\item[(iii)] \emph{Reduction to two dimensions and the hexagonal-to-rhombic phase transformation.} In the sequel studying an as simple as possible model case, we restrict to two dimensions and a specific two-dimensional phase transformation, the \emph{hexagonal-to-rhombic} phase transformation (this is for instance used in studying materials such as $\mbox{Mg}_2\mbox{Al}_4\mbox{Si}_5\mbox{O}_{18}$ or Mg-Cd alloys undergoing a (three-dimensional) hexagonal-to-orthorhombic transformation, c.f. \cite{CPL14}, \cite{KK91}, and also for closely related materials such as $\mbox{Pb}_3(\mbox{VO}_4)_2$, which undergo a
(three-dimensional) hexagonal-to-monoclinic transformation, c.f. \cite{MA80a}, \cite{MA80}, \cite{CPL14}). From a microscopic point of view, the hexagonal-to-rhombic phase transformation 
occurs, if a hexagonal atomic lattice is transformed into a rhombic atomic lattice.
From a continuum point of view, we model it as solutions to the differential inclusion
\begin{equation}
\label{eq:incl_1}
\begin{split}
 & u: \R^{2} \rightarrow \R^{2},\\
  &\frac{1}{2}(\nabla u +
  (\nabla u)^{T}) \in K \mbox{ a.e. in }\Omega,
\end{split}
\end{equation}
where $\Omega \subset \R^2$ is a bounded Lipschitz domain and
\begin{equation}
\label{eq:K3}
\begin{split}
&K:=\{e^{(1)}, e^{(2)}, e^{(3)}\} \mbox{ with }\\
&e^{(1)}:= \begin{pmatrix}
1 & 0 \\
0& -1
\end{pmatrix},
e^{(2)}:= \frac{1}{2}\begin{pmatrix}
-1 & \sqrt{3} \\
\sqrt{3}& 1
\end{pmatrix},
e^{(3)}:= \frac{1}{2}\begin{pmatrix}
-1 & -\sqrt{3}\\
-\sqrt{3}& 1
\end{pmatrix}.
\end{split}
\end{equation}
We note that all the matrices in $K$ are trace-free, which corresponds to the (infinitesimal) volume preservation of the transformation.
We note that the set $K $ is ``large" (its convex hull is a two-dimensional set in the three-dimensional ambient space of two-by-two, symmetric matrices, c.f. Lemma \ref{lem:convex_hull}).
\end{itemize} 
In the sequel, we study the problem (\ref{eq:incl_1}), (\ref{eq:K3}) and 
investigate regularity properties of its solutions. 

\subsection{Main result}

The geometrically linearized hexagonal-to-rhombic phase transformation is a very flexible transformation, which allows for numerous exact solutions to the associated three-well problem (\ref{eq:incl_1}) with different types of boundary data. Here the simplest possible solutions are so-called \emph{simple laminates}, for which the strain is a one-dimensional function 
$e(\nabla u)(x)=f(x\cdot n)$ for some vector $n \in S^1$ and for which
$$f(x\cdot n) \in\{e^{(i_1)}, e^{(i_2)}\} \mbox{ a.e. in } \Omega, \
i_1, i_2 \in\{1,2,3\} \mbox{ and } i_1 \neq i_2,
$$ 
i.e. $e(\nabla u)$ only attains two values. The possible directions of these laminates, as given by the vector $n \in S^1$ are (up to sign reversal) six discrete values, which arise as the \emph{symmetrized rank-one directions} between the energy wells: For each $i_1,i_2 \in \{1,2,3\}$ with $i_1 \neq i_2$ there exists (up to sign reversal and exchange of the roles of $a_{i_1,i_2}$ and $n_{i_1,i_2}$ and renormalization) exactly one pair $(a_{i_1,i_2},n_{i_1,i_2}) \in \R^{2} \setminus \{0\}\times S^1$ with the property that
\begin{align*}
e^{(i_1)} - e^{(i_2)} = a_{i_1,i_2} \odot n_{i_1,i_2} 
:= \frac{1}{2}(a_{i_1, i_2}\otimes n_{i_1,i_2} + n_{i_1,i_2} \otimes a_{i_1,i_2}).
\end{align*}
The possible vectors are collected in Lemma \ref{lem:geo2}. \\
In addition to these ``simple" constructions, there are further exact solutions
to the three-well problem associated with the hexagonal-to-rhombic phase
transformation, e.g. there are patterns involving all three variants as depicted
in Figures \ref{fig:zeroh} and \ref{fig:cross} in the Appendix (Section \ref{sec:append}).\\
In the sequel, we study solutions to the hexagonal-to-rhombic phase transformation with \emph{affine boundary conditions}, i.e. we consider $u\in W^{1,p}_{\text{loc}}(\R^2)$ with $p\in(2,\infty]$ such that
\begin{equation}
\label{eq:incl}
\begin{split}
 & u: \R^{2} \rightarrow \R^{2},\\
  &\nabla u = M \mbox{ a.e. in } \R^2\setminus \Omega, \\
  &\frac{1}{2}(\nabla u +
  (\nabla u)^{T}) \in K \mbox{ a.e. in }\Omega.
\end{split}
\end{equation}
Here we investigate the rigidity/ non-rigidity of the problem by asking whether it has non-affine solutions:
\begin{itemize}
\item[(Q1)] Are there (non-affine) solutions to (\ref{eq:incl}) with $M \in \R^{2\times 2}$?
\end{itemize}
Clearly, a necessary condition for this is that $e(M)\in \conv(K)$. Using the method of convex integration, Müller and {\v{S}}ver{\'a}k \cite{MS} (c.f. also the Baire category arguments of \cite{D}, \cite{DaM12}) constructed multiple solutions to related differential inclusions, displaying the existence of a variety of solutions to the problem. Noting that these techniques are applicable to our set-up of the three-well problem, ensures that for any $M$ with $e(M)\in \intconv(K)$ there exists a non-affine solution to (\ref{eq:incl}).\\
In general these convex integration solutions are however very ``wild" in the sense that they do not possess very strong regularity properties (c.f. \cite{DM1}). As our inclusion (\ref{eq:incl}) is motivated by a physical problem, a natural question addresses the relevance of this multitude of solutions:
\begin{itemize}
\item[(Q2)] Are all the convex integration solutions physically relevant? Or are they only mathematical artifacts? Is there a mechanism distinguishing between the ``only mathematical" and the ``really physical" solutions?
\end{itemize}
Guided by the physical problem at hand and the literature on these problems,
natural criteria to consider are surface energy constraints and surface
energy regularizations. For our differential inclusion these translate into
regularity constraints and lead to the question, whether \emph{unphysical}
convex integration solutions have a natural regularity threshold. Here an
immediate regularity property of solutions to (\ref{eq:incl}) is that $e(\nabla
u)\in L^{\infty}(\R^2)$. With slightly more care, it is also possible to
obtain solutions with the property that $u\in W^{1,\infty}_{\text{loc}}(\R^2)$. However, prior to
this work it was not known whether these solutions can enjoy more regularity, i.e.
whether for instance there are convex integration solutions with $\nabla u \in W^{s,p}(\Omega)$ for some $s>0$, $p\geq 1$.
\\
Motivated by these questions, in this article, we study the regularity of a specific convex integration construction and obtain \emph{higher Sobolev regularity} properties for the resulting solutions: 

\begin{thm}
\label{thm:main}
Let $\Omega\subset \R^2$ be a bounded Lipschitz domain. Let 
$K$ be as in (\ref{eq:K3}) and let $M\in \R^{2\times 2}$ be such that 
$e(M):= \frac{M+M^T}{2} \in \intconv(K)$.
Then there exist a value $\theta_0\in(0,1)$, depending only on $\frac{\dist(e(M), \partial \conv(K))}{\dist(e(M),K)}$, and a deformation
$u: \R^2 \rightarrow \R^2$ with $u\in W^{1,\infty}_{loc}(\R^2)$ such that (\ref{eq:incl}) holds and such that $\nabla u\in W^{s,p}_{loc}(\R^2)\cap L^{\infty}(\R^2)$ for all $s\in(0,1)$, $p\in (1,\infty)$ with $s p< \theta_0$.
\end{thm}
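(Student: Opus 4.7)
The plan is to build the solution $u$ as the limit of an explicit convex integration iteration and then read off its Sobolev regularity from the parameters of the iteration. Starting from $u_0(x) = Mx$, I would construct a sequence $u_k \in W^{1,\infty}_{loc}(\R^2)$ agreeing with $Mx$ outside $\Omega$, all satisfying $e(\nabla u_k)\in \conv(K)$ and $\|\nabla u_k\|_{L^\infty}\le C(M)$, and such that on a shrinking "still-bad" set $\Omega_k \subset \Omega$ the strain $e(\nabla u_k)$ remains in the interior $\intconv(K)$, while off $\Omega_k$ one already has $e(\nabla u_k)\in K$. The step from $u_{k-1}$ to $u_k$ is carried out on each connected component of $\Omega_{k-1}$ by a cell-wise \emph{replacement construction}: since $e(\nabla u_{k-1})(x)\in\intconv(K)$ there, and $K$ has symmetrized rank-one connections in the six directions of Lemma \ref{lem:geo2}, one may write $e(\nabla u_{k-1})$ as a finite nested (symmetrized) laminate in these directions, realize this laminate by a piecewise-affine $W^{1,\infty}$ displacement at a chosen oscillation scale $\ell_k$, and then reserve a small $\intconv(K)$-valued buffer that will be refined at the next step. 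Here the control quantity $\dist(e(M),\partial\conv K)/\dist(e(M),K)$ enters only in bounding the depth of the initial laminate and hence a fixed geometric factor $\rho\in(0,1)$ by which $|\Omega_k|$ shrinks at each step.

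For the regularity estimate, I would write $\nabla u = \nabla u_0 + \sum_{k\ge 1} w_k$ with $w_k := \nabla u_k - \nabla u_{k-1}$, and estimate each $w_k$ in $W^{s,p}$ by interpolation between $L^p$ and $BV$. By design $w_k$ is supported in $\Omega_{k-1}$, satisfies $\|w_k\|_{L^\infty}\le v_k$ with $v_k = O(1)$ and oscillates on scale $\ell_k$, giving
\begin{align*}
\|w_k\|_{L^p(\R^2)} \lesssim v_k |\Omega_{k-1}|^{1/p}, \qquad
|Dw_k|(\R^2) \lesssim \frac{v_k}{\ell_k}|\Omega_{k-1}|.
\end{align*}
Using the finite-difference characterization of $W^{s,p}$ together with the standard interpolation
$\|w_k(\cdot+h)-w_k\|_{L^p}\lesssim \min\bigl(|h|\,|Dw_k|(\R^2)^{1/p}\|w_k\|_{L^\infty}^{1-1/p},\, \|w_k\|_{L^p}\bigr)$,
splitting the $h$-integral at $|h|\sim \ell_k$ yields
\begin{align*}
[w_k]_{W^{s,p}(\R^2)}^p \lesssim v_k^p \, \ell_k^{-sp}\, |\Omega_{k-1}|.
\end{align*}
If the parameters are tuned geometrically, $|\Omega_k|\le \rho^k|\Omega|$, $\ell_k = \mu^k$ and $v_k\equiv v$, then summability of $\sum_k [w_k]_{W^{s,p}}$ reduces to the single condition $\mu^{-sp}\rho < 1$, i.e.\ $sp < \log(1/\rho)/\log(1/\mu) =: \theta_0$, and this threshold depends only on the geometric constants fixed in the construction, hence only on the ratio $\dist(e(M),\partial\conv K)/\dist(e(M),K)$, as claimed. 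The $L^\infty$-bound on $\nabla u$ follows at once from uniformity of $\|\nabla u_k\|_{L^\infty}$, and the inclusion $e(\nabla u)\in K$ a.e.\ in $\Omega$ from $|\Omega_k|\to 0$ together with $L^p$-convergence of $\nabla u_k$.

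The main obstacle is the design of the in-cell replacement map so that all three estimates—$L^\infty$-boundedness of $\nabla u_k$, a quantitative $BV$-bound of order $v_k/\ell_k$ independent of $k$, and a \emph{fixed} geometric decrement $\rho<1$ of the bad volume—hold simultaneously. Rank-one/symmetrized rank-one connections generically force a loss either in the $L^\infty$ envelope or in the number of lamination levels when one iterates, and the hexagonal-to-rhombic geometry is special because all three wells are pairwise symmetrized-rank-one connected and $\conv(K)$ is a two-dimensional polytope in the three-dimensional space of symmetric matrices; exploiting this explicit structure to produce replacement constructions whose complexity is bounded uniformly in $k$ and whose amplitude $v_k$ and scale $\ell_k$ decouple is the heart of the argument. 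Once such a building block is in hand, the rest of the proof is the bookkeeping outlined above.
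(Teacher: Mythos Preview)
Your overall strategy---iterative convex integration with a replacement building block, then $W^{s,p}$ regularity via interpolation between an exponentially decaying $L^1$-type quantity and an exponentially growing $BV$ norm---is exactly the paper's approach. You also correctly identify the replacement construction as the crux. However, two substantive points separate your outline from a proof.

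First, you mislocate the $M$-dependence. You claim the ratio $\dist(e(M),\partial\conv K)/\dist(e(M),K)$ enters only through the volume-decay factor $\rho$. In the paper it is the opposite: the volume fraction covered per step is a \emph{universal} constant $v_0$ (so $\rho=1-\tfrac{7}{8}v_0$ is independent of $M$), while the $M$-dependence enters entirely through the $BV$ growth rate. Concretely, the Conti replacement rectangle (Lemma~\ref{lem:conti_deformed}) has aspect ratio $\delta_0\sim \epsilon_0/d_K$ forced by the requirement that the strain errors stay inside $\intconv(K)$; the perimeter of the covering then grows by a factor $C\delta_0^{-1}$ per iteration (Lemma~\ref{lem:BV}), yielding $\theta_0=\frac{\ln(1-\frac{7}{8}v_0)}{\ln(1-\frac{7}{8}v_0)+\ln\delta_0-\ln C_0}$. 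So in your notation it is $\mu$, not $\rho$, that carries the $M$-dependence, and $\mu$ is \emph{not} a free tuning parameter but is dictated by the geometry.

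Second, and relatedly, your $BV$ estimate $|Dw_k|\lesssim v_k|\Omega_{k-1}|/\ell_k$ with a fixed $\mu$ hides the entire difficulty. The paper's Sections~\ref{sec:Conti}--\ref{sec:covering} are devoted to showing that one can get $\sum_k\Per(\Omega_{j+1,k})\le C\delta_0^{-1}\sum_k\Per(\Omega_{j,k})$ rather than something superexponential. The obstruction is that successive Conti rectangles need not be parallel: when the closest well changes (a ``push-out'' followed by a new nearest well), the lamination direction rotates by an angle bounded away from $0$ and $\pi$ (Lemma~\ref{lem:angles}), and covering a thin rectangle by rotated thin rectangles costs a factor $\delta_0^{-1}$ in perimeter (cf.\ Figure~\ref{fig:rectangle_coverings}). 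Simultaneously, the skew part of $\nabla u_k$ drifts and must be kept bounded (else $L^\infty$ fails), which constrains the sign choices in the rank-one connections (Algorithm~\ref{alg:skew}, Proposition~\ref{prop:skewcontrol1}). The paper's algorithms track all of this: they distinguish ``parallel'' and ``rotated'' steps, prescribe $\epsilon_j$ so that rotations only occur when $\delta_j=\delta_0$ (avoiding superexponential blowup), and choose skew signs to keep $\|\omega(\nabla u_k)\|_\infty$ bounded. Your proposal acknowledges this is ``the heart of the argument'' but does not engage with the rotation issue, which is precisely where the $\delta_0^{-1}$ factor---and hence the claimed dependence of $\theta_0$---originates.

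A minor point: in your finite-difference bound the first branch should read $(|h|\,|Dw_k|)^{1/p}\|w_k\|_{L^\infty}^{1-1/p}$ rather than $|h|\,|Dw_k|^{1/p}\|w_k\|_{L^\infty}^{1-1/p}$; with this correction the final estimate $[w_k]_{W^{s,p}}^p\lesssim v_k^p\ell_k^{-sp}|\Omega_{k-1}|$ is recovered after optimizing the splitting scale. The paper instead uses the $L^p$--$BV$ interpolation of Cohen--Dahmen--Daubechies--DeVore (Theorem~\ref{thm:interpol}, Corollary~\ref{cor:int}) directly, which avoids this computation.
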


Let us comment on this result: To the best of our knowledge it represents the first $W^{s,p}$ higher regularity result for convex integration solutions arising in differential inclusions for shape-memory materials. 
The given \emph{quantitative} dependences for $\theta_0$ are certainly not optimal in the specific constants. While it is certainly possible to improve on these numeric values, a more interesting question deals with the \emph{qualitative} expected dependences: Is it necessary that $\theta_0$ depends on $\frac{\dist(e(M), \partial \conv(K))}{\dist(e(M),K)}$?\\
Since for $M\in \R^{2\times 2}$ with $e(M)\in \partial \conv(K)$ there are no non-affine solutions to (\ref{eq:incl}), it is natural to expect that convex integration constructions deteriorate for matrices $M$ with $e(M)$ approaching the boundary of $\conv(K)$. The precise dependence on the behavior towards the boundary however is less intuitive.
In this context, it is interesting to note that the regularity threshold $\theta_0>0$ does not depend on the \emph{distance to the boundary} of $K$, but rather on the \emph{angle}, which is formed between the initial matrix $e(M)$ and the boundary of $K$. This is in agreement with the intuition that the larger the angle is, the better the convex integration algorithm becomes, as it moves the values of the iterations, which are used to construct the displacement $u$, further into the interior of $K$. In the interior of $K$ it is possible to use larger length scales, which increases the regularity of solutions. Whether this dependence is necessary in the value of the product of $sp$ or whether the product $sp$ should be independent of this and only the value of the corresponding norm should deteriorate with a smaller angle, is an interesting open question.\\
We remark that in the special case of \emph{additional symmetries} it is possible to construct much better solutions. An example is given in the appendix for the case $M=0$ (c.f. also \cite{Pompe} and \cite{CPL14}).
It is an important and challenging open question, whether it is possible to exploit further symmetries and thus to construct further solutions with these much better regularity properties.

\subsection{Literature and context}
A fascinating problem in studying solid-solid, diffusionless phase transformations modeling shape-memory materials is the dichotomy between rigidity and non-rigidity. Since the work of M\"uller and {\v{S}}ver{\'a}k \cite{MS}, who adapted the convex integration method of Gromov \cite{G}, \cite{EM} and Nash-Kuiper \cite{N1}, \cite{Ku} to the situation of solid-solid phase transformations, and the work of Dacorogna and Marcellini \cite{D}, \cite{DaM12}, it is known that under suitable conditions on the convex hulls of the energy wells, there is a very large set of possible minimizers to (\ref{eq:var})  (c.f. also \cite{S} and \cite{K1} for a comparison of these two methods).
More precisely, the set of minimizers forms a residual set (in the Baire sense) in the associated function spaces. However, in general convex integration solutions are ``wild"; they do not enjoy very good regularity properties.
This has rigorously been proven for the case of the geometrically nonlinear two-well problem \cite{DM1}, \cite{DM2}, the geometrically nonlinear three-well problem in three dimensions (the ``cubic-to-tetragonal phase transformation") \cite{K}, \cite{CDK} and (under additional assumptions) for the geometrically linear six-well problem (the ``cubic-to-orthorhombic phase transformation") \cite{R16}. In these works it has been shown that on the one hand convex integration solutions exist, if the deformation gradient is only assumed to be $L^{\infty}$ regular. If on the other hand, the deformation gradient is $BV$ regular (or a replacement of this), then solutions are very rigid and for most constant matrices $M$ the analogue of (\ref{eq:incl}) does not possess a solution.\\
Thus, convex integration solutions cannot exist at $BV$ regularity for the deformation gradient; at this regularity solutions are \emph{rigid}. At $L^{\infty}$ regularity they are however \emph{flexible} and a multitude of solutions exist. Similarly as in the related (though much more complicated) situation of the Onsager conjecture for Euler's equations \cite{S}, \cite{DS16} or the situation of isometric embeddings \cite{CoS}, it is hence natural to ask whether there is a regularity threshold, which distinguishes between the rigid and the flexible regime.\\
It is the purpose of this article to make a first, very modest step into the
understanding of this dichotomy by analyzing the $W^{s,p}$ regularity of a
(known) convex integration scheme in an as simple as possible model case.

\subsection{Main ideas}
In our construction of solutions to the differential inclusion \eqref{eq:incl} we follow the ideas of Müller and {\v{S}}ver{\'a}k \cite{MS} (in the version of \cite{Otto}) and argue by an iterative convex integration algorithm. For the hexagonal-to-rhombic transformation this is particularly simple, since the laminar convex hull equals the convex hull of the wells and since all matrices in the convex hull are symmetrized rank-one-connected with the wells (c.f. Lemma \ref{lem:convex_hull}). As a consequence it is possible to construct piecewise affine solutions (in the language of \cite{K1}, Chapter 4). This simplifies the convergence of the iterative construction drastically. It is one of the reasons for studying the hexagonal-to-rhombic phase transformation as a model problem.\\
Yet, in spite of the (relative) simplicity of obtaining \emph{convergence} of the iterative construction to a solution of (\ref{eq:incl}) and hence of showing \emph{existence}, substantially more care is needed in addressing \emph{regularity}. In this context we argue by an interpolation result (c.f. Theorem \ref{thm:interpol} and Proposition \ref{prop:reg}): While our approximating deformations $u_k:\R^2 \rightarrow \R^2$ are such that the $BV$ norms of the iterations increase (exponentially), the $L^1$ norm of their difference decreases exponentially. If the threshold $\theta_0>0$ is chosen appropriately, the $W^{s,p}$ norm for $0<sp< \theta_0$ is controlled by an interpolation of the $BV$ and the $L^1$ norms, which can be balanced to be uniformly bounded. 
To ensure this,
we have to make the iterative algorithm \emph{quantitative} in several ways:
\begin{itemize}
\item[(i)] \emph{Tracking the error in strain space.} In order to iterate the convex integration construction, it is crucial not to leave the interior of the convex hull of $K$ in the iterative modification steps. In \emph{qualitative} convex integration algorithms, it suffices to use 
errors, which become arbitrarily small and to invoke the openness of $\intconv(K)$. As the admissible error in strain space is however coupled to the length scales of the convex integration constructions (c.f. Lemma \ref{lem:conti_deformed}) and as these in turn are directly reflected in the solutions' regularity properties, in our \emph{quantitative} algorithm we have to keep track of the errors in strain space very carefully. Here we seek to maximize the possible length scales (and hence the error) without leaving $\intconv(K)$ in each iteration step. This leads to the distinction of various possible cases (the ``stagnant", the ``push-out", the ``parallel" and the ``rotated" case, c.f. Notation \ref{nota:conti}, Definition \ref{defi:parallel_rot} and Algorithm \ref{alg:construction}). In these we quantitatively prescribe the admissible error according to the given geometry in strain space.
\item[(ii)] \emph{Controlling the skew part without destroying the structure of (i).} Seeking to construct $W^{1,\infty}$ solutions, we have to control the skew part of our construction. Due to the results of Kirchheim, it is known that this is generically possible (c.f. \cite{K1}, Chapter 3). However, in our quantitative construction, we cannot afford to arbitrarily change the direction of the rank-one connection, which is chosen in the convex integration algorithms, at an arbitrary iteration step. This would entail $BV$ bounds, which could not be compensated by the exponentially decreasing $L^1$ bounds in the interpolation argument. Hence we have to devise a detailed description of controlling the skew part (c.f. Algorithm \ref{alg:skew}). 
\item[(iii)] \emph{Precise covering construction.} In order to carry out our convex integration scheme we have to prescribe an iterative covering of our domain by constructions, which successively modify a given gradient. As our construction in Lemma \ref{lem:conti_deformed} relies on triangles, we have to ensure that there is a class of triangles, which can be used for these purposes (c.f. Section \ref{sec:covering}). In particular, we have to quantitatively control the overall perimeter (which can be viewed as a measure of the BV norm of $\nabla u_k$) of the covering at a given iteration step of the convex integration algorithm. This crucially depends on the specific case (``rotated" or ``parallel"), in which we are in.
\end{itemize}

\subsection{Organization of the article}
The remainder of the article is organized as follows: After briefly collecting
preliminary results in the next section (interpolation results, results on the
convex hull of the hexagonal-to-rhombic phase transition), in Section
\ref{sec:Conti} we begin by
describing the convex integration scheme, which we employ. Here we first recall the main ingredients of the qualitative
scheme (Section \ref{sec:conv}) and then introduce our more quantitative
algorithms in Sections \ref{sec:alg}-\ref{sec:skew}. As this algorithm crucially
relies on the existence of an appropriate covering, we present an explicit
construction of this in Section \ref{sec:covering}. Here we also address quantitative covering estimates for the perimeter and the volume. The ingredients from Sections \ref{sec:Conti} and \ref{sec:covering} are then combined in Section \ref{sec:quant}, where we prove Theorem \ref{thm:main} for a specific class of domains. In Section \ref{sec:generaldomains} we explain how this can be generalized to arbitrary Lipschitz domains. Finally, in the Appendix, Section \ref{sec:append}, we recall a symmetry based construction for a solution to (\ref{eq:incl}) with $M=0$ with much better regularity properties.

\section{Preliminaries}
In this section we collect preliminary results, which will be relevant in the sequel. We begin by stating the interpolation results of \cite{CDDD03}, on which our $W^{s,p}$ bounds rely. Next, in Section \ref{sec:hex} we recall general facts on matrix space geometry and in particular apply this to the hexagonal-to-rhombic phase transformation and its convex hulls.

\subsection{An interpolation inequality and Sickel's result}
\label{sec:interpol}

Seeking to show higher Sobolev regularity for convex integration solutions, we rely on the characterization of $W^{s,p}$ Sobolev functions. Here we recall the following two results on an interpolation
characterization \cite{CDDD03} and on a geometric characterization of the regularity of characteristic functions \cite{Si}:

\begin{thm}[Interpolation with BV, \cite{CDDD03}]
\label{thm:interpol}
We have the following interpolation results:
\begin{itemize}
\item[(i)] Let $p\in[2,\infty)$ and assume that
$\frac{1}{q} = \frac{1-\theta}{p} + \theta$ for some $\theta \in(0,1)$. Then
\begin{align}
\label{eq:int}
\|u\|_{W^{\theta,q}(\R^{n})} \leq C \|u\|_{L^p(\R^{n})}^{1-\theta}\|u\|_{BV(\R^n)}^{\theta} .
\end{align}
\item[(ii)] Let $p\in(1,2]$ and let $\frac{1}{q} = \frac{1-\theta}{p} + \theta$ for some $\theta \in (0,1)$. Let further $(\theta_1, q_1) \in (0,1)\times (1,\infty)$ be such that 
\begin{align*}
\frac{1}{q_1} &= \frac{1-\theta_1}{2} + \theta_1,\\
(\theta, q^{-1}) &= \tau (0,1-) + (1-\tau)(\theta_1, q_1^{-1}),
\end{align*}
for some $\tau \in (0,1)$, where $1-$ denotes an arbitrary positive number slightly less than $1$. Then,
\begin{align}
\label{eq:int_a}
\| u \|_{W^{\theta, q}(\R^n )} 
\leq C \left(\|u\|_{L^{1+}(\R^n)}^{\frac{\tau}{1-\theta}} \|u\|_{L^2(\R^n)}^{1-\frac{\tau}{1-\theta}} \right)^{1-\theta} \|u\|_{BV(\R^n)}^{\theta},
\end{align}
with $1+ := (1-)^{-1}$.
\end{itemize}
\end{thm}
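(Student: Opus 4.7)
The plan is to realize the inequality as an instance of real interpolation between $L^p(\R^n)$ and $BV(\R^n)$, and to identify the resulting interpolation space with a Besov space which coincides with the fractional Sobolev space $W^{\theta,q}$ for non-integer $\theta$ and $q\in(1,\infty)$. The core object is the $K$-functional $K(t,u):=\inf_{u=u_0+u_1}\bigl(\|u_0\|_{L^p}+t\|u_1\|_{BV}\bigr)$, whose decay in $t$ controls the fractional Sobolev norm through the real-interpolation identity $\|u\|_{(L^p,BV)_{\theta,q}}\sim \bigl(\int_0^\infty (t^{-\theta}K(t,u))^q\,\tfrac{dt}{t}\bigr)^{1/q}$.

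For part (i), with $p\geq 2$, the key estimate is a two-sided bound $K(t,u)\lesssim \min(\|u\|_{L^p},\,t\|u\|_{BV})$. I would produce the splitting $u=u_0+u_1$ by convolving $u$ with a smooth mollifier at an appropriate scale $\lambda$ depending on $t$: set $u_1:=u*\rho_\lambda$ and $u_0:=u-u_1$, so that standard Poincar\'e and convolution estimates give $\|u_1\|_{BV}\lesssim\|u\|_{BV}$ and $\|u_0\|_{L^p}\lesssim \lambda\|u\|_{BV}$; optimizing $\lambda$ in $t$ yields the claim. Inserting into the $K$-functional integral and invoking the Jawerth--Milman identification $(L^p,BV)_{\theta,q}=B^{\theta}_{q,q}(\R^n)$ (with the scaling $1/q=(1-\theta)/p+\theta$ built into the parameter matching) produces a bound in $B^{\theta}_{q,q}$, which equals $W^{\theta,q}$ in the stated range.

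For part (ii), with $p\in(1,2]$, a direct interpolation between $L^p$ and $BV$ is not available on the scale of the theorem because $BV$ sits at the $L^1$-endpoint. I would instead use a reiteration argument: first apply part (i) at an intermediate pair $(\theta_1,q_1)$ lying on the $L^2$--$BV$ interpolation line, obtaining $\|u\|_{W^{\theta_1,q_1}}\lesssim \|u\|_{L^2}^{1-\theta_1}\|u\|_{BV}^{\theta_1}$, and then exploit the convex-combination identity $(\theta,q^{-1})=\tau(0,1-)+(1-\tau)(\theta_1,q_1^{-1})$ to interpolate between $L^{1+}$ and $W^{\theta_1,q_1}$. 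The $L^2$ factor in the intermediate bound is split by log-convexity of Lebesgue norms between $L^{1+}$ and higher-order Lebesgue norms, producing the displayed mixed-norm inequality with the weights $\frac{\tau}{1-\theta}$ and $1-\frac{\tau}{1-\theta}$ in front of $\|u\|_{L^{1+}}$ and $\|u\|_{L^2}$.

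The hardest step is the sharp identification of $(L^p,BV)_{\theta,q}$ with a Besov space and the passage to $W^{\theta,q}$ in the endpoint range: $BV$ is not a standard Sobolev space and lies at the borderline of the interpolation scale, so the natural approach is a wavelet/atomic decomposition of the Cohen--DeVore--Petrushev--Xu type, which is precisely the content of \cite{CDDD03}. Everything else (the Poincar\'e-type bound on $u-u*\rho_\lambda$, the Holmstedt reiteration formula for real interpolation, and the Besov-to-Sobolev identification for non-integer smoothness) is classical and essentially mechanical once the endpoint interpolation identity is in place.
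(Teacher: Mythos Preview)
Your overall two-step strategy for part (ii) --- apply part (i) at the boundary point $(\theta_1,q_1)$ on the $L^2$--$BV$ line, then interpolate the resulting $W^{\theta_1,q_1}$ bound with $L^{1+}$ --- is exactly what the paper does. However, the specific tool you name for the second step is not the one that works, and this is a genuine gap. The Holmstedt reiteration formula governs iterated \emph{real} interpolation and would produce spaces with a prescribed second (microscopic) index; interpolating $L^{1+}=F^{0}_{1+,2}$ against $W^{\theta_1,q_1}=F^{\theta_1}_{q_1,q_1}$ by standard real or complex methods does not land you in $W^{\theta,q}=F^{\theta}_{q,q}$ because the fine indices do not match. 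The paper instead invokes the Gagliardo--Nirenberg type inequality of Oru (as recorded by Brezis--Mironescu) in the Triebel--Lizorkin scale,
\[
\|u\|_{\tilde F^{s}_{r,l}}\leq C\,\|u\|_{\tilde F^{s_0}_{p_0,l_0}}^{\tau}\|u\|_{\tilde F^{s_1}_{p_1,l_1}}^{1-\tau},
\]
whose decisive feature is that there is \emph{no} constraint linking $l,l_0,l_1$. This lets one take $l_0=2$ (so that $\tilde F^{0}_{1+,2}=L^{1+}$), $l_1=q_1$, and $l=q$, which is precisely what delivers $W^{\theta,q}$ on the left. Your final sentence about ``splitting the $L^2$ factor by log-convexity'' is also off: no splitting is needed, one simply substitutes the part (i) bound for $\|u\|_{W^{\theta_1,q_1}}$ and reads off $(1-\tau)\theta_1=\theta$ and $(1-\tau)(1-\theta_1)=(1-\theta)-\tau$.

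For part (i), your $K$-functional sketch has a separate gap: the Poincar\'e-type bound $\|u-u*\rho_\lambda\|_{L^p}\lesssim\lambda\|u\|_{BV}$ is only available for $p=1$; for $p>1$ one cannot control an $L^p$ norm of a generic $BV$ function by its total variation times a length scale. The paper does not attempt a direct $K$-functional argument here either --- it simply quotes Theorem~1.4 of \cite{CDDD03} (whose proof uses wavelet characterizations, as you correctly anticipate), and then uses the embedding $L^p\hookrightarrow B^0_{p,p}$, valid only for $p\geq 2$, to pass from $B^0_{p,p}$ on the right to $L^p$. That restriction to $p\geq 2$ is exactly why part (ii) requires the additional reiteration step.
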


Before proceeding to the proof of Theorem \ref{thm:interpol}, we present an immediate corollary of it: For functions, which are ``essentially" characteristic functions, we obtain the following unified result:

\begin{cor}
\label{cor:int}
Let $u:\R^n \rightarrow \R^n$ be a function, such that
\begin{align}
\label{eq:char_a}
\|u\|_{L^{\infty}(\R^n)} <\infty \mbox{ and } |u(x)| \geq c_0>0 \mbox{ for a.e. } x \in \supp(u).
\end{align} 
Then, for any $p\in(1,\infty)$ we have that
\begin{align}
\label{eq:int__c}
\|u\|_{W^{\theta,q}(\R^n)} \leq C \left( \frac{\|u\|_{L^{\infty}(\R^n)}}{c_0} \right)^{\left(1-\frac{1}{p}\right)(1-\theta)} \|u\|_{L^p(\R^n)}^{1-\theta} \|u\|_{BV(\R^n)}^{\theta},
\end{align}
where $\frac{1}{q} = \frac{1-\theta}{p} + \theta$ and $\theta \in(0,1)$.
\end{cor}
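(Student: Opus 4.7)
The strategy is to split on whether $p \geq 2$ or $p \in (1,2)$ and apply the two parts of Theorem \ref{thm:interpol} separately, exploiting the essentially characteristic structure \eqref{eq:char_a} to convert the $L^{1+}$ and $L^2$ norms appearing in part (ii) into $L^p$ norms with correction factors involving $\|u\|_{L^\infty}/c_0$. Observe first that the assumption $c_0 \leq |u(x)| \leq \|u\|_{L^\infty}$ on $\supp u$ immediately yields $\|u\|_{L^\infty}/c_0 \geq 1$, so the multiplicative factor $(\|u\|_{L^\infty}/c_0)^{(1-1/p)(1-\theta)}$ is always $\geq 1$. Consequently, for $p \in [2,\infty)$ the estimate of the corollary follows verbatim from Theorem \ref{thm:interpol}(i) by inserting this (harmless) factor on the right-hand side.

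For the non-trivial range $p\in(1,2)$ I would invoke Theorem \ref{thm:interpol}(ii). Given $\theta$ and $q$ with $1/q = (1-\theta)/p + \theta$, one solves the linear system of the theorem for $(\theta_1, q_1, \tau)$: from $\theta = (1-\tau)\theta_1$ and $1/q_1 = (1+\theta_1)/2$ together with $1/q = \tau (1-) + (1-\tau)/q_1$, a short computation shows that $\alpha := \tau/(1-\theta) = (1/p - 1/2)/((1-)-1/2)$ is admissible (i.e. lies in $(0,1)$) whenever $1-$ is taken sufficiently close to $1$, and that $\alpha \to 2/p-1$ in that limit.

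Next, I would fix the parameter $1+$ once and for all, small enough that $1 < 1+ < p$ (possible since $p>1$). Writing $|u|^r = |u|^p \cdot |u|^{r-p}$ on $\supp u$ and bounding $|u|^{r-p}$ by $c_0^{r-p}$ when $r<p$ or by $\|u\|_{L^\infty}^{r-p}$ when $r\geq p$, the hypothesis \eqref{eq:char_a} gives the two pointwise-driven bounds
\begin{align*}
\|u\|_{L^{1+}(\R^n)} &\leq c_0^{\,1-p/(1+)} \|u\|_{L^p(\R^n)}^{p/(1+)}, \\
\|u\|_{L^2(\R^n)} &\leq \|u\|_{L^\infty(\R^n)}^{\,1-p/2} \|u\|_{L^p(\R^n)}^{p/2}.
\end{align*}
Substituting these into Theorem \ref{thm:interpol}(ii) and collecting exponents, one verifies algebraically that the exponent of $\|u\|_{L^p}$ comes out to exactly $1$ (this is the key compatibility that forces the choice $\alpha = 2/p-1$), while the $c_0$ and $\|u\|_{L^\infty}$ exponents are opposite in sign and give a factor $(\|u\|_{L^\infty}/c_0)^{\gamma}$ with $\gamma = (2-p)(p-1)/p = (1-1/p)(2-p)$. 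Since $2-p \leq 1$ and $\|u\|_{L^\infty}/c_0 \geq 1$, one has $(\|u\|_{L^\infty}/c_0)^{\gamma} \leq (\|u\|_{L^\infty}/c_0)^{1-1/p}$, and raising everything to the $(1-\theta)$th power in Theorem \ref{thm:interpol}(ii) then yields precisely \eqref{eq:int__c}.

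The main obstacle is bookkeeping rather than any conceptual step: one has to identify the correct value of $\alpha$ in Theorem \ref{thm:interpol}(ii) from the given $\theta, q, p$, and verify that the three pairs of exponents (on $\|u\|_{L^p}$, $\|u\|_{L^\infty}$, and $c_0^{-1}$) produced by multiplying the two essentially-characteristic estimates with exponents $\alpha$ and $1-\alpha$ land where needed. This is what pins down the choice $\alpha = 2/p-1$ in the limit $1-\to 1$ and yields the $(1-1/p)(1-\theta)$ exponent of the corollary.
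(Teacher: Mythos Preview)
Your proposal is correct and follows essentially the same approach as the paper: split on $p\geq 2$ versus $p\in(1,2)$, apply Theorem~\ref{thm:interpol}(i) in the former case, and in the latter case use Theorem~\ref{thm:interpol}(ii) together with the characteristic-function hypothesis~\eqref{eq:char_a} to convert the $L^{1+}$ and $L^2$ norms into $L^p$. The only organisational difference is that the paper packages the norm conversion as a single reusable inequality passing through $\|u\|_{L^1}$ (namely $\|u\|_{L^{p_1}}^{\sigma}\|u\|_{L^{p_2}}^{1-\sigma}\leq (\|u\|_{L^\infty}/c_0)^{1-1/r}\|u\|_{L^r}$ for $r^{-1}=\sigma p_1^{-1}+(1-\sigma)p_2^{-1}$), which cleanly yields the exponent $1-1/p$ without any limit $1+\to 1$, whereas you bound the two norms separately and track the exponents by hand; note also that the exponent on $\|u\|_{L^p}$ equals $1$ exactly for every admissible $1+$ (by the very relation $1/p=\alpha/(1+)+(1-\alpha)/2$ built into Theorem~\ref{thm:interpol}(ii)), so no limiting argument is actually needed there.
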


In the sequel, we will mainly rely on Corollary \ref{cor:int}, since in our applications (e.g. in Propositions \ref{prop:reg}, \ref{prop:reg1}), we will mainly deal with functions, which are ``essentially" characteristic functions.

\begin{figure}[t]
\includegraphics[width=0.6 \textwidth, page=40]{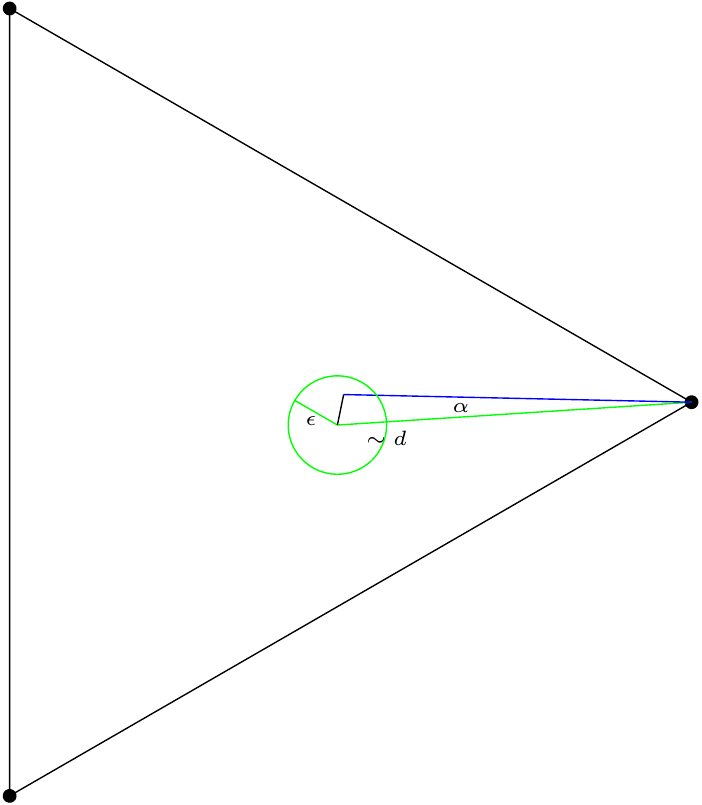}
\caption{For functions, which are ``essentially" characteristic functions in the sense that condition (\ref{eq:char_a}) of Corollary \ref{cor:int} holds, we obtain the interpolation inequality (\ref{eq:int__c}), which is valid in the whole coloured region in the figure (green and blue). Here the blue region is already covered in Theorem \ref{thm:interpol} (i). In order to also obtain the green region, we have to be able to simplify the statement of (\ref{eq:int_a}), which in general is only valid for functions, which are ``essentially" characteristic functions. In our application of Corollary \ref{cor:int} (c.f. Proposition \ref{prop:reg}, \ref{prop:reg1}), we will restrict to the region to the left of the dashed line. Remark \ref{rmk:prod} shows that having a bound for the product of the right hand side of (\ref{eq:int__c}) for a \emph{specific} value $(\theta_0,1)$ already allows to deduce a bound for \emph{all} exponents $(\tilde{\theta},q)$ on the associated dashed line connecting $(\theta_0,1)$ with $(0,
\infty)$.}
\label{fig:interpol}
\end{figure}

\begin{proof}[Proof of Corollary \ref{cor:int}]
By virtue of Theorem \ref{thm:interpol} (i) and equation (\ref{eq:int}), it suffices to consider the regime, in which $p\in(1,2)$. In this case the statement follows from a combination of equation  \eqref{eq:int_a} and the fact that for functions satisfying \eqref{eq:char_a} we have
\begin{align}
\label{eq:Lp}
 \|u\|_{L^{p_1}(\R^n)}^{\sigma} \|u\|_{L^{p_2}(\R^n)}^{1-\sigma}
 \leq \left( \frac{\|u\|_{L^{\infty}(\R^n)}}{c_0} \right)^{1-\frac{1}{r}} \|u\|_{L^r(\R^n)},
\end{align}
for $1< p_1 \leq r \leq p_2$, $r^{-1}= \sigma p_1^{-1} + (1-\sigma)p_2^{-1} $ and $\sigma \in (0,1)$.
We postpone a proof of (\ref{eq:Lp}) to the end of this proof, and observe first that it indeed suffices to show \eqref{eq:Lp} to conclude the claim of (\ref{eq:int__c}). To this end, we note that the exponents in \eqref{eq:int_a} obey the relation
\begin{align*}
\frac{1}{p} = \frac{1}{1+} \frac{\tau}{1-\theta} + \frac{1}{2} \left(1- \frac{\tau}{1-\theta}\right).
\end{align*}
This in turn is a consequence of the three identitites
\begin{align*}
\frac{1}{q} = \frac{1}{1+} \tau + (1-\tau) \frac{1+\theta_1}{2},\ 
\frac{1}{q} = \frac{1-\theta}{p} + \theta, \ \theta = (1-\tau)\theta_1.
\end{align*}
Here we note that $\frac{\tau}{1-\theta}=1-\frac{(1-\tau)(1-\theta_1)}{1-\theta}\in(0,1)$. Hence (\ref{eq:Lp}) (applied to $r=p$, $p_1 = 1+$, $p_2 = 2$ and $\sigma = \frac{\tau}{1-\theta}$) together with (\ref{eq:int_a}) yields the claim of (\ref{eq:int__c}).\\
It thus remains to prove (\ref{eq:Lp}).
To this end, we observe that for any $r \in [1,\infty]$
\begin{align*}
\|u\|_{L^r(\R^n) } &\geq c_0^{1-\frac{1}{r}} \|u\|_{L^1(\R^n)}^{\frac{1}{r}},\\
\|u\|_{L^r(\R^n) } &\leq C_1^{1-\frac{1}{r}} \|u\|_{L^1(\R^n)}^{\frac{1}{r}},
\end{align*}
where for abbreviation we have set $C_1:=\|u\|_{L^{\infty}(\R^n)}$. With this we infer
\begin{align*}
\|u\|_{L^{p_1}(\R^n)}^{\sigma} \|u\|_{L^{p_2}(\R^n)}^{1-\sigma}
&\leq C_1^{\left( 1-\frac{1}{p_1}\right) \sigma} \|u\|_{L^1(\R^n)}^{\frac{\sigma}{p_1}}
C_1^{\left(1-\frac{1}{p_2}\right) (1-\sigma)} \|u\|_{L^1(\R^n)}^{\frac{1-\sigma}{p_2}}\\
& \leq C^{1-\frac{1}{r}}_1 \|u\|_{L^1(\R^n)}^{\frac{1}{r}}
 \leq C^{1-\frac{1}{r}}_1 c_0^{\frac{1}{r}-1} \|u\|_{L^r(\R^n)}\\
& = \left(\frac{C_1}{c_0} \right)^{1-\frac{1}{r}}  \|u\|_{L^r(\R^n)}.
\end{align*}
This concludes the argument.
\end{proof}

After this discussion, we come to the proof of Theorem \ref{thm:interpol}:

\begin{proof}[Proof of Theorem \ref{thm:interpol}]
If $p\geq 2$, the interpolation result is a special case of Theorem 1.4 in \cite{CDDD03} (where in the notation of \cite{CDDD03} we have chosen $s=0$, $t=\theta$): Indeed, for $\gamma< 1-\frac{1}{n}$ and $(s,p)$ satisfying $(s-1)p^{\ast} \frac{1}{n} = \gamma -1$ with $p^{\ast}$ being the dual exponent of $p$, the estimate in Theorem 1.4 from \cite{CDDD03} reads
\begin{align}
\label{eq:int_gen}
\|u\|_{B^{t}_{q,q}(\R^n)} \leq C\|u\|_{B^s_{p,p}(\R^n)}^{1-\theta}
\|u\|_{BV(\R^n)}^{\theta},
\end{align}
where
\begin{align*}
\frac{1}{q} = \frac{1-\theta}{p} + \theta, \ t=(1-\theta) s + \theta.
\end{align*}
We note that in the setting of Theorem \ref{thm:interpol} the estimate (\ref{eq:int_gen}) is applicable, as in the notation of \cite{CDDD03} and with dimension $n$ we have that $\gamma := -\frac{p}{p-1}\frac{1}{n} + 1= 1- \frac{1}{n} - \frac{1}{p-1}\frac{1}{n}< 1- \frac{1}{n}$, which implies the validity of (\ref{eq:int}). The simplification from (\ref{eq:int_gen}) to (\ref{eq:int}) is then a consequence of the facts that 
\begin{itemize}
\item for $s\notin \Z$ we have $W^{s,p}(\R^n) = B_{p,p}^s(\R^n)$ (c.f. \cite{Cohen}, \cite{BM12}),
\item and for $p\geq 2$ the embedding
$L^p(\R^n) \hookrightarrow B_{p,p}^0(\R^n)$ is valid (Theorem 2.41 in \cite{BCD}). 
\end{itemize}
This concludes the argument for (i).
\\
To obtain (ii), we combine (i) with an additional interpolation inequality, which becomes necessary, as the inclusion $L^p(\R^n) \hookrightarrow B_{p,p}^0(\R^n)$ is no longer valid for $p\in(1,2)$. 
Hence, we rely on the following interpolation estimate (c.f. Lemma 3 in \cite{BM12})
\begin{align}
\label{eq:Fsrl}
\|u\|_{\tilde{F}^{s}_{r,l}(\R^n)} \leq C \|u\|_{\tilde{F}^{s_0}_{p_0,l_0}(\R^n)}^{\tau} \|u\|_{\tilde{F}^{s_1}_{p_1,l_1}(\R^n)}^{1-\tau},
\end{align}
which is valid for $-\infty< s_0 <s_1<\infty$, $0<q_0,q_1\leq \infty$, $0<p_0,p_1\leq \infty$, $0< \tau < 1$ with 
\begin{align*}
s = \tau s_0 + (1-\tau) s_1, \ r^{-1} = \tau p_0^{-1} + (1-\tau) p_1^{-1}.
\end{align*}
Here the spaces $\tilde{F}^{s}_{r,l}$ denote the (modified) Triebel-Lizorkin spaces from \cite{BM12}.
The main advantage of the estimate \eqref{eq:Fsrl}, which goes back to Oru \cite{Or98}, is that there are no conditions on the relations between $l,l_1,l_2$ in this estimate. In particular, we can choose $l_1=2$, $l_2 = p_0$ and $l=r$. Using that
\begin{itemize}
\item $\tilde{F}^{s}_{r,2}(\R^n) = L^{s,r}(\R^n)$ for $s\in \R$, $1<r<\infty$ and that for this range $L^{0,r}(\R^n)=L^{r}(\R^n)$, 
\item $\tilde{F}^{s}_{r,r}(\R^n)= W^{s,r}(\R^n)$ for $0<s<\infty$, $s\notin \Z$, $1\leq p <\infty$,
\end{itemize}
we can simplify \eqref{eq:Fsrl} to yield
\begin{align}
\label{eq:Wsp_int}
\|u\|_{W^{s,r}(\R^n)} \leq C \|u\|_{L^{p_0}(\R^n)}^{\tau} \|u\|_{W^{s_1,p_1}(\R^n)}^{1-\tau},
\end{align}
which is valid for $0< s_1 < \infty$, $1<p_0 < \infty$, $1<p_1 <\infty$ with
\begin{align*}
s = (1-\tau) s_1, \ r^{-1} = \tau p_0^{-1} + (1-\tau) p_1^{-1}.
\end{align*}
We apply $(\ref{eq:Wsp_int})$
 with $p_0=1+$,  $s=\theta$, $r=q$ and $(s_1,p_1)=(\theta_1,q_1)$
  lying on the boundary of the interpolation region from (i) (c.f. the blue region in Figure \ref{fig:interpol}), i.e. 
  \begin{align*}
  \left(\frac{1}{q},\theta\right)&= \tau \left(1-,0\right) + (1-\tau)\left(\frac{1}{q_1},\theta_1\right), \\
    \left(\frac{1}{q_1},\theta_1\right)&= (1-\theta_1) \left(\frac{1}{2},0\right) + \theta_1(1,1).
  \end{align*}
  In particular, these equations uniquely determine $\tau \in (0,1)$.
  Hence, we obtain
  \begin{align*}
    \|u\|_{W^{s,q}} \lesssim \|u\|_{L^{1+}}^{\tau} \|u\|_{W^{\theta_1,q_1}}^{1-\tau}  \lesssim   \|u\|_{L^{1+}}^{\tau} \|u\|_{L^2}^{(1-\tau)(1-\theta_1)} \|u\|_{BV}^{(1-\tau)\theta_1}.
  \end{align*}
  We conclude the proof of (ii) by noting that $(1-\tau)\theta_1=\theta$ and that
  \begin{align*}
    0<\frac{(1-\tau)(1-\theta_1)}{1-\theta}= 1-
  \frac{\tau}{1-\theta}.
  \end{align*}
\end{proof}

As an alternative to the interpolation approach, a more geometric criterion for regularity is given by Sickel:

\begin{thm}[Sickel, \cite{Si}] 
\label{thm:Sickel}
Let $\theta \in(0,1)$, $q\in[1,\infty)$ and let $E\subset \R^{n}$ be a bounded set satisfying
\begin{align}
\label{eq:Sickel}
\int\limits_{0}^{1}\delta^{-\theta q}|(\partial E_i)_{\delta}| \frac{d \delta}{\delta}<\infty,
\end{align}
where 
\begin{align*}
(\partial E)_{\delta}:= \{ x \in E: \dist(x,\partial E)\leq \delta\}.
\end{align*}
Then, $\chi_{E}\in W^{\theta,q}(\R^{n})$.
\end{thm}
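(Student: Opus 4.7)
The plan is to verify the two contributions to the $W^{\theta,q}(\R^n)$ norm of $\chi_E$ separately. Since $E$ is bounded, $\chi_E \in L^q(\R^n)$ trivially, so the real work is to control the Slobodeckij seminorm
\[
[\chi_E]_{W^{\theta,q}}^q := \int_{\R^n}\int_{\R^n} \frac{|\chi_E(x)-\chi_E(y)|^q}{|x-y|^{n+\theta q}}\, dy\, dx.
\]
Because $\chi_E$ is $\{0,1\}$-valued, $|\chi_E(x)-\chi_E(y)|^q = |\chi_E(x)-\chi_E(y)|$, and splitting according to whether each of $x, y$ lies in $E$ or $E^c$ reduces the double integral (by symmetry) to
\[
2 \int_E \int_{E^c} |x-y|^{-n-\theta q}\, dy\, dx.
\]

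Next I would use the geometry of $E$ to bound the inner integral by a pointwise quantity. For $x \in E$, set $d(x) := \dist(x, E^c)$; this coincides with $\dist(x, \partial E)$ at every $x \in \inte(E)$ (any path from $x$ to $E^c$ must cross $\partial E$, while $\partial E \subset \overline{E^c}$ gives the reverse inequality). Since every $y \in E^c$ satisfies $|x-y| \geq d(x)$, switching to polar coordinates around $x$ gives
\[
\int_{E^c} \frac{dy}{|x-y|^{n+\theta q}} \leq C_n \int_{d(x)}^{\infty} r^{-1-\theta q}\, dr = \frac{C_n}{\theta q}\, d(x)^{-\theta q}.
\]
Consequently $[\chi_E]_{W^{\theta,q}}^q \leq C \int_E d(x)^{-\theta q}\, dx$, and the task reduces to controlling this weighted volume integral.

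The final step is a layer-cake/Fubini computation that converts the weighted integral into precisely the hypothesized one. Using the identity $a^{-\theta q} = \theta q \int_0^{\infty} \mathbf{1}_{\{a < \delta\}}\, \delta^{-\theta q}\, \frac{d\delta}{\delta}$, valid for $a > 0$, Fubini yields
\[
\int_E d(x)^{-\theta q}\, dx = \theta q \int_0^{\infty} \delta^{-\theta q}\, |\{x \in E : d(x) < \delta\}|\, \frac{d\delta}{\delta}.
\]
Up to the null set $\partial E$ the sublevel set on the right is exactly $(\partial E)_\delta$, so the piece over $(0,1)$ is finite by the hypothesis \eqref{eq:Sickel}, while the tail over $(1,\infty)$ is bounded by $|E|\int_1^{\infty} \delta^{-\theta q - 1}\, d\delta < \infty$, using that $E$ is bounded.

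The argument is thus a direct computation rather than a deep one; the only mild obstacle is the standard bookkeeping in the layer-cake step and the identification $\{x \in E : d(x) < \delta\} = (\partial E)_\delta$ modulo a null set. The hypothesis has been tailored so that the borderline small-$\delta$ behavior is precisely what is needed, leaving no serious difficulty beyond making this matching explicit.
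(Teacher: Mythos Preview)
The paper does not prove this statement; it is quoted from Sickel \cite{Si} without proof and explicitly set aside (``we do not pursue this further in the sequel''). So there is no proof in the paper to compare against.

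Your argument is the standard one and is correct. One small point: you write ``up to the null set $\partial E$'', but $\partial E$ is not assumed to be null. However, the hypothesis \eqref{eq:Sickel} forces $|\partial E \cap E| = 0$: indeed $\partial E \cap E \subset (\partial E)_\delta$ for every $\delta>0$, so a positive measure there would make the integrand bounded below by a positive constant near $\delta=0$ and the integral would diverge. With that observation in hand, the identification of $\{x\in E: d(x)<\delta\}$ with $(\partial E)_\delta$ up to a null set goes through (the strict versus non-strict inequality being harmless since the larger set dominates). Otherwise the computation is clean.
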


Although this theorem provides good geometric intuition and could have been used as an alternative means of proving Theorem \ref{thm:main}, we do not pursue this further in the sequel, but postpone its discussion to future work.

\begin{rmk}
\label{rmk:prod}
We note that the estimate \eqref{eq:Sickel} in Theorem \ref{thm:Sickel} yields a condition on the \emph{product} $\theta q>0$, while, at first sight, Theorem \ref{thm:interpol} and Corollary \ref{cor:int} pose a restriction on $\theta,q$ individually. 
As we are dealing with bounded (or even characteristic) functions, we however observe that it is also possible to obtain an analogous condition on the product $\theta q$ in Theorem \ref{thm:interpol} and Corollary \ref{cor:int}:
Indeed, assume that $u\in L^{\infty}(\R^2)$ is such that for some $\theta_0 \in (0,1)$ the product
\begin{align}
\label{eq:L1_bound}
\|u\|_{L^1(\R^2)}^{1-\theta_0}\| u\|_{BV(\R^2)}^{\theta_0}
\end{align}
is bounded. Then, we claim that for 
\begin{align}
\label{eq:spec_exp}
q\in (1,\infty), \ \tilde{\theta}:=\theta_0 q^{-1} \mbox{ and for } p= \frac{1-\tilde{\theta}}{\tilde{\theta}} \frac{\theta_0}{1-\theta_0},
\end{align} 
also the product
\begin{align*}
\|u\|_{L^p(\R^2)}^{1-\tilde{\theta}}\| u\|_{BV(\R^2)}^{\tilde{\theta}}
\end{align*}
is bounded. To derive this, we first observe that the $L^{\infty}$ bound for $u$ allows us to infer that for any $p\in(1,\infty)$
\begin{align}
\label{eq:Linf}
\|u\|_{L^p(\R^2)} \leq \|u\|_{L^{\infty}(\R^2)}^{1-\frac{1}{p}} \|u\|_{L^1(\R^2)}^{\frac{1}{p}}.
\end{align}
As a consequence, we deduce that
\begin{equation}
\label{eq:control_int}
\begin{split}
\|u\|_{L^p(\R^2)}^{1-\tilde{\theta}}\| u\|_{BV(\R^2)}^{\tilde{\theta}}
&\leq \|u\|^{(1-\frac{1}{p})(1-\tilde{\theta})}_{L^{\infty}(\R^2)} \|u\|_{L^1(\R^2)}^{\frac{1-\tilde{\theta}}{p}}\| u\|_{BV(\R^2)}^{\tilde{\theta}} \\
& = \|u\|^{1-\frac{\tilde{\theta}}{\theta_0}}_{L^{\infty}(\R^2)} \left( \|u\|_{L^1(\R^2)}^{1-\theta_0}\| u\|_{BV(\R^2)}^{\theta_0} \right)^{\frac{\tilde{\theta}}{\theta_0}}.
\end{split}
\end{equation}
Here we have made use of the specific choices of exponents from (\ref{eq:spec_exp}) and the boundedness of $u$, which allowed us to invoke (\ref{eq:Linf}). This concludes the argument for the claim.\\
Thus, relying on the bound (\ref{eq:control_int}), we infer that given a bound on (\ref{eq:L1_bound}), we obtain that for all exponents $q,\tilde{\theta},p$ from (\ref{eq:spec_exp})
\begin{align}
\label{eq:higher}
\|u\|_{W^{\tilde{\theta},q}(\R^n)} \leq C \|u\|^{1-\frac{\tilde{\theta}}{\theta_0}}_{L^{\infty}(\R^2)} \left( \|u\|_{L^1(\R^2)}^{1-\theta_0}\| u\|_{BV(\R^2)}^{\theta_0} \right)^{\frac{\tilde{\theta}}{\theta_0}}.
\end{align}
Here we applied Theorem \ref{thm:interpol} (or Corollary \ref{cor:int}), for which we noted that the respective exponents are admissible.
On the one hand, this is the desired analogue of the condition from Theorem \ref{thm:Sickel} and allows us to obtain a whole family of $W^{\theta,q}$ bounds for $u$, where $\theta q < \theta_0$. On the other hand, it shows that although $p=1$ is \emph{not admissible} in Theorem \ref{thm:interpol} and Corollary \ref{cor:int}, for our purposes, it still suffices to consider the case $p=1$ and to prove a control for (\ref{eq:L1_bound}), which then gives the full range of expected exponents in the form of the estimate (\ref{eq:higher}). 
\end{rmk}

\begin{rmk}[Fractal packing dimension]
\label{rmk:frac}
Following Sickel \cite{Si}, Proposition 3.3 (c.f. also \cite{JM96}, Theorem 2.2) we remark that for a characteristic function its $W^{s,p}$ regularity has direct consequences on the packing dimension (c.f. \cite{JM96}, \cite{Mat}), which we denote by $\dim_P$, of its boundary: If for some set $E \subset \R^{n}$ its characteristic function $\chi_E$ satisfies $\chi_{E}\in W^{s,p}(\R^n)$ for some $s>0$ and $1\leq p <\infty$, then
\begin{align*}
\dim_P(S_{\delta}(\partial E)) \leq \min\{n,n-sp+\delta\}.
\end{align*}
Here 
\begin{align*}
S_{\delta}(\partial E):= \left\{
x \in \partial E: \ \exists \mu >0 \mbox{ such that } 
\forall \epsilon, \ 0 < \epsilon \leq 1, \ \exists A_{\epsilon}, A_{\epsilon}' \mbox{ satisfying } \right. \\
\left. A_{\epsilon} \subset B_{\epsilon}(x)\cap E, \ A_{\epsilon}' \subset B_{\epsilon}(x)\cap E^{c} \mbox{ and } 
|A_{\epsilon}||A_{\epsilon}'| \geq \mu \epsilon^{2n+\delta}
\right\},
\end{align*}
$B_{\epsilon}(x):=\{x'\in \R^n: |x-x'|\leq \epsilon\}$ and $E^c$ denotes the complement of $E$.
\end{rmk}

\subsection{Matrix space geometry}
\label{sec:hex}
Before discussing our convex integration scheme, we recall some basic notions and properties of the hexagonal-to-rhombic phase transformation, which we will use in the sequel. \\

We begin by introducing notation for the symmetric and antisymmetric part of two matrices.

\begin{defi}[Symmetric and antisymmetric parts]
\label{defi:sym}
Let $M\in \R^{n\times n}$. We denote the uniquely determined symmetric and antisymmetric parts of $M$ by
\begin{align*}
M= e(M) + \omega(M), \ e(M) := \frac{1}{2}(M^T + M), \ 
\omega(M) := \frac{1}{2}(M - M^T).
\end{align*}
\end{defi}

\subsubsection{Lamination convexity notions}

Relying on the notation from Definition \ref{defi:sym}, in the sequel we discuss the different notions of lamination convexity. Here we distinguish between the usual \emph{lamination convex hull} (defined by successive rank-one iterations) and the \emph{symmetrized lamination convex hull} (defined by successive symmetrized rank-one iterations):

\begin{defi}[Lamination convex hull, symmetrized lamination convex hull] We define the following notions of lamination convex hulls:
\begin{itemize}
\item[(i)] Let $U\subset \R^{n\times n}$. Then we set
\begin{align*}
\mathcal{L}^0(U)&:= U,\\
\mathcal{L}^k(U) &:= \{M\in \R^{2\times 2}: M= \lambda A+ (1-\lambda) B \mbox{ with } A-B = a \otimes n, \lambda\in[0,1],\\
& \quad \quad A,B \in \mathcal{L}^{k-1}(U)\}, \ k\geq 1,\\
U^{lc}&:= \bigcup\limits_{k=0}^{\infty} \mathcal{L}^k(U).
\end{align*} 
We refer to $U^{lc}$ as the \emph{laminar convex hull of $U$} and to $\mathcal{L}^k(U)$ as the \emph{laminates of order at most $k$}.
\item[(ii)] Let $U\subset \R^{n \times n}_{sym}$. Then we define
\begin{align*}
\mathcal{L}^0_{sym}(U)&:= U,\\
\mathcal{L}^k_{sym}(U) &:= \{M\in \R^{2\times 2}: M= \lambda A+ (1-\lambda) B \mbox{ with } A-B = a \odot n, \lambda\in[0,1],\\
& \quad \quad A,B \in \mathcal{L}^{k-1}_{sym}(U)\}, \ k\geq 1,\\
U^{lc}_{sym}&:= \bigcup\limits_{k=0}^{\infty} \mathcal{L}^k_{sym}(U).
\end{align*} 
Here $a\odot b:= \frac{1}{2}(a\otimes b + b\otimes a)$.
We refer to $U^{lc}_{sym}$ as the \emph{symmetrized laminar convex hull of $U$} and to $\mathcal{L}^k_{sym}(U)$ as the \emph{symmetrized laminates of order at most $k$}.
\item[(iii)] We denote the \emph{convex hull} of a set $U\subset\R^{m}$ by $\conv(U)$.
\end{itemize}
\end{defi}

\begin{rmk}
We note that if $U \subset \R^{n\times n}$ or $U\subset \R^{n\times n}_{sym}$ is (relatively) open, then also $U^{lc}$ or $U^{lc,sym}$ is (relatively) open.
\end{rmk}

\begin{lem}[Convex hull = laminar convex hull]
\label{lem:convex_hull}
Let $K$ be as in (\ref{eq:K3}). Then 
\begin{align*}
K_{sym}^{lc} = \conv(K) = \mathcal{L}^2_{sym}(K).
\end{align*}
Moreover, each element $e\in \intconv(K)$ is symmetrized rank-one connected with each element in $K$.
\end{lem}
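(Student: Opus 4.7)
The plan is to exploit the fact that every matrix in $K$ is trace-free symmetric, so that the whole problem reduces to a two-dimensional geometric picture in the plane of trace-free symmetric $2\times 2$ matrices and to a single algebraic fact about such matrices. First I would identify the ambient space: parametrize trace-free symmetric $2\times 2$ matrices by $\begin{pmatrix}a&b\\b&-a\end{pmatrix}\mapsto(a,b)$, under which $e^{(1)},e^{(2)},e^{(3)}$ map to $(1,0)$, $(-1/2,\sqrt{3}/2)$, $(-1/2,-\sqrt{3}/2)$. These are the vertices of an equilateral triangle with the origin in its interior, so $\conv(K)$ is a two-dimensional triangle inside the two-dimensional space of trace-free symmetric matrices, and $\intconv(K)$ really is the (relative) interior of that triangle.

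The core algebraic step is the following elementary lemma which I would state and prove explicitly: any nonzero trace-free symmetric matrix $M\in\R^{2\times 2}$ admits a representation $M=a\odot n$ with $a,n\in\R^2\setminus\{0\}$. The reason is that $M$ has eigenvalues $\pm\lambda$ with $\lambda>0$, hence in an orthonormal eigenbasis $\{v_1,v_2\}$ it takes the form $M=\lambda(v_1v_1^T-v_2v_2^T)$; choosing $a:=v_1+v_2$ and $n:=\lambda(v_1-v_2)$ gives $a\odot n=M$ by direct computation. Consequently, any two distinct trace-free symmetric $2\times 2$ matrices are symmetrized rank-one connected, since their difference is again trace-free symmetric and nonzero.

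With this lemma in hand I would prove the first identity $K^{lc}_{sym}=\conv(K)=\mathcal{L}^2_{sym}(K)$ by three inclusions. The chain $\mathcal{L}^2_{sym}(K)\subset K^{lc}_{sym}\subset \conv(K)$ is immediate, since every symmetrized rank-one convex combination is in particular a convex combination. For the reverse inclusion $\conv(K)\subset\mathcal{L}^2_{sym}(K)$, given $M\in\conv(K)$ that is not a vertex, I would draw the line through $M$ and (say) $e^{(1)}$, which meets the opposite edge in a point $N=\mu e^{(2)}+(1-\mu)e^{(3)}$ for some $\mu\in[0,1]$. Since $e^{(2)}-e^{(3)}$ is a nonzero trace-free symmetric matrix, the lemma places $N\in \mathcal{L}^1_{sym}(K)$, and since $e^{(1)}-N$ is likewise trace-free symmetric (and nonzero when $M\notin K$), the lemma also shows $M\in\mathcal{L}^2_{sym}(K)$. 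A short case distinction handles the boundary points where $M$ already lies on an edge or at a vertex.

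Finally, the second statement is a direct corollary of the same algebraic observation: for $e\in\intconv(K)$ and any $e^{(i)}\in K$, the vertex $e^{(i)}$ is extremal in $\conv(K)$, so $e\neq e^{(i)}$, and $e-e^{(i)}$ is a nonzero trace-free symmetric matrix; the lemma then supplies $a,n\in\R^2\setminus\{0\}$ with $e-e^{(i)}=a\odot n$. The only genuinely nontrivial ingredient is the trace-free $\Rightarrow$ symmetrized rank-one lemma; everything else is a bookkeeping argument on a triangle, with the potential pitfall being the degenerate cases $M\in K$ or $M$ on an edge, which I would explicitly dispatch to make sure the ``two-step'' laminate decomposition really works.
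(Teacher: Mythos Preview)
Your proposal is correct and follows essentially the same approach as the paper: both arguments hinge on the single algebraic fact that any two trace-free symmetric $2\times 2$ matrices are symmetrized rank-one connected, which the paper obtains by citing Lemma~\ref{lem:rk1} (where condition (iii), $\rank\leq 2$, is automatic in two dimensions) and the reference \cite{R16}, while you prove it directly via the eigendecomposition $M=\lambda(v_1v_1^T-v_2v_2^T)=a\odot n$. Your treatment is more self-contained and makes the two-laminate decomposition explicit, but the underlying idea is identical.
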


\begin{proof}
The first point follows from an observation of Bhattacharya (c.f. \cite{B} and also Lemma 4 in \cite{R16}). The second point either follows from a direct calculation or by an application of Lemma \ref{lem:rk1} below.
\end{proof}

The following lemma establishes a relation between rank-one connectedness and symmetrized rank-one connectedness. It in particular shows that in two dimensions all symmetric trace-free matrices are pairwise symmetrized rank-one connected.

\begin{lem}[Rank-one vs symmetrized rank-one connectedness]
\label{lem:rk1}
Let $e_1,e_2 \in \R^{n\times n}_{sym}$ with $\tr(e_1)=0=\tr(e_2)$. Then the following statements are equivalent:
\begin{itemize}
\item[(i)] There exist vectors $a\in \R^{n}\setminus\{0\}, n \in S^{n-1}$ such that 
\begin{align*}
e_1-e_2 = a\odot n.
\end{align*}
\item[(ii)] There exist matrices $M_1,M_2 \in \R^{n\times n}$ and vectors $a\in \R^{n}\setminus\{0\}, n \in S^{n-1}$ such that
\begin{align*}
M_1 - M_2 &= a\otimes n,\\
e(M_1) &= e_1, e(M_2) = e_2.
\end{align*}
\item[(iii)] $\rank(e_{1}-e_2)\leq 2$.
\end{itemize}
\end{lem}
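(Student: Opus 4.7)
The plan is to prove the cyclic chain (i) $\Rightarrow$ (ii) $\Rightarrow$ (iii) $\Rightarrow$ (i), exploiting that symmetrization $e(\cdot)$ is the natural bridge between the full tensor product and the symmetric one.

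For (i) $\Rightarrow$ (ii), I would simply set $M_2 := e_2$ and $M_1 := e_2 + a \otimes n$. Then $M_1 - M_2 = a \otimes n$ holds by construction, and since $e(a \otimes n) = a \odot n = e_1 - e_2$, one obtains $e(M_1) = e_1$ and $e(M_2) = e_2$ as required. The implication (ii) $\Rightarrow$ (iii) is then immediate: applying $e(\cdot)$ to $M_1 - M_2 = a \otimes n$ yields $e_1 - e_2 = a \odot n = \tfrac{1}{2}(a\otimes n + n \otimes a)$, a sum of two rank-one matrices, hence of rank at most two.

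The substantive step is (iii) $\Rightarrow$ (i). Setting $E := e_1 - e_2$, the symmetry of $E$ gives a spectral decomposition $E = \sum_{i=1}^n \lambda_i v_i \otimes v_i$ with orthonormal eigenbasis $\{v_i\}$. The rank bound forces at most two eigenvalues to be nonzero, and the trace-free condition then pairs them with opposite sign, so that
\begin{equation*}
  E = \lambda\bigl(v_1 \otimes v_1 - v_2 \otimes v_2\bigr)
\end{equation*}
for some orthonormal $v_1, v_2$ and some $\lambda \in \R$. The key algebraic identity
\begin{equation*}
  v_1 \otimes v_1 - v_2 \otimes v_2 = (v_1 + v_2) \odot (v_1 - v_2),
\end{equation*}
verified by expanding the symmetrized tensor product and using $v_1 \perp v_2$, then allows me to take $n := \tfrac{1}{\sqrt{2}}(v_1 - v_2) \in S^{n-1}$ and $a := \sqrt{2}\lambda(v_1 + v_2)$, giving $E = a \odot n$ with $|n| = 1$ and $a \neq 0$ provided $E \neq 0$ (the case $e_1 = e_2$ being trivial).

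The main obstacle is the last step. The rank bound alone only expresses $E$ as a sum of two symmetric rank-one terms, and it is precisely the trace-free hypothesis, combined with the spectral decomposition, that produces the opposite-sign pairing $\{\lambda, -\lambda\}$ required to rewrite $E$ as a \emph{single} symmetrized rank-one matrix via the above identity. This is the only point where $\tr(e_1) = \tr(e_2) = 0$ is genuinely needed.
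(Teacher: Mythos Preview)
Your proof is correct and complete. The paper itself does not give a proof of this lemma but simply refers to Lemma~9 in \cite{R16}, so there is no in-paper argument to compare against; your self-contained cyclic argument via the spectral decomposition and the identity $v_1\otimes v_1 - v_2\otimes v_2 = (v_1+v_2)\odot(v_1-v_2)$ is the standard route and exactly what one would expect the cited reference to contain.

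One small remark on the edge case you flag: if $e_1=e_2$ then (iii) holds trivially, but (i) as stated (with $a\neq 0$) does not, since $a\odot n=0$ with $|n|=1$ forces $a=0$. This is a harmless imprecision in the lemma's statement rather than in your argument; the lemma is only ever applied in the paper with $e_1\neq e_2$.
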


\begin{proof} We refer to \cite{R16}, Lemma 9 for a proof of this statement.
\end{proof}

This lemma allows us to view symmetrized rank-one connectedness essentially as equivalent to rank-one connectedness. 

\subsubsection{Skew parts}

We discuss some properties of the associated skew symmetric parts of rank-one connections, which occur between points in the interior of $\intconv(K)$. To this end, we introduce the following identification:

\begin{nota}[Skew symmetric matrices]\label{not:skew} As the two dimensional skew symmetric matrices are all of the form
\begin{align*}
S=\begin{pmatrix}
0 & \tilde{\omega} \\ 
-\tilde{\omega} & 0 
\end{pmatrix} \mbox{ for some } \tilde{\omega} \in \R,
\end{align*}
we use the mapping $S\mapsto \tilde{\omega}$ to identify $\Skew(2)$ with $\R$.
We define an ordering on $\Skew(2)$ by the corresponding ordering on $\R$, i.e. 
\begin{align*}
S_1 = \begin{pmatrix}
0 & \tilde{\omega}_1 \\ 
-\tilde{\omega}_1 & 0 
\end{pmatrix} \leq S_2 = \begin{pmatrix}
0 & \tilde{\omega}_2 \\ 
-\tilde{\omega}_2 & 0 
\end{pmatrix}
\end{align*}
if $\tilde{\omega}_1 \leq \tilde{\omega}_2$.
\end{nota}

We begin by estimating the symmetric and skew-symmetric parts of a symmetrized rank-one connection:

\begin{lem}
  \label{lem:skewcontrol2}
  Let $ a \in \R^{2}, n \in S^{1}$ with $a \cdot n=0$. Then
  \begin{align*}
    \|a \odot n \| =|a|/2= \|\omega(a \otimes n)\|.
  \end{align*}
  Here $\|\cdot\|$ denotes the spectral norm, i.e. $\|A\|:= \sup\limits_{|e|=1}\{|e\cdot A e|\}$, where $|\cdot|$ denotes the $\ell_2$ norm.
\end{lem}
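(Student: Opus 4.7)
The plan is to exploit the orthogonality assumption $a\cdot n=0$ to reduce the claim to a single one-parameter family of matrices by choosing a convenient orthonormal frame, and then read off both spectral norms directly.

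First, I would observe that because $n\in S^1$ and $a\perp n$ in $\R^2$, the vector $a$ is a scalar multiple of the rotated unit vector $n^\perp$; more precisely $a=\alpha\, n^\perp$ with $|\alpha|=|a|$. Working in the orthonormal basis $\{n,n^\perp\}$ of $\R^2$, the matrix $a\otimes n$ has the single nonzero entry $\alpha$ in the lower-left position, so
\begin{align*}
a\odot n = \frac{\alpha}{2}\begin{pmatrix} 0 & 1 \\ 1 & 0\end{pmatrix},\qquad
\omega(a\otimes n) = \frac{\alpha}{2}\begin{pmatrix} 0 & -1 \\ 1 & 0\end{pmatrix}.
\end{align*}
The first matrix has eigenvalues $\pm\alpha/2$, so its spectral norm equals $|\alpha|/2=|a|/2$. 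The second is (a multiple of) a rotation matrix and acts as an isometry up to the factor $\alpha/2$, hence it also has spectral norm $|\alpha|/2=|a|/2$. This gives the claim.

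As an alternative (and basis-free) route, I would note that $\tr(a\odot n)=a\cdot n=0$, so in two dimensions the symmetric matrix $a\odot n$ has eigenvalues $\pm\mu$ with $\mu^2=-\det(a\odot n)$. A direct computation of the $2\times 2$ determinant using $a\cdot n=0$ yields $\det(a\odot n)=-|a|^2/4$, and thus $\|a\odot n\|=|a|/2$. For the antisymmetric part, one uses that $\omega(a\otimes n)=\frac12(a_1n_2-a_2n_1)J$ with $J$ the standard symplectic rotation, whose spectral norm is one; the orthogonality $a\cdot n=0$ forces $|a_1n_2-a_2n_1|=|a||n|=|a|$, so $\|\omega(a\otimes n)\|=|a|/2$.

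There is no genuine obstacle here: the only point to be careful about is verifying that in the orthogonal two-dimensional case the off-diagonal coefficients of the symmetric and antisymmetric parts coincide in absolute value, which is precisely what the relation $a=\pm|a|n^\perp$ provides. The lemma is essentially a coordinate computation packaged to be invoked later in controlling the skew part of the convex integration iteration.
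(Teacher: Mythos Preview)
Your proposal is correct and takes essentially the same approach as the paper: both exploit the orthonormal basis $\{n,\,a/|a|\}$ (equivalently $\{n,n^\perp\}$) to reduce the two matrices to explicit $2\times 2$ forms and read off the norms. The paper does this by computing the images of the basis vectors under $a\odot n$ and $\omega(a\otimes n)$, while you instead read off the eigenvalues and the isometry structure, but these are the same computation in slightly different packaging.
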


\begin{proof}
  Since $a \bot n$, we obtain that
  \begin{align*}
    (a \odot n) n  &=a/2 = \frac{|a|}{2} \frac{a}{|a|} ,\\
    (a \odot n) \frac{1}{|a|} a &= \frac{|a|}{2} n.
  \end{align*}
  As $n, \frac{1}{|a|}a$ forms an orthonormal basis, this shows that $\|a \odot n\|=|a|/2$.

  Similarly, we obtain that 
  \begin{align*}
    \omega(a \otimes n) n &= \frac{|a|}{2} \frac{a}{|a|}, \\
    \omega(a \otimes n) \frac{1}{|a|}a &= - \frac{|a|}{2} n, 
  \end{align*}
  and hence $\| \omega(a \otimes n)\|=|a|/2$.
\end{proof}

Using the previous result, we can control the size of the skew part which occurs in rank-one connections with $K$:

\begin{lem}
  \label{lem:skewcontrol}
For all matrices $N$ with $e(N)\in \intconv(K)$ and with
$N$ being rank-one connected with a matrix $e^{(j)} \in K$ it holds
\begin{align*}
\|\omega(N)\| \leq 10.
\end{align*}
\end{lem}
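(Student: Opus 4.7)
The plan is to decompose the rank-one connection into its symmetric and skew parts, exploit the trace-free structure of $K$ to force the direction vectors to be orthogonal, and then invoke Lemma~\ref{lem:skewcontrol2} to transfer the bound on the skew part into a bound on the symmetric part, which is controlled by the (bounded) geometry of $\conv(K)$.

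First I would write the rank-one connection: by assumption there exist $a \in \R^{2}\setminus\{0\}$ and $n \in S^{1}$ with $N - e^{(j)} = a \otimes n$. Since $e^{(j)}$ is symmetric, taking symmetric and skew parts yields
\begin{align*}
  e(N) - e^{(j)} = a \odot n, \qquad \omega(N) = \omega(a \otimes n).
\end{align*}
The key step is to observe that $a \perp n$. Indeed, every $e^{(i)} \in K$ is trace-free by inspection of \eqref{eq:K3}, so every element of $\conv(K)$ is trace-free; in particular $e(N)$ is. Therefore $a \odot n = e(N)-e^{(j)}$ is trace-free, and since $\tr(a \odot n) = a \cdot n$, we obtain $a \cdot n = 0$.

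With orthogonality in hand, Lemma~\ref{lem:skewcontrol2} applies and gives
\begin{align*}
  \|\omega(N)\| = \|\omega(a \otimes n)\| = \tfrac{|a|}{2} = \|a \odot n\| = \|e(N)-e^{(j)}\|.
\end{align*}
It therefore suffices to bound $\|e(N) - e^{(j)}\|$, and since $e(N) \in \intconv(K) \subset \conv(K)$ and $e^{(j)} \in K \subset \conv(K)$, this is at most $\diam(\conv(K))$ in the spectral norm. A direct computation of $\|e^{(i_1)}-e^{(i_2)}\|$ for $i_1 \neq i_2$ gives the value $\sqrt{3}$ (the matrix differences are symmetric trace-free, so their spectral norm equals $\sqrt{a^2+b^2}$ for entries $a,b$), and thus $\diam(\conv(K)) = \sqrt{3} \leq 10$, which yields the claim with ample room.

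I do not expect a genuine obstacle here: the only nontrivial observation is that the trace-free structure of $K$ forces $a \perp n$, which is what unlocks Lemma~\ref{lem:skewcontrol2}. The constant $10$ in the statement is clearly chosen generously so that no optimization is needed; any uniform bound on the diameter of $\conv(K)$ would do.
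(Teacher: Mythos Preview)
Your proof is correct and follows essentially the same approach as the paper: write the rank-one connection $N-e^{(j)}=a\otimes n$, use trace-freeness of $K$ (and hence of $\conv(K)$) to get $a\perp n$, apply Lemma~\ref{lem:skewcontrol2} to equate $\|\omega(N)\|$ with $\|e(N)-e^{(j)}\|$, and bound the latter by the diameter of $\conv(K)$. Your computation of the diameter as $\sqrt{3}$ is in fact sharper than the paper's ``less than five''.
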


\begin{proof}
For each $e\in \intconv(K)$ there are exactly two matrices 
$M_{e,i}^{\pm}$ such that  
\begin{align*}
e(M_{e,i}^{\pm})= e \mbox{ and } \rank(M_{e,i}^{\pm} - e^{(i)}) = 1 \mbox{ for } i\in\{1,2,3\}.
\end{align*}
Let $e-e^{(i)}= \frac{1}{2}(a \otimes n + n \otimes a)$ for some 
$a \in \R^{2}\setminus\{0\}, n \in S^{1}$.
Then, $\omega(M_{e,i}^{\pm})= \omega(M_{e,i}^{\pm}) - \omega(e^{(i)})$ is explicitly given by 
$\pm \frac{1}{2} (a \otimes n -n \otimes a)$. 
Thus, Lemma \ref{lem:skewcontrol2} implies
  \begin{align*}
    |a|/2= |(e-e^{(i)}) n| \leq \|e-e^{(i)}\|,  
  \end{align*}
  since $n \in S^{1}$ and $a \cdot n =0$ by the trace-free condition.
As $\conv(K)$ is a compact set, $e-e^{(i)}$ is uniformly
  bounded. Moreover, the diameter of $\conv(K)$ is less than five, which yields the desired bound.
\end{proof}

\subsubsection{Geometry of the hexagonal-to-rhombic phase transformation}
\label{sec:geo}

In this subsection, we discuss the specific matrix space geometry of the hexagonal-to-rhombic phase transformation. To this end we decompose each matrix of the form $\begin{pmatrix} \alpha & \beta \\ \beta & -\alpha \end{pmatrix}$ into a component in $v_1=\begin{pmatrix} 1 & 0 \\ 0 & -1 \end{pmatrix}$ and a component in $v_2=\begin{pmatrix} 0 & 1 \\ 1 & 0 \end{pmatrix}$ direction.

With this notation we make the following observations:

\begin{lem}
\label{lem:geo1}
Let $v_1, v_2 \in \R^{2\times 2}_{sym}$ be as above. Then,
\begin{align*}
\cos(\varphi)
\begin{pmatrix} 1 & 0 \\ 0 &- 1 \end{pmatrix} + \sin(\varphi) \begin{pmatrix} 0 & 1 \\ 1 & 0 \end{pmatrix}& = \begin{pmatrix} \cos(\varphi) & -\sin(\varphi) \\ \sin(\varphi) & \cos(\varphi) \end{pmatrix} \begin{pmatrix} 1 & 0 \\ 0 & -1 \end{pmatrix}\\
&= \begin{pmatrix} \cos(\varphi) & -\sin(\varphi) \\ \sin(\varphi) & \cos(\varphi) \end{pmatrix} v_1.
\end{align*}
Furthermore, we have that
\begin{align*}
\begin{pmatrix} \cos(\varphi) & -\sin(\varphi) \\ \sin(\varphi) & \cos(\varphi) \end{pmatrix}v_1 &= \begin{pmatrix}\cos(\frac{\varphi}{2}+\frac{\pi}{4}) \\ \sin(\frac{\varphi}{2}+\frac{\pi}{4}) \end{pmatrix}\otimes \begin{pmatrix} \sin(\frac{\varphi}{2} + \frac{\pi}{4}) \\ -\cos(\frac{\varphi}{2}+\frac{\pi}{4}) \end{pmatrix} \\
& \quad +  \begin{pmatrix} \sin(\frac{\varphi}{2} + \frac{\pi}{4}) \\ -\cos(\frac{\varphi}{2}+\frac{\pi}{4}) \end{pmatrix} \otimes \begin{pmatrix} \cos(\frac{\varphi}{2}+\frac{\pi}{4}) \\ \sin(\frac{\varphi}{2}+\frac{\pi}{4}) \end{pmatrix}.
\end{align*}
\end{lem}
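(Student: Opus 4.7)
\medskip

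The statement decomposes into two independent identities, and both reduce to straightforward matrix computations; no deep idea is required beyond recognizing a double-angle substitution.

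\textbf{First identity.} I would simply multiply out
\[
R(\varphi) v_1 = \begin{pmatrix} \cos(\varphi) & -\sin(\varphi) \\ \sin(\varphi) & \cos(\varphi) \end{pmatrix}\begin{pmatrix} 1 & 0 \\ 0 & -1 \end{pmatrix} = \begin{pmatrix} \cos(\varphi) & \sin(\varphi) \\ \sin(\varphi) & -\cos(\varphi) \end{pmatrix},
\]
which is visibly $\cos(\varphi) v_1 + \sin(\varphi) v_2$. This gives the first equality.

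\textbf{Second identity.} Here the key is the substitution $\psi := \frac{\varphi}{2} + \frac{\pi}{4}$, so that $2\psi = \varphi + \frac{\pi}{2}$, i.e.\ $\sin(2\psi) = \cos(\varphi)$ and $-\cos(2\psi) = \sin(\varphi)$. Writing $a = (\cos\psi, \sin\psi)^T$ and $b = (\sin\psi, -\cos\psi)^T$ (both unit vectors, and orthogonal since $a\cdot b = 0$), I would compute
\[
a \otimes b + b \otimes a = \begin{pmatrix} 2\cos\psi\sin\psi & \sin^2\psi - \cos^2\psi \\ \sin^2\psi - \cos^2\psi & -2\cos\psi\sin\psi \end{pmatrix} = \begin{pmatrix} \sin(2\psi) & -\cos(2\psi) \\ -\cos(2\psi) & -\sin(2\psi) \end{pmatrix}.
\]
Applying the identities $\sin(2\psi) = \cos(\varphi)$ and $-\cos(2\psi) = \sin(\varphi)$ above, the right-hand side equals $\begin{pmatrix} \cos(\varphi) & \sin(\varphi) \\ \sin(\varphi) & -\cos(\varphi) \end{pmatrix}$, which by the first identity coincides with $R(\varphi)v_1$.

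\textbf{Obstacles.} There is essentially no obstacle; the computation is purely mechanical once one notices that the angle shift $\psi = \varphi/2 + \pi/4$ converts the single-angle entries of $R(\varphi)v_1$ into double-angle expressions of $\psi$ and thereby allows the product structure $a\otimes b + b\otimes a$ to appear. The geometric content is that the symmetric trace-free matrix $R(\varphi)v_1$ (which lies on the one-parameter family parametrized by $\varphi$ in the two-dimensional space spanned by $v_1, v_2$) admits an explicit symmetrized rank-one decomposition with orthogonal factors, a fact that will later be used in combination with Lemma \ref{lem:skewcontrol2} to read off the directions $a_{i_1,i_2}, n_{i_1,i_2}$ of the symmetrized rank-one connections between the wells.
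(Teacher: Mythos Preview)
Your proof is correct and follows essentially the same approach as the paper: both rely on the substitution $\psi = \frac{\varphi}{2}+\frac{\pi}{4}$ together with the double-angle identities $\cos(\varphi)=2\cos\psi\sin\psi$ and $\sin(\varphi)=\sin^2\psi-\cos^2\psi$, after which the verification is a direct computation. The paper merely states these two identities and leaves the matrix multiplication implicit, whereas you spell it out; there is no substantive difference.
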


\begin{proof}
Using the trigonometric identities
    \begin{align*}
      \cos(\phi)&=\sin(\phi+\frac{\pi}{2})= 2 \cos(\frac{\phi}{2}+\frac{\pi}{4})\sin(\frac{\phi}{2}+\frac{\pi}{4}),\\
      \sin(\phi)&=-\cos(\phi+\frac{\pi}{2})= \sin^2(\frac{\phi}{2}+\frac{\pi}{4})-\cos^2(\frac{\phi}{2}+\frac{\pi}{4}),
    \end{align*}
an immediate computation shows the claim.
\end{proof}

In other words, Lemma \ref{lem:geo1} allows us to identify all lines in matrix
space (through the origin) by their rotation angle. In particular, this
gives a simple description of the possible rank-one connections between the
energy wells (c.f. also Figure \ref{fig:normals} 
  (a)). In our application to the hexagonal-to-rhombic phase transformation we have to take into account that non-trivial differences of the matrices $e^{(1)}, e^{(2)}, e^{(3)}$ lie on the sphere of radius $\sqrt{3}$ in matrix space (with respect to the spectral norm), which yields slightly different normalization factors for $a$:

\begin{figure}
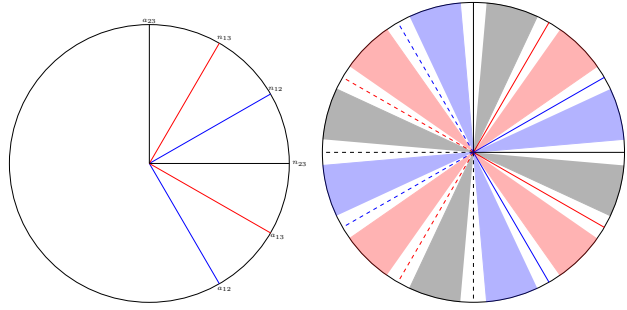

\centering
\includegraphics[width=4cm,page=41]{figures.pdf}
\includegraphics[width=4cm, page=44]{figures.pdf}
\caption{The possible normals between the wells (c.f. Lemma \ref{lem:geo2}) (left) and the angles that arise in Lemma \ref{lem:angle_a} (right). The figure on the left depicts the possible normals between the wells, the respective pairs $(a_{ij},n_{ij})$ are marked in the same color. We note that
by symmetry it is also possible to pass from $(a,n)$ to $(-a,-n)$. After
normalizing appropriately, symmetry further allows to exchange the roles of $a$,
$n$.
The figure on the right depicts the normals, which arise in the decomposition of differences $e-e^{(i)}$ with $e^{(i)}\in K$ and $e\in C_d$ (c.f. Lemma \ref{lem:angle_a}). The colors represent the well, with which $e\in C_d$ is connected: Red corresponds to the cone at $e^{(1)}$, blue to the cone at $e^{(2)}$ and black to the cone at $e^{(3)}$.
As $C_d$ does not contain the
full convex hull of $K$ (c.f. Figure \ref{fig:star}), only vectors, which lie within the colored zones arise as
possible normals (in particular, there is a gap between these vectors and the vectors, which arise as decomposition of differences of the wells). 
}
\label{fig:normals}
\end{figure}

\begin{lem}
\label{lem:geo2}
Let $K$ be as in (\ref{eq:K3}). Then we have that
\begin{align*}
e^{(1)} - e^{(2)} &= a_{12}\odot n_{12},\\
e^{(1)}-e^{(3)} &= a_{13}\odot n_{13},\\
e^{(2)}-e^{(3)} &= a_{23}\odot n_{23},
\end{align*}
with (up to rotation symmetry by an angle of $\pi$)
\begin{align*}
a_{12} &:= -2\sqrt{3} \begin{pmatrix} \cos(\frac{5 \pi}{12} + \frac{\pi}{4}) \\ \sin(\frac{5 \pi}{12} + \frac{\pi}{4}) \end{pmatrix}
=\begin{pmatrix} \sqrt{3} \\ -3 \end{pmatrix}, \
n_{12}:= \begin{pmatrix} \sin(\frac{5 \pi}{12} + \frac{\pi}{4}) \\ -\cos(\frac{5 \pi}{12} + \frac{\pi}{4}) \end{pmatrix}
=\frac{1}{2}\begin{pmatrix} \sqrt{3} \\ 1 \end{pmatrix} ,\\
a_{13}&:= -2\sqrt{3}\begin{pmatrix} \cos(\frac{7 \pi}{12} + \frac{\pi}{4}) \\ \sin(\frac{7 \pi}{12} + \frac{\pi}{4}) \end{pmatrix}
= \begin{pmatrix} 3 \\ -\sqrt{3} \end{pmatrix}, \
n_{13}:= \begin{pmatrix} \sin(\frac{7 \pi}{12} + \frac{\pi}{4}) \\ -\cos(\frac{7 \pi}{12} + \frac{\pi}{4}) \end{pmatrix}
=\frac{1}{2}\begin{pmatrix} 1\\\sqrt{3} \end{pmatrix} ,\\
a_{23} &:= 2\sqrt{3} \begin{pmatrix} 0 \\ 1 \end{pmatrix} , \
n_{23} :=\begin{pmatrix} 1 \\ 0 \end{pmatrix} .
\end{align*}
\end{lem}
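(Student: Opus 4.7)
The statement is a direct computation; the cleanest way to organize it is to use Lemma \ref{lem:geo1} rather than guessing the symmetrized rank-one decompositions \emph{ab initio}. My plan has three steps.

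First, I would rewrite the three wells in the rotational parametrization of Lemma \ref{lem:geo1}. In the basis $\{v_1,v_2\}$ of trace-free symmetric $2\times 2$ matrices one has $e^{(1)} = v_1$, $e^{(2)} = \cos(2\pi/3)\,v_1 + \sin(2\pi/3)\,v_2$ and $e^{(3)}= \cos(4\pi/3)\,v_1+\sin(4\pi/3)\,v_2$. Subtracting pairs yields
\begin{align*}
e^{(1)}-e^{(2)} &= \tfrac{3}{2}v_1 - \tfrac{\sqrt{3}}{2}\,v_2,\\
e^{(1)}-e^{(3)} &= \tfrac{3}{2}v_1 + \tfrac{\sqrt{3}}{2}\,v_2,\\
e^{(2)}-e^{(3)} &= \sqrt{3}\,v_2.
\end{align*}
Each of these has matrix norm $\sqrt{3}$, so can be written as $\pm\sqrt{3}\,R_\varphi v_1$ with $R_\varphi v_1 := \cos(\varphi)\,v_1 + \sin(\varphi)\,v_2$. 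Matching coefficients identifies the three differences with $-\sqrt{3}\,R_{5\pi/6}v_1$, $-\sqrt{3}\,R_{7\pi/6}v_1$, and $+\sqrt{3}\,R_{\pi/2}v_1$ respectively. The angles $5\pi/12,\ 7\pi/12$ that appear in the statement are precisely the corresponding values of $\varphi/2$.

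Second, I would feed these expressions into Lemma \ref{lem:geo1} to obtain
\[
R_\varphi v_1 \;=\; \tilde{a}_\varphi \otimes \tilde{n}_\varphi + \tilde{n}_\varphi \otimes \tilde{a}_\varphi \;=\; 2\,\tilde{a}_\varphi \odot \tilde{n}_\varphi,
\]
with the explicit unit vectors $\tilde{a}_\varphi = (\cos(\varphi/2+\pi/4),\,\sin(\varphi/2+\pi/4))^T$ and $\tilde{n}_\varphi = (\sin(\varphi/2+\pi/4),\,-\cos(\varphi/2+\pi/4))^T$. Pulling the scalar $\pm 2\sqrt{3}$ into the $a$-vector and setting $a_{ij} := \mp 2\sqrt{3}\,\tilde{a}_\varphi$, $n_{ij} := \tilde{n}_\varphi$ already produces the decomposition in its stated trigonometric form. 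Evaluating $\cos$ and $\sin$ at the angles $2\pi/3$, $5\pi/6$, $\pi/2$ reproduces the explicit coordinate expressions claimed for $(a_{ij}, n_{ij})$.

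Finally, for transparency I would close the argument by verifying each identity $e^{(i)}-e^{(j)} = a_{ij}\odot n_{ij}$ directly, computing $\tfrac{1}{2}(a_{ij}\otimes n_{ij} + n_{ij}\otimes a_{ij})$ entrywise from the explicit vectors and comparing with the prescribed difference. The whole argument is essentially bookkeeping; the only mild point of care is to keep track of the sign freedom $(a,n)\mapsto(-a,-n)$ and the (rescaled) exchange $a\leftrightarrow n$ mentioned in the statement, both of which merely correspond to equivalent parametrizations of the same symmetrized rank-one line. There is no conceptual obstacle.
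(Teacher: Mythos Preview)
Your proposal is correct and follows exactly the approach the paper indicates: the paper's proof is the one-liner ``This is a consequence of Lemma~\ref{lem:geo1} and the form of the matrices in (\ref{eq:K3}),'' and you have simply spelled out that consequence in detail. Your identification of the differences as $\pm\sqrt{3}\,R_\varphi v_1$ with $\varphi\in\{5\pi/6,\,7\pi/6,\,\pi/2\}$, followed by the application of the symmetrized rank-one formula from Lemma~\ref{lem:geo1}, is precisely the intended computation.
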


\begin{proof}
This is a consequence of Lemma \ref{lem:geo1} and the form of the matrices in (\ref{eq:K3}).
\end{proof}

With Lemma \ref{lem:geo1} at hand, we can also compute the possible (symmetrized) rank-one connections, which occur between each well and any possible matrix in $\conv(K)$:

\begin{lem}
\label{lem:geo3}
Let $e^{(i)}\in K$ and let $e\in \conv(K)$. Let $\va\in(-\pi,\pi]$ denote the angle from the decomposition from Lemma \ref{lem:geo1} for the matrix $\frac{e^{(i)}-e}{\|e^{(i)}-e\|}$, where $\|\cdot\|$ denotes the spectral matrix norm.
Then, 
\begin{align}
\label{eq:cone_angle}
\va \in \left\{ 
\begin{array}{ll}
(\frac{5\pi}{6}, \frac{7\pi}{6}) &\mbox{ if } i=1,\\
(-\frac{\pi}{2},-\frac{\pi}{6}) &\mbox{ if } i=2,\\
(\frac{\pi}{6},\frac{\pi}{2}) &\mbox{ if } i=3,\\
\end{array}
\right.
\end{align}
and
\begin{align*}
e^{(i)}- e = \|e^{(i)}-e\| a_i(\va) \otimes n_i(\va), 
\end{align*}
with 
\begin{align*}
a_i(\va) :=    \begin{pmatrix} \sin(\frac{\varphi}{2} + \frac{\pi}{4}) \\ -\cos(\frac{\varphi}{2}+\frac{\pi}{4}) \end{pmatrix}, \ 
n_{i}(\va) := \begin{pmatrix} \cos(\frac{\varphi}{2}+\frac{\pi}{4}) \\ \sin(\frac{\varphi}{2}+\frac{\pi}{4}) \end{pmatrix}.
\end{align*}
\end{lem}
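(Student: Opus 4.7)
The plan is to combine Lemma \ref{lem:geo1}, which provides a normal form for any trace-free symmetric $2\times 2$ matrix, with an explicit geometric analysis of the triangle $\conv(K)$ in the two-dimensional space of trace-free symmetric matrices spanned by $v_1,v_2$.

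First I would observe that every matrix in $K$ is symmetric and trace-free and that trace-freeness is preserved under convex combinations, so $e$, $e^{(i)}$, and hence $e^{(i)}-e$ all lie in the two-dimensional space $\mathrm{span}\{v_1,v_2\}$. Assuming $e\neq e^{(i)}$ (so that $\|e^{(i)}-e\|>0$), the normalized difference $(e^{(i)}-e)/\|e^{(i)}-e\|$ is a unit-norm trace-free symmetric matrix and can therefore be written uniquely as $\cos(\varphi)v_1+\sin(\varphi)v_2$ for some $\varphi\in(-\pi,\pi]$. Plugging this $\varphi$ into the second identity of Lemma \ref{lem:geo1} immediately yields the asserted symmetrized rank-one expression in terms of the explicit vectors $a_i(\varphi)$ and $n_i(\varphi)$, establishing the decomposition part of the statement.

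Second, to pin down the angular ranges in (\ref{eq:cone_angle}) I would identify the trace-free symmetric matrices with $\R^2$ via the basis $(v_1,v_2)$. In these coordinates the three wells lie on the unit circle at angles $0$, $2\pi/3$, $-2\pi/3$, so $\conv(K)$ is an equilateral triangle inscribed in the unit circle. As $e$ varies over $\conv(K)\setminus\{e^{(i)}\}$, the direction $(e^{(i)}-e)/\|e^{(i)}-e\|$ sweeps out exactly the closed cone at the vertex $e^{(i)}$ generated by the two edges of the triangle incident to $e^{(i)}$. The extreme rays of this cone are the two differences $e^{(i)}-e^{(j)}$, $j\neq i$, whose explicit expressions are already recorded in Lemma \ref{lem:geo2}. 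Reading off their $(v_1,v_2)$-angles in each of the three cases directly produces the three intervals claimed in the lemma, with $(5\pi/6,7\pi/6)$ understood as an arc on $\R/2\pi\Z$ passing through $\pi$.

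Overall the proof is a direct computation, and no genuinely hard step is expected once Lemma \ref{lem:geo1} and the explicit coordinates of Lemma \ref{lem:geo2} are in hand. The only subtlety I anticipate is bookkeeping: one must be careful with the $2\pi$-periodicity of $\varphi$ when an interval crosses $\pi$ (the case $i=1$), and one must track the sign convention for the direction of the difference matrix consistently so that the parameterization of Lemma \ref{lem:geo1} is matched to the correct half-line and no spurious shift by $\pi$ is introduced in $\varphi$.
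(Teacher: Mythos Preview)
Your proposal is correct and follows essentially the same approach as the paper: the paper's proof is a one-line remark that the result is ``a direct consequence of Lemma \ref{lem:geo1} and of the fact that the set $K$ forms an equilateral triangle in strain space,'' and you have simply spelled out these two ingredients in more detail. Your added reference to Lemma \ref{lem:geo2} for the edge directions is a natural way to make the equilateral-triangle step explicit.
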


\begin{proof}
This is a direct consequence of Lemma \ref{lem:geo1} and of the fact that the set $K$ forms an equilateral triangle in strain space.
\end{proof}

As an immediate consequence of Lemma \ref{lem:geo2} and Lemma \ref{lem:geo3}, we infer the following result, which is graphically illustrated in Figure \ref{fig:normals} (b): 

\begin{figure}[t]
\centering
\includegraphics[scale=0.7, page=45]{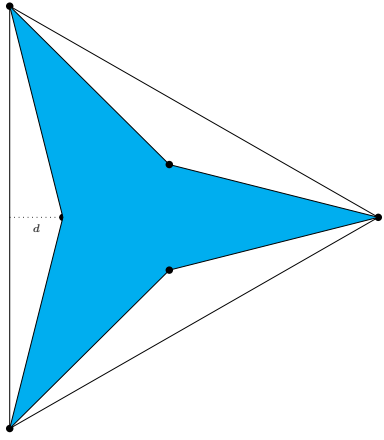}
\caption{The colored domain represents the interior of the star $C_d$ from Lemma \ref{lem:angle_a}. As not the full cone of rank-one directions between the wells are possible (c.f. \ref{eq:angle_cone1}), the difference in the angle between different cones is bounded strictly away from zero and $\pi$. }
\label{fig:star}
\end{figure}

\begin{lem}
\label{lem:angle_a}
Let $K$ be as in (\ref{eq:K3}) and let 
\begin{align*}
C_{d}:=\conv\{K, P_d, Q P_d Q^T, Q^2 P_d (Q^2)^T\},
\end{align*}
with $Q$ being a rotation by $\frac{\pi}{6}$ and
\begin{align*}
P_d := \frac{1-d}{2}\begin{pmatrix} -1 & 0 \\ 0 & 1 \end{pmatrix},
\end{align*}
where $d \in (0,\frac{1}{2})$.
Let $\hat{e}, \bar{e} \in C_d$. Suppose that
\begin{align*}
\hat{e}-e^{(i_1)} &= \|\hat{e}-e^{(i_1)} \| a_{i_1}\odot n_{i_1},\\
\bar{e}-e^{(i_2)} &= \|\hat{e}-e^{(i_2)} \| a_{i_2}\odot n_{i_2},
\end{align*}
with $e^{(i_1)}, e^{(i_2)} \in K$ and $e^{(i_1)} \neq e^{(i_2)}$.
Define $\alpha_{m_1,m_2} \in (0,\pi)$ as
\begin{align*}
\cos(\alpha_{m_1,m_2}) := (m_1,m_2),
\end{align*}
where
\begin{align*}
m_l \in \left\{ \frac{a_{i_l}}{\|a_{i_l}\|}, \frac{n_{i_l}}{\|n_{i_l}\|} \right\}, \ l \in \{1,2\}.
\end{align*}
Then there exists a constant $C=C(d)\in (0,\pi/4)$ such that
\begin{align*}
|\alpha_{m_1,m_2}|  \in \bigcup\limits_{l=1}^{6}\left(C(d) + \frac{\pi}{6}(l-1), \frac{\pi}{6}l - C(d)\right).
\end{align*}
\end{lem}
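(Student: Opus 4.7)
The plan is to reduce the claim to a case analysis of the possible unit-vector angles of $a_{i_l}, n_{i_l}$ in $\R^2$, using the explicit formulas from Lemma~\ref{lem:geo3}. First, I would observe that those formulas give the angles
\[
\theta(a_i(\va)) = \tfrac{\va}{2} - \tfrac{\pi}{4}, \qquad \theta(n_i(\va)) = \tfrac{\va}{2} + \tfrac{\pi}{4}
\]
of the respective unit vectors (a direct trigonometric check), and $n_i(\va)\perp a_i(\va)$. Plugging in the three allowed $\va$-intervals from \eqref{eq:cone_angle}, the six $(i,\mathrm{type})$ combinations produce six pairwise disjoint open intervals of length $\pi/6$ which, modulo $\pi$, partition $(0,\pi)$ along the multiples of $\pi/6$. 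The key bookkeeping is that the assignment
\[
(2,n)\mapsto(0,\tfrac{\pi}{6}),\ (1,a)\mapsto(\tfrac{\pi}{6},\tfrac{\pi}{3}),\ (3,n)\mapsto(\tfrac{\pi}{3},\tfrac{\pi}{2}),\ (2,a)\mapsto(\tfrac{\pi}{2},\tfrac{2\pi}{3}),\ (1,n)\mapsto(\tfrac{2\pi}{3},\tfrac{5\pi}{6}),\ (3,a)\mapsto(\tfrac{5\pi}{6},\pi)
\]
is a bijection, so for $i_1\neq i_2$ and any choice of $(m_1,m_2)$ the two vectors are forced into two \emph{distinct} sixths, with $|l_1-l_2|\in\{1,2,4,5\}$ (the case $|l_1-l_2|=3$ would require $i_1=i_2$).

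The second ingredient is the geometric meaning of the restriction $\hat{e},\bar{e}\in C_d$. Since the valley points $P_d,QP_dQ^T,Q^2P_dQ^{T2}$ are pulled in from $\partial\conv(K)$ by a positive amount of order $d$, the cone of rank-one directions from each well $e^{(i)}$ into $C_d$ is strictly inset from the cone into $\conv(K)$; equivalently, $\va_{i_l}$ lies in a closed subinterval of the interval of \eqref{eq:cone_angle} whose distance from the endpoints is some $c_1(d)>0$ (to be computed by an elementary planar geometry argument that traces the extremal rays from each $e^{(i)}$ to the adjacent valley vertices). Hence $\theta(m_l)$ is bounded away from the boundary of its sixth by $c_2(d):=c_1(d)/2>0$; this is exactly the statement that the admissible normals miss neighborhoods of the inter-well rank-one normals of Lemma~\ref{lem:geo2}, which are precisely the vectors whose angles are multiples of $\pi/6$ (see Figure~\ref{fig:normals}(b)).

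Finally, I would run through the (at most twelve, up to symmetry) pairs of sixths, in each case using the explicit angular ranges of $\theta(m_1),\theta(m_2)$ to express
\[
\alpha_{m_1,m_2}=\arccos\bigl(\cos(\theta(m_1)-\theta(m_2))\bigr)\in(0,\pi),
\]
and verifying that the $c_2(d)$-gap on each $\theta(m_l)$ propagates to a uniform $C(d)>0$ gap between $\alpha_{m_1,m_2}$ and every multiple of $\pi/6$; then the conclusion $\alpha_{m_1,m_2}\in\bigcup_{l=1}^{6}\bigl(C(d)+\tfrac{(l-1)\pi}{6},\tfrac{l\pi}{6}-C(d)\bigr)$ follows, and bounding $C(d)<\pi/4$ is automatic from the construction. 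The main obstacle — and the step requiring the most care — is the last one: ensuring that the uniform $C(d)$ survives for \emph{every} admissible pair of sixths, since in some cases (e.g., $|l_1-l_2|=4$) the natural interval for $\theta(m_1)-\theta(m_2)$ a priori straddles two adjacent sixths, and one must exploit the joint bounds on $\va_{i_1}$ and $\va_{i_2}$ coming from $\hat{e},\bar{e}\in C_d$ to rule out the intermediate multiples of $\pi/6$.
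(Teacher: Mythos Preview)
Your approach mirrors the paper's: parametrize the rank-one decomposition by the matrix-space angle $\varphi$, observe that the restriction to $C_d$ insets the $\varphi$-intervals of \eqref{eq:cone_angle} by some $c_1(d)>0$, and read off the conclusion from the resulting sectors for $a_{i_l},n_{i_l}$. The paper's proof is terse and simply asserts that ``only the claimed angles occur''; your sixth-by-sixth bookkeeping and the bijection you write down are a correct and more explicit version of the same argument.

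However, the obstacle you flag at the end is a genuine gap, and your proposed fix cannot work. There are no ``joint bounds on $\varphi_{i_1}$ and $\varphi_{i_2}$'': the points $\hat e$ and $\bar e$ are independent elements of $C_d$, so $\varphi_{i_1}$ and $\varphi_{i_2}$ range freely over their respective inset intervals. Concretely, take $i_1=1$ with $\varphi_1=\pi$ (the midpoint of its interval, so certainly in the inset range) and $i_2=2$ with $\varphi_2=-\pi/3$ (likewise the midpoint). Then $\theta(a_{1})=\pi/4$ and $\theta(n_{2})=\pi/12$, giving $\alpha_{a_1,n_2}=\pi/6$ exactly, which lies in none of the claimed intervals for any $C(d)>0$. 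So the straddling you worry about does occur, and the lemma as literally stated is too strong.

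What \emph{does} follow from your analysis---and this is the point the paper actually uses (see Lemma~\ref{lem:angles} and the angle constraints in Definition~\ref{defi:classes})---is the weaker conclusion that $\alpha_{m_1,m_2}$ avoids $C(d)$-neighborhoods of $0$, $\tfrac{\pi}{2}$, and $\pi$. This is precisely your observation that $|l_1-l_2|\in\{1,2,4,5\}$: the cases $|l_1-l_2|=1,5$ give $\alpha\in(2c_2,\tfrac{\pi}{3}-2c_2)\cup(\tfrac{2\pi}{3}+2c_2,\pi-2c_2)$, and the cases $|l_1-l_2|=2,4$ give $\alpha\in(\tfrac{\pi}{6}+2c_2,\tfrac{\pi}{2}-2c_2)\cup(\tfrac{\pi}{2}+2c_2,\tfrac{5\pi}{6}-2c_2)$. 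In particular $\alpha$ is bounded away from $0,\tfrac{\pi}{2},\pi$ by $2c_2(d)$, which suffices for every downstream application.
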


\begin{proof}
Arguing as for (\ref{eq:cone_angle}) by using the definition of the set $C_d$, we infer that the angles $\va$ that occur in the representation from Lemma \ref{lem:geo1}
for $\frac{e-e^{(i)}}{\|e-e^{(i)}\|}$ with $e\in C_d$ and $e^{(i)}\in K$ satisfy 
\begin{align}
\label{eq:angle_cone1}
\va \in \left\{ 
\begin{array}{ll}
(\frac{5\pi}{6}-C(d), \frac{7\pi}{6} + C(d)) &\mbox{ if } i=1,\\
(-\frac{\pi}{2}+C(d),-\frac{\pi}{6}-C(d)) &\mbox{ if } i=2,\\
(\frac{\pi}{6}-C(d),\frac{\pi}{2}+C(d)) &\mbox{ if } i=3.
\end{array}
\right.
\end{align}
Here $C(d)\in(0,\pi/3)$ is a constant, which depends only on $d$.
The associated symmetrized rank-one connection is determined by $\va$ as stated in Lemma \ref{lem:geo3}. Applied to the situation in Lemma \ref{lem:angle_a} this implies that $a_{i_1}, n_{i_1}, a_{i_2}, n_{i_2}$ are expressed in terms of $\va$ (as in Lemma \ref{lem:geo3}). Since $d>0$, the sectors parametrized by $\va$ however do not overlap for $i_1\neq i_2$. As a consequence of this and of the options in (\ref{eq:angle_cone1}), only the claimed angles $\alpha_{m_1,m_2}$ occur.
\end{proof}

\section{The Convex Integration Algorithm}
\label{sec:Conti}
In this section we present and analyze our convex integration algorithm (c.f. Algorithms \ref{alg:construction}, \ref{alg:skew}). 
Our discussion of this consists of four parts: First in Section \ref{sec:conv} we introduce a replacement construction, in which a deformation gradient can be modified (c.f. Lemma \ref{lem:conti_undeformed}-\ref{lem:convex_int}). Here we follow
Otto's Minneapolis lecture notes \cite{Otto} and refer to this construction as a version of Conti's construction (c.f. \cite{C}, \cite{CT05} (Appendix) but also \cite{K1}). \\
Next in Section \ref{sec:alg} we explain how the Conti construction can be exploited to formulate the convex integration algorithm (c.f. Algorithms \ref{alg:construction}, \ref{alg:skew}). Here we deviate from the more common \emph{qualitative} algorithms by precisely prescribing error estimates in strain space, by specifying a covering construction and by controlling the skew part \emph{quantitatively}.\\
In Section \ref{sec:well} we analyze our algorithms and show that they are well-defined (c.f. Proposition \ref{prop:symcontrol}). We further provide a control on the skew part of the resulting construction (c.f. Proposition \ref{prop:skewcontrol1}).\\
Finally, in Section \ref{sec:exist} we use Algorithms \ref{alg:construction}, \ref{alg:skew} to deduce the existence of solutions to the inclusion problem (\ref{eq:incl}), c.f. Proposition \ref{prop:convex_int}.
\\ 

We remark that our version of the convex integration scheme is based on particular properties
of our set of strains: For the hexagonal-to-rhombic phase transition 
the laminar convex hull equals the convex hull (c.f. Lemma \ref{lem:convex_hull}). Moreover, we can
connect any matrix in $\intconv(K)$ with the wells $K$ (c.f. Lemma
\ref{lem:rk1}). For a general inclusion problem this is no longer possible and
hence more sophisticated arguments are necessary. In spite of the restricted
applicability of the scheme, we have decided to focus on the hexagonal-to-rhombic phase transformation, as it yields one of the simplest instances
of convex integration and illustrates the difficulties and ingredients, which have to be dealt with in proving higher Sobolev regularity in the simplest possible set-up.

\subsection{The replacement construction}
\label{sec:conv}
In this section we describe the replacement construction that allows to modify constant gradients by replacing them with an affine construction that preserves the boundary values. Moreover, the resulting new gradients are controlled (c.f. Lemma \ref{lem:convex_int}).\\

We begin by recalling Otto's variant of Conti's construction \cite{Otto}, see also the video at \cite{minneapolis}:

\begin{figure}[t]
  \centering
  \includegraphics[width=0.8\linewidth, page=3]{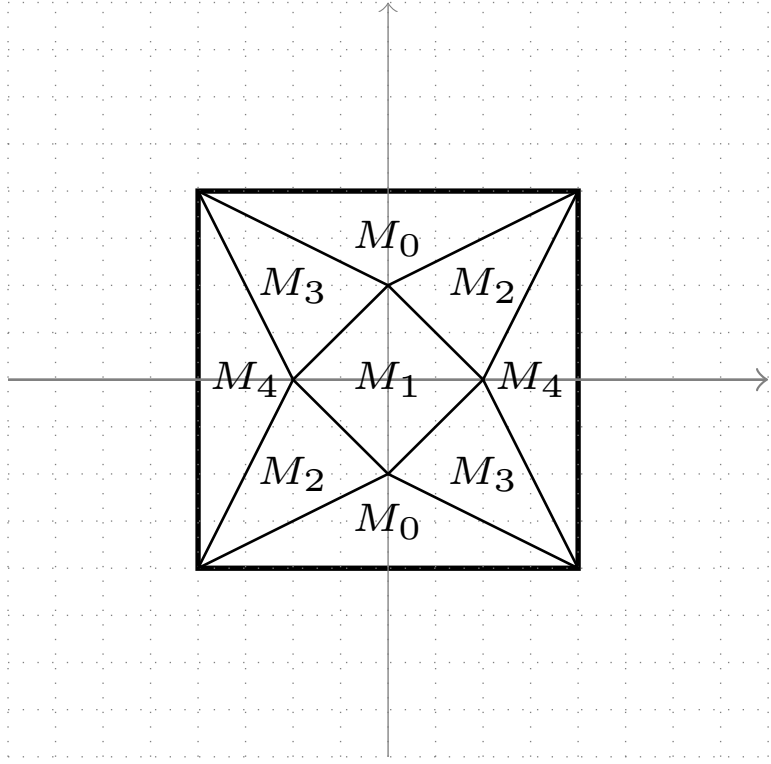}
  \caption{The level sets in Conti's construction}
  \label{fig:conti}
\end{figure}
\begin{lem}[Undeformed Conti construction]
  \label{lem:conti_undeformed}
Let $\Omega=(-1,1)^{2}$ and let
\begin{equation}
\label{eq:contimatrices}
\begin{split}
&  M_{0}=2
  \begin{pmatrix}
    0 & 1\\ 0 &0
  \end{pmatrix},
                M_{1}= 2
                \begin{pmatrix}
                  0 & -1 \\ 1 & 0
                \end{pmatrix},
                                M_{2}= \frac{2}{3}
                                \begin{pmatrix}
                                  2 & -1 \\ 1 & -2
                                \end{pmatrix}, \\
 & M_{3}= \frac{2}{3}
  \begin{pmatrix}
    -2 & -1 \\ 1 & 2
  \end{pmatrix},
                 M_{4}=2
                   \begin{pmatrix}
                     0 & 0 \\ -1& 0
                   \end{pmatrix}.
\end{split}
\end{equation}
Then there exists $u: \R^{2}\rightarrow \R^{2}$ Lipschitz such that
\begin{align*}
 & u=0 \mbox{ on } \R^{2}\setminus \Omega, \\
&  \nabla u \in \{M_{0}, \dotsi, M_{4}\} \text{ in }\Omega, \\
 & \{x: \nabla u(x) = M_{i}\} \mbox{ is a finite union of rectangles and triangles}, \\
 & |\{x: \nabla u(x) = M_{i}\}|=
  \begin{cases}
    1 & i=0,4, \\
    \frac{1}{2} & i=1, \\
    \frac{3}{4} & i=2,3.
  \end{cases}               
\end{align*}
\end{lem}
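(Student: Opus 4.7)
My plan is to construct $u$ explicitly as a continuous, piecewise affine function on a specific partition of $\Omega=(-1,1)^{2}$ into nine polygonal pieces, and then to read off each claim of the lemma directly from the construction.

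First, I would compute the rank-one compatibilities among the five gradient values. The pairwise differences reveal that $(M_{0},M_{2})$, $(M_{0},M_{3})$, $(M_{4},M_{2})$, $(M_{4},M_{3})$, $(M_{1},M_{2})$, $(M_{1},M_{3})$ are exactly the rank-one connected pairs, with interface slopes $1/2$, $-1/2$, $2$, $-2$, $-1$ and $1$ respectively, while $M_{0}-M_{4}$, $M_{1}-M_{0}$, $M_{1}-M_{4}$ and $M_{2}-M_{3}$ all have rank two. In particular, no two of $M_{0}, M_{1}, M_{4}$ can be adjacent, so $M_{2}$ and $M_{3}$ must act as intermediaries. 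Moreover $M_{0}(1,0)^{T}=0$ and $M_{4}(0,1)^{T}=0$, whereas $M_{1}, M_{2}, M_{3}$ all have trivial kernel, so only $M_{0}$ can border horizontal edges of $\partial\Omega$ and only $M_{4}$ can border vertical edges.

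Next, I would write down the partition essentially dictated by these constraints: a central rhombus with vertices $(\pm 1/2,0)$ and $(0,\pm 1/2)$ carrying $\nabla u=M_{1}$; two $M_{0}$-triangles with vertices $(\pm 1,1),(0,1/2)$ and $(\pm 1,-1),(0,-1/2)$ abutting the top and bottom edges; two $M_{4}$-triangles with vertices $(1,\pm 1),(1/2,0)$ and $(-1,\pm 1),(-1/2,0)$ abutting the right and left edges; and four quadrant triangles filling the remaining gaps, alternately assigned $M_{2}$ (upper right and lower left) and $M_{3}$ (upper left and lower right). A direct check shows that each interior edge has the slope prescribed by the rank-one normal of the two adjoining gradients, so that the Hadamard jump condition holds and a continuous piecewise affine $u$ with the prescribed piecewise constant gradient is well defined up to one integration constant.

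Finally, I would fix that constant so that $u=0$ along $\partial\Omega$: since $M_{0}$ annihilates the horizontal direction and $M_{4}$ the vertical one, each affine piece adjacent to $\partial\Omega$ is constant along the corresponding edge and can be normalised to zero; the Hadamard jumps then propagate this value through the remaining seven pieces, giving a globally continuous $u$ that extends by zero outside $\Omega$. Global Lipschitz regularity is immediate from the uniform bound on the $M_{i}$. A shoelace computation yields the volume fractions: each of the four boundary triangles has area $1/2$ (total $1$ for $i=0,4$), the central rhombus has diagonals of length $1$ and hence area $1/2$, and each quadrant triangle has area $3/8$ so that $M_{2}$ and $M_{3}$ each accumulate area $3/4$. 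The only real subtlety in the argument is the preliminary bookkeeping of compatibilities and boundary-kernel constraints; once those are in place, the geometry of Figure \ref{fig:conti} essentially forces the partition and the remaining verification is routine.
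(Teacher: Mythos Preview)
Your proposal is correct and arrives at exactly the same piecewise affine map and the same nine-piece partition of $(-1,1)^2$ as the paper. The only difference is the direction of the argument: the paper prescribes the values of $u$ at the four interior vertices $(\pm\tfrac12,0),(0,\pm\tfrac12)$, linearly interpolates, and then \emph{computes} that the resulting gradients are $M_0,\dots,M_4$; you instead start from the five matrices, work out the rank-one compatibilities and boundary-kernel constraints, deduce that the partition is essentially forced, and then integrate. Your route has the advantage of explaining \emph{why} the picture looks the way it does, while the paper's route is shorter because it simply writes down the ansatz and verifies it.
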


Following Otto \cite{Otto}, we generalize this construction slightly by allowing variable volume fractions:

\begin{lem}[Variable Conti construction]
  \label{cor:var_conti}
  Let $\Omega=(-1,1)^2$.
  Let $\lambda \in (0,1)$. Define
   \begin{align*}
 &     M_{0}=2
  \begin{pmatrix}
    0 & 1\\ 0 &0
  \end{pmatrix},
                 M_{4}=2
                   \begin{pmatrix}
                     0 & 0 \\ -1& 0
                   \end{pmatrix}, \\
  &   M_{1}^{\lambda}= 2
                \begin{pmatrix}
                  0 & \frac{-1+\lambda}{\lambda} \\ \frac{1-\lambda}{\lambda} & 0
                \end{pmatrix},
                      M_{2}^{\lambda}=\frac{-2(1-\lambda)}{1-(1-\lambda)^{2}}
    \begin{pmatrix}
      1 & \lambda -1\\
      1-\lambda & -1
    \end{pmatrix},\\
   &               M_{3}^{\lambda}=Q^{T}M_{2}^{\lambda}Q, Q=
                  \begin{pmatrix}
                    0 & 1 \\ -1 & 0
                  \end{pmatrix}.
  \end{align*}
Then there exists $u: \R^{2}\rightarrow \R^{2}$ Lipschitz such that
\begin{align*}
 & u=0 \mbox{ on } \R^{2}\setminus \Omega, \\
&  \nabla u \in \{M_{0}^{\lambda}, \dotsi, M_{4}^{\lambda}\}\text{ in }\Omega, \\
 & \{x: \nabla u(x) = M_{i}^{\lambda}\} \mbox{ is a finite union of rectangles and triangles}, \\
&  |\{x: \nabla u(x)=M_{i}^{\lambda}\}|=
    \begin{cases}
      2(1-\lambda) & i=0,4, \\
      2\lambda^{2} & i =1,\\
      \frac{1}{2}(4-4(1-\lambda)-2\lambda^{2})=\lambda(2-\lambda) & i=2,3.
    \end{cases}              
\end{align*}	  
Multiplying $u$ with $\frac{1}{2\lambda'}$ and setting $\lambda=1-\lambda'$, we obtain the
matrices on page 56 of \cite{Otto}:
\begin{align}
  \label{eq:matrices}
  \begin{split}
  &M_{0}=\frac{1}{\lambda'}
  \begin{pmatrix}
    0 & 1 \\ 0 &0
  \end{pmatrix},
                 M_{1}=\frac{1}{1-\lambda'}
  \begin{pmatrix}
    0 & -1 \\ 1 &0
  \end{pmatrix},
                  M_{2}=\frac{1}{1-\lambda'^{2}}
  \begin{pmatrix}
    1 & -\lambda' \\ \lambda' &-1
  \end{pmatrix},\\
  &                            M_{3}=\frac{1}{1-\lambda'^{2}}
  \begin{pmatrix}
    -1 & -\lambda' \\ \lambda' &1
  \end{pmatrix},
                               M_{4}=\frac{1}{\lambda'}
  \begin{pmatrix}
    0 & 0 \\ -1 &0
  \end{pmatrix},
  \end{split}
\end{align}
and volumes (total volume $4$)
\begin{align*}
& |\{x: \nabla u(x)=M_{i}\}|=
  \begin{cases}
    2\lambda' & i=0,4, \\
    2(1-\lambda')^{2} & i=1, \\
    (1-\lambda')(1+\lambda') & i=2,3.
  \end{cases}
\end{align*}
\end{lem}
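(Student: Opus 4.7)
The statement is a one-parameter generalization of the undeformed Conti construction of Lemma~\ref{lem:conti_undeformed}, so my strategy is to take the explicit piecewise affine map $u$ from that construction and stretch/compress its level sets in a way parametrized by $\lambda\in(0,1)$, redefining the gradient matrices accordingly. Concretely, the level sets $\{\nabla u = M_i\}$ in Lemma~\ref{lem:conti_undeformed} form a symmetric pattern of rectangles and triangles in $(-1,1)^2$ (central rectangle for $M_1$, horizontal bands for $M_0,M_4$, and triangular corner/wedge regions for $M_2,M_3$). I would reparametrize this pattern by a single parameter so that the central $M_1$ rectangle has width proportional to $\lambda$ rather than $1/2$, with the $M_0, M_4$ bands shrinking and the $M_2, M_3$ triangles adjusting correspondingly, and then compute the unique matrices $M_1^\lambda, M_2^\lambda, M_3^\lambda$ that make the resulting piecewise affine map continuous with zero boundary values.

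\textbf{Step 1 (geometry).} I would fix the geometric layout: take the same covering of $(-1,1)^2$ as in Figure~\ref{fig:conti}, but with the central vertical strip containing the $M_1$-region shrunk to width $2\lambda$ (instead of $1$) and the left/right rectangles carrying $M_0$ and $M_4$ of width $1-\lambda$ each; the triangles carrying $M_2^\lambda, M_3^\lambda$ then occupy the remaining wedges. The parameter $\lambda$ completely determines the volume fractions
\begin{align*}
|\{\nabla u = M_0^\lambda\}| = |\{\nabla u = M_4^\lambda\}| &= 2(1-\lambda),\\
|\{\nabla u = M_1^\lambda\}| &= 2\lambda^2,\\
|\{\nabla u = M_2^\lambda\}| = |\{\nabla u = M_3^\lambda\}| &= \lambda(2-\lambda),
\end{align*}
by elementary area computation.

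\textbf{Step 2 (compatibility and boundary data).} With the geometry fixed, continuity of $u$ along each internal interface is equivalent to the Hadamard condition: on every shared edge with normal $\nu$, the two adjacent gradients must differ by a rank-one matrix $a\otimes\nu$. Since $M_0$ and $M_4$ are unchanged and the edge normals between adjacent regions are determined by $\lambda$, the three matrices $M_1^\lambda, M_2^\lambda, M_3^\lambda$ are uniquely forced and one checks by direct computation that they match the explicit formulas given in the statement. Finally, one verifies the zero boundary condition by checking that, weighted by the volume fractions of Step 1,
\begin{align*}
\sum_{i=0}^{4}|\{\nabla u = M_i^\lambda\}|\,M_i^\lambda = 0,
\end{align*}
which is an elementary computation using the explicit entries. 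Together with the fact that each piece is affine and the values agree on interfaces, this shows $u$ is Lipschitz on $\R^2$ and vanishes outside $\Omega$.

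\textbf{Step 3 (rescaling).} The second set of matrices in \eqref{eq:matrices} and the corresponding volumes are then obtained by substituting $\lambda = 1-\lambda'$ (swapping the roles of the inner and outer proportions) and rescaling the constructed map $u$ by $\tfrac{1}{2\lambda'}$: this multiplies each $M_i^\lambda$ by $\tfrac{1}{2\lambda'}$ and leaves the level-set volumes unchanged, yielding exactly the matrices on page~56 of \cite{Otto}.

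\textbf{Main obstacle.} The routine but non-trivial step is the linear algebra of Step~2: one must check three distinct rank-one compatibility conditions (between $M_0^\lambda/M_4^\lambda$ and $M_2^\lambda/M_3^\lambda$, between $M_1^\lambda$ and $M_2^\lambda/M_3^\lambda$, and along the external boundary) simultaneously, using the $\lambda$-dependent interface normals. The cleanest way to organize this is to parametrize the construction so that rank-one compatibility on one class of interfaces is automatic and then solve a single scalar equation per remaining interface for the free parameters in $M_1^\lambda, M_2^\lambda, M_3^\lambda$; the asserted formulas are precisely the solutions. Once the formulas are in hand, the volume and averaging computations are direct and no further argument is needed.
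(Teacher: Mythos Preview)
Your overall strategy---fix the partition of $(-1,1)^2$ into polygonal pieces with one free parameter, then determine the gradients---is correct in spirit, but two points need correcting.

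First, your description of the geometry is off. The $M_1^\lambda$-region is not a vertical strip but the inner diamond with vertices $(\pm\lambda,0),(0,\pm\lambda)$ (area $2\lambda^2$); the $M_0,M_4$-regions are the four \emph{triangles} on the sides of the square whose bases lie on $\partial\Omega$ and whose apices are the diamond vertices; and the $M_2^\lambda,M_3^\lambda$-regions are the four corner triangles connecting diamond edges to the square's corners. Your areas happen to be right, but ``left/right rectangles of width $1-\lambda$'' would force interface normals incompatible with $M_0=2\,e_1\otimes e_2$.

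Second, and more importantly, the weighted-average identity $\sum_i |\{\nabla u=M_i^\lambda\}|\,M_i^\lambda=0$ is \emph{necessary} but not \emph{sufficient} for $u=0$ on $\partial\Omega$: it only says $\int_\Omega \nabla u=0$. You must either check the rank-one compatibility of each boundary-touching $M_i^\lambda$ with the zero matrix across $\partial\Omega$ (which you mention in your ``Main obstacle'' but do not use in Step~2), or---as the paper does---reverse the logic entirely: \emph{prescribe} $u=0$ on $\partial\Omega$ and $u(\pm\lambda,0)=\pm 2(0,1-\lambda)^T$, $u(0,\pm\lambda)=\pm 2(-(1-\lambda),0)^T$ at the four interior nodes, then linearly interpolate on each triangle. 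This automatically yields a continuous piecewise affine map with zero boundary data, and you simply \emph{compute} $\nabla u$ on each piece and read off the matrices $M_i^\lambda$. This is both shorter and avoids the verification gap in your Step~2.
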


\begin{proof}[Proof of Lemma \ref{cor:var_conti} and \ref{lem:conti_undeformed}]
Lemma \ref{lem:conti_undeformed} is a special case of Lemma \ref{cor:var_conti} and (\ref{eq:matrices}) with $\lambda =1/2$. We thus describe the general construction for arbitrary $\lambda \in(0,1)$ and then show that this reduces to the matrices from Lemma \ref{lem:conti_undeformed} in the case $\lambda=\frac{1}{2}$.\\
  In order to construct the function $u$, we prescribe the value of $u$ at the points
  $(0,\pm \lambda), (\pm \lambda, 0)$ for $\lambda \in (0,1)$ to be determined
  and then consider linear interpolations. This then yields a 
  piecewise affine Lipschitz map. It remains to verify that all matrices are
  divergence-free and are as claimed in the lemmata (c.f. also Figure \ref{fig:conti}).
  
  \begin{figure}[t]
    \centering
  \includegraphics[width=0.8\linewidth, page=4]{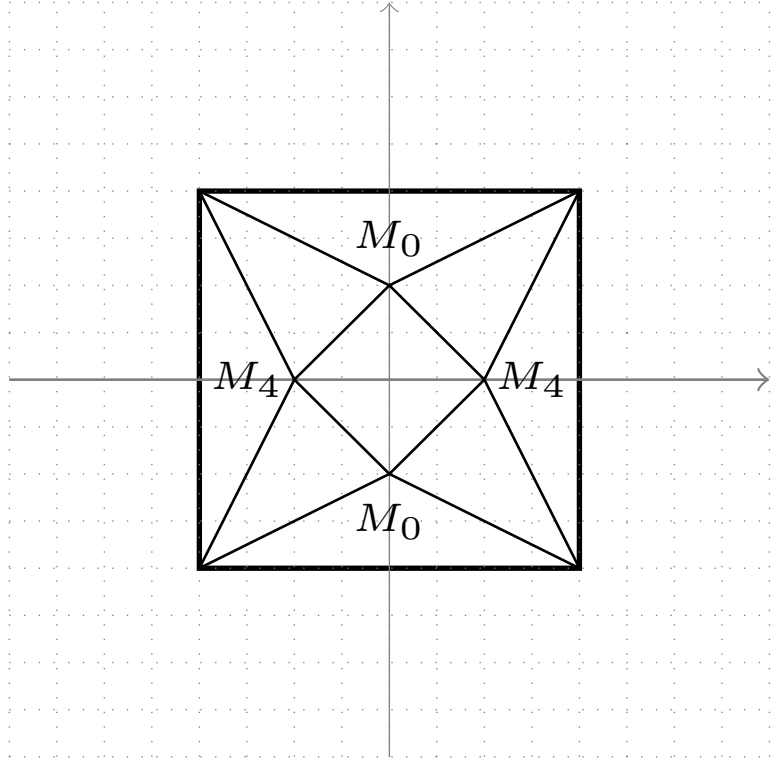}.
    \caption{A symmetric ansatz}
    \label{fig:conti_step1}
  \end{figure}

  We start with the ansatz given in Figure \ref{fig:conti_step1}. The value
  of $u$ at the points $(\pm \lambda,0), (0,\pm \lambda)$ is chosen in
  such a way that linear interpolation in the triangles on the sides of the square in Figure \ref{fig:conti} yields $\nabla u \in \{M_0,M_4\}$, i.e.
  \begin{align*}
    u(\pm \lambda,0)&= \pm 2
    \begin{pmatrix}
      0 \\ 1-\lambda
    \end{pmatrix}, \\
    u(0, \pm \lambda)&= \pm 2
    \begin{pmatrix}
      -1+\lambda \\0
    \end{pmatrix}.
  \end{align*} 
  By linear interpolation on the inner diamond we hence obtain that
  \begin{align*}
  &  \nabla u
    \begin{pmatrix}
      2 \lambda  & 0 \\
      0 & 2 \lambda
    \end{pmatrix}
=4
          \begin{pmatrix}
            0  & -1 +\lambda\\
            1-\lambda & 0
          \end{pmatrix}\\
   & \Rightarrow
    \nabla u =2
    \begin{pmatrix}
      0 & \frac{-1+\lambda}{\lambda} \\
      \frac{1-\lambda}{\lambda} & 0
    \end{pmatrix}.
  \end{align*}
  Choosing $ \lambda =\frac{1}{2}$, we thus obtain $\nabla u = M_{1}$.
  It remains to check the value of $\nabla u$ on the triangles, which interpolate between the sides of the inner diamond and the corners of the outer square. 
  By symmetry it suffices to consider the
  lower left triangle.
  Using again linear interpolation, there $\nabla u$ has to satisfy
  \begin{align*}
   & \nabla u
    \begin{pmatrix}
      1  & 1-\lambda\\
      1-\lambda & 1
    \end{pmatrix}
=
                  \begin{pmatrix}
                    -2(1-\lambda) & 0 \\
                    0 & -2(1-\lambda)
                  \end{pmatrix} \\
    &\Rightarrow \nabla u =\frac{-2(1-\lambda)}{1-(1-\lambda)^{2}}
    \begin{pmatrix}
      1 & \lambda -1\\
      1-\lambda & -1
    \end{pmatrix}.
    \end{align*}
    Setting $\lambda = 1/2$ this equals
    \begin{align*}
\nabla u                  =
\frac{2}{3}
                  \begin{pmatrix}
                    2& -1 \\
                    1 & -2
                  \end{pmatrix},
  \end{align*}
  which is the matrix $M_2$ from Lemma \ref{lem:conti_undeformed}.
\end{proof}

Using this construction as a basic building block, the following lemma allows us to
replace a general matrix $M \in \R^{2\times 2}$ and to restrict
the replacement matrices to an $\epsilon$-neighborhood of a rank-one line passing through $M$. 

\begin{lem}[Deformed Conti construction, page 57 of \cite{Otto}]
  \label{lem:conti_deformed}
  Let $M,M_{0},M_{1}$ be given matrices such that
  \begin{align*}
   & M = \frac{1}{4}M_{0} + \frac{3}{4}M_{1}, \\
   & M_{1}-M_{0}= a \otimes n, \quad a \cdot n =0.
  \end{align*}
Then, for every $\epsilon>0$ there exist matrices
$\tilde{M}_{1},\tilde{M}_{2},\tilde{M}_{3},\tilde{M}_{4}$ with 
\begin{equation}
\label{eq:error}
\begin{split}
 & |\tilde{M}_{1}- M_{1}|< \epsilon, |\tilde{M}_{2}-M_{2}|< \epsilon, |\tilde{M}_{4}-M|<\epsilon, \\
  &|\tilde{M}_{3}-M_{2}|<\epsilon, \\
 & M_{2}=\frac{1}{5}M_{0} + \frac{4}{5}M_{1},
\end{split} 
\end{equation}
a rectangle $\Omega \subset \R^{2}$ of 
aspect ratio $\delta =\frac{\epsilon}{20 |a|}$
and a Lipschitz map $u:
\R^{2}\rightarrow \R^{2}$ such that
\begin{align*}
  &\nabla u = M \mbox{ in } \R^{2} \setminus \Omega, \\
  &\nabla u \in \{M_{0}, \tilde{M}_{1}, \dotsi, \tilde{M}_{4}\}, \\
  &|\{x: \nabla u (x)=M_{0}\}| \geq \frac{1}{8}|\Omega|.
\end{align*}
Furthermore, the level sets of $\nabla u$ are given by the union of at most 16
triangles.
\end{lem}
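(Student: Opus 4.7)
The plan is to follow the approach of Otto \cite{Otto}: take the variable Conti construction on the unit square from Lemma \ref{cor:var_conti}, and pull it back through affine transformations in both the domain and the range so that it lives on a thin rectangle aligned with the rank-one direction $a\otimes n$ and modifies the background value $M$ in a controlled way, with the aspect ratio $\delta$ of the rectangle playing the role of the small error parameter.

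\emph{Setup.} By an orthonormal rotation of $\R^{2}$ I may assume $a = |a|e_1$ and $n = e_2$, so that $a\otimes n = |a|\,e_1\otimes e_2$ has only its $(1,2)$ entry nonzero, and the four matrices $M_0, M_1, M, M_2$ all differ from one another only in that one entry (they lie on the affine line $M + \R(e_1\otimes e_2)$). I then apply Lemma \ref{cor:var_conti} with the specific parameter $\lambda' = 1/4$, which is consistent with the assumption $M = \tfrac14 M_0 + \tfrac34 M_1$ (a short check against the averaging identity $\int_{(-1,1)^2}\nabla u_0 = 0$), to obtain a Lipschitz $u_0 : (-1,1)^{2} \to \R^{2}$ with $u_0 = 0$ on the boundary and $\nabla u_0 \in \{N_0,\dots,N_4\}$. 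Explicitly, $N_0 = 4\,e_1\otimes e_2$, $N_1 = \tfrac{4}{3}(-e_1\otimes e_2 + e_2\otimes e_1)$, $N_4 = -4\,e_2\otimes e_1$, and $N_2, N_3$ have $(1,1)$ entry $\pm \tfrac{16}{15}$; crucially, the $(1,1)$ entries of $N_0, N_1, N_4$ all vanish, and the $N_0$ region occupies exactly $1/8$ of the area.

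\emph{The construction.} Pick any $L > 0$, set $\delta := \epsilon/(20|a|)$, $A := \operatorname{diag}(L,\delta L)$, and let $\Omega := A((-1,1)^{2})$, a rectangle of aspect ratio $\delta$. Define
\[
u(x) := Mx + B\, u_0(A^{-1}x), \qquad B := -\tfrac{3|a|\delta L}{16}\, e_1\otimes e_1.
\]
Outside $\Omega$, $u_0$ vanishes and hence $\nabla u = M$. Inside, $\nabla u = M + B\,N_i\,A^{-1}$ on the level set of $\nabla u_0$ of value $N_i$, which is the image under $A$ of the corresponding set in Conti's construction and is therefore a triangle (or a rectangle split into two triangles), giving at most $16$ triangles in total. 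Volume fractions are preserved by $A$, so $|\{\nabla u = M_0\}| = \tfrac{1}{8}|\Omega|$.

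\emph{Verification and the key technical point.} The main — and really only — subtlety is the choice of $B$ as a \emph{rank-one} matrix with range $\R e_1$: this forces the second row of every $B\,N_i\,A^{-1}$ to vanish, which is necessary because the required corrections $M_0 - M$, $M_1 - M$, $0$, $M_2 - M$ all have vanishing second row. The scalar $c = -\tfrac{3|a|\delta L}{16}$ is then pinned down by requiring $\nabla u = M_0$ exactly on the $N_0$ region (using $(N_0)_{11} = 0$ and $(N_0)_{12} = 4$), and one checks by direct arithmetic that this single choice \emph{automatically} yields $\nabla u = M_1$ exactly on $N_1$ and $\nabla u = M$ exactly on $N_4$ (again using the vanishing of their $(1,1)$ entries, together with the matching ratios of their $(1,2)$ entries). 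On the $N_2, N_3$ regions, the $(1,2)$ entry still produces exactly the target $M_2$ offset, while the nonzero $(1,1)$ entry contributes a transverse error of size $|a|\delta/5 = \epsilon/100 < \epsilon$. A naive scalar rescaling $B = \gamma I$ would, by contrast, leave an uncontrollable $O(|a|)$ error in the $(2,2)$ entry on the $N_2, N_3$ regions (independent of $\delta$); it is precisely the rank-one structure of $B$ that collapses the five Conti values onto the rank-one line through $M$ in direction $e_1\otimes e_2$, up to a controlled transverse error scaling linearly in $\delta$.
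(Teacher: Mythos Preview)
Your argument is correct and follows the same overall strategy as the paper: take the variable Conti construction of Lemma~\ref{cor:var_conti} with $\lambda'=\tfrac14$, then pre- and post-compose with linear maps to send the unit square to a thin rectangle and to push the five gradient values onto (a neighborhood of) the rank-one segment through $M$. Where you differ is in the choice of the range map. The paper applies the anisotropic diagonal scaling $(u_1,u_2)=(\delta v_1,\delta^2 v_2)$ together with $(y_1,y_2)=(x_1,\delta x_2)$, which leaves $M_0$ exact and produces $O(\delta)$ errors in the $(1,1),(2,1),(2,2)$ entries for all other values. You instead take the rank-one map $B=c\,e_1\otimes e_1$, which kills the second row of every correction outright. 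This exploits the observation (which the paper's scaling does not use) that the first rows of $N_0,N_1,N_4$ have vanishing $(1,1)$ entry, so you obtain $\tilde M_1=M_1$ and $\tilde M_4=M$ \emph{exactly}, with the only error ($|a|\delta/5$) appearing in the $(1,1)$ slot for $N_2,N_3$. Your version is thus a mild sharpening of the paper's construction, but the architecture of the proof is the same.
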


\begin{figure}[t]
  \centering
  \includegraphics[width=0.8\textwidth, page=5]{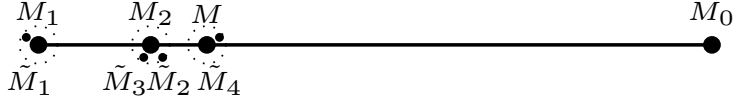}
  \caption{The horizontal axis corresponds to the upper right component of the
    matrix:
  $M\sim 0, M_{0}\sim \frac{1}{\lambda}, M_{1}\sim -\frac{1}{1-\lambda},
  M_{2}\sim - \frac{\lambda}{1-\lambda^{2}}$.
Here for $\lambda=\frac{1}{4}$.}
  \label{fig:conti_deformed_matrices}
\end{figure}

\begin{proof}
  Mapping $u \mapsto u - Mx$, it suffices to consider the case $M=0$.
Furthermore, rotating the rectangular domain by $x \mapsto Qx, Q \in
\text{SO}(2)$ and scaling $u \mapsto \frac{1}{|a|} u$, we may assume that $n= (1,0)$
and $a=(0,-1)$.
Hence, 
\begin{align*}
  M_{0}= \frac{1}{\lambda}
  \begin{pmatrix}
    0 & 1 \\ 0 & 0
  \end{pmatrix},
                 M_{1}= \frac{1}{1-\lambda}
                 \begin{pmatrix}
                   0 & -1 \\ 0 & 0 
                 \end{pmatrix}, 
                                 M_{2}= \frac{1}{1-\lambda^{2}}
                                 \begin{pmatrix}
                                   0 & -\lambda \\ 0 & 0
                                 \end{pmatrix},
\end{align*}
where $\lambda=\frac{1}{4}$ and $-\frac{\lambda}{1-\lambda^{2}}=
-\frac{4}{15}=\frac{1}{5} (4) + \frac{4}{5}(-\frac{4}{3})$.
Applying the construction of Lemma \ref{lem:conti_undeformed} (with
$\lambda=\frac{1}{4}$ instead) rescaled by
$\frac{2}{\lambda}$, we obtain a Lipschitz function $v : \R^{2} \rightarrow
\R^{2}$, which vanishes outside the rectangle $\tilde{\Omega}=(-1,1)^{2}$ and
satisfies
\begin{align*}
  |\{x: \nabla v(x)=M_{0}\}| \geq \frac{1}{8}|\tilde{\Omega}|.
\end{align*}
However, the values of $\nabla v$ as given in \eqref{eq:matrices} in Lemma \ref{cor:var_conti} are
not yet in an $\epsilon$-neighborhood of
$\{M,M_{1},M_{2}\}$.
Hence, we consider the following change of coordinates and the following modified deformation:
\begin{align*}
  (y_{1},y_{2}) &= (y_{1}(x),y_{2}(x)):=(x_{1}, \delta x_{2}), \\
  (u_{1}(x),u_{2}(x))&:= (\delta v_{1}(x), \delta^{2} v_{2}(x)).
  \end{align*}
  We remark that this transforms the domain $\tilde{\Omega}=(-1,1)^2$ into the domain $(-1,1)\times (-\delta,\delta)$ and moreover note that this scaling preserves volume fractions.
Rewriting $\nabla_{x} v(x)$ into $\nabla_{y} u$ yields
\begin{align}
\label{eq:err}
\nabla_y u (y) =  
  \left. \begin{pmatrix}
    \delta \p_{x_1} v_{1} & \p_{x_2} v_1 \\
    \delta^{2} \p_{x_1} v_2 & \delta \p_{x_2} v_2
  \end{pmatrix}\right|_{(x_1, x_2/\delta)}
  =
  \begin{pmatrix}
   0 & \p_{x_2} v_1 \\ 0 & 0
  \end{pmatrix}
 +\mathcal{O}(\delta|\nabla_x v|),
\end{align}
which in particular leaves $M_{0}$ invariant.
Letting $\delta$ be sufficiently small, we thus obtain
the desired $\epsilon$-closeness. Undoing the initial rescaling with
$|a|$ leads to the precise requirement
\begin{align*}
\delta |\nabla_x v| |a| \leq  \epsilon.
\end{align*}
This implies the claimed ratio for $\Omega$ by noting that $|\nabla_x v| \leq 20$.
\end{proof}

\begin{rmk}
\label{rmk:depend}
We remark that both the side ratio $\delta$ as well as the error $\epsilon$ remain unchanged under rescalings of the form $\mu u(\frac{x}{\mu})$ (as this leaves the gradient invariant).
\end{rmk}

We now show how to apply Lemma \ref{lem:conti_deformed} to the setting of symmetric matrices in our three-well problem (\ref{eq:K3}):

\begin{lem}[Application to the three-well-problem, pages 60 ff. of \cite{Otto}]
\label{lem:convex_int}
Suppose that $M \in \R^{2\times 2}$ with 
$$e(M):=\frac{1}{2}(M+M^{T}) \in \intconv(K)$$ 
and let $e^{(i)}$ with $i\in\{1,2,3\}$ be such that 
\begin{align}
\label{eq:dist}
  |e(M)-e^{(i)}| \leq \dist(e(M),K) + 4 \epsilon_0. 
\end{align}
Let $\epsilon_{0}\leq \frac{\dist(e(M), \p \conv(K))}{100}$. Then, for every $0<\epsilon<\epsilon_{0}$ there exist a Lipschitz function
$u:\R^{2}\rightarrow \R^{2}$, a rectangular domain $\Omega$
(with ratio $1:\delta$ and 
$\delta =\frac{\epsilon}{20|e(M)-e^{(i)}|}$) 
with symmetric parts $e^{(i)},\tilde{e}_1,\dots,\tilde{e}_4 \in \intconv(K)$, such that
\begin{align*}
&  u(x)= Mx \text{ on } \R^{2} \setminus \Omega, \notag \\
& e(\nabla u)\in \{e^{(i)},\tilde{e}_{1}, \dotsi , \tilde{e}_{4}\} \subset \conv(K) \mbox{ in } \Omega, \notag \\
&      |\{x\in \Omega: e(\nabla u)(x)= e^{(i)}\}|/|\Omega|=\frac{1}{4}, \notag \\
&\nabla u \in \{\tilde{M}_0, \dots, \tilde{M}_4\} \mbox{ with }
|\tilde{M}_4-M|\leq \epsilon.
\end{align*}
\end{lem}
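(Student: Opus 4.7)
The strategy is to reduce the statement to a direct application of the deformed Conti construction (Lemma \ref{lem:conti_deformed}) applied to a rank-one decomposition of $M$ that passes through a matrix with symmetric part $e^{(i)}$.

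First, by Lemma \ref{lem:convex_hull} we can write $e(M) - e^{(i)} = a \odot n$ for some $n \in S^{1}$ and $a \in \R^{2}$; since $e(M)$ and $e^{(i)}$ are both trace-free, $a \cdot n = \tr(a \odot n) = 0$. Setting $\tilde{a} := \tfrac{4}{3} a$, we still have $\tilde{a} \cdot n = 0$ and $\tfrac{4}{3}(e(M)-e^{(i)}) = \tilde{a} \odot n$. Now lift this symmetrized rank-one connection to the full matrix space by defining
\[
M_{0} := e^{(i)} + \omega(M) - \tfrac{3}{4}\,\omega(\tilde{a} \otimes n), \qquad M_{1} := M_{0} + \tilde{a}\otimes n.
\]
A short computation verifies $e(M_{0}) = e^{(i)}$, $M_{1} - M_{0} = \tilde{a} \otimes n$ (with $\tilde{a} \cdot n = 0$), and $M = \tfrac{1}{4}M_{0} + \tfrac{3}{4}M_{1}$, so the triple $(M, M_{0}, M_{1})$ meets the hypotheses of Lemma \ref{lem:conti_deformed}. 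Applying that lemma with the prescribed $\epsilon$ produces the rectangle $\Omega$ (whose aspect ratio scales like $\epsilon / |e(M) - e^{(i)}|$) and a Lipschitz map $u$ with $u(x) = Mx$ on $\R^{2} \setminus \Omega$, $\nabla u \in \{M_{0}, \tilde{M}_{1}, \tilde{M}_{2}, \tilde{M}_{3}, \tilde{M}_{4}\}$, and the error bounds of (\ref{eq:error}); in particular $|\tilde{M}_{4} - M| < \epsilon$. Setting $\tilde{e}_{k} := e(\tilde{M}_{k})$ gives $e(\nabla u) \in \{e^{(i)}, \tilde{e}_{1}, \dotsc, \tilde{e}_{4}\}$ in $\Omega$, and the volume-fraction statement is inherited from the Conti building block.

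It remains to check that every $\tilde{e}_{k}$ lies in $\intconv(K)$. The unperturbed symmetric parts satisfy
\[
e(M_{1}) = e^{(i)} + \tfrac{4}{3}(e(M)-e^{(i)}), \qquad e(M_{2}) = e^{(i)} + \tfrac{16}{15}(e(M)-e^{(i)}),
\]
so they extrapolate slightly past $e(M)$ along the ray from $e^{(i)}$. Writing $e(M) = \sum_{k=1}^{3} \mu_{k} e^{(k)}$ in barycentric coordinates, the barycentric coefficients of $e(M_{1})$ become $\tfrac{4}{3}\mu_{i} - \tfrac{1}{3}$, $\tfrac{4}{3}\mu_{j}$, $\tfrac{4}{3}\mu_{k}$. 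The equilateral geometry of $\conv(K)$ (Lemma \ref{lem:geo2}) together with the near-minimality (\ref{eq:dist}) forces $\mu_{i} \geq \tfrac{1}{3} - O(\epsilon_{0})$, so that $\tfrac{4}{3}\mu_{i} - \tfrac{1}{3}$ is bounded away from $0$, while $\dist(e(M),\partial\conv(K)) \geq 100\epsilon_{0}$ forces $\mu_{j}, \mu_{k} \gtrsim \epsilon_{0}$. All barycentric coefficients of $e(M_{1})$ are therefore positive and at least of order $\epsilon_{0}$, and the same holds for $e(M_{2})$. Consequently $\dist(e(M_{l}),\partial \conv(K)) \gtrsim \epsilon_{0} \gg \epsilon$ for $l = 1, 2$, so the $\epsilon$-perturbations $\tilde{e}_{k}$ stay inside $\intconv(K)$.

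The main obstacle is this last geometric verification: ruling out that the extrapolations past $e(M)$ exit the convex hull. It hinges on the equilateral structure of $\conv(K)$ and on the precise interplay between the near-minimality assumption (\ref{eq:dist}) and the smallness hypothesis $\epsilon_{0} \leq \dist(e(M),\partial\conv(K))/100$, which together quantify how deeply $e(M)$ sits in the cell of barycentric coordinate $\mu_i$ bounded away from $\tfrac{1}{4}$.
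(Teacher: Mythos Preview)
Your proof is correct and follows essentially the same route as the paper: build a rank-one splitting $M=\tfrac14 M_0+\tfrac34 M_1$ with $e(M_0)=e^{(i)}$ by lifting the symmetrized rank-one connection $e(M)-e^{(i)}=a\odot n$ to $\R^{2\times2}$ with an appropriate skew correction, then invoke the deformed Conti construction of Lemma~\ref{lem:conti_deformed}. The paper does exactly this (writing the skew correction as $\tilde S=\pm\tfrac14\omega(a\otimes n)$ and noting that \emph{both} signs work, which is later exploited in Corollary~\ref{cor:plusminus_construction}); your choice fixes one sign, which suffices for the present lemma.

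The only place where you go beyond the paper is the verification that $\tilde e_1,\dots,\tilde e_4\in\intconv(K)$. The paper simply asserts that the equilateral geometry together with \eqref{eq:dist} forces $\tilde e_1=e^{(i)}+\tfrac43(e(M)-e^{(i)})\in\intconv(K)$, whereas you make this explicit via barycentric coordinates: $\mu_i\ge \tfrac13-O(\epsilon_0)$ from near-minimality, and $\mu_j,\mu_k\gtrsim\epsilon_0$ from $\dist(e(M),\partial\conv(K))\ge 100\epsilon_0$, so the dilated coefficients $(\tfrac43\mu_i-\tfrac13,\tfrac43\mu_j,\tfrac43\mu_k)$ stay positive with margin $\gg\epsilon$. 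This is a perfectly good (and arguably clearer) way to see the same fact.
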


\begin{proof}
Let $M$ and $e^{(i)}$ be given. Since $e^{(1)},e^{(2)},e^{(3)}$
are arranged in an equilateral triangle with side lengths $\sqrt{3}$ (with respect to the spectral norm)
and as (\ref{eq:dist}) holds, there exists $\tilde{e}_{1} \in \intconv(K)$ such that 
\begin{align*}
   e(M)=\frac{1}{4}e^{(i)} + \frac{3}{4}\tilde{e}_{1}. 
\end{align*}
  Next let $S:=\omega(M) \in \Skew(2)$ and let $\tilde{S} \in
  \Skew(2)$ to be determined.
  Then we obtain
  \begin{align*}
  &  M= \frac{1}{4}(e^{(i)}+S+3\tilde{S})+ \frac{3}{4}(\tilde{e}_{1}+S-\tilde{S}), \\
   & (e^{(i)}+S+3\tilde{S}) - (\tilde{e}_{1}+S-\tilde{S})= e^{(i)}- \tilde{e}_{1} +4 \tilde{S}.
  \end{align*}
  Since we are in two dimensions, any two symmetric, trace-free matrices are symmetrized rank-one connected (c.f. Lemma \ref{lem:rk1}).
  Thus, there exist vectors $a\in \R^{2}\setminus\{0\}$, $n\in S^1$ such that
  \begin{align*}
    e^{(i)}- \tilde{e}_{1}
    = \frac{1}{2}(a \otimes n + n \otimes a).
  \end{align*}
  Furthermore, as $\tr(e^{(i)})= \tr(\tilde{e}_{1})$, $a$ and $n$ are orthogonal.
  Choosing
  \begin{align}
  \label{eq:choice}
    \tilde{S}:= \frac{1}{8}(a \otimes n - n \otimes a) = \frac{1}{4} \omega(a\otimes n)
    \mbox{ or } \tilde{S}:=- \frac{1}{4} \omega(a\otimes n),
  \end{align}
 we thus obtain that the matrices
  \begin{align}
  \label{eq:matrices1}
    M_{0}:=(e^{(i)}+S+3\tilde{S}), \ M_{1}:=(\tilde{e}_{1}+S-\tilde{S})
  \end{align}
  are rank-one connected (with difference $a \otimes n$ or $n \otimes a$,
  respectively) and
  \begin{align*}
    M= \frac{1}{4}M_{0} + \frac{3}{4}M_{1}.
  \end{align*}
  We may hence apply the construction of Lemma \ref{lem:conti_deformed} with $M,
  M_0, M_1$ as defined above. 
  Noting that $\|e(M)-e^{(i)}\|= \frac{|a|}{2} $ (c.f. Lemma \ref{lem:skewcontrol2}), Lemma \ref{lem:conti_deformed} implies the statement on the side ratio for $\Omega$.
  Finally, we note that the $\epsilon$-closeness of the matrices $\tilde{M}_{1},
  \dotsi, \tilde{M}_{4}$ also implies that their symmetric parts are $\epsilon$-close. 
\end{proof}

\begin{figure}[t]
  \centering
  \includegraphics[page=6]{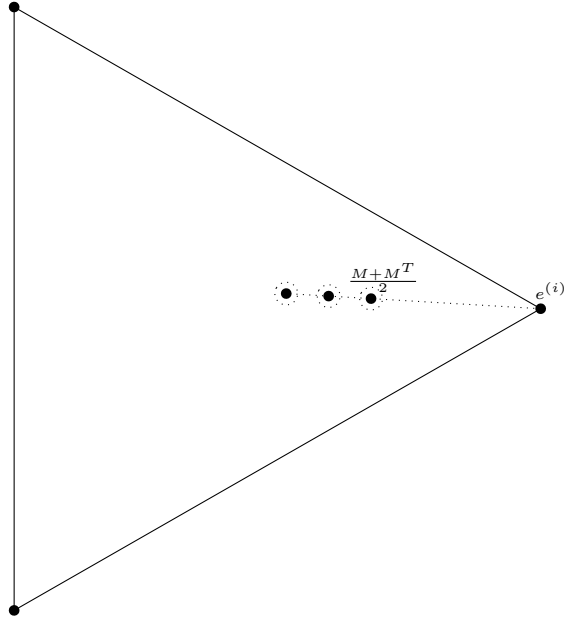}
  \caption{Relative positions of the symmetric part of the matrices inside the
    convex hull.}
  \label{fig:matrices_in_triangle}
\end{figure}

\begin{nota}
\label{nota:conti}
In the preceding Lemma \ref{lem:convex_int} the matrices $\tilde{M}_0, \cdots, \tilde{M}_4$ obey the same (convexity) relations as the ones in Lemma \ref{lem:conti_deformed}, where for the matrices $M_0$ and $M_1$ we insert the ones from \eqref{eq:matrices1}, c.f. Figures \ref{fig:conti_deformed_matrices}, \ref{fig:matrices_in_triangle}. The error estimates in \eqref{eq:error} thus 
\begin{itemize}
\item motivate us to refer to the matrix $\tilde{M}_4$ as \emph{stagnant} (with respect to the replaced matrix $M$).
\item The matrices $\tilde{M}_1,\tilde{M}_2,\tilde{M}_3$ will also be called \emph{pushed-out} matrices (with the factors $\frac{4}{3}$ and $\frac{16}{15}$ respectively), since by construction 
\[\frac{4}{3}\left|e(M)-e^{(i)}\right|-\epsilon\leq\left|e(\tilde{M}_1)-e^{(i)}\right|\leq \frac{4}{3}\left|e(M)-e^{(i)}\right|+\epsilon,\]
and similarly for the other matrices.
\end{itemize}
In order to emphasize the dependence on $M$, we also use the notation
\begin{align*}
\tilde{M}_0(M),\dots,\tilde{M}_4(M).
\end{align*}
Although the matrices $\tilde{M}_0,\dots,\tilde{M}_4$ also depend on the choice of $e^{(i)}$, in the sequel we will often suppress this additional dependence for convenience as the reference well will be clear in most of our applications.\\
We refer to the construction of Lemma \ref{lem:convex_int} as 
\emph{the $(\epsilon$, $\delta)$ Conti construction with respect to $M, e^{(i)}$}. If some of the parameters of this are self-evident from the context, we also occasionally omit them in the sequel.
\end{nota}

We emphasize that in our construction in Lemma \ref{lem:convex_int}, we have the choice between two different solutions, which
differ in the sign of their skew symmetric component and thus in the choice of the corresponding rank-one connection (c.f. (\ref{eq:choice})). This freedom of
choice is a central ingredient in the control over the skew symmetric part of the iterated
constructions. We summarize this observation in the following corollary.

\begin{cor}
  \label{cor:plusminus_construction}
  Let $M,e^{(i)},\epsilon_{0},\epsilon$ be as in Lemma \ref{lem:convex_int}. Then
  there exist two Lipschitz functions $u_{+}, u_{-}: \R^{2}\rightarrow \R^{2}$
  such that on the set where $e(\nabla u_{\pm}) =e^{(i)}$
  \begin{align*}
    \omega(\nabla u_{\pm})= \omega(M) \pm 
    \frac{3}{4} \omega(a \otimes n) =: \omega(M) \pm \hat{S}.
  \end{align*}
  Furthermore, up to an error of size $\epsilon$ the skew parts on the other
  level sets are given by
  \begin{align*}
    \omega(M), \ \omega(M) \pm \frac{1}{3}\hat{S} , \ \omega(M) \pm \frac{1}{15} \hat{S}.
  \end{align*}
\end{cor}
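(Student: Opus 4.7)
The plan is to invoke Lemma \ref{lem:convex_int} with each of the two sign choices of $\tilde{S}$ from \eqref{eq:choice} separately, producing the two maps $u_{+}$ and $u_{-}$; everything else then reduces to a short linear-algebra calculation using the explicit matrices $M_0,M_1$ in \eqref{eq:matrices1} and the convex-combination structure recorded in Lemma \ref{lem:conti_deformed}.

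More concretely, I would set $\tilde{S}_{\pm}:=\pm\tfrac{1}{4}\omega(a\otimes n)=\pm\tfrac{1}{3}\hat S$ and let $u_{\pm}$ denote the maps obtained by running the proof of Lemma \ref{lem:convex_int} with these two respective choices. The subset $\{x:\nabla u_{\pm}(x)=M_0\}$ coincides with $\{x:e(\nabla u_{\pm})(x)=e^{(i)}\}$, since $e^{(i)}$ is distinct from the other symmetric parts $\tilde e_1,\dots,\tilde e_4\in\intconv(K)$ that appear in Lemma \ref{lem:convex_int} (for $\epsilon_0$ sufficiently small as in the hypothesis). On this set the gradient equals $M_0=e^{(i)}+\omega(M)+3\tilde{S}_{\pm}$ \emph{exactly}, because Lemma \ref{lem:conti_deformed} realises the $M_0$-level set without any $\epsilon$-approximation. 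Taking skew parts immediately gives $\omega(\nabla u_{\pm})=\omega(M)\pm\hat S$, which is the first identity in the statement.

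For the remaining level sets I would compute the skew parts of the matrices in \eqref{eq:matrices1} together with $M_2=\tfrac{1}{5}M_0+\tfrac{4}{5}M_1$ directly, and then absorb the differences between $M_1,M_2,M$ and the approximating $\tilde M_1,\tilde M_2,\tilde M_3,\tilde M_4$ into an error of size $\mathcal O(\epsilon)$ via \eqref{eq:error}. The relevant identities are
\begin{align*}
\omega(M_1)&=\omega(M)-\tilde{S}_{\pm}=\omega(M)\mp\tfrac{1}{3}\hat S,\\
\omega(M_2)&=\tfrac{1}{5}\omega(M_0)+\tfrac{4}{5}\omega(M_1)=\omega(M)\pm\tfrac{1}{5}\hat S\mp\tfrac{4}{15}\hat S=\omega(M)\mp\tfrac{1}{15}\hat S,
\end{align*}
while $\tilde M_4$ is $\epsilon$-close to $M$ and therefore has skew part $\omega(M)+\mathcal O(\epsilon)$. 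Combining these with the $\epsilon$-closeness of $\tilde M_1,\tilde M_2,\tilde M_3$ to $M_1,M_2,M_2$ yields the announced list $\{\omega(M),\,\omega(M)\pm\tfrac{1}{3}\hat S,\,\omega(M)\pm\tfrac{1}{15}\hat S\}$ of possible skew parts on the other level sets.

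I do not foresee any substantive obstacle: the proof is essentially bookkeeping. The only care needed is in tracking the signs, since the two choices of $\tilde{S}_{\pm}$ also flip the direction of the underlying rank-one connection (from $a\otimes n$ to $n\otimes a$), an ambiguity that is already absorbed into the $\pm$ notation of the statement.
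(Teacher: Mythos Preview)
Your proposal is correct and follows essentially the same approach as the paper's proof, which simply reads off the skew parts of $M_0,M_1$ from \eqref{eq:matrices1}, computes that of $M_2=\tfrac{1}{5}M_0+\tfrac{4}{5}M_1$ by linearity, and invokes the $\epsilon$-closeness in Lemma \ref{lem:conti_deformed}. Your version is more explicit (in particular your observation that the $M_0$-level set is realised exactly, not just approximately, is a useful clarification), but the underlying argument is identical.
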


\begin{proof}
From \eqref{eq:matrices1} we read off the skew symmetric parts of $M_0, M_1$. The skew symmetric part of 
$M_2:= \frac{1}{5}M_0 + \frac{4}{5}M_1$ is a consequence of that. The result then follows from Lemma \ref{lem:conti_deformed}.
\end{proof}

\subsection{The convex integration algorithm}
\label{sec:alg}
In this subsection we formulate our convex integration algorithm. It consists of two parts, Algorithms \ref{alg:construction} and \ref{alg:skew}. The first part (Algorithm \ref{alg:construction}) determines the symmetric part of the iterated deformation vector field, while the second part (Algorithm \ref{alg:skew}) deals with the choice of the ``correct" skew component.\\
After formulating the algorithms, we prove their well-definedness (i.e. show that it is indeed possible to iterate this construction as claimed). \\

In the whole section we assume that the domain $\Omega$ and the matrix $M$ in (\ref{eq:incl}) fit together in the sense that $\Omega = Q_{\beta}[0,1]^2$, where $Q_{\beta}$ is the rotation of the Conti construction from Lemma \ref{lem:conti_deformed} for $M$ (and the closest energy well $e^{(i)}$). These ``special" domains will play the role of the essential building blocks of the construction of convex integration solutions in general Lipschitz domains (c.f. Section \ref{sec:generaldomains}).\\

We define our convex integration scheme:

\begin{alg}[Quantitative convex integration algorithm, I]
  \label{alg:construction}
We consider the following construction:
  \begin{itemize}
  \item [Step 0:] State space and data. 
\begin{itemize}
\item[(a)] State space.  
  Our state space is given by
	\begin{align}  
	\label{eq:state_space}
  SP_j:=(j,u_{j},\{\Omega_{j,k}\}_{k\in \{1,\dots,J_j\}},
  e_{j}^{(p)},\epsilon_j, \delta_j).
  \end{align}
Here $j \in \N$ and $u_{j}: \Omega \rightarrow \R^2$ is a piecewise affine
function. The sets
\begin{align*}
\Omega_{j,k}\subset \Omega\cap\{\nabla u_j=const\}\cap\{e(\nabla u_j)\notin K\}
\end{align*} 
are closed triangles, which form a (up to null sets) disjoint, finite partition of the level sets of $\nabla u_j$, for which $e(\nabla u_j) \notin K$. Let $\Omega_j:= \bigcup\limits_{k=1}^{J_j}\Omega_{j,k}$ denote the set, on which $e(\nabla u_j)$ is not yet in one of the energy wells.\\
The function
\begin{align*}
e^{(p)}_{j}:\Omega \rightarrow K
\end{align*}
is constant on each of the sets $\Omega_{j,k}$. It essentially keeps track of the well closest to $e(\nabla u_j|_{\Omega_{j,k}})$ for each $j,k$. \\
The functions 
\begin{align*}
\epsilon_j, \delta_j: \Omega \rightarrow \R,
\end{align*}
are constant on each set $\Omega_{j,k}$ and vanish in $\Omega \setminus \Omega_j$. They correspond to the error and side ratio in the Conti construction, which is to be applied in $\Omega_{j,k}$. The functions $\epsilon_j, \delta_j$ are coupled by the relation
\begin{align*}
\delta_j = \frac{\epsilon_j}{10^2 d_K}, 
\mbox{ where } d_K:=\dist(e(M),K).
\end{align*}
Hence, in the following (update) steps, we will mainly focus on $\epsilon_j$ and assume that $\delta_j$ is modified accordingly.
\item[(b)] Data. Let $M\in \R^{2\times 2}$ with $e(M)\in \intconv(K)$. Let $\Omega= Q_{\beta}[0,1]^2$ with $Q_{\beta}$ denoting the rotation associated with $M$ (c.f. explanations above). Further set
\begin{align*}
d_0&:= \dist(e(M), \p \conv(K)),\\
\epsilon_0 &:= \min\left\{\frac{d_0}{100},\frac{1}{1600}\right\},\ 
\delta_0 := \frac{\epsilon_0}{10^2 d_K}.
\end{align*}
\end{itemize}
\item[Step 1:] Initialization, definition of $SP_1$. We consider the data from Step 0 (b) and in addition define
\begin{align*}
u_{0}(x)&=Mx- \omega(M)x, \\
e^{(p)}_{0}&=\argmin \dist{(e(M), K)}.
\end{align*}
In the case of non-uniqueness in the above minimization problem, we arbitrarily choose any of the possible options.\\
Possibly dividing $\delta_0$ by a factor up to $100$, we may assume that $K_{0,0}:=\delta_0^{-1}\in \N$. We cover $
\Omega=Q_{\beta}[0,1]^2$ by $K_{0,0}$ many (translated) up to null-sets 
disjoint $(\epsilon_0,\delta_0)$ Conti constructions with respect to $
\nabla u_0$ and $e^{(p)}_0$ (c.f. Notation \ref{nota:conti}). We denote these sets by $R_{0,1}^{1},\dots, R_{0,K_{0,0}}^{1}$. We remark that 
$\Omega = \bigcup\limits_{l=1}^{K_{0,0}} R_{0,l}^1$ is possible with (up to null sets) disjoint choices of $R_{0,l}^1$, $l\in\{1,\dots,K_{0,0}\}$, as by definition of the domain $\Omega$ the
sets $R_{0,l}^{1}$, $l\in\{1,\dots,K_{0,0}\}$, are parallel to one of the 
sides of $\Omega$ and as $\delta_0^{-1}\in \N$.
We apply Step 2 (b) on these sets. As a consequence we obtain 
$SP_1$.

\item[Step 2:] Update. Let $SP_j$ be given.
	Let $M_{j,k}:=\nabla u_j|_{\Omega_{j,k}}$ for some 
	$k\in \{1,\dots, J_j\}$. We explain how to update $u_{j}$ and 
	$\epsilon_j, \delta_j$ on 
	$\Omega_{j,k}$. \\
	We seek to
  	apply the construction of Lemma 									\ref{lem:convex_int}
    with $\epsilon_{j,k}:=\epsilon_{j}|_{\Omega_{j,k}}$, 
    $\delta_{j,k}:=\delta_{j}|_{\Omega_{j,k}}$ and
    \begin{align}
    \label{eq:matrices_1}
     e^{(p)}_{j,k}, \ M_{j,k}
     \end{align}
     in a part of $\Omega_{j,k}$.
     To this end, we cover the
     domain $\Omega_{j,k}$ by a union of finitely many 
     (up to null sets) disjoint triangles and rectangles. 
     The rectangles are chosen as
     translated and rescaled
     versions of the domains in the $(\epsilon_{j,k}, \delta_{j,k})$ 
     Conti construction with respect to the matrices from
     (\ref{eq:matrices_1}). We denote these rectangles by
     $R_{j,l}^{k}$, $l\in\{1,\dots,K_{j,k}\}$, for some $K_{j,k}\in \N$
     and require that they cover at least 
     a fixed volume fraction $v_0>0$ of the overall
     volume of $\Omega_{j,k}$ (which is always possible, c.f. Section 
     \ref{sec:covering} for our precise covering algorithm). \\
     We define new
     sets 
     $\tilde{\Omega}_{j+1,l}^k$, $l\in\{1,\dots, \tilde{K}_{j,k}\}$: 
     These are given by the triangles which are in 
     $\Omega_{j,k}\setminus 
     \bigcup\limits_{l=1}^{K_{j,k}}R_{j,l}^k$ and by the triangles
     which form the level sets of the deformed Conti rectangles
     $R_{l}^k$. \\
     \begin{itemize}
     \item[(a)]
     For $x\in \Omega_{j,k}\setminus 
     \bigcup\limits_{l=1}^{K_{j,k}}R_{j,l}^k$ we define 
     \begin{align*}
     u_{j+1}(x)&:=u_{j}(x),\\
     \epsilon_{j+1}(x) &:= \epsilon_{j}(x) \ \
     (\mbox{and hence } \delta_{j+1}(x):= \delta_{j}(x)),\\
     e^{(p)}_{j+1}(x)&:= e^{(p)}_{j}(x).
     \end{align*}
   
     Further we set $\Omega_{j+1,l}^k:=\tilde{\Omega}_{j+1,l}^k$.
     Carrying this out for all $k\in\{1,\dots,J_{j}\}$ hence yields a
     collection of
     triangles 
     $$\{\Omega_{j+1,l}^k\}_{k\in\{1,\dots,J_{j}\},l\in\{1,\dots,K_{j,k}\}}$$ 
     covering 
     $\Omega_{j}\setminus 
     \bigcup\limits_{l=1}^{K_{j,k}}R_{j,l}^k$.\\
     \item[(b)]
     In the sets $R_{j,l}^k$ we apply the Conti construction
     with the matrices from (\ref{eq:matrices_1}). 
     In this application we choose the skew part according to Algorithm
      \ref{alg:skew}. With $\tilde{\Omega}_{j+1,l}^k \subset 
      \bigcup\limits_{k=1}^{J_j}\bigcup\limits_{l=1}^{K_{j,k}} R_{j,l}^k$
     as defined in Step 2 (a),
     we define $u_{j+1}|_{\tilde{\Omega}_{j+1,l}^k}$ as the
     function from the corresponding Conti construction. 
     More precisely, in each of the rectangles $R_{j,l}^k$ the 
     matrix $M_{j,k}$ has been replaced by the
     matrices
     \begin{align*}
     \tilde{M}_0(M_{j,k}),  \dots, \tilde{M}_4(M_{j,k}),
     \end{align*}
     with $e(\tilde{M}_0(M_{j,k})) = e^{(p)}_{j,k}$.
     For each $x\in \tilde{\Omega}_{j+1,l}^k$ with 
     $\tilde{\Omega}_{j+1,l}^k$  
     as above, we define 
     \begin{align*}
       \epsilon_{j+1}(x):=
       \begin{cases}
     \epsilon_{0}
     &\mbox{ for } \nabla u_{j+1}|_{\tilde{\Omega}_{j+1,k}} \in 
     \{\tilde{M}_1(M_{j,k}),\dots,\tilde{M_3}(M_{j,k})\},\\
      \epsilon_{j}(x)/2
     &\mbox{ for } \nabla u_{j+1}|_{\tilde{\Omega}_{j+1,k}} 
     = \tilde{M}_4(M_{j,k}),\\
     0 &\mbox{ for } \nabla u_{j+1}|_{\tilde{\Omega}_{j+1,k}} 
     = \tilde{M}_0(M_{j,k}).    
       \end{cases}
     \end{align*}
     For the definition of $\delta_{j+1}$ we recall its coupling with $\epsilon_{j+1}$.
   We further set
     \begin{align*}
     e^{(p)}_{j+1}(x)
       :=
       \begin{cases}
         \operatorname*{argmin}\limits_{i\in\{1,2,3\}}
     & \{|e(\nabla u_{j+1})|_{\tilde{\Omega}_{j+1,k}} - e^{(i)}|\}  \\
     &\mbox{ for } \nabla u_{j+1}|_{\tilde{\Omega}_{j+1,k}} \in 
      \{\tilde{M}_1(M_{j,k}),\dots,\tilde{M_3}(M_{j,k})\},\\
      e^{(p)}_{j}(x) 
     &\mbox{ for } \nabla u_{j+1}|_{\tilde{\Omega}_{j+1,k}} 
     = \tilde{M}_4(M_{j,k}),\\
     e^{(p)}_{j}(x)  &\mbox{ for } \nabla u_{j+1}|_{\tilde{\Omega}_{j+1,k}} 
     = \tilde{M}_0(M_{j,k}).    
       \end{cases}
     \end{align*}
     Here we choose an arbitrary possible minimizer if there is 
     non-uniqueness. Finally, we possibly split each of the sets 
     $\tilde{\Omega}_{j+1,l}^k \in \bigcup\limits_{l=1}^{K_{j,l}}R_{j,l}^k$ into at most four smaller triangles 
     (c.f. Section \ref{sec:cov_Cont}) and add them to the collection 
	$\{\Omega_{j+1,l}^k\}_{k\in \{1,\dots, J_{j}\}, l \in\{1,\dots, K_{j,k}\}}$. 
	Upon relabeling this yields a new collection 
	$\{\Omega_{j+1,k}\}_{k\in \{1,\dots, J_{j+1}\}}$.
  \end{itemize}
    As a result of Steps 2 (a) and (b) we obtain $SP_{j+1}$.
  \end{itemize}
\end{alg}

While this algorithm prescribes the \emph{symmetric} part of the iteration, we complement it with an algorithm, which defines the choice of the \emph{skew} part. Here the main objectives are to keep the resulting skew parts uniformly bounded (which is necessary, if we seek to obtain bounded solutions to (\ref{eq:incl})) and simultaneously to ensure the choice of the ``right" rank-one direction (c.f. Section \ref{sec:quant}, Lemma \ref{lem:BV}). Here the rank-one direction has to be chosen ``correctly" in the sense that the successive Conti constructions are not rotated too much with respect to one another (which corresponds to the ``parallel" case, c.f. Definition \ref{defi:parallel_rot}). \\
In order to make this precise, we introduce two definitions: The first (Definition \ref{defi:family}) allows us to introduce an ``ordering" on the triangles in $\{\Omega_{j,k}\}_{k\in\{1,\dots,J_j\}}$ for different values of $j\in \N$. With this at hand, we then define the notions of being parallel or rotated (c.f. Definition \ref{defi:parallel_rot}).

\begin{defi}
\label{defi:family}
Let $D\in \{\Omega_{j,k}\}_{k\in\{1,\dots,J_{j}\}}$ for $j\geq 1$. Then a
triangle $\hat{D} \subset D$ is a \emph{descendant of $D$ of order $l$}, if
$\hat{D}\in \{\Omega_{j+l,k}\}_{k\in\{1,\dots,J_{j+l}\}}$ is (part of) a level
set of $\nabla u_{j+l}$ and is obtained from $D$ by an $l$-fold application of
the update step of Algorithm \ref{alg:construction} (where we specify the covering to be the one, which is described in Section \ref{sec:covering}). The \emph{set of descendants of $D$ of order $l$} is denoted by $\mathcal{D}_l(D)$. We define $\mathcal{D}(D):=\bigcup\limits_{l=1}^{\infty}\mathcal{D}_l(D)$.\\
A triangle $\bar{D} \in \{\Omega_{j,k}\}_{k\in\{1,\dots,J_{j}\}}$ is a \emph{predecessor of order $l$ of $D$}, if 
$D\in \mathcal{D}_{l}(\bar{D})$. We then write $\bar{D}\in \mathcal{P}_l(D)$ and also use the notation $\mathcal{P}(D)$ 
for the \emph{set of all predecessors} of $D$. 
\end{defi}

With this we define the parallel and the rotated cases:

\begin{defi}
\label{defi:parallel_rot}
Let $e^{(p)}_{j,k}$ be as in Algorithm \ref{alg:construction}.
Let $D\in \{\Omega_{j,k}\}_{k\in\{1,\dots,J_j\}}$ for $j\geq 1$.
Let $j_0\neq 0$ be the smallest index, for which $\mathcal{P}_{j_0}(D)\ni \bar{D} \neq D$ (i.e. $\mathcal{P}_{j_0}(D)$ was the last triangle in Algorithm \ref{alg:construction}, to which Step 2 (b) was applied instead of Step 2 (a)).
 Then, if for a.e. $x\in D$
\begin{align}
\label{eq:parallel}
e^{(p)}_{j}(x) = e^{(p)}_{j-j_0}(x),
\end{align}
we say that \emph{in step $j$ the triangle $D$ is in the parallel case}. If there is no possible confusion, we also just refer to $D$ as \emph{in the parallel case}.\\
If for a.e. $x \in D$
\begin{align}
\label{eq:rot}
e^{(p)}_{j}(x) \neq e^{(p)}_{j-j_0}(x),
\end{align}
we say that \emph{in step $j$ the triangle $D$ is in the rotated case}. If there is no possible confusion, we also just refer to $D$ as \emph{in the rotated case}.
\end{defi}

Let us comment on this definition: Intuitively, its objective is to describe whether successive Conti constructions can be chosen as essentially parallel or whether they are necessarily substantially rotated with respect to each other (hence, these notions will also play a crucial role in Section \ref{sec:covering}, where we construct our precise covering). More precisely, let $SP_j$ be as in Algorithm \ref{alg:construction} and let $j,j_0, D, \bar{D}$ be as in Definition \ref{defi:parallel_rot}. Then, at the iteration step $j_0$ the triangle $\bar{D}$ was a subset of one of the Conti rectangles $R_{j-j_0,l}^k$. Thus, $u_{j-j_0}$ is modified according to the Conti construction with respect to $\nabla u_{j-j_0}|_{\bar{D}}$, $e_{j-j_0}^{(p)}|_{\bar{D}}$ in this domain. In particular, the difference of the matrices $e(\nabla u_{j-j_0}|_{\bar{D}})$, $e_{j-j_0}^{(p)}|_{\bar{D}}$ determines a direction $e$ in strain space (up to a choice of the skew direction (c.f. Corollary \ref{cor:plusminus_construction}) this 
is directly related to the orientation of the Conti rectangle $R_{j-j_0,l}^k$). By virtue of Lemma \ref{lem:conti_undeformed} all of the new matrices $e(\tilde{M}_0(\nabla u_{j-j_0}|_{\bar{D}})),\dots, e(\tilde{M}_4(\nabla u_{j-j_0}|_{\bar{D}}))$ essentially lie on the line $e$ in strain space. Hence the direction, which is determined by the difference of $e^{(p)}_{j-j_0+1}|_{D}$ and $e(\nabla u_{j-j_0+1}|_{D})$, is still essentially parallel to the directions $e$ (in strain space). As by definition (we are now in Step 2(a) of Algorithm \ref{alg:construction}) the values of $e^{(p)}_{j-j_0 + l}|_{D}$ and of $\nabla u_{j-j_0 +l}|_{D}$ do not change further until $l=j_0$ is reached, the requirement in (\ref{eq:parallel}) implies that the direction $e$ spanned by $e(\nabla u_{j-j_0}|_{\bar{D}}), e^{(p)}_{j-j_0}|_{\bar{D}}$ and the one spanned by $e(\nabla u_{j}|_{D}), e^{(p)}_{j}|_{D}$ are essentially parallel (c.f. Lemma \ref{lem:angle_1} and Remark \ref{rmk:angle_1} for the precise statements). If we choose 
the correct skew directions in Step 2(b) of Algorithm \ref{alg:construction}, we can hence ensure that the successive Conti constructions are essentially parallel, if (\ref{eq:parallel}) is satisfied.\\
We remark that for this argument to hold and for it to yield new, significant information, it was necessary in Definition \ref{defi:parallel_rot} to mod out the cases, in which Step 2(a) was active, i.e. $\mathcal{P}_l(D) = \{D\}$, as during these there are no changes.
\\
If (\ref{eq:rot}) holds, then the directions of the successive Conti constructions are necessarily substantially rotated with respect to each other (c.f. Lemma \ref{lem:angles} for the precise bounds). In this case we cannot substantially improve the situation to being more parallel by choosing the skew part appropriately in Corollary \ref{cor:plusminus_construction}. Thus, in the sequel, we will exploit these instances as possibilities to control the size of the skew part and to use this, if necessary, to change the sign of the skew direction. The precise formulation of this is the content of Algorithm \ref{alg:skew}.

\begin{alg}[Quantitative convex integration algorithm, II]
\label{alg:skew}
Let $\Omega$, $u_j:\Omega \rightarrow \R^2$ and $SP_j$ for $j\geq 1$ be as in Algorithm \ref{alg:construction}. 
We further consider
\begin{align*}
\omega_j:\Omega \rightarrow \Skew(2).
\end{align*}
This function will be defined to be piecewise constant on $\Omega$ and to be constant on each triangle $\Omega_{j,k}$. It will define the skew part of $\nabla u_j$ on $\Omega_{j,k}$, i.e.
\begin{align*}
\omega(\nabla u_{j}|_{\Omega_{j,k}}) = \omega_j|_{\Omega_{j,k}}.
\end{align*}
\begin{itemize}
\item[Step 1:] Initialization. Let $M$ be as in Step 1 in Algorithm \ref{alg:construction}. Then we define
\begin{align*}
\omega_0(x) = 0 \mbox{ for a.e. } x \in \Omega. 
\end{align*}
In the initialization step of Algorithm \ref{alg:construction} we choose
$\omega_1$ arbitrarily.
\item[Step 2:] Update. Let $j\in \N, j\geq 1$. Let $\omega_j$ and $\Omega_{j,k}$ be given. Suppose that $\tilde{\Omega}_{j+1,l}^k$ with $\tilde{\Omega}_{j+1,l}^k\in \mathcal{D}_1(\Omega_{j,k})$ is constructed from $\Omega_{j,k}$ by our covering argument (c.f. Step 2 in Algorithm \ref{alg:construction}). Then we define $\omega_{j+1}$ as follows:
\begin{itemize}
\item[(a)] If $\tilde{\Omega}_{j+1,l}^k$ is not part of one of the Conti constructions in the covering, then we set 
\begin{align*}
\omega_{j+1}|_{\tilde{\Omega}_{j+1,l}^k} = \omega_{j}|_{\Omega_{j,k}}.
\end{align*}
\item[(b)] If $\tilde{\Omega}_{j+1,l}^k$ is part of one of the Conti constructions in the covering, then by Algorithm \ref{alg:construction} we seek to apply the construction of Lemma \ref{lem:convex_int} with scale $\epsilon_j|_{\Omega_{j,k}}$ and $e^{(p)}_j|_{\Omega_{j,k}}$, $\nabla u_j|_{\Omega_{j,k}}$. Thus, by Corollary \ref{cor:plusminus_construction} we have two possible choices for the skew part of $\nabla u_{j+1}$. These are determined by their sign. To define the sign, let $j_0\in \N$ be the smallest integer such that $D:=\mathcal{P}_{j_0}(\Omega_{j,k}) \neq \Omega_{j,k}$.
We then choose the sign of the new skew direction $\omega_{j+1}|_{\tilde{\Omega}_{j+1,l}^k}$ (and hence determine the whole corresponding skew part) according to
\begin{align*}
&\sgn(\omega_{j+1}|_{\tilde{\Omega}_{j+1,l}^k} - \omega_{j}|_{\tilde{\Omega}_{j+1,l}^k})\\
&:=
\left\{
\begin{array}{ll}
\sgn(\omega_{j}|_{\Omega_{j,k}}- \omega_{j-j_0}|_{\Omega_{j,k}}) 
& \mbox{ if } e^{(p)}_{j}|_{\Omega_{j,l}^k} 
= e^{(p)}_{j-j_0}|_{D},\\
-1 & \mbox{ if } e^{(p)}_{j}|_{\Omega_{j,l}^k} 
\neq e^{(p)}_{j-j_0}|_{D}\\
& \quad \wedge \ \omega_j|_{\Omega_{j,k}}\geq 0,\\ 
1 & \mbox{ if } e^{(p)}_{j}|_{\Omega_{j,l}^k} 
\neq e^{(p)}_{j-j_0}|_{D} \\
& \quad \wedge \ \omega_j|_{\Omega_{j,k}}> 0.
\end{array}
\right.
\end{align*}
\end{itemize}
After having carried out the relabeling step, in which we pass from $\tilde{\Omega}_{j+1,l}^k$ to $\Omega_{j+1,l}$, the function $\omega_{j+1}$ is constant on each of the triangles in $\Omega_{j+1,l}$. Together with Algorithm \ref{alg:construction} this completes the construction of $\nabla u_{j+1}$.
\end{itemize}
\end{alg}

Let us comment on these algorithms: Due to the structure of the convex hulls
(Lemma \ref{lem:convex_hull}), our convex integration algorithm produces a
(countably) piecewise affine solution (in contrast to the solutions obtained by
means of an in-approximation scheme). This is reflected in the fact that the deformation $u_j$ is not further modified in the piecewise polygonal domains in $\Omega \setminus \Omega_j$. The preceding algorithm differs from a non-quantitative version of a convex integration scheme in several aspects:
\begin{itemize}
\item We consider \emph{finite} coverings of $\Omega \setminus \Omega_j$ instead of directly covering the whole domain.
\item We prescribe the choice of $\epsilon_j$ quantitatively.
\item We prescribe the skew part quantitatively.
\end{itemize}
These points are central in our higher regularity argument: 
As we seek to prove higher regularity by means of the interpolation result from Theorem \ref{thm:interpol} or Corollary \ref{cor:int}, we have to control the BV norm of the resulting deformation gradients. However, by a countably infinite (self-similar) covering of the whole domain, this is in general not possible (the total perimeter of the covering triangles is not bounded in general). Hence we only consider \emph{finite} coverings, which produce a controlled (but growing) BV norm and simultaneously allows us to cover a sufficiently large volume fraction $v_0$ of our domain $\Omega_j$. That it is possible to satisfy these two competing aims is content of the covering results of the next sections (c.f. Propositions \ref{prop:parallel}, \ref{prop:rot}). This finite covering of $\Omega_{j,k}$ is the cause for the splitting of Step 2 into two parts. Part (a) deals with the triangles which are not covered by Conti constructions and are in this sense ``errors" (in the sense that $u_j$ is not modified here), 
while part (b) deals with the 
part of the domain that is covered by Conti constructions, on which $u_j$ is modified.
\\
The specification of $\epsilon_j$ is of key relevance as well. It distinguishes in a quantitative way whether a new rank-one connection is rotated or not with respect to the corresponding last rank-one connection. In our BV estimate this leads to different bounds (c.f. the perimeter estimates in Propositions \ref{prop:parallel}, \ref{prop:rot}). In particular we cannot afford substantial rotations, as long as $\epsilon_j\ll \epsilon_0$ is very small, since this would yield superexponential growth for the BV norms, which cannot be compensated in our estimates (c.f. Figure \ref{fig:rectangle_coverings} and the corresponding explanations for the intuition behind this).\\ 
Due to the relation between the size of the scales $\delta_j$ (which itself is directly coupled to the admissible error $\epsilon_j$) and our regularity estimates, we in general seek to choose the value of $\epsilon_j$ as large as possible without leaving $\intconv(K)$. By the intercept theorem, it is always possible to choose $\epsilon_j$ to be ``relatively large" in the push-out steps (c.f. Notation \ref{nota:conti}). However, for stagnant matrices, this is no longer possible. Here we have to ensure a choice of $\epsilon_j$, which is summable in $j\in \N$ (in Algorithm \ref{alg:construction} we choose it geometrically decaying), in order to avoid leaving $\intconv(K)$. These considerations lead to the case distinction in the definition of $\epsilon_{j+1}$ in Step 2 (b) of Algorithm \ref{alg:construction}. 
\\
Finally, the quantitative prescription of the skew part is central to deduce the quantitative BV bound of Lemma \ref{lem:BV}, as we have to take care that, as long as we remain ``parallel" in strain space (c.f. Definition \ref{defi:parallel_rot}), we approximately preserve the same skew direction. This is necessary to prevent the Conti constructions from being substantially rotated with respect to each other if $\epsilon_j$ is very small and constitutes a crucial ingredient in the derivation of our perimeter and BV estimates in Sections \ref{sec:covering} and \ref{sec:quant} (c.f. Figure \ref{fig:rectangle_coverings} for the intuition behind this).\\
The normalization of the initial skew part is convenient (though not necessary).

\subsection{Well-definedness of Algorithms \ref{alg:construction}, \ref{alg:skew}}
\label{sec:well}

We now proceed to prove that Algorithms \ref{alg:construction} and \ref{alg:skew} are well-defined. Here in particular, it is crucial to show that with our choice of the admissible error $\epsilon_j$, we do not leave $\intconv(K)$ in the iteration except to attain one of the energy wells in $K$ (c.f. Proposition \ref{prop:symcontrol}). Moreover, we seek to construct solutions to (\ref{eq:incl}), which are Lipschitz regular.
These points are the content of the following two Propositions \ref{prop:symcontrol}, \ref{prop:skewcontrol1}, which deal with the symmetric and anti-symmetric parts respectively. To show these we will rely on several auxiliary observations.

\subsubsection{Symmetric part}
We begin by discussing the symmetric part and by showing that in our
construction it does not leave $\intconv(K)$, except to reach $K$.

\begin{prop}[Symmetric part]
  \label{prop:symcontrol}
  Let 
  $$d: \R^{2 \times 2} \rightarrow [0,\infty], \ N \mapsto \dist(e(N), \p
  \conv (K) ),$$
   and let $SP_{j}$ and $M$ be as in Algorithm \ref{alg:construction}. 			Then
  for every $j,k \in \N$ and every domain $\Omega_{j,k}\in \{\Omega_{j,k}\}_{k\in\{1,\dots,J_j\}}$ there holds 
  \begin{align*}
    d(\nabla u_{j}|_{\Omega_{j,k}}) 
    \geq \min\left\{\frac{1}{16},d(M)\right\} - 2(\epsilon_0 -\epsilon_{j}|_{\Omega_{j,k}}).
  \end{align*}
  In particular, for all $j\geq 1$ it holds that $\nabla u_j(x) \in \intconv(K)$ for almost all $x\in \Omega_j$.
\end{prop}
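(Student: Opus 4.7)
I would argue by induction on $j$. The base case $j=0$ is immediate since $e(\nabla u_0) = e(M)$, so $d(\nabla u_0) = d(M)$ and $\epsilon_0 - \epsilon_0 = 0$. For the inductive step, fix a triangle $\Omega_{j+1,k}$ in the new partition. By Step 2 of Algorithm \ref{alg:construction} this triangle arises in exactly one of two ways: either it comes from Step 2(a), in which case $\nabla u_{j+1}$ and $\epsilon_{j+1}$ inherit the values on the parent $\Omega_{j,k'}$ and the inductive bound passes through verbatim; or it is a level set of a Conti rectangle placed in Step 2(b) inside some parent $\Omega_{j,k'}$ with parameters $M' := \nabla u_j|_{\Omega_{j,k'}}$, $e^{(p)} := e^{(p)}_j|_{\Omega_{j,k'}}$, $\epsilon := \epsilon_j|_{\Omega_{j,k'}}$.

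In the second case, Lemma \ref{lem:convex_int} and Notation \ref{nota:conti} give three sub-cases. If $\nabla u_{j+1} = \tilde M_0$, then $e(\tilde M_0) = e^{(p)} \in K$, so $\Omega_{j+1,k}$ drops out of $\Omega_{j+1}$ and no estimate is required. In the stagnant case $\nabla u_{j+1} = \tilde M_4$, the strain error $|\tilde M_4 - M'| < \epsilon$ together with the inductive hypothesis gives
\[
d(\tilde M_4) \geq d(M') - \epsilon \geq \min\{1/16, d(M)\} - 2(\epsilon_0 - \epsilon) - \epsilon = \min\{1/16, d(M)\} - 2(\epsilon_0 - \tfrac{\epsilon}{2}),
\]
which is exactly the required inequality since $\epsilon_{j+1} = \epsilon/2$.

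The main content is the push-out case $\nabla u_{j+1} \in \{\tilde M_1, \tilde M_2, \tilde M_3\}$, where $\epsilon_{j+1}$ is reset to $\epsilon_0$ and hence I must show the sharper bound $d(e(\tilde M_i)) \geq \min\{1/16, d(M)\}$ with no slack. I would work in barycentric coordinates $(\lambda_1, \lambda_2, \lambda_3)$ on the equilateral triangle $\conv(K)$. Since $e^{(p)}$ was set as the closest vertex at the last push-out applied to this lineage, and since only geometric-series-controlled stagnation drift of total size $\sum 2^{-k}\epsilon_0 \leq 2\epsilon_0$ has accrued since, the majority-barycentric-equals-Voronoi identity for an equilateral triangle yields $\lambda_{i_p}(e(M')) \geq 1/3 - C\epsilon_0$. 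Then $\tilde e_1 = \tfrac{4}{3}e(M') - \tfrac{1}{3}e^{(p)}$ satisfies $\lambda_{i_p}(\tilde e_1) \geq \tfrac{1}{9} - C\epsilon_0$, while $\lambda_q(\tilde e_1) = \tfrac{4}{3}\lambda_q(e(M'))$ is strictly larger than $\lambda_q(e(M'))$ for $q \neq i_p$. Since the distance to $\partial \conv(K)$ equals the triangle height $h_0 \geq 3/2$ (Lemma \ref{lem:geo2}) times $\min_i \lambda_i$, and since $h_0/9 > 1/16$, the push-out factor $4/3$ on the transverse coordinates provides surplus that comfortably absorbs both the inductive slack $-2(\epsilon_0 - \epsilon)$ on $d(M')$ and the additive $\epsilon$-error from Lemma \ref{lem:conti_deformed}, giving $d(e(\tilde M_1)) \geq \min\{1/16, d(M)\}$. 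The matrices $\tilde M_2, \tilde M_3$ are treated identically with the factors $16/15$ and $-1/15$. The in-particular statement follows since $2(\epsilon_0 - \epsilon_j) \leq 2\epsilon_0 \leq 1/800 < 1/16$.

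\textbf{Main obstacle.} The technical heart is the barycentric lower bound $\lambda_{i_p}(e(M')) \geq 1/3 - C\epsilon_0$, which must be maintained uniformly through arbitrary chains of alternating push-out and stagnant steps by combining the selection rule $e^{(p)} = \argmin \dist(\cdot, K)$ (re-established at every push-out) with the geometric-series control on the accumulated stagnation drift. Once this is secured, the factor-$4/3$ geometric surplus of the push-out supplies enough slack to absorb every $\epsilon$-error and the remaining algebra is routine.
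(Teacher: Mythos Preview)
Your proposal is correct and follows the same inductive scheme as the paper: the base case, Step~2(a), and the stagnant case $\tilde M_4$ are handled identically (your one-line computation for $\tilde M_4$ matches the paper's verbatim). For the push-out case your barycentric-coordinate argument is the paper's ``intercept theorem'' argument written in different language: the paper splits into whether the nearest edge of $\partial\conv(K)$ changes under the push-out, gets $d(\tilde M_l)\ge\frac{16}{15}d(M_{j,k})-\epsilon_0$ from the intercept theorem when it does not, and dismisses the case where it does change with the phrase ``by construction $d(\tilde M_l)\ge 1/16$''. Both of these steps tacitly rely on exactly the fact you isolate as the main obstacle, namely that $e^{(p)}$ remains the (approximately) closest well, so that $\lambda_{i_p}(e(M'))\gtrsim 1/3$; without this the edge opposite $e^{(p)}$ could be the nearest one and the push-out would \emph{decrease} $d$. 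You make this lemma explicit and sketch its proof via the reset rule for $e^{(p)}$ plus the geometric bound on the stagnation drift, whereas the paper leaves it implicit in ``by construction''. So the two arguments coincide; yours is somewhat more transparent about where the work is.
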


\begin{proof}
  We prove the statement inductively. For $j=0$, we note that this holds since
  $\epsilon_{j}=\epsilon_{0}$ and $\nabla u_{0}=M$.

  Let thus $\nabla u_{j}|_{\Omega_{j,k}}=:M_{j,k}$ be given. 
  We only show that the result remains true for $j+1$ in the regions, in which
  the Conti construction is applied, as in the other regions it holds by the
  induction hypothesis (as $\epsilon_{j+1}=\epsilon_j$ for these regions). Let
  $\tilde{M}_0(M_{j,k}),\dots, \tilde{M}_4(M_{j,k})$ be the matrices, by which
  $M_{j,k}$ is replaced in the application of the Conti construction of Lemma
  \ref{lem:convex_int}.
We consider first the pushed out matrices (see also Notation \ref{nota:conti}). If the edge of $\partial \conv (K)$ closest to $\tilde{M}_l(M_{j,k})$, $l=1,2,3$, is different from the edge closest to $M_{j,k}$, then by construction $d(\tilde{M}_l(M_{j,k}))\geq 1/16$. It thus remains to discuss the situation, in which this is not the case. In this situation the intercept theorem and the induction hypothesis, for $l=1,2,3,$ (for which $\epsilon_{j+1}|_{\tilde{\Omega}_{j+1,l}^k}=\epsilon_0$) it holds
  
  \begin{align*}
    d(\tilde{M}_{l}(M_{j,k})) 
    &\geq \frac{16}{15}d(M_{j,k})- \epsilon_{0} \\
    &\geq \frac{1}{15}d(M_{j,k}) - \epsilon_{0} +  \min\left\{\frac{1}{16},d(M)\right\}-2(\epsilon_0 - \epsilon_j|_{\Omega_{j,k}}) \\
    &\geq \min\left\{\frac{1}{16},d(M)\right\} + \frac{98}{15}\epsilon_0 - 3\epsilon_0\\
    &\geq \min\left\{\frac{1}{16},d(M)\right\}.
  \end{align*}
Here we used the definition of $\epsilon_0$ (c.f. Step 0 (b)) and the induction hypothesis combined with the bound
  \begin{align*}
    d(M_{j,k}) &\geq \min\left\{d(M),\frac{1}{16}\right\}-2\epsilon_0=98 \epsilon_{0}.
  \end{align*}
  Finally, for $\tilde{M}_{4}(M_{j,k})$ we estimate
  \begin{align*}
    d(\tilde{M}_{4}(M_{j,k})) 
    &\geq d(M_{j,k})- \epsilon_{j}|_{\Omega_{j,k}}
    \geq  \min\left\{d(M), \frac{1}{16} \right\}  +(2\epsilon_{j}|_{\Omega_{j,k}}-\epsilon_j|_{\Omega_{j,k}})-2\epsilon_{0}\\
    & =  \min\left\{d(M), \frac{1}{16} \right\} + 2(\epsilon_{j+1}|_{\tilde{\Omega}_{j,l}^k} -\epsilon_{0}).
  \end{align*}
  This concludes the proof.
\end{proof}

\subsubsection{Skew symmetric part}
\label{sec:skew}
In order to deal with the skew part and to show its boundedness, we need  several auxiliary results. These are targeted at controlling the maximal number of push-out steps in the parallel case (c.f. Lemma \ref{lem:skew1}), where the notions ``parallel" and ``rotated" are used as in Definition \ref{defi:parallel_rot}. With the control of the maximal number of push-out steps at hand, we can then present a bound on the skew part of the gradients from Algorithms \ref{alg:construction} and \ref{alg:skew} (c.f. Proposition \ref{prop:skewcontrol1}). Together with the boundedness of the symmetrized gradient this yields the uniform $L^{\infty}$ bounds on $\nabla u_j$.\\

We begin by estimating the distance to the wells. 

\begin{lem}
\label{lem:dist}
  Let $SP_{j}$ be the j-th step of the convex integration construction obtained
  in Proposition \ref{prop:convex_int}. Then for every level set $\Omega_{j,k}$
  it holds
  \begin{align*}
    &\dist(e(\nabla u_{j}|_{\Omega_{j,k}}), K) 
    \geq \min\{d_K, 1/8\} - 2(\epsilon_0-\epsilon_{j}|_{\Omega_{j,k}}),
   \end{align*} 
   where $d_0$, $d_K$, $\epsilon_0$ are as in Step 0(b) in Algorithm \ref{alg:construction}.
\end{lem}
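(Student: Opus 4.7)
The plan is to induct on $j$, in direct parallel with the proof of Proposition~\ref{prop:symcontrol} but with the vertex set $K$ replacing $\partial\conv(K)$. For $j=0$, Step~1 of Algorithm~\ref{alg:construction} gives $e(\nabla u_0)=e(M)$ and $\epsilon_0|_{\Omega_{0,k}}=\epsilon_0$, so the inequality reduces to $\min\{d_K,1/8\}\leq d_K$. For the inductive step I fix a descendant $\tilde{\Omega}_{j+1,l}^k\subset\Omega_{j,k}$ produced in Step~2. When Step~2(a) applies, the state is inherited without change and there is nothing to prove. For the stagnant branch $\nabla u_{j+1}|_{\tilde{\Omega}}=\tilde{M}_4$ the $\epsilon$-closeness in Lemma~\ref{lem:convex_int} yields $|e(\tilde{M}_4)-e(M_{j,k})|\leq\epsilon_j|_{\Omega_{j,k}}$, and since $\epsilon_{j+1}=\epsilon_j/2$ the algebraic identity $-2(\epsilon_0-\epsilon_j)-\epsilon_j=-2(\epsilon_0-\epsilon_{j+1})$ closes the induction.

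The substantive case is the pushed-out branch $\nabla u_{j+1}|_{\tilde{\Omega}}\in\{\tilde{M}_1,\tilde{M}_2,\tilde{M}_3\}$, where $\epsilon_{j+1}=\epsilon_0$ and one must establish the unshifted inequality $\dist(e(\tilde{M}_l),K)\geq\min\{d_K,1/8\}$. I split according to whether the newly reset $e^{(p)}_{j+1}$ coincides with $e^{(p)}_{j,k}$. If it does, then $\dist(e(\tilde{M}_l),K)=|e(\tilde{M}_l)-e^{(p)}_{j,k}|$, and the $\tfrac{16}{15}$-push-out relation from Notation~\ref{nota:conti} combined with the trivial bound $|e(M_{j,k})-e^{(p)}_{j,k}|\geq\dist(e(M_{j,k}),K)$ gives
\[
\dist(e(\tilde{M}_l),K)\geq \tfrac{16}{15}\dist(e(M_{j,k}),K)-\epsilon_j;
\]
plugging in the inductive bound and using $\epsilon_0\leq\min\{d_K/100,1/1600\}$ absorbs the correction and delivers the claim, exactly as for the analogous estimate in Proposition~\ref{prop:symcontrol}. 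If instead the closest well has jumped to some $e^{(q)}\neq e^{(p)}_{j,k}$, the reverse triangle inequality together with the $\tfrac{4}{3}$-push-out estimate yields
\[
\dist(e(\tilde{M}_l),K)=|e(\tilde{M}_l)-e^{(q)}|\geq |e^{(p)}_{j,k}-e^{(q)}|-\tfrac{4}{3}|e(M_{j,k})-e^{(p)}_{j,k}|-\epsilon_j.
\]
Tracking how $e^{(p)}$ is maintained by Algorithm~\ref{alg:construction}—reset to $\argmin\dist(\cdot,K)$ at initialization and after every push-out, frozen during stagnant steps with cumulative drift bounded by the geometric series $\sum_{k\geq 0}2^{-k}\epsilon_0=2\epsilon_0$—shows that $|e(M_{j,k})-e^{(p)}_{j,k}|$ is controlled by the maximal vertex-to-Voronoi-cell distance inside the equilateral triangle $K$, which equals the vertex-to-centroid distance $1$ (in the spectral norm used throughout Section~\ref{sec:hex}), plus $\mathcal{O}(\epsilon_0)$. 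Since the distinct wells satisfy $|e^{(p)}_{j,k}-e^{(q)}|=\sqrt{3}$, the right-hand side is at least $\sqrt{3}-\tfrac{4}{3}-\mathcal{O}(\epsilon_0)>1/8\geq\min\{d_K,1/8\}$.

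The only genuinely new input compared with Proposition~\ref{prop:symcontrol} is this Voronoi-crossing subcase, which is therefore the main obstacle. It depends on the explicit geometry of $K$ from~\eqref{eq:K3}: a $\tfrac{4}{3}$-push-out of a symmetric strain sitting in the Voronoi cell of one well cannot cross the separating bisector and still land within $\tfrac{1}{8}$ of any other well. The only additional bookkeeping required is the cumulative-drift estimate for $|e(M_{j,k})-e^{(p)}_{j,k}|$ through long runs of stagnant iterations, and here the geometric decay $\epsilon_{j+1}=\epsilon_j/2$ built into Algorithm~\ref{alg:construction} is precisely what makes the relevant sum absolutely convergent.
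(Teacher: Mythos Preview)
Your proof is correct and follows essentially the same inductive scheme as the paper: base case, no change in Step~2(a), the stagnant branch via $\epsilon_{j+1}=\epsilon_j/2$, and the push-out branch split according to whether the nearest well changes. The only difference is in the ``closest well jumps'' subcase: the paper dismisses this with a one-line ``without loss of generality (c.f.\ the argument in Proposition~\ref{prop:symcontrol})'' --- i.e.\ the analogue of the $d\geq 1/16$ observation there --- whereas you supply the explicit Voronoi computation $\sqrt{3}-\tfrac{4}{3}(1+2\epsilon_0)-\epsilon_0>1/8$, which is precisely what that appeal is meant to convey. In the same-well push-out case the paper additionally invokes the distance-to-boundary bound of Proposition~\ref{prop:symcontrol}, but as you observe, the inductive hypothesis together with $\epsilon_0\leq\min\{d_K,1/8\}/100$ already suffices.
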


The statement of this lemma is very similar to the result of Proposition \ref{prop:symcontrol}. However, instead of controlling the distance to the boundary, we here estimate the distance to the wells. This can be substantially larger than the distance to the boundary.

\begin{proof}
The proof follows along the same lines of the one of Proposition \ref{prop:symcontrol}.
  We note that the statement is true for $j=0$ (by the definition of
  $\epsilon_{0}$) and proceed by induction. 
  Let thus $M_{j,k}:=\nabla u_{j}|_{\Omega_{j,k}}$ be given. With slight abuse
  of notation we set $\epsilon_{j}:=\epsilon_j|_{\Omega_{j,k}}$. It suffices to
  show that the values of $\nabla u_{j+1}$, which were obtained from $M_{j,k}$ by
  an application of the Conti construction, still satisfy the desired estimates (in the domains, in which $u_j$ is unchanged the estimate holds by the induction assumption). The application of the Conti construction yields matrices $\tilde{M}_0(M_{j,k}), \cdots,\tilde{M}_4(M_{j,k})$.
  As $e(\tilde{M}_0(M_{j,k}))\in K$, we only consider the other matrices. We
  consider the matrices $\tilde{M}_1(M_{j,k}),\cdots,\tilde{M}_3(M_{j,k})$,
  which are constructed by ``pushing-out" (c.f. Notation \ref{nota:conti}). Without loss of generality (c.f. the argument in Proposition \ref{prop:symcontrol}), we only discuss the case that the closest well for $e(\tilde{M}_i(M_{j,k}))$ is the same as for $e(M_{j,k})$. For $i\in\{1,2,3\}$ we have
  \begin{align*}
  \dist(e(\tilde{M}_i(M_{j,k})),K) &\geq \frac{16}{15}\dist(e(M_{j,k}),K)-\epsilon_0\\
  & \geq d_K + \frac{1}{15}\dist(e(M_{j,k}),K) - \epsilon_0 - 2(\epsilon_0-\epsilon_{j})\\
  & \geq d_K + \frac{1}{15} d(M_{j,k}) - \epsilon_0 - 2(\epsilon_0-\epsilon_{j})\\
  & \geq d_K.
  \end{align*}
  Here we used the induction assumption for $M_{j,k}$ as well as the estimate for $d(M_{j,k}) $ from Proposition \ref{prop:symcontrol} and the definition of $\epsilon_0$.\\
  For $\tilde{M}_4(M_{j,k})$ we estimate
  \begin{align*}
   \dist(e(\tilde{M}_4(M_{j,k})),K) 
   &\geq \dist(e(M_{j,k}),K)-\epsilon_j\\
  & \geq d_K  - \epsilon_j - 2(\epsilon_0-\epsilon_{j})\\
 & \geq d_K - 2(\epsilon_0-\epsilon_{j+1}).
  \end{align*}
  Here we used the definition of $\epsilon_{j+1}:= \epsilon_{j}/2$ on the subset of the Conti construction, on which $\tilde{M}_4(M_{j,k})$ is attained. 
\end{proof}

Using Lemma \ref{lem:dist} and recalling Definitions \ref{defi:family}, \ref{defi:parallel_rot}, we bound the maximal number of possible push-out steps in the parallel situation:

\begin{lem}
\label{lem:skew1}
Let $SP_j$ and $\omega_j$ be as in Algorithms \ref{alg:construction} and \ref{alg:skew}. Assume that $D\in \{\Omega_{j_0 + n,k}\}_{k\in\{1,\dots,J_{j_0+n}\}}$ and suppose that the construction of $D$ from $\bar{D}\in \mathcal{P}_{n}(D)$ involves $k$ with $k\in \N\cup \{0\}$ push-out steps (c.f. Notation \ref{nota:conti}). Further assume that for a.e. $x\in D$ and for all $r \in\{1,\dots,n\}$
\begin{align}
\label{eq:parallel_1}
e^{(p)}_{j_0 + r}(x)= e^{(p)}_{j_0}(x) .
\end{align}
Then there exists a number $N_0=N_0(d_K)$ such that $0\leq k \leq N_0$.
\end{lem}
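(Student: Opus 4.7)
The plan is to track the distance to the (fixed) closest well along the chain of descendants $\bar D = D_0 \supset D_1 \supset \dots \supset D_n = D$ connecting $\bar D$ to $D$. The parallel assumption \eqref{eq:parallel_1} ensures that $e^{(p)}_{j_0}$ remains the closest well to $e(\nabla u_{j_0+r}|_{D_r})$ for every $r\in\{0,\dots,n\}$, so the single quantity
\begin{equation*}
\rho_r := \dist(e(\nabla u_{j_0+r}|_{D_r}), K) = |e(\nabla u_{j_0+r}|_{D_r}) - e^{(p)}_{j_0}|
\end{equation*}
captures all the geometric information I need. From Notation \ref{nota:conti}, the evolution of $\rho_r$ in a single step falls into three cases: a push-out step yields $\rho_{r+1} \geq \tfrac{16}{15}\rho_r - \epsilon_r$, a stagnant step gives $\rho_{r+1} \geq \rho_r - \epsilon_r$, and a Step 2(a) step (where $\nabla u$ is not modified on $D_r$) leaves $\rho_r$ unchanged. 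Writing $k_r$ for the number of push-out steps among the first $r$ iterations, a direct induction will produce
\begin{equation*}
\left(\tfrac{16}{15}\right)^{-k_n}\rho_n \;\geq\; \rho_0 - \sum_{r=0}^{n-1}\left(\tfrac{16}{15}\right)^{-k_{r+1}}\epsilon_r.
\end{equation*}

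The heart of the argument, and the main obstacle I expect, will be to bound the error sum by a universal multiple of $\epsilon_0$ \emph{independently of $n$}. The scheduling of $\epsilon_r$ in Algorithm \ref{alg:construction} is designed precisely for this purpose: $\epsilon_r$ is reset to $\epsilon_0$ at every push-out step and halved at every stagnant step, so that within each block between two consecutive push-outs the $\epsilon_r$ form a geometric series of total mass at most $2\epsilon_0$. Since the entire block following the $j$-th push-out carries the common weight $\left(\tfrac{16}{15}\right)^{-j}$ and $\sum_{j\ge 0}(16/15)^{-j}=16$, the total error is bounded by $C\epsilon_0$ with $C$ a numerical constant. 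I emphasise that a naive bound $\epsilon_r \leq \epsilon_0$ would instead yield an $n$-dependent estimate and be useless here; the geometric decay of $\epsilon_r$ is indispensable.

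To conclude, I will use Lemma \ref{lem:dist} to obtain the lower bound $\rho_0 \geq \min\{d_K,1/8\} - 2\epsilon_0$, and combine it with the constraints $\epsilon_0 \leq \min\{d_0/100,1/1600\}$ from Step 0(b) together with the elementary inequality $d_0 \leq d_K$ (a consequence of $K \subset \partial\conv(K)$) to verify, separately in the cases $d_K \geq 1/8$ and $d_K < 1/8$, that $\rho_0 - C\epsilon_0 \geq c(d_K) > 0$ with $c(d_K)$ depending only on $d_K$. Since on the other hand $\rho_n \leq \diam(\conv(K)) = \sqrt{3}$ by Lemma \ref{lem:geo2}, combining everything gives
\begin{equation*}
\left(\tfrac{16}{15}\right)^{k} = \left(\tfrac{16}{15}\right)^{k_n} \leq \frac{\rho_n}{\rho_0 - C\epsilon_0} \leq \frac{\sqrt{3}}{c(d_K)},
\end{equation*}
so that $k \leq N_0(d_K) := \log_{16/15}\bigl(\sqrt{3}/c(d_K)\bigr)$, which depends only on $d_K$ as claimed and exhibits the expected qualitative dependence $N_0 \sim \log(1/d_K)$.
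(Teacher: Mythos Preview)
Your proof is correct and follows essentially the same strategy as the paper's: both track the distance from $e(\nabla u_{j_0+r}|_{D_r})$ to the fixed well $e^{(p)}_{j_0}$, exploit the multiplicative gain $\tfrac{16}{15}$ at each push-out, and control the accumulated errors through the geometric decay of $\epsilon_r$ between push-outs, combined with the lower bound from Lemma~\ref{lem:dist} and the diameter bound on $\conv(K)$. The only organisational difference is that the paper argues segment-by-segment (a block of stagnations followed by one push-out yields a net factor $\tfrac{101}{100}$), whereas you run a single weighted induction and bound the total error sum by $\sum_{j\ge 0}(16/15)^{-j}\cdot 3\epsilon_0$; the two bookkeepings are equivalent. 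One small inaccuracy: your identity $\rho_r=\dist(e(\nabla u_{j_0+r}|_{D_r}),K)$ need not hold after stagnant steps (where $e^{(p)}$ is not updated to the argmin), but this is harmless since your induction only uses the distance to the specific well $e^{(p)}_{j_0}$, and $\rho_r\ge\dist(\cdot,K)$ suffices for the lower bound via Lemma~\ref{lem:dist}.
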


\begin{proof}
The proof relies on the definition of $\epsilon_0$ and the control on the distance to the wells, which was obtained in Lemma \ref{lem:dist}. Indeed, let $\Omega_{j_0+l,m}\in \{\Omega_{j_0+l,\tilde{m}}\}_{\tilde{m}\in\{1,\dots,J_{j_0+l}\}}$ with $\Omega_{j_0+l,m} \subset \bar{D}$ be arbitrary but fixed. Without loss of generality, we assume that in all the iteration steps $j_0,\dots,j_0+l$ Step 2(b) occurs on our respective domain (as there is no change, if Step 2(a) occurs, and as we are only interested in the maximal number of steps, in which a specific change, i.e. a push-out, occurs).
Let $M_j:= \nabla u_{j}|_{\Omega_{j_0+l,m}}$ be given.
Suppose that a matrix $M_{j_0+n+1}$ is obtained from $M_{j_0+n}$ for some $n\in\{1,\dots,l\}$ by push-out and that $M_{j_0+n}$ is obtained from $M_{j_0}$ by stagnating $n$-times. Then,
\begin{align*}
\dist(e(M_{j_0+n+1}),e_{j_0}^{(p)})
&\geq \frac{16}{15}\dist(e(M_{j_0+n}),e_{j_0}^{(p)}) - \epsilon_0\\
&\geq \frac{16}{15} \dist(e(M_{j_0}), e_{j_0}^{(p)}) 
- \frac{16}{15}\epsilon_{j_0} \sum\limits_{j=1}^{n}2^{-j} - \epsilon_0\\
&\geq \frac{16}{15} \dist(e(M_{j_0}), e_{j_0}^{(p)}) - \frac{32}{15}\epsilon_{j_0}- \epsilon_0\\
& \geq \frac{101}{100}\dist(e(M_{j_0}), e_{j_0}^{(p)}). 
\end{align*}
Here we have used (\ref{eq:parallel_1}), the result of Lemma \ref{lem:dist}, the fact that each consecutive stagnation step decreases the value of $\epsilon_{j}$ by a factor $2^{-1}$ and the definition of $\epsilon_0$. Thus, defining $k$ as the number of push-out steps, we infer that
\begin{align*}
\dist(e(M_{j_0+l}),e_{j_0}^{(p)})
 &\geq \left( \frac{101}{100} \right)^k \dist(e(M_{j_0}),e_{j_0}^{(p)})
 \geq \left( \frac{101}{100} \right)^k (d_K - \epsilon_0)\\
 &\geq \left( \frac{101}{100} \right)^k \frac{99}{100} d_K,
\end{align*}
where $d_K$ is defined as in Step 0 (b) in Algorithm \ref{alg:construction}. 
Therefore, by Step 2 of Algorithm \ref{alg:construction} (i.e. the update for $e^{(p)}_j$) after at most
\begin{align*}
N_0:=  \frac{\log(\frac{3}{d_K})}{\log\left( \frac{101}{100} \right)}
\end{align*}
push-out steps, we are no longer in the parallel case. This yields the desired upper bound.
\end{proof}

Relying on the previous lemma, we obtain a uniform bound on the skew part:

\begin{prop}[Skew symmetric part]
\label{prop:skewcontrol1}
Let $SP_j$ and $\omega_j$ be as in Algorithms \ref{alg:construction} and \ref{alg:skew}. Suppose that $N_0>0$ is the number from Lemma \ref{lem:skew1}. Define $\bar{C}:=\max\{ 100, 20 (N_0 +1) (1+\epsilon_0) \}$. Then, 
\begin{align}
\label{eq:skewcontrol}
|\omega_j(x)| \leq \bar{C} + 2(\epsilon_0 - \epsilon_j) \mbox{ for all } x\in \Omega_j,
\end{align}
and 
\begin{align}
\label{eq:skewcontrol1}
|\omega_j(x)| \leq 2 \bar{C}  \mbox{ for all } x\in \Omega \setminus\Omega_j.
\end{align} 
\end{prop}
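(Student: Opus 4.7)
The plan is to prove both bounds simultaneously by induction on $j$, combining the single-step analysis provided by Corollary~\ref{cor:plusminus_construction} with the structural constraints enforced by Algorithm~\ref{alg:skew} and Lemma~\ref{lem:skew1}. The base case $j=0$ is immediate since $\omega_0 \equiv 0$, and the initialisation step produces $|\omega_1|\le \|\hat S\|$, which is far below $\bar C\ge 100$.

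First I would catalogue what a single Step~2(b) update can do to $\omega$. Writing $\hat S := \tfrac{3}{4}\omega(a\otimes n)$ for the skew of the rank-one connection produced in Lemma~\ref{lem:convex_int}, Corollary~\ref{cor:plusminus_construction} gives skew increments of $\pm\hat S$ on the piece that becomes frozen (where $e(\nabla u_{j+1})\in K$), of $\mp\tfrac{1}{3}\hat S$ or $\mp\tfrac{1}{15}\hat S$ on the three push-out pieces (up to error $\epsilon_j$), and of norm at most $\epsilon_j$ on the stagnant piece. Lemma~\ref{lem:skewcontrol2} combined with the fact that the spectral-norm diameter of $\conv(K)$ equals $\sqrt 3$ yields $\|\hat S\|\le \tfrac{3\sqrt 3}{4}<2$, so every per-step increment on the set $\Omega_{j+1}$ has norm at most $\tfrac{2}{3}+\epsilon_0<1$, while the newly frozen piece absorbs a one-time increment of norm at most $2$.

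The heart of the argument is a one-dimensional bookkeeping along each trajectory $k\mapsto \omega_k(x)$: I would organise the Step~2(b) updates into \emph{parallel blocks} in the sense of Definition~\ref{defi:parallel_rot}, separated by rotated steps, observing that stagnant updates never change $e^{(p)}$ and therefore always lie inside a parallel block. Within one block, Algorithm~\ref{alg:skew} forces every skew increment to carry the same sign as the last Step~2(b) change, so the increments accumulate monotonically; Lemma~\ref{lem:skew1} caps the number of push-outs in such a block by $N_0$, while the stagnant runs between push-outs form geometric series (each halving of $\epsilon_j$ after a stagnant step gives $\sum_{\ell\ge 1}\epsilon_0 2^{-\ell}\le \epsilon_0$) of total contribution at most $2\epsilon_0$ per run, with at most $N_0+1$ such runs per block. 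Hence the net $\omega$-change across one parallel block is bounded by $N_0(\tfrac{2}{3}+\epsilon_0)+2(N_0+1)\epsilon_0 \le 4(N_0+1)(1+\epsilon_0)$. A rotated update, in contrast, carries sign opposite to $\omega_j(x)$ by Algorithm~\ref{alg:skew}, so it either shrinks $|\omega|$ by at least its increment magnitude or (when $|\omega_j(x)|<\tfrac{2}{3}+\epsilon_0$) leaves $|\omega_{j+1}(x)|\le \tfrac{2}{3}+\epsilon_0$; in particular rotations never enlarge $|\omega|$ beyond the single-block bound.

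Combining these gives the inductive invariant $|\omega_j(x)|\le \bar C + 2(\epsilon_0-\epsilon_j(x))$ on $\Omega_j$: the slack $2(\epsilon_0-\epsilon_j)$ is precisely consumed by successive stagnant updates, since $\epsilon_{j+1}=\epsilon_j/2$ yields $2(\epsilon_0-\epsilon_{j+1})-2(\epsilon_0-\epsilon_j)=\epsilon_j$, exactly matching the stagnant increment. With $\bar C=\max\{100,\,20(N_0+1)(1+\epsilon_0)\}$ we retain an ample safety factor, which establishes (\ref{eq:skewcontrol}). For (\ref{eq:skewcontrol1}), observe that a point $x$ enters $\Omega\setminus\Omega_j$ precisely at the iteration at which its gradient becomes $\tilde M_0$; there $|\omega|$ absorbs the one-time large increment $\|\hat S\|\le 2$ on top of the previous $\Omega_{j-1}$-bound, so $|\omega_j(x)|\le \bar C+2\epsilon_0+2\le 2\bar C$, and on later iterations $\omega$ remains unchanged on this set by Algorithm~\ref{alg:skew}(a). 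The main obstacle is the monotone accumulation of skew increments within a parallel block: without the quantitative bound $N_0$ from Lemma~\ref{lem:skew1} the same-sign increments forced by Algorithm~\ref{alg:skew} could compound unboundedly, and a careful check of the sign-selection rule at rotations is essential to ensure that the next block does not simply resume accumulation in the same direction.
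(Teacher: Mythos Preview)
Your argument is correct and mirrors the paper's own proof: both proceed by organising the iteration into maximal runs of fixed skew-sign (your ``parallel blocks'', the paper's ``fixed sign case'' under its Assumption~\ref{as:1}), bound the push-out contribution per run via Lemma~\ref{lem:skew1} and the stagnant contribution via the geometric series in $\epsilon_j$, and then invoke the sign-reversal rule of Algorithm~\ref{alg:skew} at rotated steps to prevent accumulation across runs, with the frozen-piece bound~(\ref{eq:skewcontrol1}) obtained identically from a single extra increment of size $\|\hat S\|$. The only cosmetic difference is that the paper separates the two-sided estimate into an upper and a lower bound (its Steps~1--2), which makes the bookkeeping through a push-out step --- where $\epsilon_{j+1}$ resets to $\epsilon_0$ and hence your ``slack'' $2(\epsilon_0-\epsilon_j)$ drops to zero --- slightly cleaner than packaging everything as a single absolute-value invariant.
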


\begin{proof}
We prove the claims inductively and note that $\omega_0=0$ satisfies them.
We first discuss (\ref{eq:skewcontrol}) and show that it remains true for $\omega_{j}$ with $j\in \N$. To this end, let $l\in \N$ and 
$D\subset \{\Omega_{j+l,k}\}_{k\in\{1,\dots,J_{j+l}\}}$.
For abbreviation we set
$M_{j}:= \nabla u_j|_{D}$, $\tilde{\omega}_{j}:=\omega_j|_{D}$ (and recall that $\omega_j|_D = \omega(\nabla u_j|_D)$) and first assume that $\tilde{\omega}_{j} \leq 0$ (see Notation \ref{not:skew}).  We begin by making the following additional assumption:

\begin{assume}
\label{as:1}
We suppose that the skew matrix 
$\tilde{\omega}_{j+l}$ is derived from $\tilde{\omega}_{j}$ by an $l$-fold application of Algorithms \ref{alg:construction} and \ref{alg:skew}, where in the Conti construction of Corollary \ref{cor:plusminus_construction} we always choose the \emph{positive} skew direction. 
\end{assume}

We point out that this assumption can occur both in the parallel and in the rotated case, but ensures that the skew direction was not changed in this process. In other words, Assumption \ref{as:1} implies that the sign of the skew direction, which is chosen in Corollary \ref{cor:plusminus_construction}
remains fixed. We hence refer to this situation as the ``fixed sign case".
We further introduce the auxiliary functions
\begin{align*}
N_{l,1}, N_{l,2}, N_{l}: \Omega \rightarrow \N\cup \{0\},
\end{align*} 
with $N_l:= N_{l,1}+N_{l,2}$. Here for given $l\in \N$ and $\tilde{\omega}_{j+l}$, we define $N_{1,l}$ as the number of $4/3$ push-out steps in the process of obtaining $\tilde{\omega}_{j+l}$ from $\tilde{\omega}_{j}$, and $N_{2,l}$ as the number of $16/15$ push-out steps. By Lemma \ref{lem:skew1} we know that $0\leq N_{l}\leq N_0$. \\

\emph{Step 1: Upper bound in the fixed sign case.}
We first deal with the upper bound for $\tilde{\omega}_{j+l}$. To this end we note that
\begin{align*}
\tilde{\omega}_{j+1} \leq \frac{4}{3} \dist(e(\nabla u_j)|_{D},K) + \epsilon_j|_{D}  + \tilde{\omega}_{j}.
\end{align*}
We iterate this estimate:
\begin{align*}
\tilde{\omega}_{j+l} \leq \frac{4}{3} N_{l,1} 3 + \frac{16}{15} N_{l,2} 3 + (N_{l,1}+N_{l,2}) \epsilon_0 + \epsilon_0 N_l \sum\limits_{m=1}^{l} 2^{-m} + \tilde{\omega}_{j}.
\end{align*}
Here we used the estimate $\dist(e(\nabla u_j)|_{D},K) \leq 3$, the fact that in each push-out step an error $\epsilon_0$ is possible, while in each stagnant step the error is decreased by a factor two. Recalling the definition of $\bar{C}$ and the fact that $N_{l}\leq N_0$ hence implies
\begin{align}
\label{eq:upper_bound}
\tilde{\omega}_{j+l} \leq \bar{C}/2 + \tilde{\omega}_{j}.
\end{align}
Using that $\tilde{\omega}_{j}\leq 0$, therefore allows us to conclude that
\begin{align*}
\tilde{\omega}_{j+l} \leq \bar{C}/2.
\end{align*}
\emph{Step 2: Lower bound in the fixed sign case.}
Still working under the assumptions from above, we now bound the negative part of $\tilde{\omega}_{j+l}$. Here we show that
\begin{align*}
\tilde{\omega}_{j+l} \geq -\bar{C} - 2(\epsilon_0 - \epsilon_{j+l}|_D).
\end{align*}
We first consider the push-out steps. Let $\tilde{M}_1(M_{j+l-1}), \tilde{M}_2(M_{j+l-1}), \tilde{M}_3(M_{j+l-1})$ be the push-out matrices in the corresponding Conti construction of Algorithms \ref{alg:construction}, \ref{alg:skew}. Their skew parts are contained in
  $\epsilon_{0}$ neighborhoods of
  \begin{align*}
    \omega(M_{j+l-1}) + \frac{1}{3} \hat{S}, \omega(M_{j+l-1}) + \frac{1}{15} \hat{S}.
  \end{align*}
By Lemma \ref{lem:skewcontrol2}, Lemma \ref{lem:skewcontrol} and Proposition
  \ref{prop:symcontrol}, we obtain that
  \begin{align*}
    10 \geq \hat{S} \geq \frac{3}{8}|a \odot n| 
    \geq \frac{3}{4}\dist(e(M_{j+l-1}), e^{(p)}_{j+l-1}|_{D}) 
    \geq  \frac{3}{4} d(M_{j+l-1}) 
    \geq \frac{3}{4}98\epsilon_0.
  \end{align*}
  Here $d:\R^{2\times 2} \rightarrow \R$ denotes the function from Proposition \ref{prop:symcontrol}, and $a\otimes n$ is the rank-one connection, which appears in the Conti construction. In the last estimate we have used the estimate from Proposition \ref{prop:symcontrol}.
  Hence the skew parts of $\tilde{M}_{1}(M_{j+l-1})$, $ \tilde{M}_{2}(M_{j+l-1}),\tilde{M}_{3}(M_{j+l-1})$
  are respectively bounded by
  \begin{align*}
 \omega(\tilde{M}_{i}(M_{j+l-1})) 
  &\geq \tilde{\omega}_{j+l-1} 
  + \frac{1}{15} \frac{3}{4}98 \epsilon_0 - \epsilon_0\\
  & \geq -\bar{C} - 2(\epsilon_0 -\epsilon_{j+l}|_D)
    + \frac{1}{15} \frac{3}{4}98 \epsilon_0 - \epsilon_0 \geq -\bar{C} \mbox{ for } i\in\{1,2,3\},
  \end{align*} 
  which shows the claimed estimate (\ref{eq:skewcontrol}) with $\epsilon_j=\epsilon_0$. For $\tilde{M}_4(M_{j+l-1})$ we have that
  \begin{align*}
  \omega(\tilde{M}_4(M_{j+l-1}))&\geq -\bar{C} - 2(\epsilon_0 - \epsilon_{j+l-1}|_D) 
  - \epsilon_{j+l-1}|_D\\
   & \geq -\bar{C} - 2(\epsilon_0 - \epsilon_{j+l}|_D),
  \end{align*}
  which also proves the desired result. This concludes the proof of (\ref{eq:skewcontrol}) in the fixed sign case. \\
  
\emph{Step 3: Sign change.}  
  Let $j+l+1$ be the first index, in which the sign of the difference of the skew parts changes according to Algorithm \ref{alg:skew}. By Assumption \ref{as:1} and by the definition of our Algorithms \ref{alg:construction}, \ref{alg:skew}, this can only be the case if $\tilde{\omega}_{j+l}\geq 0$. The definition of $\bar{C}$ ensures that the $\tilde{M}_4(M_{j+l+1})$ obeys the upper bound
\begin{align*}
\omega(\tilde{M}_4(M_{j+l+1}))
\leq \frac{\bar{C}}{2}+\epsilon_0 
\leq \bar{C}.
\end{align*}  
For the pushed out parts, $\tilde{M}_1(M_{j+l+1}),\tilde{M}_2(M_{j+l+1}),\tilde{M}_3(M_{j+l+1})$, we argue similarly as we did in Step 1, but now with a change of signs: By the intercept theorem, the resulting skew parts become strictly smaller than the one of $\tilde{\omega}_{j+l}$ (potentially they even become negative). This then improves the upper bound (\ref{eq:upper_bound}). For the lower bound we argue as in Step 1 but with reversed sign in Assumption \ref{as:1}. This concludes the proof of (\ref{eq:skewcontrol}).\\

\emph{Step 4: Proof of (\ref{eq:skewcontrol1}).}  
In order to obtain the estimate (\ref{eq:skewcontrol1}), we notice that the skew parts associated with values of $e(\nabla u_j)\in K$ may on the one hand be strictly larger than the bound given in (\ref{eq:skewcontrol}). But on the other hand, they are derived as an $\tilde{M}_0$ matrix in one of the Conti constructions, in which matrices satisfying (\ref{eq:skewcontrol}) are modified. This implies that at most a gain of $5$ in the modulus of the corresponding skew part is possible, which yields the bound (\ref{eq:skewcontrol1}). As these domains are not further modified in the convex integration algorithm this bound cannot deteriorate in the course of the application of Algorithms \ref{alg:construction} and \ref{alg:skew}.
\end{proof}

\subsection{Existence of convex integration solutions}
\label{sec:exist}
Finally, in this last subsection, we show that Algorithms \ref{alg:construction}, \ref{alg:skew} can be used to deduce the existence of solutions to our problem (\ref{eq:incl}).

\begin{prop}[Convex integration solutions]
  \label{prop:convex_int}
   Let $M \in \R^{2\times 2}$ with $e(M) \in \intconv(K)$. Let $\Omega \subset \R^{2}$ be open and bounded. Then there exists a Lipschitz function $u: \R^{2} \rightarrow \R^{2}$
    such that
    \begin{align*}
     & \nabla u =M \text{ a.e. in }\R^{2}\setminus \Omega, \\
     & e(\nabla u) \in K \text{ a.e. in } \Omega.
    \end{align*}
\end{prop}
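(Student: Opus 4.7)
The plan is to iterate Algorithms \ref{alg:construction} and \ref{alg:skew} and to show that the resulting sequence of piecewise affine Lipschitz maps converges uniformly to the desired solution. First I would reduce to the case $\Omega = Q_\beta[0,1]^2$, since the algorithms are formulated for that geometry. For a general open bounded $\Omega$, a Whitney-type decomposition provides a countable collection of pairwise (up to null sets) disjoint rotated squares $\Omega^{(i)} := x_i + r_i Q_\beta[0,1]^2$ exhausting $\Omega$ modulo a null set. Setting $u(x)=Mx$ outside $\bigcup_i \Omega^{(i)}$ and running the construction separately in each $\Omega^{(i)}$ reduces everything to the single-square case, the rescaling and translation invariance of both the error and the aspect ratio recorded in Remark \ref{rmk:depend} ensuring that all estimates transfer.

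In the reduced setting, applying Algorithms \ref{alg:construction} and \ref{alg:skew} produces a sequence $(u_j)_{j\in\N}$ of piecewise affine Lipschitz maps with $u_j=Mx$ on $\R^2\setminus\Omega$. Propositions \ref{prop:symcontrol} and \ref{prop:skewcontrol1} give a uniform $L^\infty$ bound on $\nabla u_j$, so the sequence is uniformly Lipschitz. By construction, $u_{j+1}-u_j$ is supported in the union of the Conti rectangles $R_{j,l}^k$ introduced at step $j$, vanishes on each rectangle's boundary, and has uniformly bounded gradient, hence
\begin{align*}
\|u_{j+1}-u_j\|_{L^\infty(R_{j,l}^k)} \leq C \cdot (\text{shorter side of }R_{j,l}^k).
\end{align*}
Since Remark \ref{rmk:depend} allows free choice of the long side of each Conti rectangle without affecting error or aspect-ratio, I would refine the covering at step $j$ to ensure that every short side is at most $C^{-1}2^{-j}$. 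Then $\|u_{j+1}-u_j\|_{L^\infty(\Omega)} \leq 2^{-j}$, so $(u_j)$ is Cauchy in $L^\infty$ and converges uniformly to a limit $u$, which is Lipschitz as a uniform limit of uniformly Lipschitz maps and satisfies $u=Mx$ outside $\Omega$.

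Finally, I would verify the inclusion. Lemma \ref{lem:convex_int} guarantees that in each Conti rectangle a fixed volume fraction $\tfrac{1}{4}$ is sent to a well $e^{(i)}\in K$, and by Step 2 of Algorithm \ref{alg:construction} the Conti rectangles cover at least a fixed fraction $v_0>0$ of every $\Omega_{j,k}$. This yields a geometric decay
\begin{align*}
|\Omega_{j+1}| \leq \left(1-\tfrac{v_0}{4}\right) |\Omega_j|,
\end{align*}
so $|\Omega_j|\to 0$. Once a point $x$ lies in $\Omega\setminus\Omega_j$ for some $j$, the map $u_k$ is not modified near $x$ for any $k\geq j$ (Step 2(a) of Algorithm \ref{alg:construction} applied to the complement of $\Omega_j$), so $\nabla u(x)=\nabla u_j(x)$ with $e(\nabla u(x))\in K$. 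Since $\bigcap_j \Omega_j$ has measure zero, this handles a.e. $x\in\Omega$.

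The bulk of the work has already been done in Propositions \ref{prop:symcontrol} and \ref{prop:skewcontrol1} (uniform Lipschitz bound and the fact that the iteration remains in $\intconv(K)$); what remains is essentially an accounting argument. I expect the main subtlety here to be the quantitative choice of scales in each step to secure summability of $\|u_{j+1}-u_j\|_{L^\infty}$, which hinges on the scaling freedom in Remark \ref{rmk:depend}, and the verification that one can iterate the covering construction indefinitely without running into geometric degeneracies, both of which are straightforward once the algorithms are in place.
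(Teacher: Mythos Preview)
Your proof is correct, but the convergence argument differs from the paper's. The paper does not impose any additional size constraint on the Conti rectangles to force $\|u_{j+1}-u_j\|_{L^\infty}$ to be summable. Instead it argues directly at the level of gradients: since $\nabla u_k = \nabla u_j$ on $\Omega\setminus\Omega_j$ for all $k\geq j$, and since the uniform $L^\infty$ bound from Propositions \ref{prop:symcontrol} and \ref{prop:skewcontrol1} together with the geometric volume decay $|\Omega_j|\leq (1-\tfrac{7}{8}v_0)^j|\Omega|$ gives
\[
\|\nabla u_k - \nabla u_j\|_{L^2(\Omega)} \leq C\,|\Omega_j|^{1/2} \to 0,
\]
the sequence $(\nabla u_j)$ is Cauchy in $L^2_{loc}$ and its limit is identified pointwise on the exhausting sets $\Omega\setminus\Omega_j$. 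This avoids tampering with the covering, which matters because the specific covering of Section \ref{sec:covering} is later reused for the quantitative perimeter bounds; your extra requirement that all short sides be at most $C^{-1}2^{-j}$ is harmless for existence but would not mesh with those later estimates.

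Your Whitney reduction to the square $Q_\beta[0,1]^2$ is a clean way to handle general $\Omega$; the paper postpones the general-domain discussion to Section \ref{sec:generaldomains} and in the proof of Proposition \ref{prop:convex_int} tacitly works in the special domain, normalizing first to $\bar M = M - \omega(M)$ (as in Step 1 of Algorithm \ref{alg:construction}) and adding $\omega(M)x$ back at the end. Both routes are fine; yours is slightly more explicit on the domain, the paper's slightly more economical on convergence.
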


\begin{proof}
We apply Algorithm \ref{alg:construction} with $\bar{M}:= M-\omega(M)$.
By the results of Propositions \ref{prop:symcontrol} and \ref{prop:skewcontrol1} this algorithm is well-defined and can be iterated with $j \rightarrow \infty$. This yields a sequence of functions $u_j:\R^2 \rightarrow \R^2$ with bounded gradient (with $\|\nabla u_j\|_{L^{\infty}(\R^2)}$ depending on $d_K$, c.f. Lemma \ref{lem:skew1}).
We prove the convergence of this sequence and show that the limiting function $u_0$ solves (\ref{eq:K3}) with boundary data $\bar{M}$.\\
We note that for $k\geq j$
\begin{align}
\label{eq:unchanged}
  \nabla u_k(x) = \nabla u_{j}(x) \mbox{ for a.e. $x$ in } \Omega \setminus\Omega_j.
\end{align}
Moreover, 
\begin{align*}
\nabla u_k = \bar{M} \mbox{ a.e. in } \R^2 \setminus \overline{\Omega}.  
\end{align*}
By construction $\nabla u_j$ is bounded, hence $\nabla u_j \rightharpoonup \nabla u_0$ in the $L^{\infty}_{loc}(\R^2)$ weak-$\ast$ and the $L^2_{loc}(\R^2)$ weak topologies. By Poincar\'e's inequality
$u_j \rightarrow u_0$ in $L^2_{loc}(\R^2)$. 
We observe that Step 2 in Algorithm \ref{alg:construction} decreases the
total volume of the $\Omega_j$, i.e. of the part of $\Omega$, on
which $e(\nabla u_j)$ does not yet attain one of the wells:
\begin{align*}
|\Omega_j|= |\{ x \in U: e(\nabla u_j)\not \in K\}| \leq \left(1-v_0\frac{7}{8}\right)^{j}|\Omega|.
\end{align*}
Combined with \eqref{eq:unchanged} and the $L^{\infty}$ bound, this implies the desired
convergence $\nabla u_j \rightarrow \nabla u_0 $ with respect to the $L^2_{loc}(\R^2)$ topology, where 
$\nabla u_0 \in L^{\infty}(\R^2)$ is a solution to the problem \eqref{eq:K3} with
boundary data $\bar{M}$.\\
Defining $u(x):=u_0(x) + \omega(M)$ hence concludes the proof of 
Proposition \ref{prop:convex_int}.
\end{proof}

In Sections \ref{sec:covering} and \ref{sec:quant} we present a more refined
analysis of this construction algorithm. In particular, we give an explicit
quantitative construction for the covering procedure from Step 2 in  Algorithm \ref{alg:construction}.

\section{Covering Constructions}
\label{sec:covering}

In the following section we present the details of the coverings, which we use in the Algorithms \ref{alg:construction}, \ref{alg:skew}. Here we pursue two (partially) competing objectives: Given a triangle $D$, 
\begin{itemize}
\item we seek to cover an as large as possible volume fraction of it, but at least a given fixed volume fraction, $v_0>0$.
\item We have to control the perimeters of the triangles in the resulting new covering.
\end{itemize}
In the context of these considerations, it turns out that the parallel and the rotated cases (c.f. Definition \ref{defi:parallel_rot}) differ quantitatively
and hence have to be discussed separately. This can be understood when
considering possible coverings of rectangles by parallel or rotated rectangles.

\begin{figure}[t]
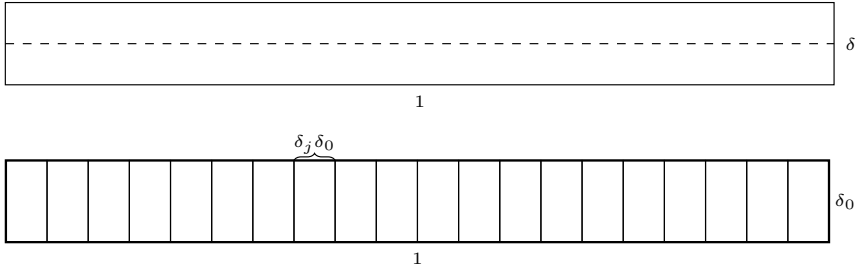

  \centering
  \includegraphics[width=0.9\linewidth, page=47]{figures.pdf}
  \includegraphics[width=0.9\linewidth, page=48]{figures.pdf}
  \caption{Covering a rectangle $R_{1,\delta_0}$ of side lengths $1$ and $\delta_0$ by (a) a parallel rectangle of half its aspect
   ratio, (b) an orthogonal rectangle of aspect ratio $r=\delta_j$ (which could for instance be $r=\delta_0/2$).  }  
    \label{fig:rectangle_coverings}
\end{figure}

We illustrate this in two extreme situations (c.f. Figure
\ref{fig:rectangle_coverings}): Given a rectangle $R_{1,\delta_0}$ with sides
of length $1$ and $\delta_0$, we seek to cover it with rectangles, which have a
fixed side ratio $r$ and whose long sides are either parallel or orthogonal to
the long side of the original rectangle $R_{1,\delta_0}$. In order to
illustrate the differences between these situations, we for instance assume that
$r=\delta_0/2$. In the situation, in which the original rectangle
$R_{1,\delta_0}$ is covered by rectangles, whose long side is \emph{parallel}
to the long side of $R_{1,\delta_0}$, the covering can be achieved by
splitting $R_{1,\delta_0}$ along its central line as illustrated in Figure
\ref{fig:rectangle_coverings} (a). Thus, the resulting perimeter (we view it as a measure of the $BV$ energy of the characteristic functions in the Conti
covering), which is necessary to cover the volume of $R_{1,\delta_0}$ is
bounded by twice the perimeter of $R_{1,\delta_0}$.
If the long sides of the covering rectangles of ratio $\delta_0/2$ are however \emph{orthogonal} to the long side of $R_{1,\delta_0}$, the covering of $R_{1,\delta_0}$ can only be achieved by $2\delta_0^{-2}$ small rectangles of side lengths $\delta_0$ and $\delta_0^2/2$ (c.f. Figure \ref{fig:rectangle_coverings} (b)). The necessary perimeter for this covering is thus proportional to $\delta_0^{-1} \Per(R_{1,\delta_0})$. \\
For a small value of $\delta_0$ this makes a substantial difference and accounts for the losses in the estimates for the rotated situation.\\
The difference of the parallel and the rotated situation become even more apparent, if we consider a sequence of coverings:
Here we start with the rectangle $R_{1,\delta_0}$ and first consider an iterative covering of it by \emph{parallel} rectangles, which in the $j$-th iteration step are of side ratio $\delta_j:=2^{-j}\delta_0$ (and such that the long side is parallel to the long side of $R_{1,\delta_0}$). The desired covering of $R_{1,\delta_0}$ in the iteration step $k$ can be achieved by splitting the rectangles from the covering at the iteration step $k-1$ along their central lines. In each iteration step the overall perimeter increases at most by a factor two, so that after $j$ iteration steps the overall perimeter can be estimated by
\begin{align*}
2^{j} \Per(R_{1,\delta_0}).
\end{align*}
If in comparison, we consider the case, in which the covering rectangles are rotated in every step by $\pi/2$ with respect to the preceding rectangles and again choose a ratio $\delta_j:= 2^{-j}\delta_0$ in the $j$-th iteration step, we inductively obtain a bound of the form
\begin{align*}
\left(\prod\limits_{l=1}^{j} \delta_l^{-1}\right) \Per(R_{1,\delta_0})
= \delta_0^{-j}\left(\prod\limits_{l=1}^{j} 2^{l}\right)\Per(R_{1,\delta_0})
\end{align*}
for the overall perimeter after the $j$-th step. In contrast to the parallel situation this has \emph{superexponential} behavior in $j$.\\
If we consider the $\pi/2$ rotated situation with fixed ratio $\delta_j=\delta_0$, this bound improves to an \emph{exponential} bound of the form
\begin{align*}
\delta_0^{-j} \Per(R_{1,\delta_0}).
\end{align*}
Hence, the estimates in the rotated situation are substantially worse 
than the ones in the parallel situation. In order to avoid 
superexponential behavior, we have to take care that the rotated case 
can only occur, if the value of $\delta_j$ is controlled from below. 
These heuristics a posteriori justify our careful choice of $\epsilon_j$ and $
\omega_j$ in Algorithms \ref{alg:construction}, \ref{alg:skew}.\\

Although the level sets of the Conti construction consist of triangles and hence our coverings $\{\Omega_{j,k}\}_{k\in\{1,\dots,J_j\}}$ will be coverings of triangles by triangles (instead of the previously described rectangular coverings), the heuristics from above still persist.  \\

Motivated by these heuristic considerations, in the sequel we seek to provide covering results and associated $BV$ bounds, which can be applied in Algorithms \ref{alg:construction}, \ref{alg:skew}. 
We organize the discussion of this as follows: In Section  \ref{sec:pre1}, we introduce some of the fundamental objects (c.f. Definitions \ref{defi:goodtriangle}, \ref{defi:classes}) and formulate the main covering result (Proposition \ref{thm:covering}). Here we consider a similar distinction into a parallel and a rotated situation as described in the above heuristics (c.f. Definition \ref{defi:classes}). With the class of triangles from Definition \ref{defi:goodtriangle} at hand we distinguish several different cases and discuss different covering scenarios. The respective coverings are tailored to the specific situation and are made such that we do not leave our class of triangles during the iteration. Their discussion is the content of Sections \ref{sec:cov_Cont}-\ref{sec:rota}. Finally, the various different cases are combined in Section \ref{sec:thm} to provide the proof of Proposition \ref{thm:covering}.

\subsection{Preliminaries}
\label{sec:pre1}
In this section we introduce the central objects of our covering (c.f. Definition \ref{defi:goodtriangle}, \ref{defi:classes}) and state our main covering result (Proposition \ref{thm:covering}). \\

As a preparation for the main part of this section, we begin by discussing auxiliary results on matrix space geometry. We first estimate the angle formed in strain space between two matrices:

\begin{lem}
\label{lem:angle_1}
Let $\tilde{d}>0$.
  Let $M^{(1)},M^{(2)}$ be matrices with $d(e(M^{(i)}),e^{(1)})\geq \tilde{d}$ and let $a^{(i)} \otimes n^{(i)}$ be
  the associated rank-one connection to $e^{(1)}+ \hat{S}_i$, where the skew matrices $\hat{S}_i$ are as in Corollary \ref{cor:plusminus_construction}.
  Suppose that $|M^{(1)}-M^{(2)}|\leq \epsilon_{j} \ll \tilde{d}$, then the angle
  $\alpha_1$ between $n^{(1)}$ and $n^{(2)}$ satisfies
  \begin{align*}
    |\alpha_1| \leq 3 \frac{\epsilon_{j}}{\tilde{d}}.
  \end{align*}
\end{lem}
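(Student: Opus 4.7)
The plan is to reduce the angle estimate in physical space to an angle estimate in strain space, via the explicit parametrisation of the rank-one connections given in Lemma \ref{lem:geo3}.

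First, I would observe that the rank-one decomposition $M^{(i)}-(e^{(1)}+\hat S_i)=a^{(i)}\otimes n^{(i)}$ forces $a^{(i)}\perp n^{(i)}$ (since both sides have trace-free symmetric parts), and that $e(M^{(i)})-e^{(1)}=a^{(i)}\odot n^{(i)}$. By Lemma \ref{lem:geo3} the unit vector $n^{(i)}$ is then determined by a unique angle $\varphi_i\in(-\pi,\pi]$, namely the angle of the \emph{strain-space} vector $e^{(1)}-e(M^{(i)})$ in the basis $v_1,v_2$, via
\begin{align*}
n_i(\varphi_i)=\begin{pmatrix}\cos(\varphi_i/2+\pi/4)\\ \sin(\varphi_i/2+\pi/4)\end{pmatrix}.
\end{align*}
In particular $n^{(1)}\cdot n^{(2)}=\cos\bigl((\varphi_1-\varphi_2)/2\bigr)$, so the angle $\alpha_1$ between $n^{(1)}$ and $n^{(2)}$ equals $|\varphi_1-\varphi_2|/2$.

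Next I would bound $|\varphi_1-\varphi_2|$ by a purely planar argument in the two-dimensional space of trace-free symmetric $2\times 2$ matrices (identified with $\mathbb R^2$ via $v_1,v_2$). Set $w_i:=e(M^{(i)})-e^{(1)}$. By hypothesis $|w_i|\geq\tilde d$, and since symmetrisation is norm non-increasing,
\begin{align*}
|w_1-w_2|=|e(M^{(1)}-M^{(2)})|\leq |M^{(1)}-M^{(2)}|\leq\epsilon_j.
\end{align*}
Applying the law of cosines to the triangle with sides $w_1$, $w_2$, $w_1-w_2$ and using $|w_1||w_2|\geq\tilde d^2$ yields
\begin{align*}
4\tilde d^{\,2}\sin^2\!\Bigl(\tfrac{\varphi_1-\varphi_2}{2}\Bigr)\leq 2|w_1||w_2|(1-\cos(\varphi_1-\varphi_2))\leq|w_1-w_2|^2\leq\epsilon_j^2,
\end{align*}
so $|\sin((\varphi_1-\varphi_2)/2)|\leq\epsilon_j/(2\tilde d)$. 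For $\epsilon_j\ll\tilde d$ the quantity $\epsilon_j/(2\tilde d)$ is small, and $\arcsin(x)\leq\tfrac{3}{2}x$ on a small neighbourhood of $0$, which gives $|\varphi_1-\varphi_2|\leq 3\epsilon_j/\tilde d$.

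Combining this with $|\alpha_1|=|\varphi_1-\varphi_2|/2$ yields the desired estimate (in fact with a factor $\tfrac{3}{2}$, but $3$ is enough). The only subtlety I foresee is a branch-of-argument issue, since $\varphi_i$ is only defined modulo $2\pi$; this is resolved by the hypothesis $\epsilon_j\ll\tilde d$, which forces $w_1$ and $w_2$ to lie in the same small angular sector so that $\varphi_1,\varphi_2$ may be chosen in a common branch. The matching of sign conventions for the pair $(a^{(i)},n^{(i)})$ (whose definition in Lemma \ref{lem:geo3} is only up to the symmetry $(a,n)\mapsto(-a,-n)$) can be done consistently once, as $|M^{(1)}-M^{(2)}|$ is small; this is the only genuinely case-dependent step, but it causes no real obstacle.
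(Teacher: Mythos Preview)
Your argument is correct and follows the same geometric idea as the paper: reduce to the planar triangle in strain space with apex $e^{(1)}$ and the two nearby points $e(M^{(i)})$, then translate the strain-space angle back to the angle between $n^{(1)},n^{(2)}$ via the half-angle parametrisation of Lemma~\ref{lem:geo3}. The paper's own proof is essentially a one-line sketch with a picture, estimating $|\alpha_1|\leq 1.5|\tan(\alpha_1)|\leq 3\epsilon_j/\tilde d$ directly from the figure, whereas you carry out the same estimate rigorously via the law of cosines (obtaining the slightly sharper constant $3/2$); the content is the same.
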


\begin{rmk}
\label{rmk:angle_1}
Applied to a triangle $D \in \{\Omega_{j,k}\}_{k\in\{1,\dots,J_j\}}$ in the parallel case (c.f. Definition \ref{defi:parallel_rot}), Lemma \ref{lem:angle_1} implies that the rotation angle $\alpha_1$, with which the consecutive Conti constructions are rotated with respect to each other (and which is defined as in Lemma \ref{lem:angle_1}), is bounded by
\begin{align*}
|\alpha_1| \leq 3 \frac{\epsilon_j|_D}{\dist(e(\nabla u_j)|_{D}, K)} \leq 300 \delta_j|_D.
\end{align*}
Here $\epsilon_j, \delta_j$ are the functions from the convex integration Algorithm 
\ref{alg:construction}.
\end{rmk}

\begin{figure}[t]
  \centering
  \includegraphics[width=0.4\linewidth, page=1]{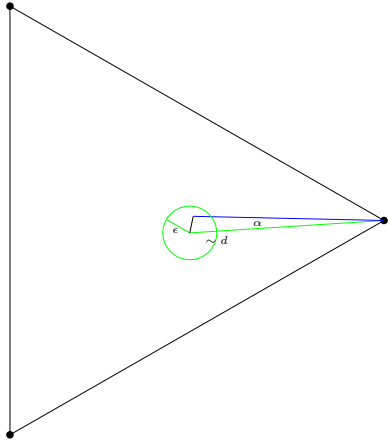}
  \caption{The angle between rank-one connections of nearby matrices is small.
    In particular, the associated rectangle constructions from Lemma \ref{lem:conti_deformed} are close to being
    parallel.}
  \label{fig:angle_nearby_matrices}
\end{figure}

\begin{proof}[Proof of Lemma \ref{lem:angle_1}]
  As sketched in Figure \ref{fig:angle_nearby_matrices}, we may estimate
  \begin{align*}
    |\alpha_1| \leq 1.5 |\tan(\alpha_1)| \leq 3 \frac{\epsilon_{j}}{\tilde{d}}, 
  \end{align*}
  where $\epsilon_j$ is the error in matrix space. Here the first estimate follows by a Taylor approximation and by noting that $|\alpha_1|$ is small, so that in particular
  $|\alpha_1| \leq \frac{\pi}{6}$ (in which range the tangent is invertible and for which the Taylor expansion is valid).
\end{proof}

Next we observe the following bounds on the rotation angles:

\begin{lem}[Angles]
\label{lem:angles}
Let $D\in \{\Omega_{j,k}\}_{k\in\{1,\dots,J_{j}\}}$ for $j\geq 1$. 
Let $\hat{D} \in \mathcal{D}_1(D)$. Assume that  the triangle $\hat{D}$ is in
the rotated case (c.f. Definition \ref{defi:parallel_rot}). Let $\alpha_1$ denote the angle between the long sides of the current and the following Conti constructions.
Then, we have that
\begin{align*}
0<C\delta_0<|\alpha_1| \leq \pi-C\delta_0.
\end{align*}
\end{lem}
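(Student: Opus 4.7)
The plan is to reduce the claim to the matrix-space angular estimate of Lemma \ref{lem:angle_a}, and then to observe that the resulting geometric constant dominates $C\delta_0$ as soon as $\delta_0$ is small.

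First I will identify the orientations of the two Conti rectangles. By the normalization used in the proof of Lemma \ref{lem:conti_deformed} (where one chooses $n=(1,0)$, $a=(0,-1)$ after rotation), the long side of the current Conti rectangle, applied to $D$, is parallel to one of the two vectors of the symmetrized rank-one pair associated with
\[
e(\nabla u_j|_D)-e^{(p)}_j|_D = a^{(1)}\odot n^{(1)},
\]
where $e^{(p)}_j|_D=e^{(i_1)}$ and the choice between $n^{(1)}$ and $a^{(1)}$ is dictated by the sign selected in Corollary \ref{cor:plusminus_construction} (equivalently, whether the rank-one connection is written as $a^{(1)}\otimes n^{(1)}$ or as $n^{(1)}\otimes a^{(1)}$). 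Analogously, the long side of the following Conti rectangle, to be applied on $\hat D$, is parallel to $n^{(2)}$ or $a^{(2)}$ arising from $e(\nabla u_{j+1}|_{\hat D})-e^{(p)}_{j+1}|_{\hat D}= a^{(2)}\odot n^{(2)}$ with $e^{(p)}_{j+1}|_{\hat D}=e^{(i_2)}$.

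Next I will collect the hypotheses. In the rotated case of Definition \ref{defi:parallel_rot} we have $i_1\neq i_2$. Proposition \ref{prop:symcontrol} guarantees that both strains $e(\nabla u_j|_D)$ and $e(\nabla u_{j+1}|_{\hat D})$ stay in $\intconv(K)$ at a distance from $\partial\conv(K)$ bounded below by a positive constant depending only on $d_0:=\dist(e(M),\partial\conv(K))$ and $\epsilon_0$. Consequently both strains lie in a common star-shaped region $C_d$ in the sense of Lemma \ref{lem:angle_a}, with $d=d(d_0)>0$ independent of $\delta_0$. I then apply Lemma \ref{lem:angle_a} with $i_1\neq i_2$ and $m_l\in\{a^{(i_l)}/\|a^{(i_l)}\|,\ n^{(i_l)}/\|n^{(i_l)}\|\}$ for $l=1,2$. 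Since the admissible sectors $(C(d)+\pi(l-1)/6,\ \pi l/6-C(d))$, $l=1,\dots,6$, avoid both endpoints $0$ and $\pi$ by a margin of at least $C(d)>0$, the angle $\alpha_1$ between any valid choice of $v_1\in\{n^{(1)},a^{(1)}\}$ and $v_2\in\{n^{(2)},a^{(2)}\}$ satisfies $C(d)\leq|\alpha_1|\leq\pi-C(d)$. Finally, since $\delta_0=\epsilon_0/(10^2 d_K)$ is small compared to the $\delta_0$-independent quantity $C(d)$, we may write $C(d)\geq C\delta_0$ for a suitable constant $C>0$, which yields the stated bounds.

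The main obstacle is the bookkeeping involved in matching the long sides of the Conti rectangles to the precise normals of Lemma \ref{lem:angle_a}, while keeping track of the potential $\pi/2$ ambiguity coming from the sign choice of Corollary \ref{cor:plusminus_construction}. Once this identification is carried out, the conclusion follows immediately from the uniform gap in the angular sectors provided by Lemma \ref{lem:angle_a}, and the bound $C(d)\gtrsim \delta_0$ comes essentially for free.
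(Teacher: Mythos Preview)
Your argument is correct and follows the same route as the paper: the paper's proof is literally the one-line statement ``This is an immediate consequence of Lemma~\ref{lem:angle_a}.'' You have simply unpacked that citation, making explicit the identification of the Conti rectangle orientations with the vectors $m_l\in\{a^{(i_l)}/\|a^{(i_l)}\|, n^{(i_l)}/\|n^{(i_l)}\|\}$ and the fact (via Proposition~\ref{prop:symcontrol}) that the strains remain in the star $C_d$, which the paper leaves implicit.
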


\begin{proof}
This is an immediate consequence of Lemma \ref{lem:angle_a}.
\end{proof}

With these auxiliary results at hand, we proceed to the discussion of our central covering objects.
In order to define our set of covering triangles, $\{\Omega_{j,k}\}_{k\in\{1,\dots,J_j\}}$, we consider a subclass of triangles with, for our purposes, suitable properties.
To this end, we can not ensure that all domains appearing in our covering argument are right angle
triangles (due to the presence of the green triangles in the Conti construction in Figure \ref{fig:good_triangles_conti}), for which one of the other angles is approximately of size $\delta_{j}$. However, the following definition
provides a family of sets with similar properties. This will allow us to formulate a precise, iterative covering result.

\begin{defi}
\label{defi:goodtriangle}
Let $\delta_0$ be as in Algorithm \ref{alg:construction}.
  The triangle $D$ is said to be
  \emph{$\delta$-good with respect to a reference direction $n\in S^1$}, for $
  \delta \in (0,\delta_0]$, if
  \begin{enumerate}
  \item One angle, $\alpha$, satisfies $\alpha \in \delta[\frac{1}{10},1000]$, 
  \item The other two angles are contained in $\frac{\pi}{2} + 2 \delta[-1000, 1000]$, 
  \item One of the long sides encloses an angle in $\delta[-1000,1000]$
    with $n$.
  \end{enumerate}
  We refer to the long side of the triangle, which satisfies the requirement of 3.) as the \emph{direction of $D$}. We also say that the triangle $D$ is \emph{oriented parallel to $n$} or that \emph{$D$ is aligned to $n$.}\\
If a triangle $D$ satisfies 1.) and 2.) but not necessarily
   3.) we call it \emph{$\delta$-good} (which allows for a possible change of
   orientation).
 \end{defi}

\begin{rmk}
\label{defi:triangle}
We note that if $\alpha$ is small, both long sides could satisfy condition 3.) at the same time. In this case both directions are valid as \emph{directions of $D$}.
\end{rmk}

 In our construction one prominent reference direction is obtained from Conti's
 construction, as detailed in the following definition.

 \begin{defi}
 \label{defi:Conti_direction}
   Let $D$ be a level set of $\nabla u_j$ and let $e= e^{(p)}_j|_D$ be the
   reference well. Let further $n \in S^1$ be the direction of the long side of
   the Conti rectangle from Step 2 in Algorithm \ref{alg:skew}.
   We say that $n$ is the \emph{direction of the relevant Conti construction (at step $j$)}.
   A $\delta$-good triangle is \emph{parallel to Conti's
     construction} if one of its long sides is parallel to $n$.
 \end{defi}

In the sequel, we will give a precise covering result, which shows that in the
$j$-th step of our convex integration Algorithms \ref{alg:construction} and
\ref{alg:skew}, we may assume that only very specific triangles are present in the collection $\{\Omega_{j,k}\}_{k\in\{1,\dots,J_j\}}$ as
(parts of) level sets of $\nabla u_j$. To this purpose we define the following classes of triangles:

\begin{defi}
\label{defi:classes}
Let $SP_j$ be as in the Algorithms \ref{alg:construction}, \ref{alg:skew}.  Then, a triangle $D_j \in \{\Omega_{j,k}\}_{k\in\{1,\dots,J_j\}}$ is in the case:
\begin{itemize}
\item[(P1),] if it is $\delta_j|_{D_j}$-good with direction $n \in S^1$, where $n$ denotes the direction of the
  relevant Conti construction.  
\item[(P2),] if $\delta_j|_{D_j} = \delta_0$ and $\delta_{j-1}|_{D_{j}}\neq \delta_0$ and if $D_j$ is $\delta_{j-1}|_{D_j}$-good with direction $n \in S^1$, where $n$ denotes the direction of the
  relevant Conti construction.
\item[(R1),] if $\delta_j|_{D_j}=\delta_0$, the triangle is $\delta_0$-good and if it forms an angle $\beta$ with $C\delta_0 \leq \beta \leq \frac{\pi}{2}- C \delta_0$ with respect to the direction of the relevant Conti construction (c.f. Lemma \ref{lem:angles}). 
\item[(R2),] if $\delta_j|_{D_j}=\delta_0$, the triangle is $\delta_{j-1}|_{D_j}$-good with $\delta_{j-1}|_{D_j} \neq \delta_0$ and if it forms an angle $\beta$ with $C\delta_0 \leq \beta \leq \frac{\pi}{2}- C \delta_0$ with respect to the direction of the relevant Conti construction. 
\item[(R3),] if $\delta_j|_{D_j}=\delta_0$ and if the triangle is right angled and such that 
\begin{itemize}
\item[(a)] the other angles are bounded from below and above by $C \delta_0$ and $\frac{\pi}{2}- C \delta_0$,
\item[(b)] one of its sides is parallel to the orientation of the relevant Conti construction.
\end{itemize}
\end{itemize}
\end{defi}

  \begin{rmk}
\label{rmk:nonunique}
    The cases above, as stated, are not distinct since we allow for a factor in
    our definition of being $\delta$-good (c.f. Definition \ref{defi:goodtriangle}). For instance, there might be triangles
    which are in both case $(P1)$ and $(P2)$. However, in such situations also the
    constructions and perimeter estimates are comparable.
    In situations, in which the estimates would differ significantly and where $\delta_{j-1} \ll
    \delta_j=\delta_0$, the above definitions yield distinct cases.
  \end{rmk}

Let us comment on this classification: The basic distinction criterion
separating the triangles into the different cases is given by checking whether
the corresponding triangles are roughly aligned (as in the cases (P1), (P2)) or
whether they are substantially rotated (as in the cases (R1), (R2)) with respect to the direction of the relevant Conti construction (the case (R3) is a special ``error situation", which does not entirely fit into this heuristic consideration). Roughly speaking, this determines whether we are in a situation analogous to the first or to the second picture in Figure \ref{fig:rectangle_coverings}. This distinction is necessary, as else a control of the arising surface energy is not possible in a, for our purposes, sufficiently strong form.\\
This distinction (essentially) coincides with our definition of the parallel and the rotated cases (c.f. Definition \ref{defi:parallel_rot}): If a triangle ${D_j} \in\{\Omega_{j,k}\}_{k\in\{1,\dots,J_j\}}$ is in the \emph{parallel case} in step $j$, then the directions of the Conti construction, which gave rise to ${D_j}$, and of the relevant Conti construction at step $j$ (i.e. the construction, by which ${D_j}$ is (in part) covered) are essentially parallel (c.f. Lemma \ref{lem:angle_1} and Remark \ref{rmk:angle_1}). Letting $j_0\in \N$ denote the index from Definition \ref{defi:parallel_rot} and assuming that $j_0=1$, there are three possible scenarios for the relation of $\delta_j|_{D_j}$ and $\delta_{j-1}|_{D_j}$:
\begin{itemize}
\item[(i)] $\delta_j|_{D_j} = \delta_{j-1}|_{D_j}/2$. In this case $\nabla u_j|_{{D_j}}$ was produced as the stagnant matrix (c.f. Notation \ref{nota:conti}) in the iteration step $j-1$. In this case, our covering construction will ensure that ${D_j}$ is in case (P1) (not exclusively, c.f. Remark \ref{rmk:nonunique}, but as one option).
\item[(ii)] $\delta_j|_{D_j}= \delta_0$ but $\delta_{j-1}|_{D_j}\neq \delta_0$. This can for instance occur in a parallel push-out step. In this case, our covering construction will ensure that ${D_j}$ is in case (P2) (not exclusively (depending on the value of $\delta_{j-1}$), c.f. Remark \ref{rmk:nonunique}, but as one option).
\item[(iii)] $\delta_j|_{D_j}= \delta_0= \delta_{j-1}|_{{D_j}}$. This case can for instance occur in two successive push-out steps. In this case, our covering ensures that ${D_j}$ is in the case (P1).
\end{itemize}
If a triangle ${D_j} \in\{\Omega_{j,k}\}_{k\in\{1,\dots,J_j\}}$ is in the \emph{rotated case} in step $j$, then the direction of the Conti construction, which gave rise to ${D_j}$, and the direction of the relevant Conti construction at step $j$ are necessarily substantially rotated with respect to each other (c.f. Lemma \ref{lem:angles}). Again assuming that the index $j_0=1$ (where $j_0$ denotes the index from Definition \ref{defi:parallel_rot}), we now distinguish two cases for the relation between $\delta_j|_{{D_j}}$ and $\delta_{j-1}|_{D_j}$: Here we first note that necessarily (by definition of the rotated case, as occurring only after a push-out step) we have $\delta_j|_{D_j} = \delta_0$. Then there are two options for $\delta_{j-1}|_{D_j}$:
\begin{itemize}
\item[(i)] $\delta_{j-1}|_{D_j} = \delta_0$. This case can for instance occur in the situation of two successive push-out steps. In this case our covering ensures that ${D_j}$ can be taken to be in the case (R1).
\item[(ii)] $\delta_{j-1}|_{D_j} \neq \delta_0$. This case can for instance occur in the case, in which $\nabla u_{j-1}|_{D_j}$ is produced in a stagnant and $\nabla u_j|_{D_j}$ in a push-out step. In this situation our covering ensures that ${D_j}$ can be taken to be in the case (R2). 
\end{itemize} 
The case (R3) only occurs as an error case as a consequence of our specific covering procedure for the triangles of the types (R1) and (R2).\\
We relate the different cases to the heuristics given at the beginning of Section \ref{sec:covering} (c.f. Figure \ref{fig:rectangle_coverings}).
We view the cases (P1) and (R1) as the ``model cases" without and with substantial rotation and corresponding to the parallel and orthogonal (triangular) situation depicted in Figure \ref{fig:rectangle_coverings}. In both cases (P1) and (R1) the aspect ratio of the given triangle $D_j$ is roughly of order $\delta_j|_{D_j}$ (i.e. the quotient of its shortest and of its longest sides are roughly of that order) and we seek to cover it with a Conti construction of comparable ratio $\delta_j|_{D_j}$.
\\
The cases (P2) and (R2) are situations, in which the underlying triangle $D_j$ is roughly of side ratio $\delta_{j-1}|_{D_j}$ (i.e. the quotient of its shortest and of its longest sides are roughly of that order), where we however seek to cover the triangle with Conti constructions with ratio $\delta_0$.
This mismatch is a consequences of our construction of the function $\delta_j|_{D_j}$ in Algorithm \ref{alg:construction}: Here we prescribe that the matrices, which are pushed out (c.f. Notation \ref{nota:conti}), are allowed to have an error tolerance of $\delta_0$. 
In particular, it may occur that $\delta_{j-1}|_{D_j}\ll \delta_j|_{D_j}=\delta_0$, which is the situation described in (P2), (R2) either without or with substantial rotation. \\
The case (R3) is a consequence of how we deal with ``remainders" in our covering constructions for the cases (R1), (R2). \\

Our main result of the present section states that it is possible to find a covering of the level sets, which respects Algorithms \ref{alg:construction}, \ref{alg:skew}, such that only the specific triangles from Definition \ref{defi:classes} occur. Moreover, we provide bounds for the remaining uncovered ``bad" volume and the resulting perimeters.

\begin{prop}[Covering]
\label{thm:covering}
Let $\Omega=Q_{\beta}[0,1]^2$, where $\beta$ is the rotation of the Conti construction adapted to the matrix $M$ from Algorithm \ref{alg:construction}. Let $u_j$ be as in Algorithms \ref{alg:construction}, \ref{alg:skew}. Then, there exists a covering 
$\{\Omega_{j,k}\}_{k\in\{1,\dots,J_{j}\}}$ such that only triangles of the classes (P1), (P2) and (R1)-(R3) occur and such that
\begin{itemize}
\item[(i)] $|\Omega \setminus \Omega_j| \leq (1-\frac{7}{8}v_0)^{j}|\Omega|$, 
\item[(ii)] $\sum\limits_{k=1}^{J_{j+1}}\Per(\Omega_{j+1,k}) 
   \leq C \delta_0^{-1} \sum\limits_{k=1}^{J_{j}}\Per(\Omega_{j,k}) .$
\end{itemize}
Here $v_0 \in (0,1)$ is a small constant, which is independent of $\epsilon_0$, $d_0$ and $d_K$.
\end{prop}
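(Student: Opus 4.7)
The plan is to argue by induction on $j$, explicitly constructing the covering of each triangle $D=\Omega_{j,k}$ according to its type (P1)--(R3) by means of a finite union of Conti rectangles aligned with the direction $n\in S^{1}$ prescribed by Algorithm \ref{alg:construction} together with residual triangles that again fall in the classes (P1)--(R3). The base case $j=0$ is essentially the initialization of Algorithm \ref{alg:construction}: the square $\Omega=Q_\beta[0,1]^2$ is tiled by $\delta_0^{-1}$ mutually parallel Conti rectangles of aspect ratio $\delta_0$, and Lemma \ref{lem:conti_deformed} partitions each rectangle into at most sixteen triangles aligned with the Conti direction, which (after a harmless further subdivision) lie in class (P1). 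For the inductive step I would treat each class separately, relying throughout on the fact that Algorithm \ref{alg:construction} fixes the target scale $\delta_{j+1}|_D$ (equal to $\delta_j|_D/2$ in the stagnant case and to $\delta_0$ in a push-out or reset case).

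In the \emph{parallel cases} (P1), (P2) the long side of $D$ is, by Lemma \ref{lem:angle_1} and Remark \ref{rmk:angle_1}, aligned with $n$ up to an angle of order $\delta_j|_D$, so one can foliate $D$ by a bounded number of Conti rectangles of aspect ratio $\delta_{j+1}|_D$ whose long sides are parallel to $n$. A direct geometric computation, using the definition of $\delta$-good, shows that these can be chosen to cover at least a fixed fraction $v_0$ of $|D|$ while leaving only a bounded number of residual triangles that are again $\delta_{j+1}|_D$-good (possibly of the (P2) type when a push-out resets the scale to $\delta_0$). Here the perimeter of the new covering is bounded by a universal constant times $\Per(D)$, contributing no $\delta_0^{-1}$ loss to (ii).

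In the \emph{rotated cases} (R1)--(R3) the new direction $n$ forms an angle $\beta\in[C\delta_0,\tfrac{\pi}{2}-C\delta_0]$ with the long side of $D$ (Lemma \ref{lem:angles}), and necessarily $\delta_{j+1}|_D=\delta_0$. I pack $D$ by Conti rectangles of aspect ratio $\delta_0$ aligned with $n$ as in Figure \ref{fig:rectangle_coverings}(b). Because $\beta$ and $\delta_0$ are bounded below by universal constants, $\mathcal{O}(\delta_0^{-1})$ rectangles suffice to capture a fraction $v_0$ of $|D|$, and the residual strips near $\partial D$ can be dissected into right-angled triangles of aspect ratio comparable to $\delta_0$ with one leg parallel to $n$ (class (R3)) together with $\delta_0$-good triangles aligned with the upcoming Conti direction (class (P1) or (R1)). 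The resulting perimeter is multiplied by at most $C\delta_0^{-1}$, which is precisely the factor appearing in (ii); this multiplicative loss is admissible only because Algorithms \ref{alg:construction}, \ref{alg:skew} confine rotation to steps with $\delta_{j+1}|_D=\delta_0$, averting the superexponential blowup illustrated heuristically in Figure \ref{fig:rectangle_coverings}.

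The volume estimate (i) is then immediate: Lemma \ref{lem:conti_deformed} guarantees that inside each Conti rectangle a fraction at least $1/8$ of the volume is placed into the level set of $\tilde M_{0}$ with $e(\tilde M_0)=e^{(p)}\in K$ and therefore leaves the bad set; since such rectangles cover at least $v_{0}|D|$ for every $D$, the bad measure contracts by at least a factor $1-\tfrac{7}{8}v_{0}$ per step. I expect the hardest part to be the rotated case (R2), where $D$ has aspect ratio $\delta_{j-1}|_D\ll\delta_0$ but we must cover it by rectangles of the comparatively larger ratio $\delta_0$ rotated relative to $D$: bookkeeping the residual pieces so that they all fit into classes (P1)--(R3), and simultaneously verifying the perimeter bound, requires a delicate case split according to the value of $\beta$ and the relative sizes of the sides of $D$. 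This is what the detailed constructions of Sections \ref{sec:cov_Cont}--\ref{sec:rota} have to accomplish, and it is also the reason why class (R3) has to be introduced in the first place as a bookkeeping device for the leftover right-angled triangles produced by such a rotated tiling.
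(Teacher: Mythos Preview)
Your proposal is correct and follows essentially the same approach as the paper: induction on $j$, with the base case handled by Lemma~\ref{lem:cov_Cont} and the inductive step split into the parallel cases (P1)--(P2), where the perimeter grows by a universal constant (Proposition~\ref{prop:parallel}), and the rotated cases (R1)--(R3), where it grows by $C\delta_0^{-1}$ (Proposition~\ref{prop:rot}), with the rotation angles controlled via Remark~\ref{rmk:angle_1} and Lemma~\ref{lem:angles}. One minor quantitative slip: in the rotated case (R1) the paper actually packs with $\mathcal{O}(\delta_0^{-2})$ Conti rectangles, each of perimeter $\mathcal{O}(\delta_0\Per(D))$, rather than $\mathcal{O}(\delta_0^{-1})$ rectangles; the resulting perimeter bound $C\delta_0^{-1}\Per(D)$ is the same, so this does not affect your argument.
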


In the remainder of this section we seek to prove this result and to construct the associated covering. To this end, in Section \ref{sec:cov_Cont} we first explain that the ``natural covering" of the Conti construction, which is achieved by splitting it into its level sets, satisfies the requirements of Proposition \ref{thm:covering}. In particular, this implies that the covering, which is obtained in Step 1 of Algorithm \ref{alg:construction}, satisfies the properties of Proposition \ref{thm:covering} (the resulting triangles are of the types (P1), (P2) or (R1), (R2)). Hence, in the remaining part of the section, it suffices to prove that given a triangle of the type (P1)-(R3), we can construct a covering for it, which obeys the claims of Proposition \ref{thm:covering}. 
To this end, in Section \ref{sec:build}, we first describe a general construction, on which we heavily rely in the sequel. With this construction at hand, in Section \ref{sec:parallel} and its subsections we then deal with the cases (P1), (P2), in which there is no substantial rotation involved. Subsequently, we discuss the cases (R1)-(R3) with non-negligible rotations in Section \ref{sec:rota}. Finally, in Section \ref{sec:thm} we provide the proof of Proposition \ref{thm:covering}.\\

The generalization to more generic domains is detailed in Section \ref{sec:generaldomains}.

\subsection{Covering the Conti construction by triangles}
\label{sec:cov_Cont}

We begin with our covering construction by explaining that a Conti construction of ratio $\delta_j|_{D_j}$ can be divided into a finite number of triangles, which are all of the types (P1), (P2) and (R1)-(R3).

\begin{lem}
\label{lem:cov_Cont}
Let $SP_j$ be as in Algorithm \ref{alg:construction} and let $D_j \in \{\Omega_{j,k}\}_{k\in\{1,\dots,J_j\}}$.
Suppose that $R\subset D_j$ is a Conti rectangle of ratio $\delta_{j}|_{D_j}$. Let $M_1,\dots,M_4$ denote the gradients occurring in the Conti construction with the same convention as in Notation \ref{nota:conti}. Then all level sets in $R$, on which $ M_4$ is attained, 
  can be decomposed into (at most two) triangles, which are of the type (P1).
  The level sets with $M_1, M_2, M_3$ can be decomposed into triangles of the
  type (P1), (P2) or (R1), (R2). 
\end{lem}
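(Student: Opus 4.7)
The plan is to proceed in two phases: a geometric triangulation of each relevant level set of $\nabla u$ in the Conti rectangle $R$ into right-angled pieces satisfying Definition \ref{defi:goodtriangle}, followed by a case-by-case classification into the types (P1), (P2), (R1), (R2) of Definition \ref{defi:classes} using the update rules of Algorithm \ref{alg:construction} together with the angular estimates of Lemmas \ref{lem:angle_1}, \ref{lem:angle_a} and \ref{lem:angles}. By construction the long axis of $R$ coincides with the direction $n$ of the relevant Conti construction at step $j$ (Definition \ref{defi:Conti_direction}), so ``aligned to the long side of $R$'' is exactly alignment to $n$ in the sense of Definition \ref{defi:goodtriangle}.

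For the triangulation I would examine the level sets associated with $\tilde{M}_1,\tilde{M}_2,\tilde{M}_3,\tilde{M}_4$ arising from the construction of Lemma \ref{lem:conti_undeformed}, viewed in the $y$-coordinates of Lemma \ref{lem:conti_deformed} under the anisotropic scaling $(x_1,x_2)\mapsto(x_1,\delta_j|_{D_j}\,x_2)$. The two level sets carrying $\tilde{M}_4$ appear as isosceles triangles with apex on the $y_1$-axis and short vertical base on a short side of $R$; a direct angle computation gives that each is already $\delta_j|_{D_j}$-good with direction $y_1$, so each is trivially a decomposition into one (hence at most two) triangle. The inner diamond carrying $\tilde{M}_1$ becomes an elongated rhombus; cutting it along the segment through its two vertices on the $y_2$-axis yields two right-angled triangles with long sides within angle $O(\delta_j|_{D_j})$ of $y_1$. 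Each of the four corner regions carrying $\tilde{M}_2$ or $\tilde{M}_3$ degenerates under the scaling to an almost flat triangle with one angle close to $\pi$ at the vertex nearest the long axis; dropping a vertical segment from this vertex to the opposite side produces two right-angled sub-triangles with a small angle of order $\delta_j|_{D_j}$ and long sides essentially parallel to $y_1$. Thus every piece is $\delta_j|_{D_j}$-good with reference direction $n$.

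The classification is then read off from Algorithm \ref{alg:construction}. On the $\tilde{M}_4$ region, Algorithm \ref{alg:construction} sets $\epsilon_{j+1}=\epsilon_j/2$ and preserves $e^{(p)}_{j+1}=e^{(p)}_j$, so $\delta_{j+1}=\delta_j/2$ and $|\nabla u_{j+1}-\nabla u_j|\le\epsilon_j$. Lemma \ref{lem:angle_1} then gives that the new Conti direction differs from $n$ by an angle $O(\delta_j)$, and since $\delta_j=2\delta_{j+1}$ the slack constants $[1/10,1000]$ in Definition \ref{defi:goodtriangle} absorb the factor of two; each sub-triangle is therefore of type (P1). For the pushed-out matrices $\tilde{M}_1,\tilde{M}_2,\tilde{M}_3$ we have $\delta_{j+1}=\delta_0$ and we distinguish on whether $e^{(p)}_{j+1}$ coincides with $e^{(p)}_j$. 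If it does, Lemma \ref{lem:angle_1} again yields an approximately parallel new direction: we obtain (P1) if $\delta_j=\delta_0$ and (P2) if $\delta_j<\delta_0$. If it does not, Lemma \ref{lem:angle_a} (equivalently Lemma \ref{lem:angles}) shows that the new direction is rotated by an angle in $(C\delta_0,\pi-C\delta_0)$, giving (R1) if $\delta_j=\delta_0$ and (R2) if $\delta_j<\delta_0$.

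The main obstacle is the geometric step at the corner triangles: one must verify quantitatively that the sub-triangles produced by the vertical subdivision of a flat triangle satisfy the angle bounds of Definition \ref{defi:goodtriangle} uniformly, and that the slack in the constants $[1/10,1000]$ is wide enough to bridge the two distinct scale mismatches that appear in the iteration -- the factor-of-two halving underlying the stagnant (P1) case, and the reset from $\delta_j<\delta_0$ to $\delta_{j+1}=\delta_0$ underlying the (P2) and (R2) cases.
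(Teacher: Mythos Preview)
Your proposal is correct and follows essentially the same approach as the paper: triangulate each level set of the deformed Conti rectangle, verify that every piece is $\delta_j|_{D_j}$-good relative to the long axis $n$, and then classify via the update rules of Algorithm \ref{alg:construction} together with the angle estimates of Lemmas \ref{lem:angle_1} and \ref{lem:angles}. Your explicit vertical subdivision of the obtuse corner regions carrying $\tilde{M}_2,\tilde{M}_3$ is precisely what is needed, and corresponds to the paper's ``suitable splitting into in total 16 triangles'' indicated only via Figure \ref{fig:good_triangles_conti}; in this respect your write-up is actually more explicit than the paper's. Two harmless imprecisions: the sub-triangles you produce are only \emph{approximately} right-angled (one angle is $\tfrac{\pi}{2}+O(\delta_j)$), which is exactly what Definition \ref{defi:goodtriangle} requires anyway; and in the rotated case the angle bound $(C\delta_0,\pi-C\delta_0)$ from Lemma \ref{lem:angles} should be read modulo the orientation symmetry of a line so that it lands in $(C\delta_0,\tfrac{\pi}{2}-C\delta_0)$ as required by (R1)/(R2).
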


\begin{figure}[t]
  \centering
  \includegraphics[width=0.4\linewidth, page=15]{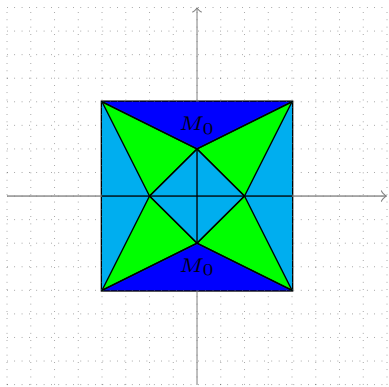}
  \caption{Triangles in the undeformed Conti construction}
  \label{fig:good_triangles_conti}
\end{figure}

\begin{proof}
  We recall that (after a suitable splitting into in total 16 triangles as depicted in Figure \ref{fig:good_triangles_conti}) all except for four triangles in the undeformed Conti construction (c.f. Lemma \ref{cor:var_conti})
  are axis-parallel and have aspect ratio
  approximately $1:\delta_{j}|_{D_j}$ (with a factor depending on $\lambda$; for
  $\lambda=\frac{1}{4}$ a factor in the interval $(1/4,4)$ is more than
  sufficient). 
  After rescaling the $x_2$-axis by $\delta_{j}|_{D_j}$ (as in Lemma \ref{lem:conti_deformed}), these aspect ratios are then
  comparable to $1:\frac{\delta_{j}|_{D_j}}{2}$, i.e. 
  $1: \delta_{j+1}|_{R\setminus F}$, where $F$ denotes the union of the non-axis-parallel level sets in the deformed configuration. Hence, 
  all the axis-parallel triangles are of the type (P1) or (R1). The triangles, in which $M_4$ is attained, are of type (P1), as the rotation angle of the next Conti construction in the parallel case is controlled by virtue of Lemma \ref{lem:angles}
  \\
  
  It remains to discuss the remaining triangles contained in $F$ (green in Figure
  \ref{fig:good_triangles_conti}). These are again $\delta_{j}|_{D_j}$-good by a
  similar estimate on the aspect ratios, and by an estimate on the angle 
  of rotation with respect to the $x_1$-axis. As by our convex integration Algorithm \ref{alg:construction}, Step 2 (b), $\delta_{j+1}|_{F}=\delta_0$, they are in general of the type (P2) or (R2), if $\delta_j|_{D_j} \neq \delta_0$, but could also be of the
  type (P1) or (R1), if $\delta_j|_{D_j} = \delta_0$.
\end{proof}

\begin{rmk}
\label{rmk:initial}
We observe that Lemma \ref{lem:cov_Cont} in particular implies that the triangles, which are obtained in Step 1 in Algorithm \ref{alg:construction}, all satisfy the claim of Proposition \ref{thm:covering}. Hence, in the sequel it suffices to provide a covering algorithm, which preserves this property.
\end{rmk}

\subsection{A basic building block}
\label{sec:build}

We begin our iterative covering statements by presenting a general building block, which we will frequently use in the sequel. Given a triangle $D$ we seek to reduce the discussion to that of a rectangle $R_2$, whose long side is aligned with the direction of $D$ and which is of similar volume as the original triangle. Only in the covering of this rectangle will the situations (P1), (P2) and (R1), (R2) differ. For the case (R3) we argue differently.

\begin{prop}
\label{prop:cov_P1}
  Let $D$ be a $\delta$-good triangle with $0< \delta \leq \delta_0$.
  Let further $\tilde{R}_2$ be a rectangle of aspect ratio $r\in [\frac{1}{10},1000] \delta$, whose long side is aligned to the direction of $D$. Then there exists
  a rescaled and translated copy $R_2\subset D$ of the rectangle $\tilde{R}_2 $ (of aspect ratio $1:r$), for which three of its corners lie on $\partial D$ and such that:
  \begin{enumerate}
  \item[(i)] $|R_2| \geq 10^{-6}|D|$.
  \item[(ii)] One corner divides a side of the triangle in the ratio $\frac{2}{3}:\frac{1}{3}$.
  \item[(iii)] The set $D \setminus R_2$ consists of at most $100$
    $\delta$-good triangles, which are aligned with the direction of $D$. 
  \end{enumerate}
\end{prop}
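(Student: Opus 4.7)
The plan is to inscribe a rescaled copy $R_2$ of $\tilde{R}_2$ in $D$ with its long side parallel to the direction of $D$, positioned so that two bottom corners lie on one long side of $D$ (with the right one at the $2/3$ point) and the top-left corner meets the second long side of $D$. This determines $R_2$ uniquely and decomposes $D \setminus R_2$ into a left triangle, a middle triangle, and a right quadrilateral; the latter I would split by one more diagonal into two triangles.

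First I would normalize by placing the small-angle vertex of $D$ at the origin with the direction of $D$ along the positive $x$-axis. Then one long side of $D$ lies along $\{y=0\}$, the other (up to an $O(\delta)$ correction from the tilted short side) along $\{y = x \tan \alpha\}$, and the short side closes $D$ near $x = L$, where $\alpha$ denotes the small angle of $D$ and $L$ the length of the long side. Setting $R_2 = [a, a+l] \times [0, lr]$ with $a+l = 2L/3$ (so the bottom-right corner sits at the $2/3$-point, giving (ii)) and $lr = a \tan \alpha$ (so the top-left corner lies on the second long side of $D$), solving gives
\[
l = \frac{2 L \tan \alpha}{3 (r + \tan \alpha)}, \qquad a = \frac{2 L r}{3 (r + \tan \alpha)}.
\]
Three corners of $R_2$ then lie on $\partial D$ as required. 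For the area, a direct computation yields
\[
\frac{|R_2|}{|D|} = \frac{8}{9} \cdot \frac{t}{(1+t)^2}, \qquad t := \frac{r}{\tan \alpha},
\]
and since $r, \tan \alpha \in \delta [1/10, 1000]$ forces $t \in [10^{-4}, 10^{4}]$, this yields $|R_2|/|D| \geq 10^{-5}$, comfortably above the $10^{-6}$ threshold of (i).

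For (iii), the set $D \setminus R_2$ splits into a left triangle $T_L$ with vertices $(0,0), (a,0), (a, lr)$; a middle triangle $T_M$ with vertices $(a, lr), (a+l, lr), (a+l, (a+l) \tan \alpha)$; and a right quadrilateral $T_R$ to the right of $R_2$, which I would split by a single diagonal (for instance from $(a+l, (a+l)\tan \alpha)$ to the short-side endpoint near $(L, 0)$) into two further triangles. The triangles $T_L$ and $T_M$ each have the same angles $\alpha, \pi/2, \pi/2 - \alpha$ as $D$ (up to an $O(\delta)$ correction) and are therefore $\delta$-good and aligned with the direction of $D$. The two pieces of $T_R$ are right-angled-up-to-$O(\delta)$ with small angles of order $\alpha$; choosing the diagonal to balance these small angles keeps each in an interval of the form $[c_1 \alpha, c_2 \alpha]$ with fixed $c_1, c_2$, so that both pieces remain $\delta$-good and aligned with the direction of $D$ using the slack in the constants of Definition \ref{defi:goodtriangle}. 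In total $D \setminus R_2$ is covered by four $\delta$-good triangles, well within the bound $100$.

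The main obstacle is this final splitting of $T_R$: a naive diagonal can produce a small angle as large as $3 \alpha$, which, combined with the upper end $\alpha \sim 1000 \delta$ of the range of admissible small angles for $D$, threatens to push the new small angle outside the required interval $\delta [1/10, 1000]$. This I would resolve by choosing the diagonal so that the two new small angles scale linearly in $\alpha$ with multiplicative constants contained in $[1/2, 3/2]$, combined with the factor-of-$10^4$ slack built into Definition \ref{defi:goodtriangle}. A secondary technicality is the $O(\delta)$ tilt of the short side of $D$ away from vertical (since $D$ is only approximately a right triangle); the resulting $O(\delta)$ corrections to all lengths and angles can be controlled uniformly and absorbed into the multiplicative constants without affecting any estimate.
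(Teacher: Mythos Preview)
Your construction of $R_2$, the verification that three corners lie on $\partial D$, the placement of the $2/3$-point, and the volume estimate are all correct and essentially identical to the paper's argument. The decomposition of the part of $D\setminus R_2$ lying in $\{x\le a+l\}$ into the two right triangles $T_L$ and $T_M$ is also fine: both have opening angle exactly $\alpha$ and are $\delta$-good aligned with the direction of $D$.

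The genuine gap is in your treatment of the right quadrilateral $T_R$. A quadrilateral has only two diagonals, and for $T_R$ neither yields two $\delta$-good triangles across the full range $\alpha\in\delta[\tfrac{1}{10},1000]$. With the diagonal from $(a+l,(a+l)\tan\alpha)$ to $(L,0)$ that you suggest, the upper triangle has its small angle at $(a+l,(a+l)\tan\alpha)$ equal to approximately $3\alpha$; when $\alpha$ is near $1000\delta$ this is $\approx 3000\delta$, outside the admissible interval. The other diagonal is worse: it produces an obtuse angle $\approx \tfrac{\pi}{2}+\alpha$ at $(a+l,(a+l)\tan\alpha)$, so that triangle violates condition~(2) of Definition~\ref{defi:goodtriangle} outright. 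Hence your proposed fix of ``choosing the diagonal so that the two new small angles have multiplicative constants in $[\tfrac12,\tfrac32]$'' cannot be realized---there is simply no such diagonal. There is also no ``factor-of-$10^4$ slack'' available: the interval $\delta[\tfrac{1}{10},1000]$ \emph{is} the full admissible range for the small angle, not additional headroom, and a descendant with small angle $3\alpha$ at $\alpha=1000\delta$ lies genuinely outside it.

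The paper handles $T_R$ differently. It first cuts $T_R$ by the horizontal line $y=(a+l)\tan\alpha$, which splits off at the top a triangle self-similar to $D$ (hence automatically $\delta$-good), leaving a nearly rectangular box $B$ of width $\approx L/3$ and height $\approx \tfrac{2}{3}L\tan\alpha$. This box is then sliced into $N\in\{1,2,3\}$ horizontal strips, each bisected along its diagonal; the resulting opening angles are $\approx \tfrac{2}{N}\alpha$, and $N$ is chosen according to the size of $\alpha$ so that these stay inside $\delta[\tfrac{1}{10},1000]$. This extra subdivision is precisely why the statement allows up to $100$ triangles rather than~$4$.
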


\begin{figure}[t]
  \centering
  \includegraphics[width=0.9\linewidth, page=23]{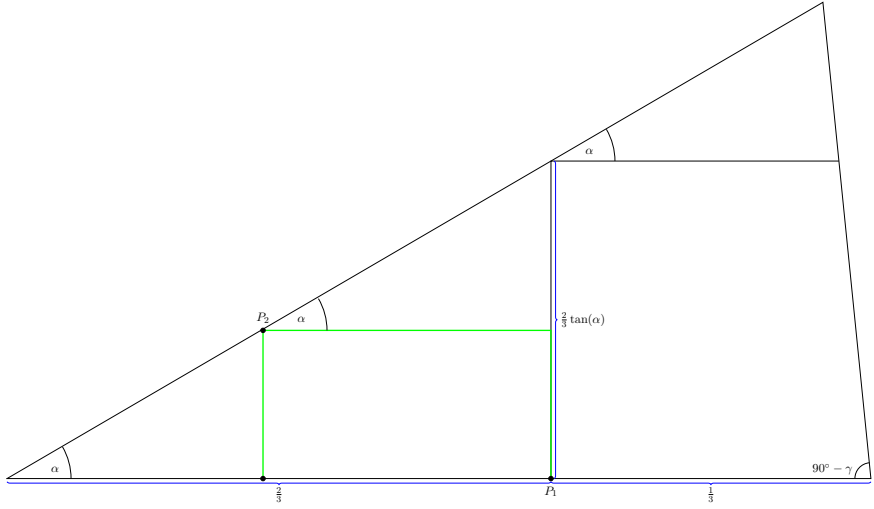}
  \caption{Fitting the parallel rectangle $R_2$ (green box) into $D$. The
    resulting remaining triangles are by construction again
    $\delta$-good. The partition of the box is shown in Figure \ref{fig:heuristic_right_block}.}
  \label{fig:heuristic_triangle}
\end{figure}

\begin{figure}[t]
  \centering
    \includegraphics[width=0.6\linewidth, page=18]{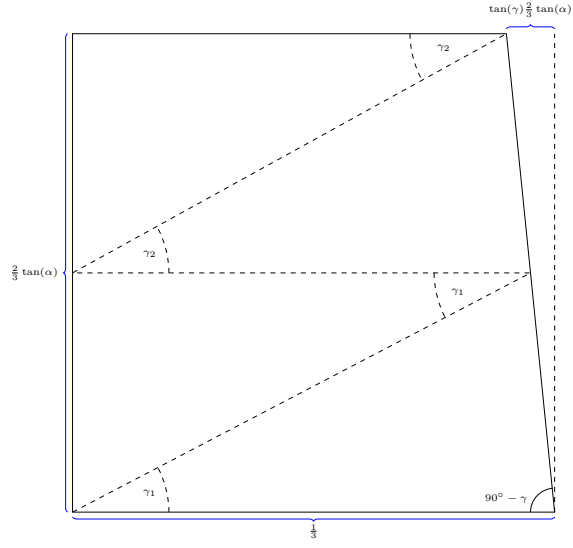}
  \caption{ As the angle at the bottom right is very close to $\frac{\pi}{2}$, the
    box is well approximated by a rectangle of side-lengths
    $\frac{1}{3}:\frac{2}{3}\tan(\alpha)$.
    Partitioning the box into $N$ slices of the same height, an estimate on the
    $\tan$ of the opening angles $\gamma_j$ of the corresponding rectangles shows
    that $\tan(\gamma_j) \approx \frac{2}{N} \tan(\alpha)$.  
   Choosing $N \in \{1,2,3\}$ appropriately, we thus
    obtain $\delta$-good triangles.}
  \label{fig:heuristic_right_block}
\end{figure}

\begin{proof}
Let $D$ be a given $\delta$-good triangle and let $\alpha$ denote the
corresponding angle from Definition \ref{defi:family} (1). Without loss of generality we may assume that the triangle
$D$ is aligned  with the $x_1$-axis, that the tip of the
triangle lies at the origin and that (after rescaling) the $x_1$-axis-parallel side is given
by the interval $[0,1]\times \{0\}$ (c.f. Figure \ref{fig:heuristic_triangle}).

Let $P_1=(\frac{2}{3},0)$ and let $g$ be the line of slope $-r$ through $P_1$.
Then $g$ intersects $\p D$ in exactly one other point $P_2$.
Being aligned along the $x_1$-axis, the rectangle $R_2$ is then uniquely determined by requiring that
$P_1$ and $P_2$ are two of its corners. By construction it has aspect ratio $1:r$.\\
In order to infer the bound on the volume, we compute the coordinates of $P_2=(\frac{2r}{\tan(\alpha) + r}, \frac{2 r \tan(\alpha)}{3(\tan(\alpha) + r)})$.
Hence the volume of $R_2$ is given by
\begin{align*}
|R_2|=  \left( \frac{2}{3}- \frac{2r}{\tan(\alpha) + r}\right)  \frac{2 r \tan(\alpha)}{3(\tan(\alpha) + r)} = \frac{2}{3} \frac{\tan(\alpha)}{r+\tan(\alpha)} \frac{2}{3} \frac{r\tan(\alpha)}{r+\tan(\alpha)}.
\end{align*}
Since the volume of $D$ is comparable to $\frac{\tan(\alpha)}{2}$, this
  results in a volume fraction of approximately
  \begin{align*}
  \frac{|R_2|}{|D|} \geq & \frac{ \frac{2}{3} \frac{\tan(\alpha)}{r+\tan(\alpha)} \frac{2}{3} \frac{r\tan(\alpha)}{r+\tan(\alpha)}}{\frac{\tan(\alpha)}{2}} 
   = \frac{1}{9} \frac{r \tan(\alpha)}{(\frac{r+\tan(\alpha)}{2})^2} \\ &\geq \frac{1}{9} \frac{\min(r,\tan(\alpha))}{\max(r, \tan(\alpha))}.
  \end{align*}
  Using the fact that $r\geq \frac{\delta}{10}$ and $\alpha,\arctan(r) \leq
  1000\delta$, we infer the desired estimate on the volume
  fraction. 
In particular, we note that it is independent of $\delta$.

Adding a vertical line through $P_1$ and a horizontal line through
$P_1+(0,\frac{2}{3}\tan(\alpha)) \in \p D$, we obtain an axis parallel triangle
of opening angle $\alpha$ to the left of $R_2$, another axis parallel triangle
of opening angle $\alpha$ above $R_2$, a four-sided box $B$ to the right of $R_2$
and triangle self-similar to $D$ above the box (c.f. Figure
\ref{fig:heuristic_triangle}).

As $D$ is $\delta$-good, so are the above mentioned three triangles and it
hence remains to discuss the box $B$ on the
right of $R_2$ (c.f. Figure \ref{fig:heuristic_right_block}).
By construction the bottom side of $B$ is axis parallel and of length
$\frac{1}{3}$ and the left side is also axis-parallel and of length
$\frac{2}{3}\tan(\alpha)$.
Furthermore, since $D$ is $\delta$-good, the angle on the bottom right of $B$ is
given by $\frac{\pi}{2} - \gamma$ for some $ \gamma  \in \delta [-2000,2000]$ and in particular $|\gamma| \leq \frac{1}{10}$.
Hence, the length of the axis-parallel top side differs from $\frac{1}{3}$ by
$|\tan(\gamma) \frac{2}{3}\tan(\alpha)| \leq \frac{1}{10}$.

Introducing further horizontal lines, we may partition $B$ into $N$ boxes with
three axis-parallel sides of height $\frac{2}{3N}\tan(\alpha)$ and length close
to $\frac{1}{3}$ (c.f. Figure \ref{fig:heuristic_right_block}).
Bisecting along the diagonals, we hence obtain opening angles $\gamma_j$ with
\begin{align*}
  \tan(\gamma_j)\approx \frac{\frac{2}{3N}\tan(\alpha)}{\frac{1}{3}} = \frac{2}{N} \tan(\alpha).
\end{align*}
Choosing $N \in \{1,2,3\}$ appropriately and noting that the remaining angle is
either $\frac{\pi}{2} -\gamma$ (same as $D$) or a right-angle, all obtained triangles
are $\delta$-good with respect to $n$.
\end{proof}

\begin{rmk}
  We note that the above quotient
  \begin{align*}
    \frac{\min(r,\tan(\alpha))}{\max(r, \tan(\alpha))}
  \end{align*}
  is symmetric in $\tan(\alpha),r$ and punishes
  them being of different size. A similar mechanism can be observed when trying
  to fit axis parallel rectangles of different aspect ratios $r_1,r_2$ inside each other.
  Letting $1:r_1$ be the lengths of the exterior rectangle, the interior
  rectangle has lengths $a:ar_2$, where $a \leq 1$ limits the volume ratio to
  \begin{align*}
    \frac{a^2 r_2}{r_1} \leq \frac{r_2}{r_1},
  \end{align*}
  and $ar_2 \leq r_1 \Leftrightarrow a \leq \frac{r_1}{r_2}$
  limits the volume ratio to
  \begin{align*}
    \frac{a^2 r_2}{r_1} \leq \frac{\frac{r_{1}^2}{r_{2}^2} r_2 }{r_1}= \frac{r_1}{r_2}.
  \end{align*}
This illustrates that the triangular situation is comparable to the rectangular situation, which we introduced as our heuristic model situation in the beginning of Section \ref{sec:covering}.
\end{rmk}

We further explain how, given a box $R$ with some rotation angle with respect to the $x_1$-axis, we construct a block of the type $\tilde{R}_2$.

\begin{lem}
\label{lem:cov_P1}
Let $0<\delta \leq \delta_0$ and let $R$ be a rectangle of aspect ratio
$1:r_0$ for $r_0 \in \delta [\frac{1}{10},10]$. 
Suppose further that the direction $n \in S^1$ of the long side of $R$ encloses
an angle $\beta \in
\delta[-1000,1000]$ with the $x_1$-axis.
Then there exist an axis-parallel rectangle $\tilde{R}_2$, which is parallel to
the $x_1$-axis and of aspect ratio $1:r$, where $r \in \delta[\frac{1}{10}, 1000]$, and a translated and rescaled copy $\tilde{R}$ of $R$ such that
  \begin{itemize}
  \item[(i)] $\tilde{R} \subset \tilde{R}_{2}$ and 
  $|\tilde{R}| \geq 10^{-6}|\tilde{R}_{2}|$, 
  \item[(ii)] The set $\tilde{R}_{2} \setminus \tilde{R}$ can be
    decomposed into at most 100 $\delta$-good triangles, whose direction is either $n$ or
    $e_1$.
  \end{itemize}
\end{lem}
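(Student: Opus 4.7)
The plan is to choose $\tilde R_2$ as an axis-aligned rectangle enclosing a suitably rescaled copy $\tilde R$ of $R$, and to triangulate the complement carefully. After reflecting if necessary I assume $\beta\in[0,1000\delta]$, and after rescaling I take $\tilde R$ of long side length $1$, centred at the origin, with long-side direction $n=(\cos\beta,\sin\beta)$. Its axis-aligned bounding box $B$ has width $W=\cos\beta+r_0\sin\beta$ and height $H=\sin\beta+r_0\cos\beta$, so the aspect ratio $H/W$ already satisfies the required bound $\delta[\tfrac{1}{10},1000]$ up to a universal constant. The complement $B\setminus\tilde R$ decomposes into four right triangles at the corners of $B$: two ``large'' ones (top-left and bottom-right) whose hypotenuse is a long side of $\tilde R$ along $\pm n$ with legs $\cos\beta$, $\sin\beta$, and two ``small'' ones (top-right and bottom-left) whose hypotenuse is a short side of $\tilde R$ along $\pm n^\perp$ with legs $r_0\sin\beta$, $r_0\cos\beta$. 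All four have acute angles $\beta$ and $\pi/2-\beta$.

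I would distinguish two regimes. If $\beta\geq\delta/10$, I take $\tilde R_2=B$. The two large corner triangles are immediately $\delta$-good with direction $n$: the small angle is $\beta\in\delta[\tfrac{1}{10},1000]$, and the long sides lie along $\pm n$ (hypotenuse) and $\pm e_1$ (long leg). For the two small corner triangles whose hypotenuses lie along $\pm n^\perp$, I drop the altitude from the right-angle vertex to the hypotenuse. The key geometric observation is that this altitude is perpendicular to $n^\perp$, hence lies along $\pm n$. The two resulting similar sub-triangles have the altitude as one of the sides adjacent to the small angle $\beta$, producing a long side along $n$. A bounded (universal) number of iterations of this drop, on the one sub-triangle at each step whose remaining long side still lies along $\pm e_2$, removes the orientation obstruction. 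This yields a decomposition of $B\setminus\tilde R$ into a uniformly bounded (say $\le 16$) collection of $\delta$-good triangles with direction $n$ or $e_1$.

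If instead $\beta<\delta/10$, the corner triangles have small angle $\beta<\delta/10$ and so are not $\delta$-good. Here I enlarge $\tilde R_2$ vertically to $[-W/2,W/2]\times[-H_2/2,H_2/2]$ with $H_2=H+2h$ and a fixed $h\in[\delta/10,100\delta]$; the aspect ratio stays in $\delta[\tfrac{1}{10},1000]$. The top and bottom strips of $\tilde R_2\setminus B$, together with the two corner triangles abutting them, form two symmetric pentagons with vertices $\{V_4,V_1,V_2,(W/2,H/2+h),(-W/2,H/2+h)\}$ for the top. I triangulate this pentagon by cutting from $V_1$ to the two top corners of $\tilde R_2$: a left wedge containing the top long side $V_4V_1$ along $n$ with small angle $\approx \beta+h\in\delta[\tfrac{1}{10},1000]$; a central near-right triangle with horizontal top along $e_1$ and small angle $\approx h$; and a right wedge containing the short side $V_1V_2$ along $n^\perp$, which is then handled by the altitude trick of the first case. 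The volume bound is easy: $|\tilde R|=r_0\geq\delta/10$ and $|\tilde R_2|\leq W(H+2h)\lesssim\delta$, so $|\tilde R|/|\tilde R_2|\geq10^{-6}$, with room to spare, and the total triangle count stays under $100$.

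The main obstacle is the orientation requirement for the ``small'' corner triangles: their natural long sides lie along $n^\perp$ and $e_2$, which are perpendicular to the two permitted directions $n$ and $e_1$. The altitude-drop technique is precisely tailored to convert such a triangle into sub-triangles possessing a long side along the perpendicular to the original hypotenuse; iterating this on the subtriangle that still has its long leg along $\pm e_2$ (rather than $\pm e_1$) after one drop, exploits that each such drop halves the aspect ratio of the remaining problematic piece, so that only a universal number of drops is needed before all resulting triangles have a long side aligned to $n$ or $e_1$. Verifying precisely that each produced triangle satisfies the two angular conditions of Definition~\ref{defi:goodtriangle} with the correct orientation, and that the strip/pentagon cuts in the small-$\beta$ regime yield central and wedge triangles within the $\delta$-good tolerances, constitutes the bulk of the bookkeeping in the proof.
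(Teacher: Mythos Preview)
Your bounding-box approach runs into a genuine obstruction at the ``small'' corner triangles (the two whose hypotenuse lies along $n^\perp$). The altitude-drop iteration you propose does not terminate: dropping the altitude from the right angle to the hypotenuse produces two sub-triangles \emph{similar} to the original. One of them, say $A_1$, indeed acquires long sides along $n$ and $e_1$ and is $\delta$-good; but the other, $B_1$, has its hypotenuse along $e_2$ and its long leg along $n^\perp$, so its two long sides are still perpendicular to the permitted directions. Dropping the altitude again in $B_1$ yields a good $A_2$ and a bad $B_2$ with hypotenuse along $n^\perp$ and long leg along $e_2$ --- the same orientation as the starting triangle, scaled by $\cos^2\beta$. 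The claim that ``each such drop halves the aspect ratio of the remaining problematic piece'' is false: similar triangles have identical aspect ratio, so the bad piece only shrinks in size, never in shape, and no finite number of drops exhausts it. Since the lemma demands an exact decomposition of $\tilde R_2\setminus\tilde R$ into finitely many $\delta$-good triangles, a shrinking leftover cannot simply be discarded.

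The paper sidesteps this difficulty by building $\tilde R_2$ \emph{outward} from $\tilde R$ rather than taking its bounding box. It attaches quadrilateral flaps of length $1$ on the left and right of $\tilde R$ (producing axis-parallel vertical edges), flaps of height $r_2=\tan\gamma_2$ on the top and bottom (with $\gamma_2$ a free parameter in $\delta(\tfrac{3}{10},\tfrac{1000}{3})$), and finally four axis-parallel corner rectangles to close up. Because the side flaps have one edge along a long side of $\tilde R$ and the other edges axis-parallel, and because $r_2$ is tunable, every triangle produced by bisecting these pieces along diagonals has its long sides along $e_1$ or $n$ by construction --- no $n^\perp$-hypotenuse triangles are ever created. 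This is the missing idea: rather than trying to repair the bad orientation of the bounding-box corners after the fact, the construction is designed so that such orientations never arise.
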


\begin{figure}[t]
  \centering
  \includegraphics[height=5cm, page=16]{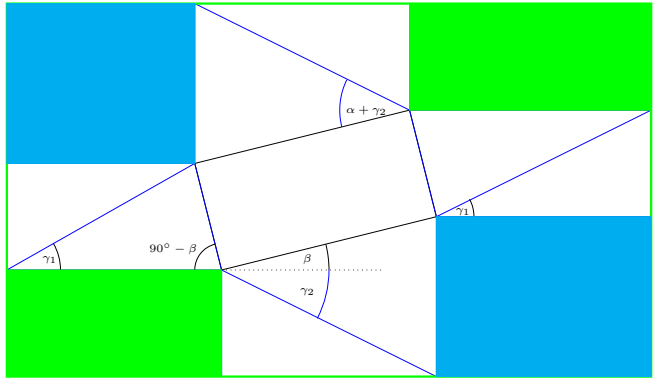}
  \caption{Constructing an axis parallel rectangle starting from $R$.
We begin with the inner rectangle $R=\tilde{R}$ (rotated with an angle $\beta$ with respect to the $x_1$-axis) and successively add the eight outer rectangles to obtain the box $\tilde{R}_2$ from Lemma \ref{lem:cov_P1}. The explicit construction of the four white boxes, which together with $R$ form the inner ``cross" are described in detail in Figures \ref{fig:heuristic_sides} and
   \ref{fig:heuristic_top}. All the outer rectangles are of aspect ratio approximately
   $1:\delta_{j}$ and can hence be decomposed into $\delta_{j}$-good triangles.
  }
  \label{fig:heuristic_box}
\end{figure}

\begin{figure}[t]
    \centering
    \includegraphics[width=0.8\linewidth,page=17]{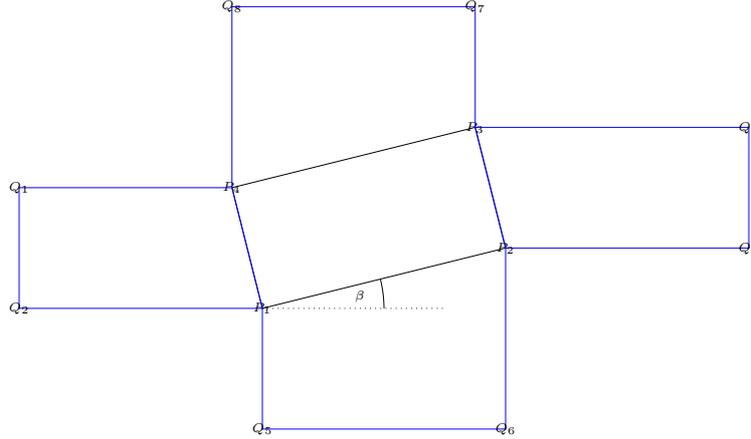}
    \caption{Labeling of points. The figure illustrates the successive addition of further points, which result in the inner cross structure. In this construction we have to choose the points in a way, which ensures that the triangles, which are formed by bisecting the boxes along the diagonals, still remain of the types (P1), (P2) and (R1), (R2).}
    \label{fig:heuristic_box_points}
  \end{figure}

\begin{figure}[t]
  \centering
  \includegraphics[width=0.8\linewidth, page=19]{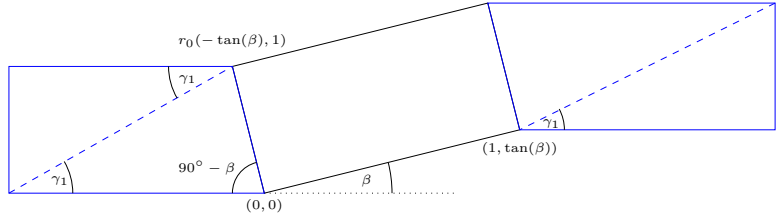}
  \caption{Adding $\delta$-good triangles on the left and right, we can
    achieve axis-parallel boundaries.}
  \label{fig:heuristic_sides}
\end{figure}

\begin{figure}[t]
  \centering
  \includegraphics[height=5cm, page=21]{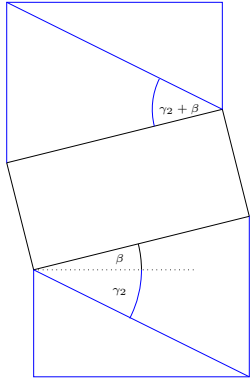}
  \caption{Adding $\delta_{j}$-good triangles on the top and bottom, we can
    achieve axis-parallel boundaries. Since $\beta$ might include a large or
    small factor in the definition, we may either allow and opening angle
    $\gamma_2+\beta$ or introduce a horizontal line to obtain two triangles with
    opening angle $\gamma_2$ and $\beta$.}
  \label{fig:heuristic_top}
\end{figure}

\begin{proof}
  The construction is sketched in Figures \ref{fig:heuristic_box}, \ref{fig:heuristic_box_points}, \ref{fig:heuristic_sides} and \ref{fig:heuristic_top}.

  After a translation, rescaling and reflection with respect to the $x_2$-axis, we may assume
  that $\beta \geq 0$ and that the corners of $\tilde{R}$ are given by
  \begin{align*}
    P_1&=(0,0),\\ P_2&=(1,\tan(\beta)), \\ P_3&=(1,\tan(\beta))+r_0 \sqrt{1+\tan(\beta)^2}(-\sin(\beta),\cos(\beta))\\
    &= (1,\tan(\beta)) + r_{0} (-\tan(\beta),1), \\ P_4&=r_0(-\tan(\beta),1),
  \end{align*}
  where we used that
  \begin{align*}
    \cos(\beta) \sqrt{1+\tan(\beta)^2}&= 1, \\
    \sin(\beta) \sqrt{1+\tan(\beta)^2}&= \tan(\beta).
  \end{align*}

  In the following, we add quadrilaterals with three axis-parallel sides
  with aspect ratios $r_{0}$ and $ r_{2}=\tan(\gamma_{2})$ for 
  \begin{align}
  \label{eq:gamma_2}
    \gamma_2 \in \delta (\frac{3}{10}, \frac{1000}{3}),
  \end{align}
   to be chosen later.
  
  We begin by adding quadrilaterals on the left and right by inserting the following four points (c.f. Figure \ref{fig:heuristic_box_points})
  \begin{align*}
    Q_1&= P_4 - (1,0), \\
    Q_2&= Q_1 - (0, r_0)= P_1 - (1 + r_0\tan(\beta),0), \\
    Q_3&= P_2 + (1,0),\\
    Q_4&= Q_3 + (0, r_0)= P_3 + (1 + r_0 \tan(\beta),0).
  \end{align*}
 
  We, in particular, note that the lines $\overline{Q_1 Q_2}$ and
  $\overline{Q_3Q_4}$ are parallel to the $x_2$-axis.
  Furthermore, the triangles $Q_2P_1P_4$ and $P_4Q_1Q_2$ have opening angles
  $\arctan(r_0)$, are parallel to the $x_1$-axis and are either right-angled or have an
  angle $\frac{\pi}{2} - \beta$. Hence all of these triangles are $\delta$-good with direction $e_1$.
  Similar observations hold for the triangles, which are constructed from $P_2,P_3,Q_2,Q_3$.\\

  Following a similar approach, we add the points
  \begin{align*}
    Q_5&= P_1 - (0,r_{2}), \\
    Q_6&= Q_5 + (1, 0)= P_2 - (0, r_{2} + \tan(\beta)), \\
    Q_7&= P_2 + (0,r_{2}) ,\\
    Q_8&= Q_7 - (1,0)= P_3 + (0, r_{2}+ \tan(\beta)).
  \end{align*}
  Here, the aspect ratio $r_2$ is chosen flexibly to account for the facts
    that our construction is horizontally of a length, which is slightly larger than $3$, and that the rotated
    rectangle has height $r_0+\tan(\beta)\geq r_0$.
  By symmetry, we may restrict ourselves to discussing the rectangle $P_1Q_5Q_6P_2$.
  The axis-parallel right angled triangle $P_{1}Q_{5}Q_{6}$ has opening angle
  $\gamma_{2}$ (as defined in (\ref{eq:gamma_2})) and is thus $\delta$-good.
  For the remaining triangle $P_{1}Q_{6}P_{2}$, we distinguish two cases:
  \begin{itemize}
  \item
  If $\beta \in \delta [\frac{1}{10}, 1000]$, we additionally
  introduce the point $Q_{9}= (1,0)$ and note that $P_1Q_{9}P_2$ is $\delta$-good
  and axis-parallel, as are $P_1Q_{9}Q_6$ and $Q_6Q_5P_1$ (which both have an opening angle $\gamma_2$).
 
\item  If $0\leq \beta \leq \delta \frac{1}{10}$, we note that by our
  restriction on $\gamma_2$,
  \begin{align*}
     \beta+ \gamma_{2} \in \delta [\frac{1}{10}, 300],
  \end{align*}
  which ensures that $P_{1}Q_{6}P_{2}$ is $\delta$-good and parallel 
  to the long side $n$ of $\tilde{R}$.
  \end{itemize}

  Finally, we complete our thus far roughly cross-shaped construction to the
  desired axis-parallel rectangle $R_2$ by adding four rectangles as in Figure
  \ref{fig:heuristic_box} (the green rectangles there).
  These rectangles have side lengths
  \begin{align}
  \label{eq:r1}
    (1+ r_0\tan(\beta)) &:  r_{2}, \\
    \label{eq:r2}
    1 &: (r_{2}+\tan(\beta)).
  \end{align}
  We consider the first rectangle with side ratio as in (\ref{eq:r1}).
  Since $r_0 \tan(\beta) \leq 2 (1000\delta)^2 < 0.1$ (which follows from the bounds for $\delta_0$), we can estimate the aspect ratio from
  above and below by $1:\frac{r_{2}}{1.1}$ and $1:r_{2}$, respectively.
  Bisecting this rectangle along the diagonals, then results in $\delta$-good axis-parallel right triangles, provided
  \begin{align*}
    r_{2} \in [1.1 \arctan(\delta\frac{1}{10}), \arctan(1000 \delta )].
  \end{align*}
  This is satisfied due the assumptions on $\gamma_2$, since $\frac{x}{1.1}
    \leq \arctan(x) \leq x$ on the considered domain.
  
  For the second rectangle with ratio as in (\ref{eq:r2}), we again distinguish two cases:
  \begin{itemize}
  \item If $\beta \in \delta[\frac{1}{10}, 1000]$, we divide the rectangle by a
    horizontal line through $P_{1}$, which yields two rectangles of lengths
    $1:\tan(\beta)$, $1:r_{2}$, which are $\delta$-good.
  \item If $\beta \in \delta [0,\frac{1}{10}]$, we note that by the same
    argument as above
    \begin{align*}
      r_{2} + \tan(\beta) \in [\arctan(\delta\frac{1}{10}), \arctan(1000\delta)].
    \end{align*}
    Thus the aspect ratio $1: (r_{2}+\tan(\beta))$ results in
    $\delta$-good axis-parallel triangles.
  \end{itemize}

  We conclude by noting that the resulting axis-parallel rectangle $R_{2}$ (which is the entire rectangle in Figure \ref{fig:heuristic_box_points}) has
  side lengths
  \begin{align*}
    (1+   2 +r_0\tan(\beta))  : (r_{0}+ 2 r_{2}+ \tan(\beta)).
  \end{align*}  
  Again estimating $r_{0}\tan(\beta)< 0.1$, this yields suitable triangles,
  provided
  \begin{align*}
    r_{0}+ 2 r_{2}+ \tan(\beta) \in [3.1 \arctan(\delta\frac{1}{10}), 3\arctan(1000 \delta)].
  \end{align*}
 Again this is satisfied due to our restrictions on $\gamma_2, r_0$ and $\beta$.

 We thus obtain a large family of admissible values $r_{2}$.
We note that the aspect
  ratio of $R_2$ is comparable to $2r_2+ \tan(\beta)+r_0$, as is the area of $R_2$.
 Hence, as a particular choice, we may take $r_2$ comparable to
  $\frac{r_0+\tan(\beta)}{2}$ (within a factor $3$ to ensure that $\gamma_2$
  satisfies the above restriction). Then the aspect ratio is comparable to
  \begin{align*}
    1: \frac{r_0+\tan(\beta)}{2},
  \end{align*}
  and the volume ratio is comparable to
  \begin{align*}
  \frac{|\tilde{R}|}{|\tilde{R}_2|}\geq
    \frac{r_0}{3 \cdot 2(r_0+\tan(\beta))}= \frac{1}{6} \frac{r_0}{r_0+\tan(\beta)}.
  \end{align*}
  Since $r_0 \geq \arctan(\delta \frac{1}{10})$ and $\beta \leq 1000 \delta$,
  this quotient may be estimated from below by $\frac{1}{10000}$.
\end{proof}

\begin{rmk}
\label{rmk:interp}
One should think of the rectangle $R$ in Lemma \ref{lem:cov_P1} as a Conti construction, which we seek to fit into a triangle $D$ as in Proposition \ref{prop:cov_P1}. In doing so, we however have to be careful, since in the cases (P1), (P2) we have to avoid creating new triangles, which are substantially rotated with respect to the original one (c.f. Figure \ref{fig:bad_triangle} and the explanations at the beginning of the next section). The box construction of Lemma \ref{lem:cov_P1} ensures this.
\end{rmk}

\subsection{Covering in the cases (P1), (P2)}
\label{sec:parallel}

In this section we explain how, given a triangle $D_j \in \{\Omega_{j,k}\}_{k\in\{1,\dots,J_j\}}$, which is of type (P1) or (P2), we can cover it by a combination of the relevant Conti constructions and some remaining triangles, which are again of the types (P1), (P2) and (R1), (R2). Moreover, we seek to achieve two partially competing objectives: On the one hand, we have to
control the volume of $D_j$, which is covered by Conti constructions, from below. On the other hand, we aim at keeping the resulting overall perimeter of the new covering geometry as small as possible. The construction of a covering, which balances these two objectives, is the content of Proposition \ref{prop:parallel}, which is the main result of this section.\\

Motivated by the heuristic considerations at the beginning of Section \ref{sec:covering} (c.f. Figure \ref{fig:rectangle_coverings} (a)), we expect that in the cases (P1) and (P2), in which there is no substantial rotation with respect to the
relevant Conti construction, the two competing objectives of sufficient volume coverage (Proposition \ref{prop:parallel} (1)) and of a good perimeter bound (Proposition \ref{prop:parallel} (3)), can be satisfied with a surface energy, which
is independent of $\delta_j$ and $\delta_0$. Indeed, it is possible to show that in the situation without substantial rotation, in each iteration step the overall perimeter of the covering of a triangle is comparable to the perimeter of the original triangle up to a loss of a controlled universal factor.

\begin{prop}
\label{prop:parallel}
Let $D_j$ be as in (P1)-(P2) with $j\geq 1$. 
Then there exists a covering and a constant $C>1$
   (independent of $\delta_0$) such that:
  \begin{enumerate}
  \item A volume fraction of at least $10^{-12}|D_j|$ is covered by finitely many
   rescaled and translated Conti constructions from Lemma
   \ref{lem:conti_deformed}. The Conti constructions can
   again be covered by finitely many 
   triangles of the types occurring in the cases (P1)-(P2) and
   (R1)-(R2), where $j$ is replaced by $j+1$.
  \item The complement of the Conti constructions is covered by finitely many 
  	triangles occurring in the cases (P1)-(P2), where $j$ is replaced by $j+1$.
   \item The overall surface energy of the new triangles 
   $D_{j+1,l}\in \mathcal{D}_1(D_k)$, is controlled by
   \begin{align*}
   \sum\limits_{D_{j+1,l}\in \mathcal{D}_1(D_j)}\Per(D_{j+1,l}) 
   \leq C \Per(D_j).
   \end{align*}
  \end{enumerate}
\end{prop}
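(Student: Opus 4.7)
The plan is to combine the two building blocks already developed: Proposition \ref{prop:cov_P1}, which inscribes a rectangle aligned with the direction of a $\delta$-good triangle, and Lemma \ref{lem:cov_P1}, which wraps a near-parallel Conti rectangle in an axis-parallel rectangle with only $\delta$-good complementary triangles. Because in both (P1) and (P2) the direction of $D_j$ essentially coincides with the direction $n$ of the relevant Conti construction, no substantial rotation has to be introduced, which is precisely the geometric feature responsible for the perimeter bound in (3) being independent of $\delta_0$.

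In case (P1), where $D_j$ is $\delta_j|_{D_j}$-good aligned with $n$ and the Conti construction has the matching aspect ratio, I would first apply Lemma \ref{lem:cov_P1} to produce an axis-parallel rectangle $\tilde R_2$ of aspect ratio comparable to $\delta_j|_{D_j}$ containing a (slightly rotated) Conti rectangle $R$, with $\tilde R_2\setminus R$ a union of at most $100$ good aligned triangles, and then apply Proposition \ref{prop:cov_P1} to inscribe a rescaled copy of $\tilde R_2$ in $D_j$, with $D_j\setminus R_2$ a further union of at most $100$ good aligned triangles. Composition of the two volume estimates yields $|R|\geq 10^{-12}|D_j|$, while the level sets inside $R$ split via Lemma \ref{lem:cov_Cont} into triangles of types (P1)--(R2), and all remainder triangles are good and aligned with $n$ and hence fall in (P1) at step $j+1$, as required.

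Case (P2) is more subtle because the ambient good-ness $\delta_{j-1}|_{D_j}$ of $D_j$ may be much smaller than the ratio $\delta_0$ of the Conti construction, so Lemma \ref{lem:cov_P1} cannot be applied directly to a single $\delta_0$-Conti rectangle inside $D_j$. My plan here is to first apply Proposition \ref{prop:cov_P1} with $r$ of order $\delta_{j-1}|_{D_j}$ to inscribe an aligned rectangle $R_2$ of aspect ratio $\delta_{j-1}|_{D_j}$ covering at least $10^{-6}|D_j|$, then to partition $R_2$ along its long direction into $N\sim \delta_0/\delta_{j-1}$ sub-rectangles of aspect ratio $\delta_0$ aligned with $n$, and inside each sub-rectangle to invoke Lemma \ref{lem:cov_P1} to insert a $\delta_0$-Conti rectangle covering a uniform fraction of the sub-rectangle. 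The accumulated Conti volume is again at least $10^{-12}|D_j|$, and each sub-rectangle together with its Conti construction produces triangles of types (P1)--(R2) via Lemma \ref{lem:cov_Cont}, while the complement of $R_2$ in $D_j$ and between sub-rectangles inherits its direction and $\delta$-value from $D_j$ by Step 2(a) of Algorithm \ref{alg:construction} and remains aligned with $n$.

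The main obstacle is the perimeter estimate (3) in case (P2). Although $N$ diverges as $\delta_{j-1}/\delta_0\to 0$, each sub-rectangle has long side of order $(\delta_{j-1}/\delta_0)L$, where $L\lesssim\Per(D_j)$ is the long side of $R_2$; summing, the contributions from the sub-rectangles telescope to $N\cdot(\delta_{j-1}/\delta_0)L\sim L\lesssim\Per(D_j)$ uniformly in the ratio $\delta_{j-1}/\delta_0$. Verifying this telescoping while simultaneously checking that the $O(100\,N)$ triangles produced in the interior together with those of $D_j\setminus R_2$ all fall in the admissible classes (P1)--(R2) at step $j+1$ with a single constant $C$ in (3) is the delicate quantitative point, and it is precisely the parallel alignment of the directions of $D_j$ and of the Conti construction that makes this possible, in sharp contrast to the rotated cases to be handled in Section \ref{sec:rota}.
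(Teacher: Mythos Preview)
Your proposal is correct and follows essentially the same approach as the paper. For (P1) you reproduce exactly the content of Lemma~\ref{prop:p11}, and for (P2) your argument is the content of Lemma~\ref{prop:cov_P2} with the two building blocks applied in the reverse order: the paper first wraps a single $\delta_0$-Conti rectangle via Lemma~\ref{lem:cov_P1}, stacks $\delta_0/\delta_{j-1}$ copies side by side to obtain a $\delta_{j-1}$-ratio box, and only then inscribes this box in $D_j$ via Proposition~\ref{prop:cov_P1}; you inscribe the $\delta_{j-1}$-ratio box first and then partition and apply Lemma~\ref{lem:cov_P1} in each cell. The perimeter count is identical in both orderings and yields precisely your telescoping sum $N\cdot(\delta_{j-1}/\delta_0)L\sim L$.
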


In the proof of Proposition \ref{prop:parallel} we have to be careful in the choice of the covering, in order
to keep all the resulting triangles parallel to the direction of $D_j$ or parallel to the
  relevant Conti construction (c.f. Definition \ref{defi:Conti_direction}). This is necessary to ensure a covering such that the sum of the resulting perimeters is comparable to the original perimeter; in particular no factor of $\delta_0$ occurs here.
We emphasize that this alignment with the directions of the original triangle or the relevant Conti construction is a central point, since
if a (substantial) rotation angle with respect to these directions were to be obtained (e.g. as illustrated in Figure
\ref{fig:bad_triangle}, where the covering gives rise to triangles which are rotated by an angle of $\frac{\pi}{2}$), we would inevitably fall into cases similar as the situations described in (R1), (R2), however with a ratio $\delta_j$, which might be substantially smaller than $\delta_0$. As explained at the beginning of Section \ref{sec:covering}, this would entail a growth of the perimeters of the covering by a factor $\delta_j$. As a consequence our BV estimate from Section \ref{sec:quant} would become a superexponential bound, which could no longer be compensated by the only exponential $L^1$ decay. This would hence destroy all hopes of deducing good higher regularity estimates for the convex integration solutions.\\

\begin{figure}[t]
  \centering
  \includegraphics[width=0.4\linewidth, page=2]{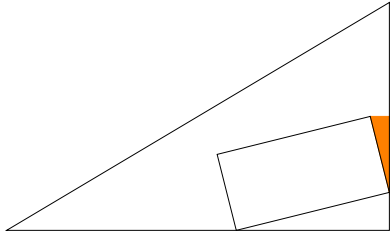}
  \caption{Problems which could arise in the covering algorithm: In our parallel covering result we have to avoid rotated triangles as the aspect ratios are very small for these.}
  \label{fig:bad_triangle}
\end{figure}

The remainder of this section is organized into three parts: We first discuss the covering constructions for the cases (P1) and (P2) separately in Sections \ref{sec:P1},
\ref{sec:P2}. Then in Section \ref{sec:P} we combine these cases, in order to provide the proof of Proposition \ref{prop:parallel}.

\subsubsection{The case (P1)}
\label{sec:P1}

We begin by explaining the covering in the case (P1). 

\begin{lem}
\label{prop:p11}
Let $D$ be a $\delta_j$-good triangle oriented along the $x_1$-axis. Let $R$ be a rectangle of aspect ratio $1:\delta_{j}/2$ such that its long axis is rotated by an angle of $\beta \in \delta_j[-10,10]$ with respect to the $x_1$-axis. Then there exists a covering by 
\begin{itemize}
\item[(i)] $K_j$, with $K_j\in [1,100]$, $\delta_j$-good, up to null-sets disjoint triangles, $D_l$, $l\in\{1,\dots,K_j\}$, which are either oriented along the $x_1$-axis or the long side of $R$,
\item[(ii)] a rescaled and translated copy $\tilde{R}$ of $R$, such that
$|\tilde{R}| \geq 10^{-6}|D|$.
\end{itemize}
Moreover,
\begin{align*}
\Per(\tilde{R}) + \sum\limits_{l=1}^{K_j} \Per(D_l) \leq C\Per(D),
\end{align*}
where $C>1$ is a universal constant (in particular independent of $\delta_j$ and $\delta_0$).
\end{lem}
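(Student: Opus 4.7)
The plan is to combine the two building blocks developed in Section \ref{sec:build}: Lemma \ref{lem:cov_P1} supplies an axis-parallel rectangle that absorbs the slight rotation $\beta$ of $R$, and Proposition \ref{prop:cov_P1} then fits this axis-parallel rectangle into the $\delta_j$-good triangle $D$. Since $D$ is oriented along $e_1$, and the relevant directions in which $\delta_j$-good remainder triangles will be aligned are $e_1$ and the long side of $R$, both auxiliary results apply with $\delta=\delta_j$.

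More precisely, I would first apply Lemma \ref{lem:cov_P1} with $r_0=\delta_j/2\in\delta_j[\tfrac{1}{10},10]$ and rotation angle $\beta\in\delta_j[-10,10]\subset\delta_j[-1000,1000]$. This produces an axis-parallel rectangle $\tilde R_2$ of aspect ratio $1:r$ with $r\in\delta_j[\tfrac{1}{10},1000]$, together with a rescaled copy $\hat R$ of $R$ inside $\tilde R_2$ satisfying $|\hat R|\geq 10^{-6}|\tilde R_2|$, and whose complement in $\tilde R_2$ is decomposed into at most $100$ $\delta_j$-good triangles aligned either with $e_1$ or with the long side of $R$. Since $\tilde R_2$ is axis-parallel with aspect ratio in the range required by Proposition \ref{prop:cov_P1} (applied with reference direction $n=e_1$), I next use that proposition to fit a rescaled and translated copy of $\tilde R_2$ into $D$, occupying at least $10^{-6}|D|$ of the volume of $D$, and with complement covered by at most $100$ $\delta_j$-good triangles aligned with $e_1$. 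Tracking through the two rescalings produces the desired rescaled copy $\tilde R$ of $R$ inside $D$, with $|\tilde R|\geq 10^{-12}|D|$ (which is the bound required by the lemma up to the numerical constant, since what matters downstream is only that the ratio is independent of $\delta_j$ and $\delta_0$).

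For the perimeter bound, I would argue as follows. Since $D$ is $\delta_j$-good, its perimeter is comparable to the length $\ell$ of its long side. The rescaled $\tilde R_2$ fits into $D$ with three corners on $\partial D$ (by Proposition \ref{prop:cov_P1} (ii)), so its long side is bounded by $\ell$ and hence $\Per(\tilde R_2)\leq C\ell$. The same is true for each of the (at most $100$) $\delta_j$-good remainder triangles from the application of Proposition \ref{prop:cov_P1}. Inside $\tilde R_2$, the remainder triangles from Lemma \ref{lem:cov_P1} are $\delta_j$-good with long side bounded by the long side of $\tilde R_2$, hence again by $C\ell$. Summing over the at most $200$ triangles and $\tilde R$ itself yields the claimed bound $\Per(\tilde R)+\sum_l\Per(D_l)\leq C\Per(D)$ with a universal constant $C$ independent of $\delta_j$ and $\delta_0$. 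In particular, this is exactly the qualitative feature that distinguishes the parallel case from the rotated case: no factor of $\delta_j^{-1}$ appears.

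The main thing to be careful about is the directional bookkeeping, which is really what the two auxiliary results were designed to handle. If one were to naively slice $D$ by a rectangle rotated by $\beta$, one would obtain sliver triangles with opening angle of order $\beta\sim\delta_j$ in the wrong orientation (compare Figure \ref{fig:bad_triangle}), which would fall outside the class (P1)-(R2) and destroy the perimeter estimate. The two-step construction via the axis-parallel intermediate rectangle $\tilde R_2$ is precisely what ensures that every resulting triangle is aligned with either $e_1$ (the direction of $D$) or with the long side of $R$ (the direction of the relevant Conti construction), and is $\delta_j$-good with the correct opening angles.
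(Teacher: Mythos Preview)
Your proposal is correct and follows essentially the same route as the paper: first apply Lemma~\ref{lem:cov_P1} with $\delta=\delta_j$ to absorb the rotation $\beta$ into an axis-parallel rectangle $\tilde R_2$, then apply Proposition~\ref{prop:cov_P1} to fit $\tilde R_2$ into $D$, and finally collect the $\delta_j$-good remainder triangles (aligned with $e_1$ or with the long side of $R$) and count perimeters. Your observation that the composition of the two $10^{-6}$ volume bounds gives $10^{-12}$ rather than the $10^{-6}$ stated in the lemma is accurate; the paper's own proof has the same looseness in the constants, and as you note only the independence from $\delta_j,\delta_0$ matters downstream.
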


\begin{proof}
We first invoke Lemma \ref{lem:cov_P1} with $R$ and $\delta= \delta_j$. This yields an axis-parallel box $\tilde{R}_2$ of side ratio $r\in [\frac{1}{10},10]\delta_j$. This box $\tilde{R}_2$ is admissible in Proposition \ref{prop:cov_P1}. An application of this proposition with $D$, $\tilde{R}_2$ and $\delta=\delta_j$ hence yields a covering of $D$ by $\delta_j$-good triangles, which all have $e_1$ as their direction, and a box $R_2$, which is covered as described in Lemma \ref{lem:cov_P1}. We note that the triangles within $R_2$ are thus also $\delta_j$-good and have as their directions either $e_1$ or the long side of $R$. The estimate on the perimeter follows, since all the covering triangles have perimeter controlled by $\Per(D)$ and as $K_j \leq 100$. The estimate on the volume fraction is a consequence of Proposition \ref{prop:cov_P1} (i) and Lemma \ref{lem:cov_P1} (i).
\end{proof}

\subsubsection{The case (P2)}
\label{sec:P2}

As in the case (P1) we have the following main covering result:

\begin{lem}
\label{prop:cov_P2}
  Let $D$ be a $\delta_{j-1}$-good, axis-parallel triangle. Let $R$ be
  a rectangle of aspect ratio $1: \delta_{0}$, whose long side is rotated with respect to the axis by an angle $\beta \in \delta_0[-1000,1000]$. Then there exists a covering of $D$ into 
\begin{itemize} 
\item[(i)] $M_j$, with $M_j \in [1,100]$, $\delta_{j-1}$-good triangles $D_l$, $l\in\{1,\dots,M_j\}$, which are parallel to the $x_1$-axis,
\item[(ii)] $K_j:= \frac{\delta_{0}}{\delta_{j-1}}$ translated, disjoint and rescaled copies $\tilde{R}_k$ of $R$ with the property that
$$|\bigcup\limits_{k=1}^{K_j}\tilde{R}_k| \geq 10^{-6}|D|,$$
\item[(iii)] $\tilde{M}_j$, with $\tilde{M}_j \in [1,100]$, $\delta_{0}$-good triangles $\tilde{D}_l$, $l\in\{1,\dots,\tilde{M}_j\}$, which are either parallel to the $x_1$-axis or parallel to the long side of $\tilde{R}$.
\end{itemize} 
Moreover,
\begin{align*}
\sum\limits_{k=1}^{K_j}\Per(\tilde{R}_{k}) + \sum\limits_{l=1}^{M_j} \Per(D_l) 
+ \sum\limits_{l=1}^{\tilde{M}_j} \Per(\tilde{D}_l) \leq C\Per(D),
\end{align*}
where $C>1$ is a universal constant (in particular independent of $\delta_j$ and $\delta_0$).
\end{lem}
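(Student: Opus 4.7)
The strategy is to first fit an axis-parallel inscribed rectangle $R_2\subset D$ of aspect ratio matching that of $D$ via Proposition~\ref{prop:cov_P1}, and then to tile $R_2$ with a stack of $K_j = \delta_0/\delta_{j-1}$ rescaled copies of the Conti rectangle using Lemma~\ref{lem:cov_P1}. The essential mismatch to overcome is that $R$ is ``fatter'' than $D$ (aspect ratio $\delta_0 \ge \delta_{j-1}$), so no single rescaled copy of $R$ can occupy a positive fraction of $D$; a horizontal stack of $K_j$ copies does.

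First I would invoke Proposition~\ref{prop:cov_P1} with $\delta=\delta_{j-1}$, choosing the ambient rectangle to have aspect ratio of order $\delta_{j-1}$. This produces an axis-parallel rectangle $R_2\subset D$ with $|R_2|\geq 10^{-6}|D|$, together with a decomposition of $D\setminus R_2$ into at most $100$ axis-aligned $\delta_{j-1}$-good triangles; these are the triangles in (i). Writing the dimensions of $R_2$ as $\ell \times \ell\delta_{j-1}$, I would subdivide $R_2$ by cuts orthogonal to its long side into $K_j$ congruent sub-rectangles $S_1,\dotsc,S_{K_j}$, each of dimensions $(\ell\delta_{j-1}/\delta_0)\times \ell\delta_{j-1}$, and hence of aspect ratio exactly $\delta_0$, matching that of $R$.

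On each $S_k$ I would then apply Lemma~\ref{lem:cov_P1} (using the freedom in the parameter $r_2$ from that lemma to match the aspect ratio of $S_k$, followed by isotropic rescaling to match its size), inscribing a rotated rescaled copy $\tilde{R}_k$ of $R$ with $|\tilde{R}_k|\geq 10^{-6}|S_k|$ and decomposing $S_k\setminus \tilde{R}_k$ into at most $100$ $\delta_0$-good triangles aligned either with the $x_1$-axis or with the long side of $\tilde{R}_k$. Summing over $k$, the total covered volume is at least $10^{-6}|R_2|\geq 10^{-12}|D|$, matching claim (ii) after harmless readjustment of the constant. Claim (iii) collects the resulting $\delta_0$-good triangles; although a naive count yields $100 K_j$ such pieces, adjacent $S_k$'s share an internal vertical boundary along which neighboring $\delta_0$-good triangles can be merged, reducing the count to the claimed $\tilde M_j\in[1,100]$ (or, if one prefers, the bound $\tilde M_j\le 100K_j$ is enough, since the relevant quantity for the remainder of the paper is not the count but the total perimeter).

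The main obstacle is precisely the perimeter estimate, which would break if naive subdivision inflated the surface energy by a factor of $K_j$. The saving fact is a telescoping identity: each $S_k$ has perimeter of order $\ell\delta_{j-1}/\delta_0$, so $\sum_k \Per(S_k) = K_j\cdot O(\ell\delta_{j-1}/\delta_0)= O(\ell)$, which is comparable to $\Per(R_2)$ and hence to $\Per(D)$. Because Lemma~\ref{lem:cov_P1} and Proposition~\ref{prop:cov_P1} each control the total perimeter of their triangulations by the perimeter of the ambient rectangle or triangle (with a universal constant), combining these bounds yields $\sum_k \Per(\tilde R_k)+\sum_l\Per(D_l)+\sum_l\Per(\tilde D_l) \leq C\Per(D)$ with $C$ independent of $\delta_0$ and $\delta_{j-1}$. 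This is the crucial structural point: every triangle produced in the construction remains aligned either with the $x_1$-axis or with the long side of $R$, so no rotated subdivision appears and no factor of $\delta_0^{-1}$ is incurred --- exactly as in the heuristic of Figure~\ref{fig:rectangle_coverings}(a) rather than (b).
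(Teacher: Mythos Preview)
Your approach is essentially the same as the paper's, just with the two steps performed in the reverse order: the paper first applies Lemma~\ref{lem:cov_P1} to build a single axis-parallel box $\tilde{R}_2\supset\tilde{R}$ of ratio $\sim\delta_0$, stacks $K_j$ translated copies side by side to obtain a box $\bar{R}_2$ of ratio $\sim\delta_{j-1}$, and then invokes Proposition~\ref{prop:cov_P1} to fit $\bar{R}_2$ into $D$; you instead fit $R_2\subset D$ first via Proposition~\ref{prop:cov_P1}, subdivide it into $K_j$ congruent sub-rectangles $S_k$ of ratio $\delta_0$, and then apply Lemma~\ref{lem:cov_P1} inside each $S_k$. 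The resulting geometric configuration and the perimeter telescoping $\sum_k\Per(S_k)\sim\Per(R_2)\sim\Per(D)$ are identical, and your observation that the literal bound $\tilde{M}_j\le 100$ in (iii) should really read $\tilde{M}_j\le 100K_j$ (with only the total perimeter mattering downstream) is well taken and applies equally to the paper's construction.
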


\begin{proof}
We apply Lemma \ref{lem:cov_P1} with $\delta=\delta_0$ and the box $R$. This yields a box $\tilde{R}_2$ of ratio approximately $\delta_0$ and a box $\tilde{R}\subset \tilde{R}_2$, which is a translated and rescaled copy of $R$ of volume comparable to the volume of $\tilde{R}_2$. Stacking $K_j:= \frac{\delta_0}{\delta_{j-1}}$ translated copies of the boxes $\tilde{R}_2$ along the $x_1$-axis next to each other and denoting the individual boxes by $\tilde{R}_{2,k}$ (each containing a translated copy $\tilde{R}_k$ of $\tilde{R}$), yields as their union a new box $\bar{R}_2$ of aspect ratio $1:\delta_{j-1}$ (c.f. Figure \ref{fig:teilen}). With respect to this rectangle $\bar{R}_2$ and with $\delta=\delta_{j-1}$ we now apply Proposition \ref{prop:cov_P1}, which yields a rectangle $R_2$ of the same aspect ratio as that of $\bar{R}_2$. As the volume of each $\tilde{R}_k$ is comparable to the volume of $\tilde{R}_{2,k}$, the claim (ii) of Lemma \ref{prop:cov_P2} follows from Proposition \ref{prop:cov_P1}, since this 
ensures that $R_2$ has volume comparable to $D$.\\
It remains to bound the perimeters. Here we only estimate the sum of the perimeters of the rectangles $\tilde{R}_{2,k}$, as the remaining parts of the covering are controlled by a multiple of this. We note that
\begin{itemize}
\item each rectangle $\tilde{R}_{2,k}$ has perimeter bounded by
\begin{align*}
\Per(\tilde{R}_{2,k})\leq C \frac{\delta_{j-1}}{\delta_0}\Per(D),
\end{align*}
\item there are $\frac{\delta_0}{\delta_{j-1}}$-many axis parallel boxes $\tilde{R}_{2,k}$.
\end{itemize}
Hence,
\begin{align*}
\sum\limits_{k=1}^{K_j}  \Per(\tilde{R}_{2,k})\leq 
 C \frac{\delta_{j-1}}{\delta_0} \frac{\delta_0}{\delta_{j-1}} \Per(D) \leq C \Per(D).
\end{align*}
This concludes the proof.
\end{proof}

The main difference of Lemma \ref{prop:cov_P2} with respect to Proposition \ref{prop:cov_P1}
is the step, in which we bridge the mismatch in the ratios of the triangle $D$ (ratio $\delta_{j-1}$) and the given box $R$ (ratio $\delta_0$). Here we pass from a box of ratio approximately $\delta_0$ (which is prescribed for $R$ and hence for $\tilde{R}_2$) to a box with ratio approximately $\delta_j$ (for $\bar{R}_2$) by stacking translates of the boxes $\tilde{R}_{2,k}$ next to each other.

\begin{figure}[t]
\centering
\includegraphics[scale=0.6, page=49]{figures.pdf}
\caption{The stacking construction of the boxes. Each of the smaller boxes is a suitably translated copy of the rectangle $\tilde{R}_2$, which is roughly of aspect ratio $1:\delta_0$. In each of these we insert a rescaled and translated version of the construction from Lemma \ref{lem:cov_P1}, c.f. Figure \ref{fig:heuristic_box}.}
\label{fig:teilen}
\end{figure}

\subsubsection{Proof of Proposition \ref{prop:parallel}}
\label{sec:P}
Using the results from Sections \ref{sec:P1}, \ref{sec:P2} we can now address the proof of Proposition \ref{prop:parallel}.

\begin{proof}
The first property of the Proposition follows from Lemma \ref{lem:cov_Cont} in combination with Lemma \ref{prop:p11} (ii)  (in the case (P1)) or Lemma \ref{prop:cov_P2} (ii) (in the case (P2)). In particular, by Lemma \ref{lem:cov_Cont} all the triangles, which are used to cover the Conti constructions, are $\delta_{j+1}$-good with respect to the relevant Conti construction.
The second property is a consequence of Lemma \ref{prop:p11} (i) combined with Lemma \ref{lem:cov_P1} (in the case (P1)) or Lemma \ref{prop:cov_P2} (i), (iii) (in the case (P2)). We emphasize that all these triangles are either parallel to the original triangle $D$ or to the relevant Conti construction, implying that both the angles and the orientations are within the admissible margins.
Finally, the bound on the perimeters follows from the corresponding claims in Lemmata \ref{prop:p11} and \ref{prop:cov_P2}.
\end{proof}

\subsection{Covering in the cases (R1)-(R3)}
\label{sec:rota}

In this section we deal with the covering in the cases (R1)-(R3). As in Section \ref{sec:parallel} we seek to simultaneously control the perimeter of the resulting covering and the volume of the domain, which is covered by Conti constructions. Motivated by the discussion from the beginning of Section \ref{sec:covering}, we however expect that it is unavoidable to produce estimates, in which the ratio $\delta_0$ appears.\\

With this expectation, we are less careful in our covering constructions and for instance do not seek to preserve the direction $n$, in which the corresponding $\delta_j$-good triangles are oriented. Yet, we still heavily rely on Proposition \ref{prop:cov_P1} and only modify the construction within the block $R_2$. This will give rise to certain new ``error triangles", which are of the type (R3).
In analogy to Proposition \ref{prop:parallel} we have:

\begin{prop}
\label{prop:rot}
Let $D_j$ be as in (R1)-(R3) with $j\geq 1$. 
Then there exists a covering and a constant $C>0$ independent of $\delta_0$ such that:
  \begin{enumerate}
  \item A volume fraction of $10^{-6}|D_j|$ is covered by finitely many
   rescaled and translated Conti constructions. The Conti-constructions can
   again be covered by finitely many 
   triangles of the types occurring in the cases (P1)-(P2) and
   (R1)-(R3), where $j$ is replaced by $j+1$.
  \item The complement of the Conti constructions is covered by finitely many 
  	triangles occurring in the cases (P1),(P2) and (R1)-(R3), where $j$ is replaced by $j+1$.
   \item The overall surface energy of the new triangles 
   $D_{j+1,l}\in \mathcal{D}_1(D_j)$, is controlled by
   \begin{align*}
   \sum\limits_{D_{j+1,l}\in \mathcal{D}_1(D_j)}\Per(D_{j+1,l}) 
   \leq C \delta_0^{-1} \Per(D_j).
   \end{align*}
  \end{enumerate}
\end{prop}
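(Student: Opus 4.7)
The approach parallels Proposition \ref{prop:parallel}, except that the direction $n_C$ of the relevant Conti construction at step $j+1$ is now substantially rotated with respect to the intrinsic direction $n_D$ of $D_j$: by Lemma \ref{lem:angles} the angle between them lies in $[C\delta_0,\pi/2-C\delta_0]$. Hence the new Conti boxes can no longer be placed parallel to $D_j$, and the plan is to tile with many rescaled, rotated copies. The factor $\delta_0^{-1}$ in the perimeter bound reflects exactly the heuristic of Figure \ref{fig:rectangle_coverings}(b): covering an elongated shape by rectangles whose long axes are substantially rotated inevitably produces many thin strips. I would treat (R1), (R2), (R3) separately, since the intermediate aspect ratios differ, but the method is the same in all cases.

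In case (R1) I would first apply Proposition \ref{prop:cov_P1} with $\delta=\delta_0$ to extract an axis-parallel rectangle $R_2 \subset D_j$ of aspect ratio comparable to $\delta_0$ aligned with $n_D$, covering a fixed fraction of $|D_j|$; the complement decomposes into at most $100$ $\delta_0$-good triangles aligned with $n_D$. These exterior triangles are automatically of type (R1) at step $j+1$, since their direction $n_D$ is rotated from the new Conti direction $n_C$ by the required substantial angle. Inside $R_2$ I would tile with rescaled, rotated copies of the Conti rectangle of aspect ratio $\delta_0$ oriented along $n_C$. Because $R_2$ is itself a thin strip, a direct geometric computation shows that such rotated boxes must be scaled to size comparable to $\delta_0 L_{R_2}$, so that roughly $\delta_0^{-2}$ boxes are needed to cover a fixed fraction of $R_2$. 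The residual region inside $R_2$ is then subdivided into $\delta_0$-good right-angled triangles with one side parallel to $n_C$; these fall into case (R3) at step $j+1$. Lemma \ref{lem:cov_Cont} guarantees that each inserted Conti rectangle itself splits into types (P1), (P2), (R1), (R2) at step $j+1$, and a straightforward perimeter count yields the bound $C\delta_0^{-1}\Per(D_j)$.

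In case (R2) the strategy is identical, except that Proposition \ref{prop:cov_P1} is applied with $\delta=\delta_{j-1}$, producing a thinner intermediate rectangle $R_2$ of aspect ratio $\sim \delta_{j-1}$ aligned with $n_D$, whose exterior triangles are $\delta_{j-1}$-good and hence of type (R2) at step $j+1$. Inside $R_2$ I would again tile with rescaled rotated Conti boxes, now at the finer scale $\sim \delta_{j-1}L_{R_2}/\delta_0$; the counting gives a total perimeter $\lesssim \Per(R_2) \lesssim \Per(D_j)$, which is within the claimed bound. Case (R3) is the easiest: $D_j$ is already a ``fat'' right-angled triangle with one side parallel to $n_C$, so I would tile it directly with $\sim \delta_0^{-1}$ thin Conti strips oriented along $n_C$, stacked perpendicular to $n_C$; the residual triangles between strips are $\delta_0$-good and either right-angled with a side along $n_C$ (type (R3)) or aligned with $n_C$ (type (P1)) at step $j+1$.

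The hard part will be the geometric bookkeeping: verifying that every residual triangle produced both by Proposition \ref{prop:cov_P1} and by the rotated tiling of $R_2$ lies in one of the admissible classes at step $j+1$ with the correct direction and the correct $\delta$-good parameter, and that the interior decomposition of each Conti rectangle via Lemma \ref{lem:cov_Cont} is consistent with the new Conti direction $n_C$. A secondary point is to perform the perimeter counts tightly enough so that the factor $\delta_0^{-1}$ arises only from the geometric necessity of rotated small boxes, rather than accumulating further losses from the various auxiliary decompositions.
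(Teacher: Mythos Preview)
Your overall plan is the paper's plan: treat (R1), (R2), (R3) separately, use Proposition~\ref{prop:cov_P1} to extract an intermediate rectangle $R_2$ aligned with $n_D$, and then tile $R_2$ with many rotated Conti boxes, producing (R3) error triangles in the gaps. For (R1) this matches Lemma~\ref{lem:cov_R1} essentially verbatim, and for (R2) your scheme is Lemma~\ref{lem:cov_R2}. One correction in (R2): your claimed bound $\sum\Per(R_{2,k})\lesssim \Per(R_2)\lesssim \Per(D_j)$ is too optimistic. Each rotated Conti box fitting inside the $\delta_{j-1}$-thin strip $R_2$ has long side $\lesssim \delta_{j-1}\Per(D_j)$ (not $\delta_{j-1}\Per(D_j)/\delta_0$), and you need $\sim(\delta_0\delta_{j-1})^{-1}$ of them to cover a fixed volume fraction; the product is $\delta_0^{-1}\Per(D_j)$, exactly the stated bound and not better. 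The $\delta_0^{-1}$ is genuinely incurred here, not just in (R1).

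Your treatment of (R3) has a real gap. The triangles of type (R3) are \emph{not} ``fat'': by Definition~\ref{defi:classes} the non-right angles range over $[C\delta_0,\tfrac{\pi}{2}-C\delta_0]$, so the aspect ratio $m$ of the two legs can be anywhere in $[C\delta_0, (C\delta_0)^{-1}]$. A naive stacking of $\sim\delta_0^{-1}$ uniform Conti strips perpendicular to $n_C$ does not cover a fixed volume fraction of a tapering triangle, and once $m$ is large (say $m\sim\delta_0^{-1}$) the number of strips needed and their perimeters both depend on $m$ in a way your sketch does not track. The paper handles this differently: after a preliminary reduction so that the Conti direction is parallel to one of the \emph{short} legs, it runs a greedy self-similar iteration (Lemma~\ref{lem:cov_R3a}): fit the largest admissible rectangle touching the long leg, which splits $D$ into that rectangle and two triangles similar to $D$ with similarity ratios $\tfrac{\delta_0 m}{1+\delta_0 m}$ and $\tfrac{1}{1+\delta_0 m}$; discard the small one and iterate in the large one for $L\sim \log 2/\log(1+\delta_0 m)$ steps. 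Summing the resulting geometric series gives both the volume lower bound and the perimeter bound $C\delta_0^{-1}\Per(D)$ uniformly in $m$. You should replace your stacking picture in (R3) by this greedy argument, or at least by something that explicitly handles the full range of aspect ratios.
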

 
As in Proposition \ref{prop:parallel} the proof of this statement is based on separate discussions of the cases (R1), (R2), (R3) and can be deduced by combining the results of Lemmas \ref{lem:cov_R1}, \ref{lem:cov_R2}, \ref{lem:cov_R3}, \ref{lem:cov_Cont} and Proposition \ref{prop:cov_P1}.
Since this does not involve new ingredients, we restrict our attention to the discussion
of the cases (R1)-(R3) and omit the details of the proof of Proposition \ref{prop:parallel}. The analysis of the cases (R1)-(R3) is the content of the following subsections.

\subsubsection{The case (R1)}
The covering result for the case (R1) is very similar to the one from the case (P1). It only deviates from this by the construction within the rectangle $R_2$: 

\begin{lem}
\label{lem:cov_R1} 
Let $\beta \in (C \delta_0, \frac{\pi}{2}-C\delta_0)$.
Assume that $D$ is a $\delta_0$-good triangle oriented parallel to the $x_1$-axis.
Let $\bar{R}$ be a rectangle of aspect ratio $1:\delta_0$, which encloses an angle $\beta$ with respect to the orientation of $D$. Then $D$ can be covered by the union of
\begin{itemize}
\item[(i)] $M_j$, with $M_j \in [1,100]$, $\delta_0$-good triangles, $D_{1,k}$, which are aligned with the direction of $D$,
\item[(ii)] $0<K_j \leq C \delta_0^{-2}$ many translated, up to null-sets
  disjoint and rescaled copies $R_{2,k}$ of the rectangle $\bar{R}$, whose
    union covers a volume of size at least $10^{-6}|D|$,
\item[(iii)] $0\leq L_j \leq C \delta_0^{-2}$ many triangles $D_{2,k}$, which are of the type (R3).
\end{itemize}
Here $C>1$ is a universal constant. 
The overall perimeter of the resulting triangles and rectangles is controlled by
\begin{align}
\label{eq:Per}
 \sum\limits_{k=1}^{M_j}\Per(D_{1,k})+\sum\limits_{k=1}^{K_j}\Per(R_{2,k}) + \sum\limits_{k=1}^{L_j}\Per(D_{2,k}) \leq C \delta_0^{-1} \Per(D).
\end{align}
\end{lem}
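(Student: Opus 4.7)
The overall strategy mirrors Lemma \ref{prop:p11}, with one crucial modification: inside the axis-parallel rectangle $R_{2}$ produced by Proposition \ref{prop:cov_P1} we no longer attempt to preserve directional alignment with $\bar{R}$, since $\beta$ is now of order one rather than of order $\delta_{0}$. Instead we perform a brute-force tiling by rotated copies of $\bar{R}$, accepting the loss of a factor $\delta_{0}^{-1}$ in the perimeter.

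\emph{Step 1 (reduction to an axis-parallel rectangle).} I would first invoke Proposition \ref{prop:cov_P1} with $\delta=\delta_{0}$ and an aspect ratio $r\in[\delta_{0}/10,\,10\delta_{0}]$ (chosen to be compatible with the tiling in Step 2). This produces an axis-parallel rectangle $R_{2}\subset D$ of aspect ratio $1:r$ with $|R_{2}|\geq 10^{-6}|D|$, three of whose corners lie on $\partial D$. The complement $D\setminus R_{2}$ decomposes into at most $100$ $\delta_{0}$-good triangles aligned with the direction of $D$; these provide the triangles $D_{1,k}$ of part (i), and their perimeter sum is bounded by $C\Per(D)$, with $C$ universal.

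\emph{Step 2 (rotated tiling of $R_{2}$).} Rescale $\bar{R}$ so that its long side has length $\ell$ comparable to $\delta_{0}$; since $\beta\in(C\delta_{0},\pi/2-C\delta_{0})$, the bounding box of the rotated rectangle has both side lengths comparable to $\ell\,\max(\sin\beta,\cos\beta)\sim \delta_{0}$, so finitely many translates fit across the short side of $R_{2}$. Tile a subregion of $R_{2}$ by a regular lattice of up to $K_{j}\leq C\delta_{0}^{-2}$ pairwise disjoint, translated rescaled copies $R_{2,k}$ of $\bar{R}$, where the lattice is generated by the two edge directions of the rotated rectangles. Choosing the lattice with spacing comparable to the respective side lengths of the tiles yields a fixed volume density (independent of $\delta_{0}$), so that $\sum_{k}|R_{2,k}|\geq 10^{-6}|R_{2}|\geq 10^{-12}|D|$, giving (ii) after adjusting the universal constant.

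\emph{Step 3 (error triangles and perimeter estimate).} The region $R_{2}\setminus\bigcup_{k}R_{2,k}$ consists of $O(\delta_{0}^{-1})$ interior strips along the edges of $R_{2}$, each of width comparable to $\ell\sim\delta_{0}$, together with $O(\delta_{0}^{-2})$ corner pieces at the intersection of these strips with the lattice. Bisecting each such strip or corner piece along a diagonal yields right-angled triangles whose acute angle equals $\beta$ or $\pi/2-\beta$ (up to swapping), hence lies in $[C\delta_{0},\pi/2-C\delta_{0}]$ with one leg parallel to the $x_{1}$-axis; these are exactly the triangles of type (R3), and there are at most $L_{j}\leq C\delta_{0}^{-2}$ of them. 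For the perimeter sum, each $R_{2,k}$ and each $D_{2,k}$ has perimeter of order $\delta_{0}$, so
\[
\sum_{k=1}^{K_{j}}\Per(R_{2,k})+\sum_{k=1}^{L_{j}}\Per(D_{2,k})\leq C\delta_{0}^{-2}\cdot\delta_{0}=C\delta_{0}^{-1}.
\]
Combined with $\Per(D)\sim 1$ and the bound on $\sum \Per(D_{1,k})$ from Step 1, this yields (\ref{eq:Per}).

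\emph{Main obstacle.} The delicate point is Step 3: one must choose the orientation of the tiling lattice and the precise positions of the corner/boundary subdivisions so that every residual piece can be split into triangles satisfying (R3)(a)--(b), in particular that one leg is parallel to the axis direction of $D$ (equivalently, parallel to the relevant Conti direction up to the rotation bookkeeping). The angle bound on $\beta$ from Lemma \ref{lem:angles} is precisely what is needed to keep the acute angles of these remainder triangles away from $0$ and $\pi/2$.
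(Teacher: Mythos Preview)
Your overall strategy---reduce to an axis-parallel box via Proposition~\ref{prop:cov_P1} and then pack it with rotated copies of $\bar{R}$, accepting a $\delta_0^{-1}$ loss---is the right one, and the perimeter count in Step~3 is correct. But there are two points where your write-up diverges from what the paper actually does, and one of them touches a genuine issue.

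\textbf{Order and tiling.} The paper reverses your Steps~1 and~2: it \emph{first} places $K_j\in[c_1,c_2]\delta_0^{-2}$ translated copies of $\bar{R}$ in a single one-dimensional row with all lowest corners on the $x_1$-axis (so consecutive copies share a long side, forming a staircase), then takes the axis-parallel envelope $\tilde{R}_2$ of this row, adjusts $K_j$ so that $\tilde{R}_2$ has an admissible aspect ratio, and only then applies Proposition~\ref{prop:cov_P1} to fit a rescaled copy $R_2$ into $D$. This one-dimensional stacking is simpler than your two-dimensional lattice: since $R_2$ has height comparable to $\delta_0$ and each rotated rectangle already has vertical extent $\ell\sin\beta\sim\delta_0$, there is essentially no room for a second lattice direction, so your construction effectively collapses to the paper's anyway. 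More importantly, the 1D stacking makes the error pieces transparent: each gap between the staircase and the horizontal sides of the envelope is a single right triangle with vertices at two consecutive lowest corners and the adjacent upper corner $D_{k+1}$, with angles $\beta$, $\tfrac{\pi}{2}-\beta$, $\tfrac{\pi}{2}$.

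\textbf{The (R3)(b) condition.} Here you have a real slip. Condition~(R3)(b) demands that one side of the error triangle be parallel to the \emph{orientation of the relevant Conti construction}, which in case~(R1) is the long side of $\bar{R}$, i.e.\ the direction at angle $\beta$---\emph{not} the $x_1$-axis. Your statement that the error triangles have ``one leg parallel to the $x_1$-axis'' and that this is ``equivalently, parallel to the relevant Conti direction up to the rotation bookkeeping'' is incorrect: these directions differ by exactly $\beta$, which is of order one. In the paper's construction the gap triangles automatically have one side along a long edge of some $R_{2,k}$ (the shared side of the staircase), which \emph{is} the Conti direction, so (R3)(b) holds. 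In your 2D-lattice picture you would likewise have to single out the sides along the rotated edges, not the axis-parallel ones, when verifying (R3). This matters downstream: Lemma~\ref{lem:cov_R3} explicitly uses that the next Conti rectangle is parallel to a side of the (R3) triangle.
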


\begin{figure}[t]
\centering
\includegraphics[width=0.8 \textwidth, page=50]{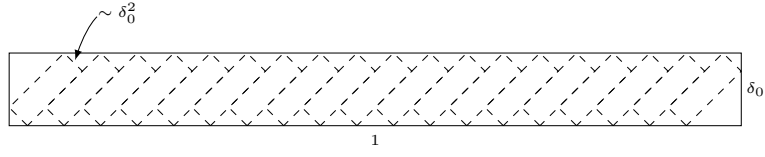}
\caption{The covering of the box $R_2$ of ratio $1:\delta_0$. The dashed rectangles correspond to the $K_j$ stacked (rescaled and translated) copies of $\bar{R}$, which we denote by $R_{2,k}$. Their envelope is a (rescaled) copy of $\bar{R}_2$, which we denote by $R_2$. It is the rectangle, which is returned as the output of Proposition \ref{prop:cov_P1}. The parts of $R_2$, which are not covered by the rectangles $R_{2,k}$, consist of the triangles $D_{2,k}$.}
\label{fig:teilen_R}
\end{figure}

\begin{rmk}
By Lemma \ref{lem:angles} the angles, which occur in our constructions, always satisfy the bound $\beta \in (C \delta_0, \frac{\pi}{2}-C\delta_0)$.
\end{rmk}

\begin{proof}
Let $c_1\in(1/4,1)$, $c_2 \in(1,4)$.
We begin by stacking $K_j \in [c_1, c_2]\delta_0^{-2}\cap \N$ many rectangles
$\tilde{R}_{2,k}$, which are translated copies of $\bar{R}$, next to each other
in such a way that their lowest corners lie on the $x_1$-axis (c.f. Figure
\ref{fig:teilen_R}). Let $\tilde{R}_2$ denote the enveloping axis-parallel
rectangle. 
By adapting the constants $c_1,c_2$ we can arrange that $\tilde{R}_2$
has an aspect ratio $r$ allowing for an application of Proposition \ref{prop:cov_P1} with $\delta = \delta_0$, $\tilde{R}_2$ and $r$. This yields a rectangle $R_2$ of aspect ratio $r$. Thus, the set $D\setminus R_2$ consists of the triangles described in (i). Moreover $R_2$ is covered as in Figure \ref{fig:teilen_R} by $K_j$-many rectangles $R_{2,k}$ with  aspect ratio $\delta_0$
and by a comparable number of ``error" triangles $D_{2,k}$. By definition, the constant $K_j$ satisfies the bounds in (ii). Using elementary geometry, we calculate that 
\begin{align}
\label{eq:vol}
| \bigcup\limits_{k=1}^{K_j}R_{2,k}|
 \geq  \frac{1}{10}|R_2|.
\end{align}
This implies the claim of (ii). \\
We note that the error triangles $D_{2,k}$ are all right angle triangles. Moreover, one of
the other angles coincides with the rotation angle $\beta$. At least one of triangles' sides is parallel to the orientation of the rectangles $R_{2,k}$. Thus the triangles $D_{2,k}$ are of the type (R3).\\
The  bound on the perimeters of the rectangles $R_{2,k}$ and of the triangles $D_{2,k}$ results from the following observations:
\begin{itemize}
\item The number $K_j$ of rectangles $R_{2,k}$ and the number $L_j$ of triangles $D_{2,k}$ are bounded by $C \delta_0^{-2}$.
\item The perimeter of each of the rectangles and each of the triangles is controlled: $\Per(R_{2,k}) + \Per(D_{2,k}) \leq C \delta_0 \Per(D)$.
\item There are at most 100 triangles $D_{1,k}$, each of which has a perimeter controlled by $\Per(D)$.
\end{itemize}
Thus,
\begin{align*}
\sum\limits_{k=1}^{M_j}\Per(D_{1,k}) +
\sum\limits_{k=1}^{K_j}\Per(R_{2,k}) + \sum\limits_{k=1}^{L_j}\Per(D_{2,k}) 
&\leq C \delta_0\Per(D) \delta_0^{-2}\\
& \leq C \delta_0^{-1} \Per(D).
\end{align*}
This concludes the proof.
\end{proof}

\subsubsection{The case (R2)}

The case (R2) is the rotated analogue of the case (P2). As we are in a
  rotated case, we have to be less careful about preserving orientations, and
  proceed similarly as in the case (R1).
Again the main issue is the covering of the rectangle $R_2$. However, in contrast to the case (R1) we now have to deal with a mismatch between the ratio of the triangle $D$ (with ratio $\delta_{j} \neq \delta_0$) and the ratio of the Conti construction (with ratio $\delta_0$). Similarly as in the case (P2) we overcome this issue by a ``stacking construction", which compensates the mismatch.

\begin{lem}
\label{lem:cov_R2} 
Let $\beta \in (C \delta_0, \frac{\pi}{2}-C \delta_0)$.
Assume that $D$ is a $\delta_{j}$-good triangle with direction parallel to the $x_1$-axis. Let $\bar{R}$ be a rectangle of side ratio $\delta_0$, which encloses an angle $\beta$ with respect to the long side of $D$. Then $D$ can be covered by the union of
\begin{itemize}
\item[(i)] $M_j$, with $M_j \in [1,100]$, $\delta_0$-good triangles, $D_{1,k}$, which are aligned with the direction of $D$,
\item[(ii)] $0<K_j\leq C \delta_0^{-1}\delta_j^{-1}$ many translated and
  rescaled copies $R_{2,k}$ of the rectangle $\bar{R}$, whose union covers
    a volume of size at least $10^{-6}|D|$,
\item[(iii)] $0<L_j\leq C \delta_0^{-1}\delta_j^{-1}$ many triangles $D_{2,k}$, which are of the type (R3).
\end{itemize}
There exists a universal constant $C>1$ such that the perimeter of the resulting triangles and rectangles is bounded by
\begin{align*}
\sum\limits_{k=1}^{M_j}\Per(D_{1,k}) + \sum\limits_{k=1}^{K_j}\Per(R_{2,k}) + \sum\limits_{k=1}^{L_j}\Per(D_{2,k})  \leq C \delta_0^{-1} \Per(D).
\end{align*}
\end{lem}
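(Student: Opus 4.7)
The plan is to combine the stacking construction of Lemma \ref{prop:cov_P2} --- which bridges a mismatch between the aspect ratio of the triangle $D$ and the aspect ratio of the rectangle $\bar R$ --- with the rotated covering argument of Lemma \ref{lem:cov_R1} --- which fits rotated thin rectangles into an axis-parallel domain. In the setting of (R2) both a ratio mismatch ($\delta_j$ vs.\ $\delta_0$) and a non-trivial rotation angle $\beta$ are simultaneously present, so the (R1) construction has to be carried out ``in parallel'' inside many smaller axis-parallel sub-boxes of $R_2$.

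First I would apply Proposition \ref{prop:cov_P1} with $\delta=\delta_j$, using an axis-parallel auxiliary rectangle of aspect ratio $\sim \delta_j$, to carve out a single axis-parallel rectangle $R_2 \subset D$ of aspect ratio $\sim \delta_j$, with $|R_2| \geq 10^{-6}|D|$ and $D \setminus R_2$ decomposing into at most $100$ good triangles aligned with the $x_1$-axis; these then give the $D_{1,k}$ in claim (i) (possibly after a trivial further bisection to reach the $\delta_0$-good class). Next, in direct analogy with the stacking step in the proof of Lemma \ref{prop:cov_P2}, I would subdivide $R_2$ along its long axis into $N_1 \sim \delta_0/\delta_j$ axis-parallel sub-rectangles $\tilde R_{2,k}$, each of aspect ratio $\sim \delta_0$; this is the step that absorbs the mismatch between $\delta_j$ and $\delta_0$.

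Inside each $\tilde R_{2,k}$ I would then mimic Lemma \ref{lem:cov_R1}: bisect the sub-rectangle along a diagonal to obtain two $\delta_0$-good triangles, and inside each of these stack $N_2 \sim \delta_0^{-2}$ translated and rescaled copies of $\bar R$ with their lowest corners on a common line, exactly as in the argument around Figure \ref{fig:teilen_R}. The gaps between consecutive copies are right-angled triangles whose acute angle equals $\beta \in (C\delta_0, \pi/2-C\delta_0)$ and which have one side parallel to the long direction of the $R_{2,k}$; these are the triangles $D_{2,k}$ of type (R3) in claim (iii). The bookkeeping is then routine: the number of rotated rectangles is $N_1 N_2 \sim \delta_0^{-1}\delta_j^{-1}$, matching (ii), and the same count governs the $D_{2,k}$. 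A volume estimate exactly as in \eqref{eq:vol} shows that the rotated rectangles cover at least a universal fraction of $|R_2|$, hence of $|D|$. Each $R_{2,k}$ and each $D_{2,k}$ has diameter of order $\delta_j$ and perimeter $\sim \delta_j$; multiplying by $N_1 N_2 \sim \delta_0^{-1}\delta_j^{-1}$ and recalling that $\Per(D)$ is comparable to the diameter of $D$ yields the desired bound $\leq C\delta_0^{-1}\Per(D)$, while the $D_{1,k}$ from the first step contribute only $\leq C\Per(D)$.

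\textbf{Main obstacle.} The delicate point is the insertion of the rotated rectangles inside each $\tilde R_{2,k}$: one must verify that the remainder consists \emph{only} of triangles of type (R3), i.e.\ right-angled triangles with acute angle $\beta$ having a side parallel to the $R_{2,k}$. This essentially amounts to replaying the geometric argument of Lemma \ref{lem:cov_R1} in a rectangle rather than a triangle; the bisection-based reduction should be harmless, but care is needed to keep the side-alignment of the error triangles and avoid producing spurious rotations, so that the iteration remains within the admissible classes (P1), (P2) and (R1)--(R3).
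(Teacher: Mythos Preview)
Your approach is essentially the same as the paper's, just carried out in the opposite order: the paper first stacks all $K_j\sim(\delta_0\delta_j)^{-1}$ rotated copies of $\bar R$ side by side (exactly as in the (R1) argument, only with more of them) so that their axis-parallel envelope $\tilde R_2$ already has aspect ratio $\sim\delta_j$, and \emph{then} applies Proposition~\ref{prop:cov_P1} with $\delta=\delta_j$ to fit a rescaling $R_2$ into $D$. You instead apply Proposition~\ref{prop:cov_P1} first, then subdivide $R_2$ into $\sim\delta_0/\delta_j$ sub-boxes of ratio $\sim\delta_0$, and fill each. Both routes give the same counts $K_j,L_j\sim(\delta_0\delta_j)^{-1}$ and the same perimeter bound.

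One small point: the intermediate ``bisect each sub-box along a diagonal and apply (R1) in the two triangles'' is an unnecessary detour. Taken literally it invokes Proposition~\ref{prop:cov_P1} once per sub-triangle, which would generate $\sim\delta_0/\delta_j$ additional $\delta_0$-good remainder triangles and violate the bound $M_j\in[1,100]$ in (i) (even though the perimeter contribution would still be harmless). The clean fix, and what the paper effectively does, is to skip the bisection and stack the rotated rectangles directly inside each rectangular sub-box (or, equivalently, inside all of $R_2$ at once), so that the only (i)-type triangles come from the single initial application of Proposition~\ref{prop:cov_P1}. With that adjustment your argument coincides with the paper's.
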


\begin{figure}[t]
\centering
\includegraphics[width=0.8 \textwidth, page=51]{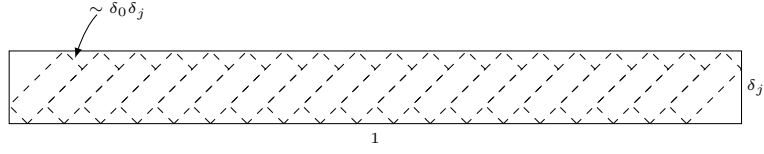}
\caption{The figure shows a (rescaled) copy of the enveloping rectangle $R_2$ and the stacked (and rescaled) copies of the rectangle $\bar{R}$, which we denote by $R_{2,k}$. The triangles correspond to the ones, which we denote by $D_{2,k}$ in Lemma \ref{lem:cov_R2} (iii).}
\label{fig:teilen_R2}
\end{figure}

\begin{proof}
We construct a box $\tilde{R}_{2}$ as in the case (R1) but now by stacking $K_j
\in [c_1,c_2](\delta_0 \delta_j)^{-1}\cap \N$ many of the boxes $\bar{R}$
next to each other, where $c_1\in(1/4,1)$ and $c_2\in (1,4)$. We denote these
stacked boxes by $\tilde{R}_{2,k}$ and define $\tilde{R}_2$ as the enveloping
axis-parallel rectangle. By adapting the values of $c_1, c_2$ it is
possible to obtain a ratio $r$ for $\tilde{R}_2$ (c.f. Figure \ref{fig:teilen_R2}), which is admissible in applying Proposition \ref{prop:cov_P1} with $\delta=\delta_j$, $\tilde{R}_2$ and $r$. This yields a box $R_2$, which is covered by rescaled copies $R_{2,k}$ of the rectangles $\tilde{R}_{2,k}$ and by ``error" triangles $D_{2,k}$. By construction and by elementary geometry (as in (\ref{eq:vol})) these satisfy the requirements in (ii), (iii). By Proposition \ref{prop:cov_P1} also (i) holds true.\\
It remains to estimate the perimeter of the union of the rectangles $R_{2,k}$ and the triangles $D_{2,k}$. To this end, we note that:
\begin{itemize}
\item Each rectangle $R_{2,k}$ has perimeter controlled by $C\delta_{j} \Per(D)$.
\item There are $C \delta_{j}^{-1}\delta_0^{-1} $ many such rectangles $R_{2,k}$.
\item The perimeters of the error triangles $D_{2,k}$ are up to a factor controlled by the perimeters of the rectangles $R_{2,k}$.
\end{itemize}
Thus, the resulting perimeter is up to a constant bounded by
\begin{align*}
 &\sum\limits_{k=1}^{M_j}\Per(D_{1,k}) + \sum\limits_{k=1}^{K_j}\Per(R_{2,k}) + \sum\limits_{k=1}^{L_j}\Per(D_{2,k}) \\
 & \leq C \delta_{j} \Per(D) (\delta_{j}^{-1} \delta_0^{-1}) = C\delta_0^{-1} \Per(D).
\end{align*}
This concludes the argument of the lemma.
\end{proof}

\subsubsection{The case (R3)}
We deal with the error triangles from the previous step. All of them are right
angle triangles, in which the other two angles are bounded from below and above
by $C \delta_0$ and $\frac{\pi}{2}-C \delta_0$. We show that in this situation we can reduce to two model cases, which we discuss below. This allows us to obtain the following result:

\begin{lem}
\label{lem:cov_R3}
Let $D$ be a triangle of type (R3). Let $R$ be a rectangle of side ratio $\delta_0$, which is parallel to one of the sides of $D$. Then it is possible to cover $D$ by finitely many scaled and translated copies of itself and by finitely many translated and scaled copies $R_k$ of the rectangle $R$ such that
\begin{itemize}
\item[(i)] $|\bigcup\limits_{k=1}^{K_j} R_{k}| \geq 10^{-6}|D|$,
\item[(ii)] $\sum\limits_{k=1}^{K_j} \Per(R_{k}) \leq \frac{C}{\delta_0}\Per(D)$.
\end{itemize}
\end{lem}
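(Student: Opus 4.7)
The strategy is to split according to which side of $D$ the rectangle $R$ is parallel to, and in each case to exhibit an explicit stacking of inscribed copies of $R$ whose union covers a universal fraction of $|D|$, with leftover pieces that are similar copies of $D$.

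Case 1 ($R$ parallel to a leg). I would place the right angle at the origin, legs of lengths $a, b$ along the coordinate axes, assume (by swapping if necessary) that $a \geq b$, and take $R$ parallel to the $x$-axis. The type (R3) hypothesis on the acute angles gives $b/a \in [c_0 \delta_0, 1]$ for a universal $c_0 > 0$. I stack rectangles $R_k := [0, w_k] \times [y_k, y_k + w_k \delta_0]$ of aspect ratio $\delta_0$ along the vertical leg, with $y_0 = 0$ and
\[
w_k := \frac{a(b - y_k)}{b + a\delta_0},
\]
so that the top-right corner of every $R_k$ lies exactly on the hypotenuse $\{x/a + y/b = 1\}$. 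Setting $z_k := b - y_k$, the recursion $y_{k+1} = y_k + w_k \delta_0$ becomes $z_{k+1} = r\, z_k$ with $r := b/(b + a\delta_0) \in (0,1)$, i.e.\ $z_k = b\, r^k$. I would stop after $K$ rectangles with $K$ chosen so that $r^{2K} \leq 1/2$; a direct computation shows that this can be arranged with $K = O(b/(a\delta_0)) \leq O(1/\delta_0)$. One then checks
\[
\sum_{k=0}^{K-1} |R_k| \;=\; \frac{a^2 \delta_0}{(b + a \delta_0)^2} \sum_{k=0}^{K-1} z_k^2 \;\gtrsim\; \frac{a b^2}{2b + a\delta_0} \;\gtrsim\; |D|,
\]
with a constant depending only on $c_0$. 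The leftover $D \setminus \bigcup_k R_k$ decomposes into one small right triangle to the right of each $R_k$ (with legs $w_k a\delta_0/b$ and $w_k \delta_0$, hence the same aspect ratio $b/a$ as $D$) together with one top triangle above $R_{K-1}$ of scale $r^K$; all of these are rescaled (and possibly reflected) copies of $D$.

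For the perimeter estimate, the crucial computation is the geometric sum
\[
\sum_{k=0}^{K-1} \Per(R_k) \;\leq\; 4 \sum_{k=0}^{K-1} w_k \;=\; \frac{4a}{b + a\delta_0} \cdot \frac{b(1 - r^K)}{1 - r} \;\leq\; \frac{4b}{\delta_0},
\]
where I used the identity $1 - r = a\delta_0/(b + a\delta_0)$. Since $\Per(D) \geq a \geq b$, this yields $\sum_k \Per(R_k) \leq (C/\delta_0) \Per(D)$.

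Case 2 ($R$ parallel to the hypotenuse). I would drop the altitude $OP$ from the right angle $O$ of $D$ to the hypotenuse $AB$, obtaining two sub-triangles $OPA$ and $OPB$, each right-angled at $P$ and similar to $D$ (so both belong to class (R3), because the acute angles of $D$ lie in $[C\delta_0, \pi/2 - C\delta_0]$). Since $PA \subset AB$ and $PB \subset AB$, both sub-triangles have one of their legs parallel to $R$, so Case 1 applies to each, and concatenating the resulting coverings gives (i)--(ii) thanks to the elementary estimate $\Per(OPA) + \Per(OPB) \leq C \Per(D)$.

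The main obstacle I foresee is calibrating the perimeter bookkeeping in Case 1: the number of inscribed rectangles can be as large as $O(1/\delta_0)$, and a naive bound of $a$ per rectangle would blow up the ratio $\sum_k \Per(R_k)/\Per(D)$ in a way that does not respect the permitted $C/\delta_0$ loss uniformly in $b/a$. The sharp telescoping $\sum_k w_k = b/\delta_0$, which hinges on the exact choice $w_k = a(b - y_k)/(b + a\delta_0)$ (producing the cancellation between $(1 - r)^{-1}$ and $(b + a\delta_0)^{-1}$), is what makes the estimate pass.
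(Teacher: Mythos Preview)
Your proof is correct and follows essentially the same strategy as the paper: your Case~1 is exactly the greedy inscription of largest-fitting rectangles with self-similar iteration and geometric-series bookkeeping that the paper isolates as Lemma~\ref{lem:cov_R3a}, and your Case~2 is the identical altitude-splitting reduction. The only quibble is that the ``swapping'' to arrange $a\ge b$ is not actually available once $R$ is fixed parallel to a specific leg, but your volume and perimeter estimates go through verbatim without that assumption (one gets $\sum|R_k|/|D|\gtrsim b/(2b+a\delta_0)$ and $\sum\Per(R_k)\le 4b/\delta_0\le 4\Per(D)/\delta_0$ in either regime).
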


Our main ingredient in proving this is the following lemma:

\begin{lem}[Covering of triangles by rectangles]
\label{lem:cov_R3a}
Let $D_{1,m}$ denote a right angle triangle, in which the sides enclosing the right angle are of side lengths $1$ and $m$. Assume that
$m\in(0,50 \delta^{-1})$.
Let $R$ be a rectangle, which has side ratio $\delta\in(0,1)$. Assume that the longer side of $R$ is parallel to the side of the triangle $D_{1,m}$, which is of length $m$.\\
Then there exist a number $L=L(m)$ and disjoint, rescaled and translated copies $R_{k}$ of $R$ with the properties that:
\begin{itemize}
\item[(i)] $|D_{1,m}\setminus\bigcup\limits_{k=1}^{L}R_k| \geq 10^{-6}|D_{1,m}|$.
\item[(ii)] The sum of the perimeters of the rectangles $R_k$ satisfies 
\begin{align*}
\sum\limits_{k=1}^{L}\Per(R_k)\leq \frac{C(1+m\delta)}{\delta}\leq\frac{C}{\delta}\Per(D_{m,1}).
\end{align*}
\end{itemize}
\end{lem}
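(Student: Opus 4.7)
My approach is an explicit stacked packing, in coordinates adapted to the right angle of $D_{1,m}$. Place the right angle at the origin with vertices $(0,0)$, $(m,0)$, $(0,1)$; the hypotenuse is then $x_1/m+x_2=1$. Since the long side of $R$ is required to be parallel to the leg of length $m$, I take the $R_k$ to be axis-aligned rectangles with long side horizontal, of the form
\[
R_k := [0,a]\times[ka\delta,(k+1)a\delta],\qquad k=0,1,\ldots,N-1,
\]
for parameters $a>0$ and $N\in\N$ to be chosen. These are mutually disjoint (up to null sets), each has aspect ratio $\delta$, and hence each is a rescaled and translated copy of $R$. A short geometric computation shows that they all lie in $D_{1,m}$ if and only if the top-right corner of the uppermost one lies below the hypotenuse, which is equivalent to $a\le m/(1+Nm\delta)$; I will take equality.

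With this choice the total covered area is $Na^2\delta=tm/(1+t)^2$ where $t:=Nm\delta$, and the total perimeter is bounded by $4Na=4Nm/(1+t)$. Both bounds are coupled through the single scale $t$, and the key quantitative step is to pick $N$ so that $t$ stays in a bounded interval: I set
\[
N := \max\bigl\{1,\lceil 1/(m\delta)\rceil\bigr\}.
\]
Together with the hypothesis $m\delta\in(0,50)$, a routine case split ($m\delta\ge 1$ versus $m\delta<1$) gives $t\in[1,50]$ in all cases. The area ratio $2t/(1+t)^2$ is then bounded below by a universal positive constant (far in excess of $10^{-6}$), which yields (i); the perimeter bound $4Nm/(1+t)\le 8/\delta$ follows by bounding $N\le 1/(m\delta)+1\le 2/(m\delta)$ in the small-$m\delta$ case, and using $1+t\ge m\delta$ in the large-$m\delta$ case. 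The inequalities in (ii) then follow from $8/\delta\le 8(1+m\delta)/\delta$ and $1+m\delta\le 51\Per(D_{1,m})$, the latter being immediate from $\Per(D_{1,m})\ge\max(1,m)$ and $m\delta\le 50$.

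The main obstacle is precisely this balancing between the two competing demands in (i) and (ii): a single rectangle of maximal aspect-matched size is optimal for perimeter only when $m\delta\approx 1$, while many small rectangles inflate the perimeter unacceptably. The packing described above is tuned so that the single free dimensionless parameter $t=Nm\delta$ simultaneously controls both the volume captured (from below) and the perimeter paid (from above), and the window $t\in[1,50]$ is exactly what the hypothesis $m\in(0,50\delta^{-1})$ affords. I expect no further subtlety: once the choice of $N$ is made the remaining computations are elementary. (I also note that the inequality in (i) as literally written has the set complement on the left; since the application in Lemma \ref{lem:cov_R3} requires the \emph{union} of the $R_k$ to cover a fixed fraction of $D_{1,m}$, I will state and prove that form.)
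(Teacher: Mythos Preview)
Your proof is correct. The paper argues differently: it uses a greedy, self-similar iteration, inscribing at each step the largest admissible rectangle with one side on the length-$m$ leg (side lengths $l_1=m/(1+m\delta)$, $l_2=\delta l_1$), which leaves two triangles similar to $D_{1,m}$ with ratios $a_1=l_2$ and $1-a_1=1/(1+m\delta)$; the procedure is then repeated in the larger of these. The resulting rectangles shrink geometrically with ratio $\lambda=1/(1+\delta m)$, and the authors sum the geometric series for area and perimeter after choosing $L$ so that $\lambda^L\in(\tfrac14,\tfrac12)$. Your construction instead stacks $N$ \emph{equal} rectangles and reduces everything to the single dimensionless parameter $t=Nm\delta$; this is more elementary (no self-similarity, no geometric series) and gives the same bounds up to constants. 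Interestingly, the union of your $N$ rectangles is exactly the paper's first rectangle $R_1$ subdivided horizontally, so the two constructions differ mainly in that the paper continues to pack beyond what you use as a single block. Your remark that item~(i) as stated has the complement on the wrong side is also right: the paper's own proof shows $|\bigcup_k R_k|\ge \tfrac{1}{100}|D_{1,m}|$, which is the version actually needed in Lemma~\ref{lem:cov_R3}.
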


\begin{rmk}
We remark that for our application, the bound on $m$ does not impose an additional requirement. Indeed, the triangles of type (R3) only occur as artifacts of the coverings in Lemmas \ref{lem:cov_R1}, \ref{lem:cov_R2}. Here we may estimate $m$ by $\tan(\beta)$ for the error triangles in . For
  $\beta=\frac{\pi}{2}-\delta$, a Taylor expansion of
  $\frac{\sin(\frac{\pi}{2}-\delta)}{\cos(\frac{\pi}{2}-\delta)}$ entails the
  desired estimate.
\end{rmk}

Before explaining Lemma \ref{lem:cov_R3a}, we show how our main covering result, Lemma \ref{lem:cov_R3}, can be reduced to the situation of Lemma \ref{lem:cov_R3a}.

\begin{proof}[Proof of Lemma \ref{lem:cov_R3}]
We first claim that without loss of generality $D$ can be assumed to be of type (R3) with $R$ being parallel to one of the \emph{short} sides of the triangle $D$. Indeed, if $R$ is parallel to the long side of the triangle $D$, then this side is opposite of the right angle of the triangle. In this case, we split the triangle $D$ into two smaller triangles $D^{(1)}$, $D^{(2)}$ by connecting the corner, at which $D$ has its right angle, by the shortest line to the long side. The resulting triangles $D^{(1)}$, $D^{(2)}$ have the same angles as the original triangle (and in particular satisfy the non-degeneracy conditions for the angles, which are required in condition (R3)), but are now such that $R$ is parallel to one of their \emph{short} sides.\\
After this reduction, we seek to apply Lemma \ref{lem:cov_R3a} with $\delta=\delta_0$ for each of the triangles $D^{(1)}, D^{(2)}$. To this end we note that as $\beta_0 \geq C\delta_0$, we have that $m\leq C \delta_0^{-1}$. As a consequence, Lemma \ref{lem:cov_R3a} yields the desired result (by observing that $\Per(D^{(1)})+ \Per(D^{(2)}) \leq 2 \Per(D)$).
\end{proof}

\begin{figure}[t]
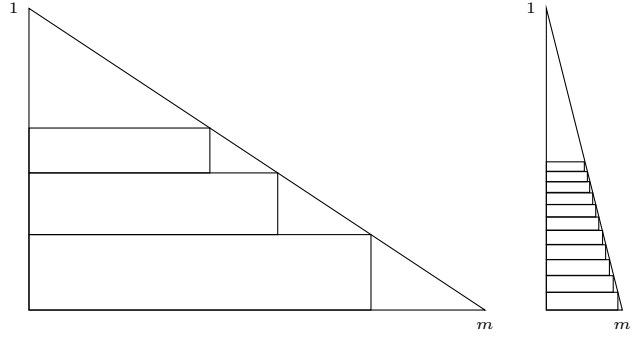

  \centering
  \includegraphics[page=7]{figures.pdf}
  \includegraphics[page=8]{figures.pdf}
  \caption{The triangles $D_{1,m}$ for $m\sim 1$ (left) and $m\sim \delta$ (right). Coverings of at least half the volume for $m>1$ and $m\sim \delta<1$.}
    \label{fig:cover_triangle}
\end{figure}

\begin{proof}[Proof of Lemma \ref{lem:cov_R3a}]
We construct the desired covering by a ``greedy" type algorithm. We
begin by fitting in the largest possible copy $R_1$ of $R$, which touches the side of 
the triangle $D_{1,m}$, which is of length $m$ (c.f. Figure \ref{fig:cover_triangle}). As the rectangle $R_1$ has to have a side ratio of $1:\delta$, its side lengths can be computed explicitly to be $l_1= \frac{m}{1+m\delta}, l_2 = \delta l_1$. Choosing $R_1$ as the first rectangle in the desired covering, we have created a decomposition of the triangle $D_{1,m}$ into three parts, the rectangle $R_1$ and two triangles, which are self-similar to the original triangle:
\begin{align*}
D_{1,m}= R_1 \cup D_{a_1, a_1 m} \cup D_{(1-a_1),(1-a_1)m}.
\end{align*}
Here $a_1:= l_2$ and hence $1-a_1= \frac{1}{1+\delta m}$ denote the similarity factors with respect to the original triangle $D_{1,m}$.
We iterate this procedure in the new triangle $D_{(1-a_1),(1-a_1)m}$, while ignoring the (smaller) triangle $D_{a_1,a_1 m}$. After $L$ steps of this algorithm we have obtained $L$ (up to null sets) disjoint rectangles $R_1,...,R_L$. We claim that if $L$ is chosen sufficiently large, the covering $\bigcup \limits_{k=1}^{L}R_k$ has the desired properties. Indeed, we choose $L$ such that $\left(\frac{1}{1+\delta m}\right)^{L}\in(\frac{1}{4}, \frac{1}{2})$ and first note that the construction of the rectangles $R_k$ is based on a self-similar iterative process with similarity factor $\lambda = \frac{1}{1+\delta m}$. Thus, we infer that
\begin{align*}
|\bigcup\limits_{k=1}^{L}R_k| 
&= |R_1| \sum\limits_{k=1}^{L} \lambda^{2 k} 
= |R_1| \frac{1-\lambda^{2(L+1)}}{1-\lambda^2} 
\geq |R_1| \frac{1}{2}\frac{1}{1-\lambda^2} 
= \frac{1}{2} \frac{m}{2+\delta m}\\
&\geq \frac{1}{200} m.
\end{align*}
Here we used the disjoint construction of the covering, the choice of $L$, the value of $\lambda$ and $\delta m \leq 50$. As $|D_{1,m}|= \frac{1}{2}m$, this yields the first claim. Similarly,
\begin{align*}
\sum\limits_{k=1}^{L}\Per(R_k)
&= \Per(R_1) \sum\limits_{k=1}^{L} \lambda^{ k} 
= \Per(R_1) \frac{1-\lambda^{L+1}}{1-\lambda} 
\leq \Per(R_1) 2\frac{1}{1-\lambda} \\
&\leq  4 \frac{1}{ \delta} 
\leq 4 \frac{1}{ \delta} \Per(D_{m,1}).
\end{align*}
Here we used that $\Per(R_1)\leq 2 l_1$ and that $\Per(D_{m,1})\geq 1$.
\end{proof}

\subsection{Proof of Proposition \ref{thm:covering}}
\label{sec:thm}

We initialize the construction by applying Step 1 in the Algorithms \ref{alg:construction}, \ref{alg:skew}. As these initial triangles are obtained as the level sets of a Conti construction with ratio $\delta_0$, they all form $\delta_0$ good triangles (c.f Lemma \ref{lem:cov_Cont}) and hence satisfy the properties of the theorem. It therefore remains to argue that this is preserved in our constructions from Sections \ref{sec:parallel} and \ref{sec:rota}. Given one of the triangles $D_j$ as in the theorem, the results of Propositions \ref{prop:parallel} and \ref{prop:rot} ensure this, once the rotation angle of the successive Conti constructions is controlled. This however is the achieved by virtue of Remark \ref{rmk:angle_1} and Lemma \ref{lem:angles}.

\section{Quantitative analysis}
\label{sec:quant}

After having recalled the qualitative construction of convex integration solutions in Section \ref{sec:Conti}, we now focus on controlling the scheme quantitatively. Here we rely on the quantitative covering results from Section \ref{sec:covering} (c.f. Propositions \ref{prop:parallel} and \ref{prop:rot}), which allow us to obtain bounds on the $BV$ norm of the iterates $u_k$ and the corresponding characteristic functions associated with the well $e^{(i)}\in K$ (Lemma \ref{lem:BV}). Combined with an $L^1$ estimate and the interpolation inequality from Theorem \ref{thm:interpol} or from Corollary \ref{cor:int}, this then yields the desired $W^{s,q}$ regularity of the characteristic function of the phases.\\

As in Section \ref{sec:alg}, given a matrix $M$ with $e(M)\in \intconv(K)$, we here assume that $\Omega:= Q_{\beta}[0,1]^2$, where $Q_{\beta}$ is the rotation, which describes how the Conti construction with respect to $M$ and $e^{(p)}_0$ is rotated with respect to the $x_1$-axis. This special case will play the role of a crucial building block in the situation of more general domains (c.f. Section \ref{sec:generaldomains}).\\

We begin by defining the characteristic functions associated with the corresponding wells:

\begin{defi}[Characteristic functions] 
\label{defi:characteristic}
We define the characteristic functions, $\chi_{k}^{(1)}, \chi_k^{(2)}, \chi_k^{(3)}$ associated with $e^{(1)}, e^{(2)}, e^{(3)}$ in the $k$-th step of the Conti construction as 
\begin{align*}
\chi_{k}^{(i)}(x)= \left\{
\begin{array}{ll}
1 & \mbox{ if } e(\nabla u_k)(x) = e^{(i)},\\
0 & \mbox{ else,}  
\end{array}\right. 
i\in\{1,2,3\}.
\end{align*}
We denote their point-wise a.e. limits as $k\rightarrow \infty$ by $\chi^{(i)}$, $i\in\{1,2,3\}$.
\end{defi}

We emphasize that these point-wise limits exists, since for a.e. point $x\in \Omega$ there exists an index $k_x\in \N$ such $x\in \Omega\setminus \Omega_{k_x}$. By our convex integration algorithm and by Definition \ref{defi:characteristic}, the value of $\chi_l(x)$ remains fixed for $l\geq k_x$.\\

Using the covering results from Section \ref{sec:covering}, we can address the $BV$ bounds for the characteristic functions $\chi_k^{(i)}$, $i\in\{1,2,3\}$:

\begin{lem}[$BV$ control]
\label{lem:BV}
Let $u_j:\Omega \rightarrow \R^2, \chi_j^{(1)},\chi_j^{(2)},\chi^{(3)}_j$ denote the deformation and characteristic functions, which are obtained in the $j$-th step of the convex integration scheme from Proposition \ref{prop:convex_int}. Let $\delta_0$ be as in Algorithm \ref{alg:construction}, Step 0 (b).
Then, there exist constants $C_0, C_1>0$ (independent of $\delta_0$) such that
\begin{align*}
\| \chi_j^{(i)}\|_{BV(\Omega)} \leq C_1(C_0\delta_0^{-1})^j \mbox{ for } i\in\{1,2,3\}.
\end{align*}
\end{lem}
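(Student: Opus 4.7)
The plan is to identify $\|\chi_j^{(i)}\|_{BV(\Omega)}$ with (an upper bound by) the total perimeter $P_j$ of the piecewise affine partition of $\Omega$ on which $\nabla u_j$ is constant, and then to show that $P_j \leq C_1(C_0\delta_0^{-1})^j$ by induction on $j$, using the covering perimeter estimates of Propositions \ref{prop:parallel}(3) and \ref{prop:rot}(3).

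First, I would observe that $\chi_j^{(i)}$ is the indicator of the set $\{x\in\Omega : e(\nabla u_j)(x)=e^{(i)}\}$, which by construction of Algorithms \ref{alg:construction} and \ref{alg:skew} is a finite union of the ``frozen'' triangular level sets produced in iterations $1,\ldots,j$. Since the jump set of $\chi_j^{(i)}$ is contained in the union of all boundaries of this partition,
\[
\|\chi_j^{(i)}\|_{BV(\Omega)} \leq P_j := \sum_k \Per(\Omega_{j,k}) + \sum_\ell \Per(F_{j,\ell}),
\]
where $\{F_{j,\ell}\}_\ell$ enumerates all frozen triangles created in steps $1,\ldots,j$ and $\{\Omega_{j,k}\}_k$ is the current collection of unfrozen triangles.

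Next, I would run the induction on $P_j$. For the base case, Step 1 of Algorithm \ref{alg:construction} tiles $\Omega=Q_\beta[0,1]^2$ by $K_{0,0}=\delta_0^{-1}$ Conti rectangles of aspect ratio $\delta_0$, each of which (by Lemma \ref{lem:cov_Cont}) is further subdivided into at most a universal number of good triangles, each of perimeter bounded by a universal constant; hence $P_1 \leq C \delta_0^{-1}$. For the inductive step, every currently unfrozen triangle $\Omega_{j,k}$ is of type (P1), (P2), (R1), (R2) or (R3), and Propositions \ref{prop:parallel}(3) and \ref{prop:rot}(3) provide a covering of $\Omega_{j,k}$ by a finite family $\mathcal{D}_1(\Omega_{j,k})$ of descendants---comprising both the unfrozen successors and the newly frozen Conti level sets---with
\[
\sum_{D\in\mathcal{D}_1(\Omega_{j,k})} \Per(D) \leq C\delta_0^{-1} \Per(\Omega_{j,k}).
\]
Previously frozen triangles $F_{j,\ell}$ are not modified and contribute unchanged to $P_{j+1}$. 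Summing over $k$ and using the induction hypothesis thus gives
\[
P_{j+1} \leq C\delta_0^{-1}\sum_k\Per(\Omega_{j,k}) + \sum_\ell \Per(F_{j,\ell}) \leq C\delta_0^{-1} P_j,
\]
which closes the induction upon absorbing the universal constants into $C_0,C_1$.

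The main subtlety, and in that sense the principal obstacle, is verifying that the covering statements of Propositions \ref{prop:parallel} and \ref{prop:rot} indeed control the total perimeter of \emph{all} first-generation descendants of $\Omega_{j,k}$ (including the newly frozen level sets inside each Conti rectangle) by $C\delta_0^{-1}\Per(\Omega_{j,k})$. This is precisely the content of the geometric bookkeeping in Lemmas \ref{lem:cov_R1}--\ref{lem:cov_R3a} and their parallel counterparts, and it crucially relies on the quantitative prescription of $\epsilon_j$ and of the skew direction in Algorithms \ref{alg:construction}, \ref{alg:skew}: whenever $\delta_j\ll\delta_0$ the triangle is forced to lie in the parallel case, so that the per-iteration blow-up factor is only $C\delta_0^{-1}$ rather than the a priori possible $C\delta_j^{-1}$, the latter of which would destroy the geometric (as opposed to superexponential) bound on $P_j$ and hence on $\|\chi_j^{(i)}\|_{BV(\Omega)}$.
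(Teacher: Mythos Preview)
Your argument is correct and follows essentially the same route as the paper: reduce the $BV$ norm of $\chi_j^{(i)}$ to the total perimeter of the current triangular partition, and then iterate the per-step perimeter bound $C\delta_0^{-1}$ furnished by Propositions~\ref{prop:parallel}(3) and~\ref{prop:rot}(3). The paper phrases the induction only in terms of the unfrozen triangles $\{\Omega_{j,k}\}$ and then asserts $\sum_i\|\chi_j^{(i)}\|_{BV}\le C\sum_k\Per(\Omega_{j,k})$ at the end, whereas you track the frozen pieces $F_{j,\ell}$ explicitly throughout; your bookkeeping is slightly more transparent on this point, but the content and the key input (the case distinction parallel/rotated feeding into the uniform factor $C\delta_0^{-1}$) are identical.
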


\begin{proof}
We deduce the following iterative bound for the size of the BV norm:
\begin{align}
\label{eq:BVbound}
\sum\limits_{k=1}^{J_{j}}\sum\limits_{\Omega_{j+1,l}\in \mathcal{D}_1(\Omega_{j,k})} \Per(\Omega_{j+1,k}) \leq C \delta_0^{-1} \sum\limits_{k=1}^{J_{j}}\Per(\Omega_{j,k}).
\end{align}
To this end, let $\Omega_{j,k}$ be a triangle from the covering $\{\Omega_{j,k}\}_{j\in\{1,\dots,J_j\}}$ at the $j$-th iteration step. In particular, $e(\nabla u_j)$ is constant on $\Omega_{j,k}$. 
We apply Algorithms \ref{alg:construction}, \ref{alg:skew} and in these specify the choice of our covering to be the one of Proposition \ref{prop:parallel} or the one of Proposition \ref{prop:rot}.
In order to bound the resulting BV norm, we distinguish two cases:
\begin{itemize}
\item[(a)] \emph{The parallel case.} Assume that
$\Omega_{j,k}$ is of the type (P1) or (P2) (which, by the explanations below Definition \ref{defi:classes} holds in the parallel case). 
Thus, the Conti construction in the $j$-th and $(j+1)$-th step are nearly aligned in the direction of their degeneracy.
In this case, Proposition \ref{prop:parallel} is applicable and implies that 
\begin{align}
\label{eq:it1}
\sum\limits_{\Omega_{j+1,l}\in \mathcal{D}_1(\Omega_{j,k})} \Per(\Omega_{j+1,k}) \leq C \Per(\Omega_{j,k}),
\end{align}
for some absolute constant $C>0$.
\item[(b)] \emph{The rotated case.} Assume that $\Omega_{j,k}$ is of the type (R1)-(R3) (which, by the explanations below Definition \ref{defi:classes} holds in the rotated case).
Thus, the Conti construction in the $j$-th and $(j+1)$-th step are not aligned. By Lemma \ref{lem:angles} there are even lower bounds on the degree of alignment.
In this case, Proposition \ref{prop:rot} is applicable and yields that 
\begin{align}
\label{eq:it2}
\sum\limits_{\Omega_{j+1,l}\in \mathcal{D}_1(\Omega_{j,k})} \Per(\Omega_{j+1,k}) \leq C \delta_0^{-1} \Per(\Omega_{j,k}).
\end{align}
\end{itemize}

Combining both estimates (\ref{eq:it1}), (\ref{eq:it2}) and summing over all domains $\Omega_{j,k}$ for fixed $j$ implies (\ref{eq:BVbound}). From this we infer that 
\begin{align*}
  \sum\limits_{k=1}^{J_{j}}\Per(\Omega_{j,k}) \leq C \delta_0^{-j}.
\end{align*}
As
by construction
\begin{align*}
\sum\limits_{i=1}^{3}\|\chi_{j}^{(i)}\|_{BV(\Omega)} \leq C  \sum\limits_{k=1}^{J_{j}}\Per(\Omega_{j,k})\leq C \delta_0^{-j},
\end{align*}
we therefore obtain the statement of the Lemma.
\end{proof}

Using the explicit construction of our convex integration scheme, we further estimate the difference of two successive iterates in the $L^1$ norm:

\begin{lem}[$L^1$ control]
\label{lem:L1}
Let $u_k:\Omega \rightarrow \R^2, \chi_k^{(1)},\chi_k^{(2)},\chi^{(3)}_k$ denote the deformation and characteristic functions, which are obtained in the $k$-th step of the convex integration scheme from Lemma \ref{lem:convex_int}. 
Then,
\begin{align*}
\| \chi_k^{(i)}-  \chi_{k+1}^{(i)}\|_{L^1(\Omega)} \leq C\left(1-  \frac{7}{8} v_0 \right)^k  \mbox{ for } i\in\{1,2,3\}.
\end{align*}
\end{lem}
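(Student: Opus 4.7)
The plan is to reduce the claimed $L^1$ estimate to the volume bound $|\Omega_k| \leq (1-\tfrac{7}{8}v_0)^k |\Omega|$ already established in Proposition \ref{prop:convex_int} (and restated in Proposition \ref{thm:covering} (i)). The key structural observation is that Algorithm \ref{alg:construction} only modifies $u_k$ on the set $\Omega_k$ where $e(\nabla u_k) \notin K$: inside each triangle $\Omega_{k,l} \subset \Omega_k$ the update step prescribes a new Conti-type construction, while on the complement $\Omega \setminus \Omega_k$ we explicitly have $u_{k+1}(x) = u_k(x)$ (c.f. Step 2(a)). Consequently, $\nabla u_{k+1}(x) = \nabla u_k(x)$ for a.e.\ $x \in \Omega \setminus \Omega_k$, and by the very definition of the characteristic functions in Definition \ref{defi:characteristic}, this forces $\chi_{k+1}^{(i)}(x) = \chi_{k}^{(i)}(x)$ for a.e.\ $x \in \Omega \setminus \Omega_k$ and every $i \in \{1,2,3\}$.

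First I would make this localization precise and conclude that the difference $\chi_k^{(i)} - \chi_{k+1}^{(i)}$ is supported (up to null sets) in $\Omega_k$. Next, since both $\chi_k^{(i)}$ and $\chi_{k+1}^{(i)}$ take values in $\{0,1\}$, we have the trivial pointwise bound
\begin{align*}
|\chi_k^{(i)}(x) - \chi_{k+1}^{(i)}(x)| \leq \mathbf{1}_{\Omega_k}(x) \quad \text{for a.e. } x \in \Omega.
\end{align*}
Integrating then yields
\begin{align*}
\|\chi_k^{(i)} - \chi_{k+1}^{(i)}\|_{L^1(\Omega)} \leq |\Omega_k|,
\end{align*}
and the claim follows by plugging in the volume decay
\begin{align*}
|\Omega_k| \leq \left(1 - \tfrac{7}{8} v_0\right)^k |\Omega|
\end{align*}
from Proposition \ref{prop:convex_int}, absorbing the factor $|\Omega|$ into the constant $C$.

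There is no real obstacle here: the only point that requires any care is verifying that the updating rule indeed leaves $u_k$ unchanged outside $\Omega_k$ (and in particular that the relabelling step in Algorithm \ref{alg:construction}, Step 2, does not destroy this invariance). This is however immediate from the case distinction in Step 2, where subcase (a) explicitly preserves $u_j$, $\epsilon_j$, $\delta_j$ and $e^{(p)}_j$, and where subcase (b) acts only inside the Conti rectangles $R_{j,l}^k$ contained in $\Omega_{j,k} \subset \Omega_j$. Everything else is a one-line volume estimate.
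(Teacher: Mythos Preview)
Your argument is correct and is essentially the same as the paper's own proof, which simply records that $\chi_k^{(i)}(x)=\chi_j^{(i)}(x)$ for a.e.\ $x\in\Omega\setminus\Omega_j$ whenever $k\geq j$ and then invokes the volume decay $|\Omega_j|\leq C(1-\tfrac{7}{8}v_0)^j|\Omega|$. One small imprecision: your reference to Step~2(a) is slightly off, since Step~2(a) concerns the uncovered part \emph{inside} $\Omega_{j,k}\subset\Omega_j$, whereas the invariance on $\Omega\setminus\Omega_j$ follows directly from the fact that the update in Algorithm~\ref{alg:construction} acts only on the triangles $\Omega_{j,k}$ (c.f.\ Step~0(a) and equation~\eqref{eq:unchanged} in the proof of Proposition~\ref{prop:convex_int}); this does not affect the validity of your conclusion.
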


\begin{rmk}
In our realization of the covering argument, which is described in Section \ref{sec:covering}, we have chosen $v_0 = 10^{-6}$. In particular, it is independent of the boundary condition $M$ in (\ref{eq:incl}).
\end{rmk}

\begin{proof}
The proof follows immediately from the Conti construction and the observations that 
\begin{align*}
|\Omega_j|\leq C \left(1-\frac{7}{8}v_0 \right)^{j} |\Omega|,
\end{align*}
and that $\chi_k^{(i)}(x) = \chi_j^{(i)}(x)$ for a.e. $x\in \Omega\setminus
\Omega_j$, if $k\geq j$.
\end{proof}

Combining Lemma \ref{lem:BV} and \ref{lem:L1} with Theorem \ref{thm:interpol} or Corollary \ref{cor:int} yields the following regularity result:

\begin{prop}[Regularity of convex integration solutions]
\label{prop:reg}
Let $M$ with $e(M)\in \intconv(K)$ and assume that $\Omega=Q_{\beta}[0,1]^2$. Let $\delta_0>0$ be as in Step 0(b) in Algorithm \ref{alg:construction}.
Let $u:\Omega \rightarrow \R^2$ be a convex integration solution obtained according to the Algorithms \ref{alg:construction}, \ref{alg:skew} and described in Proposition \ref{prop:convex_int}. Then it is possible to obtain
\begin{align*}
\chi^{(i)} \in W^{s,q}
\end{align*}
for all $s \in(0,1),q\in(1,\infty)$ with $0<s q < \theta_0$ and
$\theta_0=\frac{\ln(1-\frac{7}{8}v_0)}{\ln(1-\frac{7}{8}v_0)+\ln(\delta_0)-\ln(C_0)}$.
Here $C_0>0$ is an absolute constant, which does not depend on $\delta_0$ and $M$.
\end{prop}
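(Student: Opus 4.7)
The plan is to combine the BV bound of Lemma \ref{lem:BV}, the $L^1$ bound of Lemma \ref{lem:L1}, the interpolation inequality in Corollary \ref{cor:int}, and the ``product form'' observation of Remark \ref{rmk:prod} via a telescoping argument. Since Lemma \ref{lem:L1} shows that $(\chi_k^{(i)})_{k\in\N}$ is a Cauchy sequence in $L^1(\Omega)$ with geometrically decreasing differences, the limit $\chi^{(i)}$ exists and I will write
\[
\chi^{(i)} = \chi_1^{(i)} + \sum_{k=1}^{\infty}\bigl(\chi_{k+1}^{(i)}-\chi_k^{(i)}\bigr),
\]
so that I only need to control each difference $g_k:=\chi_{k+1}^{(i)}-\chi_k^{(i)}$ in $W^{s,q}$ and show that the resulting series converges.

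Each $g_k$ takes values in $\{-1,0,1\}$ and is therefore admissible in Corollary \ref{cor:int} with $c_0=1$ and $\|g_k\|_{L^\infty}\le 1$, so that condition \eqref{eq:char_a} holds. Moreover, combining Lemma \ref{lem:L1} and the triangle inequality against the BV bound of Lemma \ref{lem:BV} yields
\[
\|g_k\|_{L^1(\Omega)}\le C\Bigl(1-\tfrac{7}{8}v_0\Bigr)^{k},\qquad \|g_k\|_{BV(\Omega)}\le 2C_1\bigl(C_0\delta_0^{-1}\bigr)^{k+1}.
\]
I will first control the special product appearing in Remark \ref{rmk:prod}: for any exponent $\theta_0'\in(0,1)$,
\[
\|g_k\|_{L^1}^{1-\theta_0'}\|g_k\|_{BV}^{\theta_0'}\le C\,\Bigl[\bigl(1-\tfrac{7}{8}v_0\bigr)^{1-\theta_0'}\bigl(C_0\delta_0^{-1}\bigr)^{\theta_0'}\Bigr]^{k}.
\]
Taking logarithms, the geometric factor in brackets is strictly less than $1$ precisely when
$(1-\theta_0')\ln(1-\tfrac{7}{8}v_0)+\theta_0'\,(\ln C_0-\ln\delta_0)<0,$ which, solving for $\theta_0'$, is equivalent to
\[
\theta_0'<\frac{\ln(1-\tfrac{7}{8}v_0)}{\ln(1-\tfrac{7}{8}v_0)+\ln(\delta_0)-\ln(C_0)}=:\theta_0,
\]
matching the threshold in the statement (both numerator and denominator being negative).

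Given any $s\in(0,1)$ and $q\in(1,\infty)$ with $sq<\theta_0$, I will choose $\theta_0'\in(sq,\theta_0)$ and apply the inequality \eqref{eq:higher} of Remark \ref{rmk:prod} to each $g_k$ with parameter $\theta_0'$. This yields
\[
\|g_k\|_{W^{s,q}(\Omega)}\le C\,\|g_k\|_{L^\infty}^{1-s/\theta_0'}\bigl(\|g_k\|_{L^1}^{1-\theta_0'}\|g_k\|_{BV}^{\theta_0'}\bigr)^{s/\theta_0'},
\]
and the right-hand side decays geometrically in $k$ by the computation above, since the bracket is $<1$ and is raised to the positive power $s/\theta_0'$. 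Summing in $k$, the series $\sum_k \|g_k\|_{W^{s,q}}$ converges, giving $\chi^{(i)}\in W^{s,q}(\Omega)$. The main obstacle is to make sure one is allowed to use the product $\|g_k\|_{L^1}^{1-\theta_0'}\|g_k\|_{BV}^{\theta_0'}$ in the interpolation (since $p=1$ is not directly allowed in Corollary \ref{cor:int}); this is precisely the content of Remark \ref{rmk:prod}, which uses the $L^\infty$ bound on $g_k$ to upgrade to an $L^p$ factor with admissible $p>1$. Finally, by extending the construction via the affine boundary data to $\R^2$ and recalling that the characteristic functions are supported in $\overline\Omega$, the $W^{s,q}_{loc}(\R^2)$ statement of the Theorem follows.
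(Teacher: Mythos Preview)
Your proposal is correct and follows essentially the same approach as the paper's own proof: combine the $L^1$ decay (Lemma \ref{lem:L1}) and the $BV$ growth (Lemma \ref{lem:BV}) into the weighted product from Remark \ref{rmk:prod}, identify the threshold $\theta_0$ at which this product switches from decay to growth, then telescope and use completeness. Your write-up is somewhat more explicit than the paper's (e.g.\ verifying that $g_k\in\{-1,0,1\}$ satisfies \eqref{eq:char_a}, and spelling out the choice $\theta_0'\in(sq,\theta_0)$), but the argument is the same.
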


\begin{proof}
By Remark \ref{rmk:prod} it suffices to control a weighted product of the $L^1$ and the $BV$ norms.
Combining Lemmata \ref{lem:BV} and \ref{lem:L1} to this end yields that
\begin{align}
\label{eq:L1BV}
\|\chi_{j+1}^{(i)}-\chi_{j}^{(i)}\|_{L^{1}(\R^2)}^{1-\theta}\|\chi_{j+1}^{(i)}-\chi_{j}^{(i)}\|_{BV(\R^2)}^{\theta}
 \leq C \left(1-\frac{7}{8}v_0\right)^{j (1-\theta)}\left(\frac{C_0}{\delta_0}\right)^{j \theta}  .
\end{align}
Choosing $\theta \in(0,\theta_0)$ with $\theta_0:= \frac{\ln(1-\frac{7}{8}v_0)}{\ln(1-\frac{7}{8}v_0)+\ln(\delta_0)-\ln(C_0)} $ hence yields geometric decay for the right hand side of (\ref{eq:L1BV}). Invoking Remark \ref{rmk:prod} and using (\ref{eq:L1BV}) hence implies that
\begin{align}
\label{eq:L1BV1}
\|\chi_{j+1}^{(i)}-\chi_{j}^{(i)}\|_{W^{s,q}(\R^2)} 
&\leq  C_{s,q} \left(\left(1-\frac{7}{8}v_0\right)^{j (1-\theta_1)}\left(\frac{C_0}{\delta_0}\right)^{j \theta_1}\right)^{\frac{s}{\theta_1}}  
\end{align}
for any $\theta_1 \in (0,\theta_0)$ and any pair $(s,q)\in(0,1)\times (1,\infty)$ with $0<s q \leq \theta_1$.
Therefore, a telescope sum argument entails that the sequence $\{\chi_{j}^{(i)}\}_{j\in\N}$ forms a Cauchy sequence in $W^{s,q}(\R^2)$. Hence completeness yields that $\chi^{(i)}\in W^{s,q}$ for all $sq \in (0,\theta_0)$. 
\end{proof}

\begin{rmk}[Dependences]
\label{rmk:epsilon_dep}
We remark that the \emph{quantitative} dependences in Proposition \ref{prop:reg} are clearly non-optimal. Parameters, which could improve this are for instance:
\begin{itemize}
\item Varying the volume fraction $\lambda\in(0,1)$ in the Conti construction from Lemma \ref{cor:var_conti}.
\item Choosing a sharper relation between $\epsilon_j$ and $\delta_j$ and modifying the $j$-dependence of $\epsilon_j$ (for instance by only using summability for the stagnant matrices instead of the geometric decay, which is prescribed in Step 2(b) of Algorithm \ref{alg:construction}).
\end{itemize}
This would however not change the \emph{qualitative} behavior of the estimates.\\

\begin{figure}[h]
  \centering
  \includegraphics[width=0.7\linewidth, page=53]{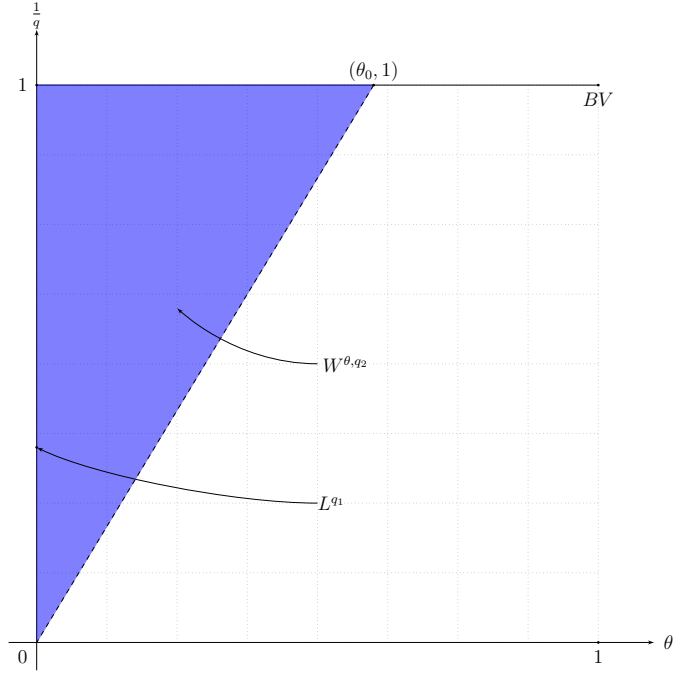}
  \caption{By interpolation, our decay and growth bounds for the $L^1$ and $BV$
    norms of the differences $\chi_{j+1}^{(i)}-\chi_j^{(i)}$ yield Cauchy sequences in the $W^{\theta,q}$ spaces
    inside the triangular region. Here, we use that the functions under
    consideration are characteristic functions and hence all $L^p$ norms with
    $1\leq p <\infty$ can be compared.}
  \label{fig:wthetaq}
\end{figure}

A \emph{qualitatively} different behavior would arise, if
in the proof of Lemma \ref{lem:BV} only case (a) occurred. Then
based on our construction in Proposition \ref{prop:parallel} and Lemma
\ref{lem:BV} the $\delta_0$ dependence would improve in Proposition
\ref{prop:reg}: In this case the choice of the product of the \emph{exponents}
$s,q$ in Proposition \ref{prop:reg} would \emph{not} depend on $\delta_0$, but would be \emph{uniform} in the whole triangle $\intconv(K)$. In this case only the \emph{value} of the $W^{s,q}$ norm would deteriorate with $\delta_0$. \\
We however remark that, as a matrix in $\intconv(K)$ is a convex combination of all three values of $e^{(1)}, e^{(2)}, e^{(3)}$, the described construction necessarily involves instances of case (b). It is however conceivable that by controlling the number of these steps, it could be possible to improve the dependence of $s,q$ on $\delta_0$. It is unclear (and maybe rather unlikely), whether it is possible to completely remove it with the described convex integration scheme.
\end{rmk}

\begin{rmk}[Fractal dimension] 
\label{rmk:frac1}
We emphasize that in accordance with Remark \ref{rmk:frac} the $W^{s,p}$ regularity of $\chi^{(i)}$ for $i\in\{1,2,3\}$ has direct implications on the (packing) dimension of the boundary of the sets $\{x\in \R^2: \chi^{(i)}(x)=1\}$.
\end{rmk}

Similarly, we obtain bounds on the deformation and the infinitesimal strain tensor:

\begin{prop}
\label{prop:reg1}
Let $\theta_0\in(0,1)$ be the exponent from Proposition \ref{prop:reg}. Then for all $s\in(0,1), p \in(1,\infty)$ with $sp <\theta_0$ there exist solutions $u:\Omega \rightarrow \R^2$ of (\ref{eq:incl}) with
\begin{align*}
\nabla u - M \in W^{s,p}(\R^{2}).
\end{align*}
\end{prop}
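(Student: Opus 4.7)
The plan is to mirror the proof of Proposition~\ref{prop:reg} almost verbatim, replacing the characteristic-function differences $\chi_{j+1}^{(i)} - \chi_j^{(i)}$ by the gradient differences $\nabla u_{j+1} - \nabla u_j$, and exploiting that Remark~\ref{rmk:prod} only uses an $L^\infty$ bound, not the characteristic-function structure. The three ingredients needed are: a uniform $L^\infty$ bound, an exponentially decaying $L^1$ bound on the differences, and a geometrically growing $BV$ bound.

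First, I would establish the uniform $L^\infty$ bound on $\nabla u_j$: Proposition~\ref{prop:symcontrol} gives $e(\nabla u_j) \in \overline{\intconv(K)}$ (hence uniformly bounded), and Proposition~\ref{prop:skewcontrol1} gives a uniform bound on the skew part $\omega_j$. By construction, $\nabla u_j = M$ outside $\Omega$, so $\|\nabla u_j - M\|_{L^\infty(\R^2)} \leq C$ uniformly in $j$. Second, since $\nabla u_{j+1}(x) = \nabla u_j(x)$ for $x \notin \Omega_j$ (the iteration modifies the deformation only where $e(\nabla u_j) \notin K$), the $L^1$ bound follows from the volume estimate $|\Omega_j| \leq (1 - \tfrac{7}{8} v_0)^j |\Omega|$ (see Proposition~\ref{thm:covering}(i)):
\begin{align*}
\|\nabla u_{j+1} - \nabla u_j\|_{L^1(\R^2)}
\leq 2 \|\nabla u_j\|_{L^\infty} |\Omega_j|
\leq C \left(1 - \tfrac{7}{8}v_0 \right)^j.
\end{align*}
Third, since $\nabla u_j - M$ is piecewise constant on the triangulation $\{\Omega_{j,k}\}_{k \in \{1,\ldots,J_j\}}$ (extended by $0$ outside $\Omega$), with jumps uniformly bounded in magnitude, one has
\begin{align*}
\|\nabla u_{j+1} - \nabla u_j\|_{BV(\R^2)}
\leq C \sum_{k=1}^{J_{j+1}} \Per(\Omega_{j+1,k})
\leq C \left(C_0 \delta_0^{-1}\right)^{j+1},
\end{align*}
exactly as in the $BV$ estimate underlying Lemma~\ref{lem:BV}.

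With these three bounds in hand, I would apply the interpolation estimate from Remark~\ref{rmk:prod} (equation~(\ref{eq:higher})), which is valid for \emph{any} bounded function, not merely for characteristic ones: for the critical product exponent $\theta_0$ as in Proposition~\ref{prop:reg}, the quantity
\begin{align*}
\|\nabla u_{j+1} - \nabla u_j\|_{L^1(\R^2)}^{1-\theta_0} \|\nabla u_{j+1} - \nabla u_j\|_{BV(\R^2)}^{\theta_0}
\leq C \left( (1-\tfrac{7}{8}v_0)^{1-\theta_0} (C_0/\delta_0)^{\theta_0} \right)^{j}
\end{align*}
is summable (in fact geometrically small) by the very definition $\theta_0 = \ln(1 - \tfrac{7}{8} v_0) / (\ln(1 - \tfrac{7}{8}v_0) + \ln(\delta_0) - \ln(C_0))$. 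Then Remark~\ref{rmk:prod} yields geometric decay of $\|\nabla u_{j+1} - \nabla u_j\|_{W^{s,p}(\R^2)}$ for every $(s,p)$ with $sp < \theta_0$, so that $\{\nabla u_j - M\}$ is Cauchy in $W^{s,p}(\R^2)$. Combined with the already established $L^p_{\mathrm{loc}}$ convergence $\nabla u_j \to \nabla u$ from Proposition~\ref{prop:convex_int}, the limit is identified with $\nabla u - M$.

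The only point requiring a small check is that Remark~\ref{rmk:prod} genuinely transfers to this setting: its derivation in (\ref{eq:control_int}) only invokes the elementary $L^p$-interpolation inequality $\|u\|_{L^p} \leq \|u\|_{L^\infty}^{1-1/p} \|u\|_{L^1}^{1/p}$, which holds for every bounded function and thus applies to $\nabla u_{j+1} - \nabla u_j$ component-wise. No further regularity of the differences is needed. Apart from this bookkeeping, the proof is mechanical — the principal work has already been done in Lemmas~\ref{lem:BV} and~\ref{lem:L1}, where the perimeter and volume estimates are in fact established for the underlying triangulation and are readily recycled for the vector-valued gradient.
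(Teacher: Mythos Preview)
Your proposal is correct and follows essentially the same approach as the paper: both replace the characteristic-function differences by $\nabla u_{j+1}-\nabla u_j$, invoke the uniform $L^\infty$ bound from Propositions~\ref{prop:symcontrol} and~\ref{prop:skewcontrol1}, recycle the perimeter estimate underlying Lemma~\ref{lem:BV} for the $BV$ bound, use the volume decay of $\Omega_j$ for the $L^1$ bound, and then interpolate. Your observation that Remark~\ref{rmk:prod} only requires an $L^\infty$ bound (not the full characteristic-function structure) is exactly the point that makes the argument go through, and the paper leaves this implicit.
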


\begin{proof}
The proof is along the lines of the proof of Proposition \ref{prop:reg}. However, instead of estimating $\chi_{j+1}^{(i)} - \chi_{j}^{(i)}$, we bound $\nabla u_{j+1}-\nabla u_j$. Here the $BV$ bound follows from the bound for $\chi_{j+1}^{(i)}$ by noting the uniform boundedness of $\nabla u_j$ (c.f. Proposition \ref{prop:skewcontrol1}) and the fact that for the estimate for $\chi_{j+1}^{(i)}$ we used the whole resulting perimeter. Hence
\begin{align*}
\|\nabla u_{j+1}- \nabla u_j\|_{BV(\R^{2})} \leq C \| \chi_{j+1}^{(i)}- \chi_j^{(i)}\|_{BV(\R^{2})} .
\end{align*}
For the $L^1$ estimate we use the $L^{\infty}$ bound for $\nabla u_j$ (which follows from Proposition \ref{prop:skewcontrol1}) and the fact that $e(\nabla u_j) \in \overline{\conv(K)}$)
in combination with the fact that in the $j$-th iteration step $\nabla u_j$ is only changed on a volume fraction of $\left(1-\frac{7}{8}v_0\right)^{j}$. Thus,
\begin{align*}
\|\nabla u_{j+1}- \nabla u_j\|_{L^1(\Omega)} \leq C \max\limits_{x\in \R^2}|\nabla u_j(x)|\left(1-\frac{7}{8}v_0\right)^{j}.
\end{align*}
Hence the same interpolation as above yields the $W^{s,p}(\R^2)$ regularity of $\nabla u - M$,
 which implies
the desired result.
\end{proof}

\section{General Domains}
\label{sec:generaldomains}
In this section we explain how to construct the desired ``regular" convex integration solutions in arbitrary Lipschitz domains by using the bounds from the special cases, which were discussed in Section \ref{sec:quant}.
In this context our main result is the following:

\begin{prop}
\label{prop:cover_a}
Assume that $\Omega\subset \R^2$ is a bounded Lipschitz domain and suppose that $M \in
\R^{2\times 2}$ with $e(M)\in \intconv(K)$. Let $\beta\in [0,2\pi)$ be the angle, with which the Conti construction for $M$ is rotated with respect to the $x_1$-axis and let $\chi_k^{(i)}$ be defined as in Definition \ref{defi:characteristic}.
Let $\theta_0>0$ be the $W^{s,p}$ exponent for the regularity of $\chi^{(i)}$ with respect to the rotated unit square $Q_{\beta}[0,1]^2$ adapted to $M$, i.e. let $\theta_0$ be such that for all $s\in(0,1), p\in (1,\infty]$ with $0<sp <\theta_0$ and for some $\mu(s,p)\in(0,1)$ it holds
\begin{align}
\label{eq:control_a}
\|\chi^{(i)}_{k+1}-\chi_{k}^{(i)}\|_{BV(Q_{\beta}[0,1]^2)}^{\theta}\|\chi^{(i)}_{k+1}-\chi_{k}^{(i)}\|_{L^1(Q_{\beta}[0,1]^2)}^{1-\theta} \leq C(s,p)\mu(s,p)^{-k}.
\end{align}
Then there exists a constant $C(\Omega,M,s,p)$ and a family of subsets $\bar{\Omega}_k\subset \Omega$ such that
\begin{itemize}
\item[(i)] $\bar{\Omega}_k:= \bigcup\limits_{l=1}^{k}  \bigcup\limits_{m=1}^{K_l} Q_{l}^m$, where $Q_l^m:= \left( [0,\lambda_l]^2 + x_{l,m} \right)$ are (up to null-sets) disjoint cubes with $x_{l,m} \in \Omega$ and $\lambda_l:= 2^{-l}$ such that
\begin{align*}
\bar{\Omega}_k \nearrow \Omega \mbox{ in } L^1(\R^2)
\end{align*}
(in the sense of the convergences of their characteristic functions),
\item[(ii)] for $\tilde{\chi}^{(i)}_k(x):= \sum\limits_{l=1}^{k} \sum\limits_{m=1}^{K_l} \chi_{k}^{(i)}(\frac{x-x_{m,l}}{\lambda_l})\chi_{\bar{\Omega}_k}(x)$ the estimate (\ref{eq:control_a}) remains valid for all $s,p$ with $s\in(0,1), p\in(1,\infty]$ and $0<sp<\theta_0$. In the dependences the constant $C(s,p)$ however is replaced by $C(\Omega,M,s,p)$ and $\mu(s,p)$ replaced by $C(\theta)\mu(s,p)$. Here $\theta=\theta(s,p)$ is the interpolation exponent associated with $s,p>0$.
\end{itemize}
\end{prop}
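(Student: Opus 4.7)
The plan is to tile $\Omega$ in a Whitney-type fashion by $\beta$-rotated dyadic cubes and to run the construction of Section \ref{sec:quant} independently on each cube, exploiting the scale invariance noted in Remark \ref{rmk:depend}. I would inductively define the level-$l$ cubes $Q_l^m = Q_\beta[0,\lambda_l]^2 + x_{l,m}$, with $\lambda_l = 2^{-l}$, as a maximal (up to null sets) disjoint family contained in $\Omega \setminus \bar{\Omega}_{l-1}$, starting from $\bar{\Omega}_0 = \emptyset$. The Lipschitz regularity of $\partial \Omega$ yields the standard Whitney bound $K_l \leq C(\Omega)\,\lambda_l^{-1} = C(\Omega)\,2^l$, while $\bar{\Omega}_k \nearrow \Omega$ follows from monotone convergence, since $\Omega \setminus \bar{\Omega}_k$ is contained in a shrinking neighborhood of $\partial \Omega$.

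On each cube $Q_l^m$ I would then apply Algorithms \ref{alg:construction} and \ref{alg:skew} adapted to the boundary datum $M$ on the reference square $Q_\beta[0,1]^2$, and transplant the resulting iterates to $Q_l^m$ via the rescaling $y \mapsto \lambda_l y + x_{l,m}$. By Remark \ref{rmk:depend} the error and ratio parameters $\epsilon_j,\delta_j$ are unchanged under this rescaling, while $L^1$ and $BV$ norms pick up factors $\lambda_l^2$ and $\lambda_l$, respectively. The resulting global field still satisfies the differential inclusion in each cube and agrees with $M$ outside $\bar{\Omega}_k$, so that in the limit $k \to \infty$ one obtains a solution of (\ref{eq:incl}) on all of $\Omega$.

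The heart of the argument lies in estimating $\tilde{\chi}_{k+1}^{(i)} - \tilde{\chi}_k^{(i)}$. This difference splits into two contributions: \textbf{(a)} a \emph{refinement} part from the cubes $Q_l^m$ with $l \leq k$, where one further iteration step is performed inside each existing cube; and \textbf{(b)} a \emph{new cube} part from the $K_{k+1}$ cubes added at level $k+1$, where $\tilde{\chi}^{(i)}$ jumps from $0$ to the rescaled $(k{+}1)$-st iterate inside the cube. For (a), disjointness of supports gives additive $L^1$ and BV norms, and the interpolation product factorises as
\begin{equation*}
\Bigl( \sum_{l=1}^k K_l \lambda_l^2 \Bigr)^{1-\theta} \Bigl( \sum_{l=1}^k K_l \lambda_l \Bigr)^{\theta} \, \|\chi_{k+1}^{(i)} - \chi_k^{(i)}\|_{L^1(Q_\beta[0,1]^2)}^{1-\theta} \, \|\chi_{k+1}^{(i)} - \chi_k^{(i)}\|_{BV(Q_\beta[0,1]^2)}^{\theta}.
\end{equation*}
With $K_l \lesssim 2^l$ and $\lambda_l = 2^{-l}$, the first geometric sum is uniformly bounded while the second grows only linearly in $k$, producing a polynomial loss $k^{\theta}$ that can be absorbed into a slightly enlarged base $C(\theta)\,\mu(s,p)$ in the geometric rate.

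The main obstacle is controlling (b), since the BV bound on each new cube carries the large factor $\delta_0^{-(k+1)}$ from Lemma \ref{lem:BV}. Combining the cube-boundary cost $\lesssim \lambda_{k+1}$ with the total $L^1$ mass $\lesssim K_{k+1}\lambda_{k+1}^2$, the interpolation product for (b) becomes
\begin{equation*}
K_{k+1}\,\lambda_{k+1}^{2-\theta}\,\delta_0^{-\theta(k+1)} \lesssim \bigl(2^{-(1-\theta)}\,\delta_0^{-\theta}\bigr)^{k+1},
\end{equation*}
which decays geometrically precisely when $\theta < \ln 2/(\ln 2 + |\ln \delta_0|)$. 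Comparing this threshold with the explicit formula for $\theta_0$ from Proposition \ref{prop:reg}, whose numerator $|\ln(1-\tfrac{7}{8}v_0)|$ is orders of magnitude smaller than $\ln 2$, shows that $\theta_0$ lies strictly below the above threshold, so the binding constraint remains the unit-cube one and (b) decays at least as fast as (a). Combining both contributions with the assumed bound (\ref{eq:control_a}) yields property (ii), and together with the exhaustion property of step one this completes the proof.
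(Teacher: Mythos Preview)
Your approach is essentially the same Whitney-type strategy the paper uses: exhaust $\Omega$ by dyadic $\beta$-rotated cubes, count at most $C\lambda_l^{-1}$ cubes at level $l$ via the Lipschitz regularity (the paper's Claim~\ref{claim:1}), transplant the reference-square iterates by rescaling, and sum the resulting geometric series. Your argument is correct.

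In two respects you are actually more careful than the paper. First, the paper's Step~4 asserts the inequality
\[
\Bigl(\sum_{l,m} a_{l,m}\Bigr)^{\theta}\Bigl(\sum_{l,m} b_{l,m}\Bigr)^{1-\theta} \;\leq\; \sum_{l,m} a_{l,m}^{\theta} b_{l,m}^{1-\theta},
\]
which is the \emph{wrong direction} of H\"older's inequality; your factorisation
$(\sum K_l\lambda_l^2)^{1-\theta}(\sum K_l\lambda_l)^{\theta}\cdot\|\chi_{k+1}^{(i)}-\chi_k^{(i)}\|_{L^1}^{1-\theta}\|\chi_{k+1}^{(i)}-\chi_k^{(i)}\|_{BV}^{\theta}$
is the correct way to handle this, at the harmless cost of a polynomial factor $k^{\theta}$. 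Second, the paper's sum in Step~4 runs only over $l\leq k$ and thus silently omits your contribution~(b) from the newly added cubes at level $k+1$; your observation that this term is governed by the threshold $\ln 2/(\ln 2+|\ln\delta_0|)$, which strictly dominates $\theta_0$ because $|\ln(1-\tfrac{7}{8}v_0)|\ll\ln 2$, closes that gap cleanly.
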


As an immediate consequence we infer the following corollary:

\begin{cor}
\label{cor:domain1}
Suppose that $\Omega\subset \R^2$ is a bounded Lipschitz domain and assume that $M\in \intconv(K)$. Let $\beta\in[0,2\pi]$ be the angle, with which the Conti construction for $M$ is rotated with respect to the $x_1$-axis. Let $\theta_0>0$ be the limiting $W^{s,p}$ exponent for the regularity of $\chi^{(i)}$ with respect to the rotated unit square $Q_{\beta}[0,1]^2$ adapted to $M$. Then for all $s,p>0$ with $0<sp<\theta_0$
\begin{itemize}
\item[(i)] the point-wise limit $\tilde{\chi}^{(i)}$ of the functions   $\tilde{\chi}^{(i)}_k$ satisfies
\begin{align*}
\| \tilde{\chi}^{(i)}\|_{W^{s,p}(\Omega)} \leq C(\Omega, M, s,p).
\end{align*}
\item[(ii)] there exist solutions $u$ to (\ref{eq:incl}) with
\begin{align*}
\|\nabla u\|_{W^{s,p}(\Omega)} \leq C(\Omega, M, s,p).
\end{align*}
\end{itemize}
\end{cor}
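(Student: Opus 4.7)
The plan is to derive Corollary \ref{cor:domain1} as a direct consequence of Proposition \ref{prop:cover_a} by running the same telescope sum argument that was used in the proofs of Propositions \ref{prop:reg} and \ref{prop:reg1}, but now on the domain $\Omega$ equipped with the Whitney-type cube decomposition supplied by Proposition \ref{prop:cover_a}.

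For part (i), I would start from the bound (\ref{eq:control_a}) applied to the modified sequence $\tilde{\chi}_k^{(i)}$, which is valid on $\Omega$ by Proposition \ref{prop:cover_a} (ii). Fixing $s, p$ with $0 < sp < \theta_0$ and selecting $\theta \in (0,\theta_0)$ accordingly, I would invoke Remark \ref{rmk:prod} to upgrade the weighted $L^1$–$BV$ bound for $\tilde{\chi}_{k+1}^{(i)} - \tilde{\chi}_k^{(i)}$ into a full $W^{s,p}(\R^2)$ bound with geometric decay of the form $C(\Omega, M, s, p)(C(\theta) \mu(s,p))^{k}$. Provided $sp$ is chosen so that this rate is strictly less than one — which is guaranteed by the hypothesis $sp < \theta_0$ in Proposition \ref{prop:cover_a} (ii), possibly at the cost of shrinking $\theta_0$ by an absolute factor — a telescoping sum shows that $\{\tilde{\chi}_k^{(i)}\}$ is a Cauchy sequence in $W^{s,p}(\R^2)$. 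By completeness, the pointwise a.e. limit $\tilde{\chi}^{(i)}$ lies in $W^{s,p}(\Omega)$ with the stated estimate.

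For part (ii), I would build the global deformation by patching. On each cube $Q_l^m$ from the covering, Proposition \ref{prop:convex_int} provides a Lipschitz convex integration solution $u_{l,m}$ with $u_{l,m}(x) = Mx$ on $\R^2 \setminus Q_l^m$ (possibly after adjusting by the skew part $\omega(M)x$ as in the proof of Proposition \ref{prop:convex_int}). Since all local solutions agree with $Mx$ on the cube boundaries, they glue into a single Lipschitz function $u$ on $\bigcup_{l,m} Q_l^m$, and extending by $u(x) := Mx$ on the remainder yields a global Lipschitz map. Part (i) of Proposition \ref{prop:cover_a} ensures that $\Omega \setminus \bigcup_{l,m} Q_l^m$ has Lebesgue measure zero, so $e(\nabla u) \in K$ a.e. in $\Omega$. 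The $W^{s,p}$ estimate on $\nabla u$ is then obtained by repeating the argument from the proof of Proposition \ref{prop:reg1}: the $L^{\infty}$ bound on $\nabla u_k$ from Proposition \ref{prop:skewcontrol1} together with the perimeter control of Proposition \ref{prop:cover_a} gives the $BV$ bound on $\nabla u_{k+1} - \nabla u_k$, while the exponential volume decay gives the $L^1$ bound, and interpolation (Theorem \ref{thm:interpol} combined with Remark \ref{rmk:prod}) produces $W^{s,p}$ Cauchy sequences.

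The main obstacle is the bookkeeping in part (ii): one needs to ensure that the iterates $\nabla u_k$, built simultaneously on infinitely many cubes of different scales $\lambda_l = 2^{-l}$, obey uniform per-cube bounds and are assembled so that the cube-wise $BV$ and $L^1$ estimates aggregate to global ones with geometric decay. Because the local constructions of Section \ref{sec:quant} are invariant under rescaling (c.f. Remark \ref{rmk:depend}), rescaling each $Q_l^m$ to $Q_\beta[0,1]^2$ preserves both the aspect ratios in the Conti construction and the estimates, so the per-cube bounds are uniform in $(l,m)$. Summing these bounds with weights given by the cube sizes — which is exactly the content of Proposition \ref{prop:cover_a} (ii) — yields the global estimates, and hence the claimed $W^{s,p}$-regularity of $\nabla u$.
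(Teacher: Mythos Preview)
Your proposal is correct and follows essentially the same approach as the paper: deduce both parts from Proposition \ref{prop:cover_a} by running the interpolation-plus-telescope argument of Propositions \ref{prop:reg} and \ref{prop:reg1} on the Whitney-type covering, with the gluing of local solutions already implicit in the definition of $\tilde{u}_k$ in Step 3 of the proof of Proposition \ref{prop:cover_a}. Your caveat about possibly shrinking $\theta_0$ is overly cautious --- the energy estimate in Step 4 of the proof of Proposition \ref{prop:cover_a} shows that the decay rate $\mu(s,p)^k$ is preserved (only the prefactor changes), so no adjustment of $\theta_0$ is needed.
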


\begin{proof}[Proof of Corollary \ref{cor:domain1}]
We first note that the point-wise limit $\tilde{\chi}^{(i)}$ exists, since $\bar{\Omega}_k \rightarrow \Omega$ and since $\chi_k^{(i)} \rightarrow \chi_k$ in a point-wise sense as $k\rightarrow \infty$.
With this at hand, the proof of Corollary \ref{cor:domain1} follows from Proposition \ref{prop:cover_a} by interpolation in an analogous way as explained in Proposition \ref{prop:reg}. 
We therefore omit the details of the proof of the corollary.
\end{proof}

\begin{figure}[t]
\centering
\includegraphics[width=6cm,page=14]{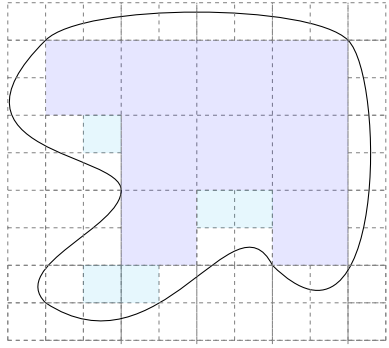}
\caption{The covering of $\Omega$ by squares of decreasing sizes defines the set $\Omega_k$.}
\label{fig:general_domain}
\end{figure}

We proceed to the proof of Proposition \ref{prop:cover_a}. Here we argue by covering our general domain $\Omega$ by the special domains from Section \ref{sec:quant} (Steps 1 and 2). On each of the special domains, we apply the construction from Section \ref{sec:quant} (c.f. also Algorithms \ref{alg:construction}, \ref{alg:skew}). In order to obtain a sequence with bounded $W^{s,p}$ norm, we however do not refine to arbitrarily fine scales immediately, but proceed iteratively (c.f. Step 3). A central point here is to control the necessary number of cubes at each scale (Claim \ref{claim:1}), since this has to be balanced with the corresponding energy contribution (c.f. Step 4). To this end, we use a ``volume argument", which by the Lipschitz regularity of the domain allows us to infer information on the number of cubes on each scale (c.f. Proof of Claim \ref{claim:1}).

\begin{proof}[Proof of Proposition \ref{prop:cover_a}]
\emph{Step 1: Covering of a general Lipschitz domain.} We may assume that
  $M=0$ and first consider the case of $\Omega$ being a domain, which is bounded
  by the $x_2$-axis, the segment $[0,1]\times \{0\}$, a Lipschitz graph
  $f:[0,1]\rightarrow \R$ and the segment $\{1\}\times [0,f(1)]$. By symmetry we may further assume that $f(x_1)\geq 0$ for all $x_1\in[0,1]$. For general
domains $\Omega$, by the compactness and Lipschitz regularity, we may locally
reduce to a similar case, where $f$ is a Lipschitz curve, but not necessarily a
graph. However, all arguments in the following extend to that case as well.
\\

\emph{Step 2: Counting cubes.}
Let $\tilde{\Omega}_l:= \bigcup\limits_{k=1}^{K_l} Q_l^k$, where $Q_l^k\subset \Omega$ are (up to zero sets) disjoint, grid cubes of an axis-parallel grid of grid size $\lambda_l:=2^{-l}$. We choose $K_l\in \N$ maximal. Thus, by definition we have that $\tilde{\Omega}_l \subset \tilde{\Omega}_{l+1}\subset \Omega$ for all $l\in \N$. In the limit $l\rightarrow \infty$ the sets $\tilde{\Omega}_l$ eventually cover the whole set $\Omega$ (which we assume to be as in Step 1). \\
We estimate the number of the cubes, which are contained in the sets $\tilde{\Omega}_{l+1}\setminus \tilde{\Omega}_l$. For these we claim:

\begin{claim}
\label{claim:1}
The set $\tilde{\Omega}_{l+1}\setminus \tilde{\Omega}_l$ contains at most $C_f \lambda_{l+1}^{-1}$ of the grid cubes $Q_{l+1}^k\subset \Omega$.
\end{claim}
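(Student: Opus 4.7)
The plan is to argue by a tubular-neighborhood volume bound against the Lipschitz graph $f$. First, I would observe that any cube $Q_{l+1}^k \subset \tilde\Omega_{l+1} \setminus \tilde\Omega_l$ must lie close to $\partial \Omega$: indeed, the dyadic grid at scale $\lambda_l$ is a refinement of the grid at scale $\lambda_{l+1}$ in the sense that each cube of side $\lambda_l$ is the union of four cubes of side $\lambda_{l+1}$. Hence if $Q_{l+1}^k$ were contained in some $\lambda_l$-grid cube $\tilde Q \subset \Omega$, then $\tilde Q$ would have been counted in $\tilde\Omega_l$ and $Q_{l+1}^k \subset \tilde Q \subset \tilde\Omega_l$, contradicting $Q_{l+1}^k \subset \tilde\Omega_{l+1} \setminus \tilde\Omega_l$. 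Therefore the parent $\lambda_l$-cube of $Q_{l+1}^k$ must intersect $\partial \Omega$, which implies $\dist(Q_{l+1}^k, \partial \Omega) \leq 2\lambda_l = 4\lambda_{l+1}$.

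Next, I would use the Lipschitz regularity of the boundary to control the area of the tubular neighborhood
\[
T_{l+1} := \{ x \in \Omega : \dist(x, \partial \Omega) \leq 4 \lambda_{l+1} \}.
\]
Since $\partial \Omega$ consists of the two axis-parallel segments (each of length at most $1 + \|f\|_{L^\infty}$) and the graph of the Lipschitz function $f:[0,1]\to\R$ with Lipschitz constant $L_f$, the graph has $\mathcal{H}^1$-length at most $\sqrt{1+L_f^2}$. A standard covering estimate then gives
\[
|T_{l+1}| \leq C_f \lambda_{l+1},
\]
where $C_f$ depends only on $L_f$ and the diameter of $\Omega$.

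Finally, since the $\lambda_{l+1}$-cubes $Q_{l+1}^k$ involved are pairwise (up to null sets) disjoint and each has area $\lambda_{l+1}^2$, the number of such cubes in $\tilde\Omega_{l+1} \setminus \tilde\Omega_l$ is bounded by
\[
\#\{ k : Q_{l+1}^k \subset \tilde\Omega_{l+1} \setminus \tilde\Omega_l \} \leq \frac{|T_{l+1}|}{\lambda_{l+1}^2} \leq \frac{C_f}{\lambda_{l+1}},
\]
which is exactly the claimed bound. The only real subtlety I expect is the reduction step in which one passes from a general bounded Lipschitz $\Omega$ to the graph case of Step 1 (via a finite covering by coordinate charts and rotations, with the Lipschitz constants remaining uniformly controlled); the tubular-neighborhood estimate itself is standard once the boundary is expressed locally as a Lipschitz graph.
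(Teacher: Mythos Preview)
Your argument is correct and is in fact the cleaner, more standard route. The key observation that any $\lambda_{l+1}$-cube in $\tilde\Omega_{l+1}\setminus\tilde\Omega_l$ must have its parent $\lambda_l$-cube meet $\partial\Omega$ (by maximality of $K_l$), and hence lie in a tubular neighborhood of width $O(\lambda_{l+1})$ around $\partial\Omega$, is exactly the Whitney-type reasoning one expects here; the volume bound $|T_{l+1}|\le C_f\lambda_{l+1}$ for a Lipschitz boundary of finite $\mathcal{H}^1$-length is standard, and dividing by $\lambda_{l+1}^2$ gives the count. (One minor wording slip: it is the $\lambda_{l+1}$-grid that refines the $\lambda_l$-grid, not the other way around; but your use of the parent--child relation is correct.)

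The paper argues differently and more specifically to the graph representation of Step~1: it introduces the subgraph $S(f,l)$ of $f-C_f\lambda_l$, observes that every point of $S(f,l)\cap\Omega$ is covered by $\tilde\Omega_l$, bounds the number of $\lambda_l$-cubes in $\tilde\Omega_l\setminus S(f,l)$ by a volume comparison, and then compares $|S(f,l+1)|-|S(f,l)|=C_f\lambda_{l+1}$. This yields the same bound but is somewhat more circuitous. Your tubular-neighborhood argument is more geometric and applies directly to any domain with finite-perimeter Lipschitz boundary without first reducing to a graph chart; the paper's approach has the modest advantage of making the constant's dependence on $[\nabla f]_{C^{0,1}}$ explicit (cf.\ Remark~\ref{rmk:Lipconstant}), though yours does so just as well via $\mathcal{H}^1(\partial\Omega)\le C(1+L_f)$.
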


\begin{proof}[Proof of Claim \ref{claim:1}]
Indeed, we first observe that for a sufficiently large constant $C_f$ (depending on $f$, c.f. Remark \ref{rmk:Lipconstant}) and for a sufficiently large value of $l\in \N$ every point $x\in\Omega$ in the subgraph $S(f,l)$ of $f-C_f \lambda_{l}$ is contained in a cube $Q_l^k\subset \tilde{\Omega}_l$. Hence at least a volume of size 
\begin{align*}
|S(f,l) \cap \Omega|:=\int\limits_{0}^{1}f(x)dx - C_f \lambda_l,
\end{align*}
is completely covered by cubes of size $\lambda_l$. There may be additional cubes of size $\lambda_l$ contained in $\tilde{\Omega}_l$. As however only a volume of size $C_f \lambda_l$ is left and as each cube has volume $\lambda_l^{-2}$, the number of these additional cubes is controlled by
\begin{align*}
\#\{\mbox{grid cubes of size $\lambda_l$ in } \tilde{\Omega}_l \setminus S(f,l)\} \leq 2 C_f \lambda_l \lambda_l^{-2} 
\leq 2 C_f \lambda_l^{-1}.
\end{align*}
Combining this with the observation that 
\begin{align*}
|\Omega\cap S(f,l+1)|-|\Omega \cap S(f,l)| = C_f (\lambda_{l}- \lambda_{l+1}) 
= C_f \lambda_{l+1},
\end{align*}
which implies that $S(f,l+1)$ has at most $C_f \lambda_{l+1}^{-1}$ more cubes of size $\lambda_{l+1}$ than $S(f,l)$,
we infer that 
$\tilde{\Omega}_{l+1}\setminus \tilde{\Omega}_{l}$
contains at most $4 C_f \lambda_{l+1}^{-1}$ cubes of size $\lambda_{l+1}$.
\end{proof}

\emph{Step 3: Definition of the algorithm.}
We use the following definitions 
\begin{align*}
\hat{\Omega}_1:=\tilde{\Omega}_1,\
\hat{\Omega}_l:= \Omega_l \setminus \bigcup\limits_{j=1}^{l-1} \tilde{\Omega}_l \mbox{ for } l\geq 2.
\end{align*}
With this we set (as illustrated in Figure \ref{fig:general_domain})
\begin{align*}
\bar{\Omega}_k:=\bigcup\limits_{j=1}^{k}\hat{\Omega}_j.
\end{align*}
We recall that by Claim \ref{claim:1} we have that
$
\bar{\Omega}_{k+1}\setminus \bar{\Omega}_k $
is a union of at most $C C_f \lambda_{k+1}^{-1}$ cubes of side lengths $\lambda_{k+1}$.\\
Denoting by $u_{k}$ the deformation in step $k$ of the Algorithms \ref{alg:construction}, \ref{alg:skew} with initialization as in Step 1, we define the deformation $\tilde{u}_k|_{\bar{\Omega}_k}$ on $\bar{\Omega}_k$ in the $k$-th step as
\begin{align*}
\tilde{u}_k(x):=
\left\{
\begin{array}{ll}
 \lambda_l u_{k}(\lambda_l^{-1} (x-x_{l,k})) &\mbox{ for } x \in Q_{l}^k \subset \hat{\Omega}_l \cap \bar{\Omega}_k,\\
0 &\mbox{ for } x \notin \bar{\Omega}_k,
\end{array} \right.
\end{align*}
where $x_{l,k}\in Q_{l}^k$ denotes the center of the cube $Q_l^k$. We observe that $\tilde{u}_{k}$ is a Lipschitz function (since $M=0$). We define $\tilde{\chi}_k^{(i)}$ as the associated characteristic function for the well $e^{(i)}$, i.e.
\begin{align*}
\tilde{\chi}_k(x):=
\left\{
\begin{array}{ll}
1 &\mbox{ if } e(\nabla u)(x) = e^{(i)},\\
0 &\mbox{ else}.
\end{array} \right.
\end{align*}

\emph{Step 4: Energy estimate.}
We note that if 
\begin{align*}
E_{k,1}:=\|\chi_{k+1}^{(i)}-\chi_k^{(i)}\|_{BV([0,1]^2)}^{\theta}\|\chi_{k+1}^{(i)}-\chi_k^{(i)}\|_{L^1([0,1]^2)}^{1-\theta} \leq C \mu(s,p)^{k},
\end{align*}
scaling implies that
\begin{align*}
E_{k,l}:=\|\chi_{k+1}^{(i)}-\chi_k^{(i)}\|_{BV([0,\lambda_l]^2)}^{\theta}\|\chi_{k+1}^{(i)}-\chi_k^{(i)}\|_{L^1([0,\lambda_l]^2)}^{1-\theta} \leq C \mu(s,p)^{k} \lambda_l^{2-\theta}.
\end{align*}
Hence, we estimate
\begin{align*}
E_k&:=\|\tilde{\chi}_{k+1}^{(i)}-\tilde{\chi}_k^{(i)}\|_{BV(\Omega_{k+1})}^{\theta}\|\tilde{\chi}_{k+1}^{(i)}-\tilde{\chi}_k^{(i)}\|_{L^1(\Omega_{k+1})}^{1-\theta}\\
& \leq \sum \limits_{l=1}^{k} E_{k,l} \#\{Q_l^{k} \subset \hat{\Omega}_l\cap \bar{\Omega}_k: Q_l^k \mbox{ is a grid cube of size } \lambda_l\}\\
& \stackrel{\text{Claim \ref{claim:1}}}{\leq} C C_f \sum \limits_{l=1}^{k} E_{k,l} \lambda_l^{-1}= C C_f \sum \limits_{l=1}^{k} \mu(s,p)^{k} \lambda_l^{2-\theta} \lambda_l^{-1}\\
&= C C_f \mu(s,p)^k\sum \limits_{l=1}^{k} 2^{-l(1-\theta)} \\
& \leq C(\theta) C_f \mu(s,p)^k  \rightarrow 0 \mbox{ as } k \rightarrow \infty.
\end{align*}
Thus, for $s,p$ as above, the sequences $\tilde{\chi}_{k}^{(i)}$ are still Cauchy in $W^{s,p}$. This concludes the proof.
\end{proof}

\begin{rmk}
\label{rmk:Lipconstant}
The constant $C_f$ from Claim \ref{claim:1} can be controlled by
$C [\nabla f]_{C^{0,1}(\Omega)}$, for some universal constant $C>1$.
\end{rmk}

\section*{Acknowledgements}
We would like to thank Sergio Conti for suggesting the problem and for useful discussions. Further, we would like to thank Felix Otto for making his Minneapolis lecture notes available to us and for helpful discussions on the project.

\section{Appendix: A Construction Using Symmetry}
\label{sec:append}

In this final section we recall a special solution to (\ref{eq:incl}) with $M=0$, which enjoys $BV$ regularity (c.f. \cite{C}, \cite{Pompe}, \cite{CPL14}). This construction crucially relies on symmetry. It hence gives rise to the question whether it is possible to exploit symmetry in a more systematic way in constructing ``regular" convex integration solutions.\\

In describing this particular solution, we first present all possible zero homogeneous solutions to the differential inclusion (\ref{eq:incl_1}) in Section \ref{sec:exact}. In Section \ref{sec:BV} we then rely on this to construct the desired solution with $BV$ regularity.

\subsection{Exactly stress-free configurations for the hexagonal\hyp to\hyp rhombic phase transformation}
\label{sec:exact}

\begin{figure}[t]
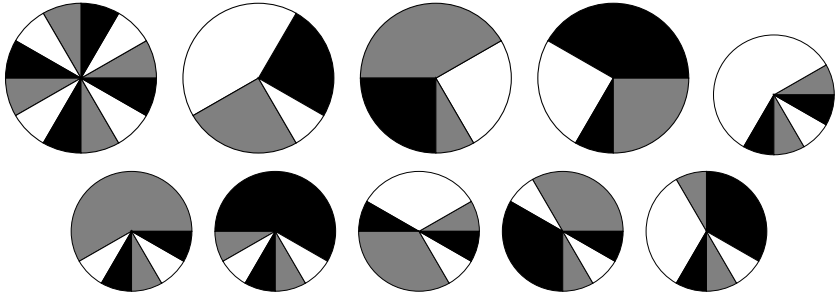

\centering
\includegraphics[scale=0.5, page=24]{figures.pdf}
\includegraphics[scale=0.5, page=31]{figures.pdf}
\includegraphics[scale=0.5, page=32]{figures.pdf}
\includegraphics[scale=0.5, page=33]{figures.pdf}
\includegraphics[scale=0.4, page=25]{figures.pdf}
\includegraphics[scale=0.4, page=26]{figures.pdf}
\includegraphics[scale=0.4, page=27]{figures.pdf}
\includegraphics[scale=0.4, page=28]{figures.pdf}
\includegraphics[scale=0.4, page=29]{figures.pdf}
\includegraphics[scale=0.4, page=30]{figures.pdf}

\caption{The zero-homogeneous exactly stress-free configurations for the hexagonal-to-rhombic phase transition. The white sectors correspond to the variant $e^{(1)}$, the gray ones to $e^{(2)}$ and the black ones to $e^{(3)}$:
There is a twelve-fold corner, up to symmetry (i.e. rotations by $\frac{\pi}{2}$) a single variant of a four-fold corner and up to symmetry two different variants of a six-fold corner.}
\label{fig:zeroh}
\end{figure}

The high degree of non-rigidity of the hexagonal-to-rhombic phase transition is reflected in a comparably large number of possible solutions to (\ref{eq:incl_1}). There is already a large number of solutions with \emph{homogeneous} strain, i.e. solutions $u$ of (\ref{eq:incl_1}) such that all the phases intersect in a single point and the strains are zero-homogeneous functions $e(\nabla u)(\lambda x)= e(\nabla u)(x)$ (c.f. Figure \ref{fig:zeroh}). 
To verify this, we recall, c.f. Lemma 17 in \cite{R16}, that the following conditions are necessary and sufficient for the presence of such a corner.

\begin{lem}[Compatibility condition at a zero-homogeneous corner]
\label{lem:compat}
Let $e:\R^2 \rightarrow \R^{2\times 2}_{sym}$ be a zero-homogeneous tensor field. Let $A_1,\dots,A_m \in \R^{2\times 2}_{sym}$ be symmetric matrices such that $A_{j}\neq A_{j+1}$, where $j,j+1$ are considered modulo $m$. 
Assume that along a closed circle surrounding the origin, $e$ successively attains 
the values $A_1,\dots,A_m$ (as for instance in Figure \ref{fig:zeroh}). Then $e$ is a strain tensor, i.e. there exists a function $u\in W^{1,\infty}_{loc}(\R^2)$ such that $e=e(\nabla u)$, if and only if the following two conditions are satisfied:
\begin{enumerate}
\item There exist vectors $a_i\in \R^2\setminus \{0\}$, $n_i\in \mathbb{S}^1$ such that
\begin{align*}
A_{i}-A_{i+1} = \frac{1}{2}(a_i\otimes n_i + n_i \otimes a_i) \mbox{ for } i\in \{1,...,m\}.
\end{align*}
\item $\sum\limits_{i=1}^{m}a_i\otimes n_i = 0.$
\end{enumerate}
\end{lem}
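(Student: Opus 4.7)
The plan is to reduce the statement to Hadamard's jump condition applied along a closed loop around the origin. The first step is to establish that on each open sector where $e$ is constant, the full gradient $\nabla u$ must also be constant. This is a consequence of the two-dimensional Saint-Venant compatibility condition: on a simply connected domain the skew part $\omega(\nabla u)$ is determined by $e(\nabla u)$ up to an additive skew constant via $\partial_j \omega_{ik} = \partial_k e_{ij} - \partial_i e_{jk}$, so if $e$ is constant on a sector (which is simply connected), then $\omega$ is constant as well, and hence $\nabla u \equiv M_i$ on the $i$-th sector with $e(M_i) = A_i$.

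For the necessity direction, I would use that since $u \in W^{1,\infty}_{\text{loc}}$, the trace of $u$ from both sides of each interface ray must agree. With $n_i \in \mathbb{S}^1$ denoting the unit normal to the ray separating the $i$-th and $(i+1)$-th sector, the Hadamard jump condition forces $M_i - M_{i+1} = a_i \otimes n_i$ for some $a_i \in \R^2$. Taking symmetric parts immediately yields condition~(1) of the lemma. Summing the telescoping identities $M_i - M_{i+1} = a_i \otimes n_i$ over $i = 1, \dots, m$ (indices taken modulo $m$) and using that $M_{m+1} = M_1$ gives $\sum_{i=1}^m a_i \otimes n_i = 0$, which is condition~(2).

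For sufficiency, I would proceed constructively. Pick any $M_1 \in \R^{2\times 2}$ with $e(M_1) = A_1$ and define iteratively
\begin{align*}
M_{i+1} := M_i - a_i \otimes n_i, \quad i = 1,\dots,m.
\end{align*}
Condition~(1) ensures $e(M_{i+1}) = e(M_i) - a_i \odot n_i = A_i - (A_i - A_{i+1}) = A_{i+1}$, while condition~(2) guarantees $M_{m+1} = M_1$, so the assignment of a constant gradient $M_i$ to the $i$-th sector is globally consistent. Define $u$ to be one-homogeneous by setting $u(x) := M_i x$ on the $i$-th sector (with $u(0) = 0$). Since $M_i - M_{i+1} = a_i \otimes n_i$ is a rank-one matrix whose normal coincides with the interface normal, the tangential derivative of $u$ agrees from both sides of each ray; combined with one-homogeneity and the vanishing at the origin, this gives continuity of $u$ across every ray, and hence $u \in W^{1,\infty}_{\text{loc}}(\R^2)$ with $e(\nabla u) = e$.

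The most delicate point I would expect is the rigorous treatment of the origin, which is a measure-zero set where $\nabla u$ is not defined and where all rays meet; the one-homogeneous ansatz handles this cleanly, because $|u(x)| \lesssim |x|$ forces continuity at $0$ without any further compatibility. A minor additional bookkeeping issue is that the unit normals $n_i$ are determined by the interface geometry only up to sign, so one must fix an orientation convention (e.g.\ outward from sector $i$ into sector $i+1$) and correspondingly adjust the sign of $a_i$ — this is a cosmetic modification that does not affect either condition~(1) or~(2).
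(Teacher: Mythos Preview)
The paper does not give its own proof of this lemma; it cites Lemma~17 in \cite{R16} and only adds the one-line gloss that ``the first condition corresponds to tangential continuity along the interfaces of the jumps'' and ``the second requirement ensures the compatibility of the skew symmetric part of the gradient.'' Your argument is precisely a fleshed-out version of this: Hadamard's jump condition across each ray gives (1), and the telescoping closure condition on the full gradients $M_i$ around the loop gives (2); the converse is the one-homogeneous patching you describe. So your proposal is correct and coincides with the approach the paper alludes to.

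One small clarification worth making explicit: in the lemma as stated both $a_i$ and $n_i$ are existentially quantified, which in two dimensions would make condition~(1) vacuous (every symmetric $2\times 2$ matrix is a symmetrized rank-one, c.f.\ Lemma~\ref{lem:rk1}). Your proof correctly reads the intended meaning, namely that the $n_i$ are the normals to the given interface rays (fixed by the sector geometry of $e$) and only the $a_i$ are free; this is also what the paper's phrase ``tangential continuity along the interfaces'' signals. You already flag this as a sign/orientation bookkeeping issue, which is the right way to handle it.
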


Here the first condition corresponds to tangential continuity along the interfaces of the jumps. The second requirement ensures the compatibility of the skew symmetric part of the gradient.\\

Keeping this in mind, a symbolic Mathematica computation allows to determine all possible zero-homogeneous corners. This leads to the following classification result:
 
\begin{obs}
Apart from simple laminates (which trivially exist due to the symmetrized rank-one connectedness of the strains) there are the following compatible zero-homogeneous configurations of strains (c.f. Figure \ref{fig:zeroh}):
\begin{itemize}
\item a single configuration involving twelve strains, 
\item (up to symmetry) two types of configurations involving six strains, 
\item (up to symmetry) one configuration involving four strains.
\end{itemize} 
\end{obs}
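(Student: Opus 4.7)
The plan is to reduce the classification to a finite combinatorial search based on Lemma \ref{lem:compat} together with the geometric information on symmetrized rank-one connections from Lemmas \ref{lem:geo2} and \ref{lem:rk1}. First, by Lemma \ref{lem:rk1} any two distinct wells in $K$ are symmetrized rank-one connected, and by Lemma \ref{lem:geo2} the three pairs of wells give three specific decompositions $e^{(i)}-e^{(j)} = a_{ij}\odot n_{ij}$ with $a_{ij}\perp n_{ij}$ (by the trace-free property). Since $a\odot n = n\odot a$, one may exchange the roles of $a$ and $n$ after rescaling, so that each pair of wells admits exactly two possible interface normals in $S^1$ (up to sign), one along $n_{ij}$ and one along $a_{ij}/|a_{ij}|$; moreover these two directions are perpendicular to each other. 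Across all three pairs this gives precisely six admissible interface normals, organised into three mutually perpendicular pairs.

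Next, since every interface of a zero-homogeneous configuration is a half-line through the origin whose unit normal lies in this list of six directions, the total number $m$ of sectors at a corner is at most twelve. For each $m\in\{3,\dots,12\}$ condition (1) of Lemma \ref{lem:compat} amounts to requiring a cyclic sequence $(A_1,\dots,A_m)$ of wells with $A_j\neq A_{j+1}$ together with an ordered sequence of admissible normals $(n_1,\dots,n_m)$ of strictly increasing polar angle filling the full circle, such that $A_j-A_{j+1}$ is a positive multiple of the symmetrized rank-one direction associated with $n_j$. In particular, once $n_j$ and one of the wells $A_j,A_{j+1}$ are chosen, the other well is essentially forced (up to which side of the interface it lies on).

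The only nontrivial constraint is then condition (2), $\sum_{j=1}^{m} a_j\otimes n_j = 0$. Its symmetric part vanishes automatically, since $\sum_j a_j\odot n_j = \sum_j (A_j-A_{j+1})=0$ by telescoping. Its skew part, via the identification $\Skew(2)\cong \R$ of Notation \ref{not:skew}, reduces to the single scalar equation $\sum_j \omega(a_j\otimes n_j)=0$. One enumerates the finitely many cyclic sequences compatible with condition (1), modulo the dihedral symmetry group of $K$ generated by the rotations of the equilateral triangle $\{e^{(1)},e^{(2)},e^{(3)}\}$ and by sign reversal, and checks the scalar skew condition in each case. The resulting list is: the trivial simple laminates (already guaranteed by Lemma \ref{lem:geo2}); one four-sector representative; two inequivalent six-sector representatives; and a single twelve-sector representative, which necessarily uses all six admissible normals.

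The main obstacle is controlling the combinatorial explosion of the enumeration and arguing that the listed representatives exhaust the classes modulo symmetry. Each individual check is short linear algebra, but one must neither miss any compatible configuration nor overcount those related by the symmetry group of $K$. A convenient organisational device is to exploit the pairing of admissible normals into three orthogonal pairs: within each pair the two adjacent wells are rigidly determined, which drastically limits the well-labellings available at each sector and makes the skew compatibility check systematic. In practice this is implemented by a symbolic computation, as remarked in the text, and the above structural argument guarantees that this computation is exhaustive.
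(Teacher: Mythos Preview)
Your proposal is correct and follows essentially the same approach as the paper: both reduce the classification to a finite enumeration governed by the compatibility conditions of Lemma~\ref{lem:compat}, with the final verification carried out by symbolic computation. The paper's own justification consists of a single sentence invoking a Mathematica search; your write-up adds useful structural detail (the six admissible interface normals, the bound $m\leq 12$, and the reduction of condition~(2) to a single scalar skew equation via telescoping of the symmetric part) that explains why the search is finite and how to organise it, but the underlying method is the same.
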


These homogeneous configurations can further be combined to yield compatible ``zig-zag" configurations, c.f. Figure \ref{fig:cross}. The zero-homogeneous configurations will in the sequel serve as the building blocks of our constructions. 

\begin{figure}
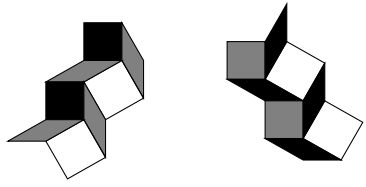

\centering
\includegraphics[scale=0.5, page=34]{figures.pdf}
\includegraphics[scale=0.5, page=35]{figures.pdf}
\caption{The homogeneous corners can be combined to yield the above patterns (up to symmetry they correspond to a single pattern). As before white corresponds to the variant $e^{(1)}$, gray to the variant $e^{(2)}$ and black to the variant $e^{(3)}$.}
\label{fig:cross}
\end{figure}

\subsection{A BV construction for zero boundary data}
\label{sec:BV}
In this section we present an explicit construction of a solution to (\ref{eq:incl}) with $M=0$ and $e(\nabla u)\in BV$ (and with $\nabla u \in L^{p}(\R^2)$ for all $p\in(1,\infty)$ but $\nabla u \notin L^{\infty}(\R^2)$). In our construction (c.f. Proposition \ref{prop:zero}) we crucially rely on symmetry properties of the strains. As the wells form an equilateral triangle in strain space, it appears plausible to expect the best regularity properties arise in the center of the convex hull of $K$, i.e. for solutions with zero boundary data.\\

For an arbitrary domain this construction (which is motivated by the constructions of Conti \cite{C} and which was similarly already used in \cite{Pompe} and \cite{CPL14}) can be used to obtain a solution to (\ref{eq:incl}) with 
\begin{align*}
e(\nabla u) \in W^{s,p}(\R^2) \mbox{ for all } s\in(0,1), p\in(1,\infty) \mbox{ with } sp<1.
\end{align*}

As we are in two-dimensions, symmetrized rank-one connections exist between any pair of symmetric matrices with vanishing trace (c.f. Lemma \ref{lem:rk1}). In particular, all the strains $e^{(1)}, e^{(2)}, e^{(3)}$ are compatible with any (constant) boundary condition. Yet, a priori it is not clear, whether this can be turned into a global configuration involving as few strains as possible. In the case of zero boundary data, this is indeed possible, while preserving very good regularity properties:

\begin{prop}[Zero boundary data construction]
\label{prop:zero}
There exists a bounded domain $\Omega\subset \R^2$, $\Omega \neq \emptyset$, and a solution $u:\R^2 \rightarrow \R^2$ of (\ref{eq:incl}) with $M=0$, such that
\begin{align*}
e(\nabla u) \in BV(\R^2).
\end{align*}
\end{prop}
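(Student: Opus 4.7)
The plan is to construct $u$ explicitly using the six-fold symmetric zero-homogeneous corner from Figure \ref{fig:zeroh} as the basic building block, iterated self-similarly so that the added jump interfaces form a geometrically convergent series.

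\emph{Step 1 (building block).} I first observe that a direct computation from (\ref{eq:K3}) gives $e^{(1)}+e^{(2)}+e^{(3)}=0$, so the centroid of $\conv(K)$ coincides with the boundary value $M=0$. I consider the zero-homogeneous corner with strains assigned in the cyclic pattern $e^{(1)},e^{(2)},e^{(3)},e^{(1)},e^{(2)},e^{(3)}$ on the six sectors of angle $\pi/3$. Lemma \ref{lem:rk1} guarantees tangential continuity across each interface (condition 1 of Lemma \ref{lem:compat}), while the antipodal identification $M_{s+3}=M_s$ combined with the rotational symmetry ensures $\sum_{i}a_{i}\otimes n_{i}=0$ (condition 2). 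This produces a Lipschitz, odd, piecewise linear map $v:\R^{2}\to\R^{2}$ with $e(\nabla v)\in K$ a.e. and average gradient zero.

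\emph{Step 2 (domain and zeroth approximation).} I choose $\Omega$ to be a regular hexagon centered at the origin, with sides aligned to the six-fold symmetry of $v$. Define $u_{0}:=\chi_{\Omega}v$ and subdivide $\Omega$ into six congruent equilateral triangles $T_{s}$ meeting at the origin, so that $\nabla v$ is a constant matrix $M_{s}$ on each $T_{s}$ with $e(M_{s})\in K$. The construction satisfies $e(\nabla u_0)\in K$ inside $\Omega$, but the boundary trace on $\partial\Omega\cap\partial T_{s}$ does not yet match the zero boundary data.

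\emph{Step 3 (iterative refinement).} Within each triangle $T_{s}$, I will replace $v$ on an outer strip of relative scale $\rho\in(0,1)$ adjacent to $\partial\Omega\cap\partial T_{s}$ by a finite, piecewise affine interpolation that matches $M_{s}$ on its inner side and $0$ on the outer side. Such an interpolation exists because both $e(M_{s})$ and $0$ lie in $\intconv(K)$; by Lemma \ref{lem:geo2} one may use a bounded number of simple laminates built from the pairwise rank-one connections of the wells (and, if necessary, a smaller rescaled copy of $v$ to realize all three variants with zero average) to pass from $M_{s}$ to $0$ while remaining in $K$. In the untreated inner subregion, $\nabla u$ still equals $M_{s}$, so we iterate: at each level, a fixed number $N$ of inner subregions of relative size $\rho$ per parent region require the same treatment. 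The construction converges in $W^{1,p}(\R^{2})$ for every $p<\infty$ to a Lipschitz $u$ with $e(\nabla u)\in K$ a.e.\ in $\Omega$ and $\nabla u=0$ outside $\Omega$.

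\emph{Step 4 (BV bound and main obstacle).} The total perimeter of the interfaces added at iteration level $k$ is bounded by $C\,(N\rho)^{k}$, so choosing the geometry so that $N\rho<1$ yields $\|e(\nabla u)\|_{BV(\R^{2})}\leq C\sum_{k\geq 0}(N\rho)^{k}<\infty$. The hard part is the construction in Step 3: one must realize the transition $M_{s}\leadsto 0$ using a uniformly bounded number of laminate layers in $K$, independently of the iteration level, while matching tangentially on both the inner and outer sides of the strip and satisfying the skew compatibility of Lemma \ref{lem:compat} at every newly created vertex. This uses crucially that $0$ is the centroid of $\conv(K)$, so that the three variants can appear in equal volume fractions and the contributions of the skew parts cancel by the threefold symmetry. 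Alternatively, one may employ the ``cross'' configurations of Figure \ref{fig:cross} on a rhombic sub-cell as in \cite{C,Pompe,CPL14}, which yield a cleaner periodic transition layer and the same geometric perimeter estimate.
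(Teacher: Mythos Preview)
Your outline has the right shape (self-similar building block + geometric perimeter series), but Step 3 is where the entire content of the proposition lies, and you do not actually carry it out. Asserting that a piecewise affine map on a strip can match $M_s$ on the inner side and $0$ on the outer side while keeping $e(\nabla u)\in K$ throughout, using ``a bounded number of simple laminates'', is precisely what has to be constructed; Lemma \ref{lem:geo2} only gives you pairwise rank-one connections, not a finite tiling of a trapezoidal strip with the correct two-sided trace. Your iteration logic is also inconsistent: if Step 3 succeeds as stated, the inner region already has $e(\nabla u)=e(M_s)\in K$ and requires no further treatment, so there is nothing left to iterate; conversely, if the transition layer inevitably leaves residual subregions with $e(\nabla u)=0\notin K$ (which is what actually happens), you have not said where they sit, why there are only $N$ of them, or why they are self-similar to the parent. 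You acknowledge in Step 4 that this is ``the hard part'' and then defer to the literature, which is not a proof.

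The paper's argument avoids this entirely by giving a single explicit self-similar picture: an equilateral triangle (rotated by $\pi/12$) contains a homothetic copy of itself at ratio $4-2\sqrt{3}$, rotated by $\pi/3$; the annular region between the two triangles decomposes into finitely many pieces on which $e(\nabla u)\in\{e^{(1)},e^{(2)},e^{(3)}\}$ exactly, while on the inner triangle $e(\nabla u)=0$. One then iterates \emph{inward}: the ``bad'' region is the shrinking inner triangle, with contraction factor $2(4-2\sqrt{3})<1$, and the added perimeter at stage $k$ is bounded by a constant times this factor to the $k$-th power. No boundary-matching strip or transition layer is needed because the self-similarity and the exact compatibility at the three four-fold corners (checked via Lemma \ref{lem:compat}) are built into the picture from the start. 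Your final sentence pointing to \cite{C,Pompe,CPL14} is essentially this construction, but it is not the hexagonal scheme you spent Steps 1--3 setting up.
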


We emphasize that this construction is not new and that related constructions, using the symmetry of the domain, already appeared in \cite{C}, \cite{Pompe}, \cite{CPL14}. \\

\begin{figure}[t]
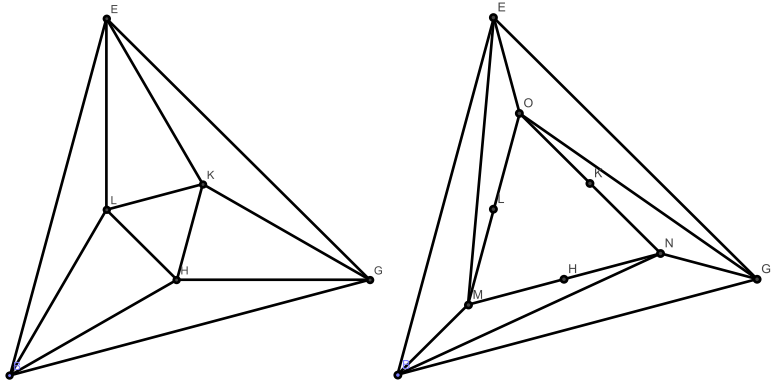

\centering
\includegraphics[scale=1, page=10]{figures.pdf}
\includegraphics[scale=1, page=11]{figures.pdf}
\caption{The first iteration step in the deformation corresponding to zero boundary data. It is possible to interpret the construction as (a linearization at the identity of) the deformation depicted in the first two pictures: The undeformed reference configuration (left) is deformed into the configuration on the right.}
\label{fig:Def}
\end{figure}

\begin{proof}
In order to obtain the desired construction, we consider an equilateral triangle rotated by $\frac{\pi}{12}$ with respect to the $x_1$-axis and a self-similar copy of it which is homothetically positioned in the larger one at a length ratio $4-2\sqrt{3}$ and then rotated by $\frac{\pi}{3}$ with respect to the outer triangle (c.f. Figure \ref{fig:Def}, left). Now we rotate the inner triangle by $\frac{\pi}{3}$, so that it turns into a homothetically scaled version of the outer triangle and stretch it by a factor 2 while preserving the boundary of the larger triangle (c.f. Figure \ref{fig:Def}, right). This leads to the gradient distribution depicted in Figure \ref{fig:Def1}. In particular, the only strains, which are used consist of $e^{(1)}, e^{(2)}, e^{(3)}$ as well as the zero strain. This can be iterated in the respectively smaller triangles. As only the skew symmetric part of the strains grow, while the symmetric strains are fixed in the set of our wells and the zero matrix, and as the new interior 
triangle is a self-similar copy of the outer original triangle with a constant ratio of $2(4-2\sqrt{3})$, this yields the claimed energy contributions:
In each step the construction leads to a bound of the form
\begin{align*}
\|e(\nabla u_n)\|_{BV(\R^2)}\sim \sum\limits_{k=0}^{n} c^k \leq C,
\end{align*}
where $c\in(0,1)$ and $C>1$ is a universal constant. Thus, passing to the limit $n\rightarrow \infty$ implies the desired regularity result for the symmetrized part. 
\end{proof}

\begin{rmk}
\label{rmk:scaling1} 
We remark that this can easily be turned into a scaling result for associated elastic and surface energies on the domain $\Omega$. It gives an energy scaling for minimizers subject to $\nabla u = 0$ on the boundary of $\Omega$, which corresponds to a ``surface energy contribution". 
\end{rmk}

A second way of producing the construction of Proposition \ref{prop:zero} relies on the homogeneous building blocks, which were described in the previous section. We
\begin{itemize}
\item compute all the possible four-fold corners consisting of the strains $e^{(1)},e^{(2)},$ $e^{(3)},0$. There is an admissible four-fold corners with a large portion (more precisely involving an angle of $\frac{5 \pi}{3}$) of the zero phase (this is one of the corner depicted in the left picture in Figure \ref{fig:Def}),
\item act on this configuration via a rotation of $\frac{2 \pi}{3}$: This yields two new compatible four-fold corners (these are the other two corners in the left picture in Figure \ref{fig:Def}),
\item combine the corners in an equilateral triangle as in Figure \ref{fig:Def},
\item check the compatibility (by means of Lemma \ref{lem:compat}) of the resulting central corners with the strain $e=0$.
\end{itemize}

Similar constructions (but with different symmetries) can be applied in other two-dimensional configurations in matrix space (e.g. with the symmetries of a square), if the strains are arranged in a symmetric polygon in strain space (e.g. in square, in which case one possible solution would be the one given in Lemma \ref{lem:conti_undeformed}, c.f. also Figure \ref{fig:conti}).\\

\begin{figure}[t]
\centering
\includegraphics[scale=1,page=13]{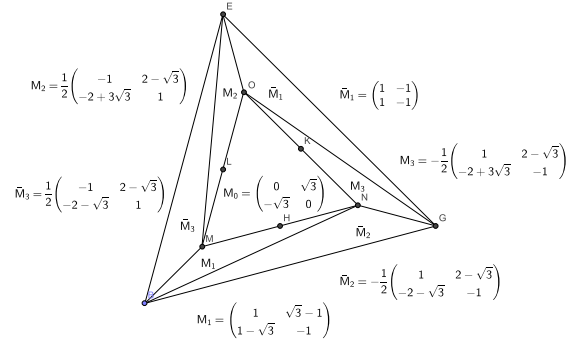}
\caption{The gradient distribution of the deformation. The construction exploits the symmetry of the wells in strain space. The figure depicts the deformation gradients (after linearization) in the image configuration. The matrices, which yield the same symmetrized gradient, are denoted by the same indeces, e.g. $M_1$ and $\bar{M}_1$.}
\label{fig:Def1}
\end{figure} 

In a general domain the construction from above can be applied with a (logarithmic) loss: 

\begin{prop}[General zero boundary data construction]
\label{prop:zero1}
Let $\Omega \subset \R^2$ be a non-empty Lipschitz domain. Then there exists a configuration such 
for all $s\in(0,1), p\in(1,\infty)$ with $sp <1$ we have
\begin{align*}
e(\nabla u) \in W^{s,p}(\R^2).
\end{align*}
\end{prop}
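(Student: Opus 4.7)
The plan is to generalize the construction of Proposition \ref{prop:zero} to an arbitrary Lipschitz domain $\Omega$ by following exactly the template of Section \ref{sec:generaldomains}: cover $\Omega$ by dyadically-scaled translated copies of the symmetric triangular building block $T$ provided by Proposition \ref{prop:zero} (glued with zero boundary data), and then combine the resulting quantitative $BV$ and $L^{1}$ bounds by interpolation via Remark \ref{rmk:prod}.

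First I would fix an equilateral triangular grid of the plane whose fundamental cells are congruent copies of the triangle $T$ from Proposition \ref{prop:zero}. Because the building block carries zero boundary data, translates at the same scale glue across boundaries without destroying the Lipschitz regularity of the resulting map. At dyadic scale $\lambda_{l}:=2^{-l}$, define $\tilde\Omega_{l}$ to be the union of all closed grid triangles of side $\lambda_{l}$ contained in $\Omega$, set $\hat\Omega_{l}:=\tilde\Omega_{l}\setminus\bigcup_{j<l}\tilde\Omega_{j}$ and $\bar\Omega_{k}:=\bigcup_{l\leq k}\hat\Omega_{l}$. Since $\partial\Omega$ is Lipschitz, the volume-counting argument from Claim \ref{claim:1} (with the obvious adaptation from squares to equilateral triangles) yields that $\hat\Omega_{l+1}$ consists of at most $C_{\Omega}\lambda_{l+1}^{-1}$ triangles of side $\lambda_{l+1}$, and $\bar\Omega_{k}\nearrow\Omega$ in $L^{1}$.

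Second, on each grid triangle $T_{l,m}\subset\hat\Omega_{l}$ I would place a rescaled and translated copy of the map from Proposition \ref{prop:zero}, and set the resulting piecewise-defined deformation $u_{k}$ to vanish on $\R^{2}\setminus\bar\Omega_{k}$. Since each $T_{l,m}$ carries zero boundary data, the pasting yields a globally Lipschitz map satisfying $e(\nabla u_{k})\in K$ on $\bar\Omega_{k}$ and $\nabla u_{k}\equiv 0$ elsewhere. As every point $x\in\Omega$ belongs to some $\hat\Omega_{l_{x}}$, its value $u_{k}(x)$ is frozen from step $k\geq l_{x}$ onwards, giving an almost everywhere limit $u$ which solves (\ref{eq:incl}) with $M=0$. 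For the quantitative estimate I would exploit the scaling of the building block: on a triangle of side $\lambda_{l}$ the BV construction of Proposition \ref{prop:zero} satisfies $\|e(\nabla u)\|_{BV}\lesssim\lambda_{l}$, $\|e(\nabla u)\|_{L^{1}}\lesssim\lambda_{l}^{2}$, with $\|e(\nabla u)\|_{L^{\infty}}$ uniformly bounded. On each grid triangle the increment $e(\nabla u_{k+1})-e(\nabla u_{k})$ is either zero or equal to the newly inserted building block. Arguing as in Step 4 of the proof of Proposition \ref{prop:cover_a} and invoking Corollary \ref{cor:int} together with Remark \ref{rmk:prod}, the local $W^{s,p}$ contribution of one triangle at scale $\lambda_{l}$ is dominated by $C\lambda_{l}^{2-\theta_{0}}$ for any $\theta_{0}\in(0,1)$ with $sp<\theta_{0}$. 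Summing over the $\lesssim C_{\Omega}\lambda_{l}^{-1}$ triangles at scale $l$ and then over $l\in\N$ gives a geometric series $\sum_{l}2^{-l(1-\theta_{0})}$, which converges for every $\theta_{0}<1$, hence $e(\nabla u)\in W^{s,p}(\R^{2})$ for all $s\in(0,1)$, $p\in(1,\infty)$ with $sp<1$.

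The main obstacle will be the same as in Proposition \ref{prop:cover_a}: the interpolation bound from Remark \ref{rmk:prod} controls a \emph{product} of $L^{1}$ and $BV$ norms rather than a linear functional, so one cannot just sum contributions across the dyadic layers without care. As in Step 4 of the proof of Proposition \ref{prop:cover_a}, the way around this is to estimate the $W^{s,p}$ norm on each grid triangle separately using interpolation, and to exploit the disjointness of the supports together with the exponent restriction $sp<1$ (which keeps us subcritical for characteristic-function-like bounds) in order to add up the local contributions without uncontrolled cross-terms. A secondary but routine issue is verifying that a triangular grid can be constructed so that the Proposition \ref{prop:zero} building block fits the fundamental cells at every dyadic scale consistently; this is purely a matter of choosing the initial grid compatibly with the self-similar refinement inherent in the construction.
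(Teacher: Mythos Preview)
Your approach is correct and is precisely what the paper intends: the paper omits the proof entirely, saying only that ``the passage from the special domain to an arbitrary domain follows by a covering argument analogous to the one presented in Section~\ref{sec:generaldomains}''. Your dyadic triangular covering, the use of Claim~\ref{claim:1} to count boundary triangles, the scaling of the $BV$ and $L^{1}$ contributions of the building block, and the telescoping interpolation argument are exactly that template, with the natural adaptation from squares to equilateral triangles (which is forced on you since the building block of Proposition~\ref{prop:zero} is triangular).

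One simplification is worth pointing out. Your proposed resolution of the ``main obstacle''---estimating the $W^{s,p}$ norm locally on each triangle and then summing---runs into the genuine nuisance that fractional Sobolev seminorms have nonlocal cross-terms between disjoint supports, which you would then have to control by hand. This is unnecessary. Instead, observe that the increment $e(\nabla u_{k+1})-e(\nabla u_{k})$ is supported on the $\lesssim C_{\Omega}\lambda_{k+1}^{-1}$ newly added triangles at scale $\lambda_{k+1}$, and that both the $L^{1}$ norm and the $BV$ seminorm are \emph{additive} on disjoint supports. Summing \emph{these} first gives
\[
\|e(\nabla u_{k+1})-e(\nabla u_{k})\|_{BV(\R^{2})}\lesssim 1,\qquad
\|e(\nabla u_{k+1})-e(\nabla u_{k})\|_{L^{1}(\R^{2})}\lesssim \lambda_{k+1},
\]
and only then do you form the product and apply Corollary~\ref{cor:int} together with Remark~\ref{rmk:prod} \emph{globally} on $\R^{2}$. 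This yields the geometric decay $2^{-k(1-\theta_{0})}$ directly, with no cross-terms to worry about, and the telescoping sum converges for every $\theta_{0}<1$, giving $sp<1$ as claimed.
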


As the passage from the special domain to an arbitrary domain follows by a covering argument analogous to the one presented in Section \ref{sec:generaldomains}, we omit the proof here.

\begin{rmk}
\label{rmk:scaling2}
Similarly as explained in Remark \ref{rmk:scaling1}, Proposition \ref{prop:zero1} can also be transformed into a scaling result. In comparison to the ``surface energy scaling'', which is obtained from Proposition \ref{prop:zero}, we however lose a logarithmic factor in the corresponding construction. It is an interesting and challenging open problem to decide whether this logarithmic loss is necessary in a general domain.
\end{rmk}

\bibliographystyle{alpha}
\bibliography{citations}

\end{document}